\documentclass[a4paper]{amsart}
\usepackage{comment}
%
%
\excludecomment{AKT}
\newenvironment{ARXIV}{}{}
\newcommand{\ARXIVONLY}[1]{#1}
\newcommand{\AKTONLY}[1]{}
%
%
%
\usepackage[top=2.5cm,bottom=2.5cm,left=4cm,right=4cm]{geometry}
\usepackage[utf8]{inputenc}
\usepackage[T1]{fontenc}
\usepackage{lmodern,microtype,etoolbox}
\usepackage[british]{babel}
\newcommand{\stacktag}[1]{\href{https://stacks.math.columbia.edu/tag/#1}{\texttt{#1}}}
\usepackage[all]{xy}
\usepackage{mathtools}
\usepackage{amssymb}
\usepackage{amsthm}
\usepackage{mathrsfs}
\usepackage{changepage} 
\usepackage{paralist}
\usepackage{varioref}
\usepackage{prettyref}
\newrefformat{fig}{Figure~\ref{#1} \vpageref[]{#1}}
\newrefformat{eg}{Example~\ref{#1}}
\usepackage{hyperref}
\hypersetup{breaklinks=true,colorlinks=true,linkcolor=black,anchorcolor=black,citecolor=black,filecolor=black,menucolor=black,urlcolor=black}
\usepackage{color}

\swapnumbers
\newtheorem{theorem}{Theorem}[section]
\newrefformat{thm}{\hyperref[{#1}]{Theorem~\ref*{#1}}}
\newtheorem{definition}[theorem]{Definition}
\newrefformat{def}{\hyperref[{#1}]{Definition~\ref*{#1}}}

\newrefformat{note}{\hyperref[{#1}]{Note~\ref*{#1}}}

\newrefformat{setup}{\hyperref[{#1}]{Setup~\ref*{#1}}}
\newtheorem{lemma}[theorem]{Lemma}
\newrefformat{lem}{\hyperref[{#1}]{Lemma~\ref*{#1}}}
\newtheorem{proposition}[theorem]{Proposition}
\newrefformat{prop}{\hyperref[{#1}]{Proposition~\ref*{#1}}}
\newtheorem{corollary}[theorem]{Corollary}
\newrefformat{cor}{\hyperref[{#1}]{Corollary~\ref*{#1}}}

\theoremstyle{remark}
\newtheorem{remark}[theorem]{Remark}
\newrefformat{rem}{\hyperref[{#1}]{Remark~\ref*{#1}}}
\newtheorem{example}[theorem]{Example}
\newrefformat{ex}{\hyperref[{#1}]{Example~\ref*{#1}}}

\newrefformat{not}{\hyperref[{#1}]{Notation~\ref*{#1}}}

\newcommand{\op}{\operatorname}
\newcommand{\colim}{\op{colim}}         
\newcommand{\id}{\op{id}}               
\newcommand{\sing}{\mathrm{sing}}       
\newcommand{\et}{\mathrm{\acute{e}t}}   
\renewcommand{\r}{{\op{r}}}             
\newcommand{\ret}{\mathrm{r\acute{e}t}} 
\newcommand{\fl}{\mathrm{fl}}           
\renewcommand{\P}{\mathbb{P}}
\newcommand{\Z}{\mathbb{Z}}
\newcommand{\A}{\mathbb{A}}
\newcommand{\R}{\mathbb{R}}
\newcommand{\C}{\mathbb{C}}

\newcommand{\Gm}{\mathbb{G}_{\mathrm{m}}}
\newcommand{\OO}{\mathcal{O}}          
\DeclareMathOperator{\Hom}{Hom}

\DeclareMathOperator{\support}{supp}

\DeclareMathOperator{\vcd}{vcd}
\DeclareMathOperator{\Spec}{Spec}
\newcommand{\noloc}{:\!}
\newcommand*{\cat}[1]{\mathcal #1}
\newcommand*{\sheaf}[1]{\mathcal #1}

\newcommand{\pfist}[1]{\langle\!\langle #1 \rangle\!\rangle} 
\newcommand{\sign}{\op{sign}} 
\newcommand{\realcycle}{\op{rcl}} 
\newcommand{\Sper}{\op{sper}} 
\newcommand{\cont}{\mathcal C}        
\newcommand{\god}{\mathbb G} 
\newcommand{\res}{\sheaf R}  

\usepackage{graphicx,pbox}
\newcommand{\cbox}[2][]{\pbox[#1]{\textwidth}{\relax\ifvmode\centering\fi#2}}
\newcommand*{\downcong}{\rotatebox[origin=c]{-90}{$\cong$}}

\newcommand{\gersten}[5][]{\op{C}^{#2}(#3,#4\ifstrempty{#5}{}{(#5)})_{#1}}     
\newcommand{\Icohom}[5][]{\op{H}^{#2}_{#1}(#3,#4(#5))} 
\newcommand{\I}{\mathbf{I}}                   
\newcommand{\W}{\mathbf{W}} 
\newcommand{\Lb}{\mathcal{L}} 
\newcommand{\Nb}{\mathcal{N}} 

\title{The real cycle class map}
\author{Jens Hornbostel, Matthias Wendt, Heng Xie and Marcus Zibrowius}
\begin{document}

\address{Jens Hornbostel, Fachgruppe Mathematik/Informatik, Bergische Universit\"at Wuppertal, Gau\ss{}stra\ss{}e~20, 42119 Wuppertal, Germany}
\email{hornbostel@math.uni-wuppertal.de}

\address{Matthias Wendt, Fachgruppe Mathematik/Informatik, Bergische Universit\"at Wuppertal, Gau\ss{}stra\ss{}e~20, 42119 Wuppertal, Germany}
\email{wendt@math.uni-wuppertal.de}

\address{Heng Xie, Fachgruppe Mathematik/Informatik, Bergische Universit\"at Wuppertal, Gau\ss{}stra\ss{}e~20, 42119 Wuppertal, Germany}
\email{heng.xie@math.uni-wuppertal.de}

\address{Marcus Zibrowius, Mathematisches Institut, Heinrich-Heine-Universit\"at Düsseldorf, Universitäts{}stra\ss{}e~1, 40225 D\"usseldorf, Germany}
\email{marcus.zibrowius@cantab.net}

\begin{abstract}
The classical cycle class map for a smooth complex variety sends cycles in the Chow ring to cycles in the singular cohomology ring.  We study two cycle class maps for smooth real varieties: the map  from the \(\I\)-cohomology ring to singular cohomology induced by the signature, and a new cycle class map defined on the Chow-Witt ring. For both maps, we establish basic compatibility results like compatibility with pullbacks, pushforwards and cup products.  As a first application of these general results, we show that both cycle class maps are isomorphisms for cellular varieties.
\end{abstract}

\maketitle
\tableofcontents

\section{Introduction}
Classically, for smooth varieties $X$ over a field $F$ with a complex embedding $\sigma\colon F\hookrightarrow \mathbb{C}$, there are cycle class maps $\op{CH}^k(X)\to \op{H}^{2k}(X(\mathbb{C}),\Z)$ from Chow groups to singular cohomology, and these are compatible with pullbacks, pushforwards and intersection products.

In the present article, we consider real cycle class maps in the analogous situation for real varieties.
Note that integral Chow groups alone are not the right object to look at when studying real varieties.  For example, it is known that the rational motivic Eilenberg--Mac Lane spectrum maps to zero under real realization.  Rather, the general philosophy due to Morel, Fasel and others is that we should study \(\I\)-cohomology: sheaf cohomology with coefficients in the sheafified powers \(\I^k\) of the fundamental ideal of the Witt ring (see Definitions~\ref{def:W-and-I-sheaves} and \ref{def:I-cohomology} below).  It is this theory that is to singular cohomology of real varieties what Chow groups are to singular cohomology for complex varieties.

Concretely, for varieties $X$ over a field $F$ with a real embedding $\sigma\colon F\hookrightarrow \R$ (or more generally, an embedding into a real closed field) there are {\em real cycle class maps} $\op{H}^k(X,\I^k)\to \op{H}^k(X(\R),\Z)$. These were discussed in the work of Jacobson \cite{jacobson}, and they lift the more classical maps constructed by Borel--Haefliger and Scheiderer.

The main goal of this article is to study this real cycle class map in detail. In particular, we will show that
it is compatible with all the structure we are interested in, notably pushforwards, pullbacks
and ring structures.
We also study the cycle class map with twisted coefficients $\I^k(\Lb)$, whose target is singular cohomology with coefficients in local coefficient systems arising from real realization of the twisting line bundle.

One important application of the compatibility results for the real cycle class maps is that for a smooth cellular variety $X$ over $\R$, the real cycle class map induces a ring isomorphism
\[
  \bigoplus_{k,\mathcal{L}}\op{H}^k(X,\mathbf{I}^k(\mathcal{L}))\to \bigoplus_{n,\mathcal{L}}\op{H}^k_\sing(X(\R),\Z({\mathcal{L}}))
\]
where $\Z({\mathcal{L}})$ denotes coefficients in the local system associated to the line bundle $\mathcal{L}$. More generally, the result holds over real closed fields, using semi-algebraic cohomology instead of singular cohomology.

Another important goal of the present article is to extend the real cycle class map to a {\em full cycle class map} on the twisted Chow--Witt groups $\smash{\widetilde{\op{CH}}}^k(X,\mathcal{L})$ and establish the compatibility of the cycle class maps with pushforwards, pullbacks and intersection products in Chow--Witt theory. The target of this full cycle class map combines both the singular cohomology of $X(\R)$ with integral coefficients (possibly twisted by a local system) and the $\op{C}_2$-equivariant cohomology of $X(\mathbb{C})$, in the same way that Milnor--Witt K-theory combines the sheaves $\I^k$ and $\mathbf{K}^{\op{M}}_k$,
and the full cycle class map refines the recently defined
$\op{C}_2$-equivariant cycle class map of Benoist--Wittenberg \cite{benoist:wittenberg}.

The results concerning real realization and real cycle class maps discussed here were motivated by recent computations of Chow--Witt rings and \(\I\)-cohomology rings of classifying spaces \cite{hornbostelwendt}, Grassmannians \cite{real-grassmannian} and split quadrics \cite{HXZ:quadrics}. In particular, in the present preprint we discuss the compatibility of real realization with the characteristic classes left open  both in \cite{real-grassmannian} and in \cite{hornbostelwendt}, cf.\ in particular \cite[Remark 9.2]{hornbostelwendt}.

\medskip

For most of our considerations we deal only with ``geometric'' bidegrees, that is, we concentrate on $\op{H}^k(X,\I^k)$ or $\smash{\widetilde{\op{CH}}}^k(X)=\op{H}^k(X,\mathbf{K}^{\op{MW}}_k)$ and ignore the full bigraded theories (Milnor--Witt motivic cohomology $\op{H}^i(X,\widetilde{\Z}(j))$, etc.) that have recently been developed and studied by Fasel and others.

\begin{remark} The focus of this article is on schemes over $\R$ and realization to classical topological manifolds. However, several results refine to the real \'etale site
and the real topology, see Sections \ref{sec:prelim-ret} and \ref{compare:retsing}, Remarks \ref{retvariant} and \ref{rem:base-fields-for-cellular-applications} and Appendix \ref{sec:app}. To start with,
 there are evident extensions of the results to the case of real closed fields, using the theory of semi-algebraic cohomology as discussed in \cite{delfs}. It is also possible to go beyond the real closed case for parts of the results. In full generality, one has to replace singular cohomology by real-\'etale cohomology as in \cite{scheiderer}. In this setting, it is possible to define cycle class maps $\op{H}^i(X,\I^j)\to \op{H}^i_{\ret}(X,\mathbf{Z})$ and prove all the relevant compatibilities. For a formally real field $F$, the kernel of the natural homomorphism
  \[
    \op{W}(F) \to \prod_P \op{W}(F_P) =\op{H}^0_{\ret}(F,\Z)
  \]
  is equal to the torsion subgroup of $\op{W}(F)$, where the product runs over all orderings $P$ of the field $F$ and $F_P$ denotes the real closure for $F$ with the ordering $P$. In particular, to determine an element up to the torsion, it suffices to consider all the signatures. Consequently, claims concerning the image of an element in real cohomology over a formally real field may then be deduced from the corresponding claims for real closed fields (whose proofs can be obtained from the case $\R$ discussed in the present article). Lacking significant applications for this extension, we won't go into further details here.
\end{remark}

\begin{remark}
  At least for smooth varieties over $\R$, taking real
  points is expected to induce a morphism of spectral sequences from the Gersten--Witt spectral sequence defined by Balmer and Walter \cite{BW02} to the Atiyah--Hirzebruch spectral sequence for topological KO-theory, each potentially twisted by a line bundle in an appropriate sense. 
  On $E_2$-terms in degree zero, this morphism is the (twisted) signature map studied here, and on $E_{\infty}$-terms it is expected to be (a twisted version of) the ``signature map'' that appears in  Brumfiel's Theorem \cite{brumfiel,karoubi-schlichting-weibel}. It thus seems plausible that variants of Brumfiel's Theorem could be obtained by such a comparison of spectral sequences.  However, a careful implementation of such arguments requires higher signature maps, which are beyond the scope of this paper.  We refer to \cite{jacobson-pre2} for some work in this direction, and for related questions.
\end{remark}

\medskip

\emph{Structure of the paper:} We provide a recollection on the various cohomology theories involved in \prettyref{sec:prelims}. \prettyref{sec:cyclesheaf} discusses sheaf-theoretic and geometric constructions of the real cycle class maps and shows that they agree. The compatibility results with pullbacks, pushforwards and intersection products are proved in \prettyref{sec:compatibilities}. The special case of cellular varieties is discussed in \prettyref{sec:cellular}. The comparison of algebraic and topological characteristic classes is established in Section~\ref{sec:characteristic}.

\emph{Acknowledgements:} The first, third and fourth author
were supported by the DFG-priority program 1786 grants HO~4729/3-1 and ZI~1460/1-1. This
research was conducted in the framework of the DFG-funded research training group
\emph{GRK 2240: Algebro-Geometric Methods in Algebra, Arithmetic and Topology}. The third author was supported by EPSRC Grant EP/M001113/1, and he would also like to
thank Max Planck Institute for Mathematics in Bonn for its hospitality. 
We thank Marco Schlichting for helpful discussions related to the twisted signature,
Jeremy Jacobsen for answering a question of us,%
\begin{AKT}
  and the referee for their careful reading and very useful suggestions.
  Following the advice of the managing committee, section~2 of the published version is considerably shorter than it was in previous variants.
\end{AKT}
\begin{ARXIV}
  and the referee of \textit{Annals of K-Theory} for their careful reading and very useful suggestions.
  Following the advice of the managing committee of \textit{Annals of K-Theory}, section~2 of the published version is considerably shorter than it is in this preprint.
\end{ARXIV}

\section{Preliminaries and notation}
\label{sec:prelims}

In this article, all schemes are assumed to be separated of finite type over a field $F$ which has a real embedding $\sigma\colon F\hookrightarrow \R$, although many results obviously hold in greater generality. In particular, the conditions of being perfect, infinite and of characteristic $\neq 2$ which frequently occur in the literature on $\mathbb{A}^1$-homotopy and Chow--Witt theory are automatically satisfied. Occasionally, we might refer to reduced schemes as varieties.

We write \(\cat Ab(X)\) for the category of sheaves of abelian groups on a topological space \(X\).  Given a continuous map \(f\colon X\to Y\), we write \(f_*\) and \(f^*\) for the components of the canonical adjunction \(\cat Ab(X)\leftrightarrows \cat Ab(Y)\).  (Note that some authors prefer to write \(f^{-1}\) for the left adjoint and reserve the notation \(f^*\) for \(\sheaf O\)-modules over ringed spaces.)

\subsection{Recollections on sheaf cohomology}
The \textbf{sheaf cohomology} of a topological space \(X\) with coefficients in a sheaf of abelian groups \(\sheaf F\) on \(X\) is given by the right derived functors of the global sections functor \(\Gamma\):
\begin{ARXIV}
\[
  \op{H}^i(X,\sheaf F):= (\op{R}^i\Gamma)(\sheaf F)
\]
\end{ARXIV}
\begin{AKT}
  \(\op{H}^i(X,\sheaf F):= (\op{R}^i\Gamma)(\sheaf F)\).
\end{AKT}
Thus, sheaf cohomology depends on a pair \((X,\sheaf F)\) consisting of a topological space \(X\) and a sheaf of abelian groups \(\sheaf F\), and we refer to such a pair \((X,\sheaf F)\) as \textbf{coefficient datum}.  We define a \textbf{morphism of coefficient data} \((f,\phi)\colon (X,\sheaf F_X) \to (Y,\sheaf F_Y)\) to be a pair consisting of a morphism of spaces \(f\colon X\to Y\) and a morphism of sheaves \(\phi\colon f_*\sheaf F_X\leftarrow \sheaf F_Y\).  There's an obvious notion of composition of such morphisms:
\begin{equation}\label{eq:composition-of-morphisms-of-coefficient-data}
  (g,\gamma)\circ(f,\phi) = (gf,g_*\phi\circ\gamma).
\end{equation}
\begin{example}
  A morphism of ringed spaces \((X,\sheaf O_X)\to (Y,\sheaf O_Y)\) is, in particular, a morphism of coefficient data in the above sense.
\end{example}
\begin{example}\label{eg:canonical-morphisms-of-coefficient-data}
  For any morphism of spaces \(f\colon X\to Y\) and arbitrary sheaves \(\sheaf F_X\) and \(\sheaf F_Y\) on \(X\) and \(Y\), respectively, we have canonical associated morphisms of coefficient data \((X,\sheaf F_X)\to (Y,f_*\sheaf F_X)\) and \((X,f^*\sheaf F_Y)\to (Y,\sheaf F_Y)\).
\end{example}
\begin{remark}\label{rem:cohomomorphisms}
In \cite{bredon}, the component \(\phi\) of a morphism of coefficient data \((f,\phi)\) is referred to as an ``\(f\)-cohomomorphism''.  We will not use this terminology.
\end{remark}
In order to compute sheaf cohomology with coefficients in \(\sheaf F\) from the definition, one needs to first find an injective resolution of \(\sheaf F\).  However, one gets the same result using resolutions by flasque or more generally by acyclic sheaves \cite[Proposition~III.1.2A]{hartshorne}, and this additional flexibility is often useful.  The Godement resolution at the centre of this section is one particularly well-behaved choice of flasque resolution.

\subsubsection{Godement resolutions}\label{sec:construction-Godement}
\begin{ARXIV}
The classical and established sources for Godement resolutions are \cite{godement}, \cite{iversen} and \cite{bredon}.  We are mainly interested in a good definition of cup and cross products, but we provide a more general overview purely for the reader's convenience.

Let \(X\) be a topological space.  By a \textbf{resolution} \(\res_{\sheaf F}\) of a sheaf \(\sheaf F\) on \(X\) we mean a complex of sheaves \(\res_{\sheaf F}\) equipped with a morphism to \(\sheaf F\to \res_{\sheaf F}\) such that \(0\to\sheaf F\to\res_{\sheaf F}\) is exact.  Following \cite[II.6.4 and Appendice~\S\,3]{godement}, the \textbf{Godement resolution} \(\sheaf F\to\god\sheaf F\) can be constructed as follows:
Let \(X^\delta\) denote \(X\) equipped with the discrete topology.  The canonical continuous map \(d\colon X^\delta \to X\) induces an adjunction between the associated categories of sheaves of abelian groups:
\(
d_*\colon \cat Ab(X^\delta) \leftrightarrows \cat Ab(X) \noloc d^*
\). This induces a monad  \(G := d_*d^*\) which for an abelian sheaf \(\sheaf F\) on \(X\) is concretely given as
\begin{equation}\label{eq:Go0-concrete}
  G\sheaf F = \prod_{x\in X^\delta}\sheaf F_x,
\end{equation}
where \(\sheaf F_x\) denotes the (skyscraper sheaf associated with) the stalk of \(\sheaf F\) at \(x\).
Associated with this monad $G$, we have a canonical simplicial resolution \(\sheaf F \to \god^\star\sheaf F\) of abelian sheaves on \(X\), given in degree~\(n\) by \(\god^n\sheaf F := G^{n+1}(\sheaf F)\).  The Godement resolution \(\sheaf F \to \god\sheaf F\) is the chain complex associated with this simplicial resolution by the construction of \cite[8.6.15]{weibel}.   Its many nice features include:  the resolution is flasque, it is functorial in \(\sheaf F\), and at each point \(x\in X^\delta\), the complex of stalks \(\sheaf F_x \to \god \sheaf F_x\) is not only exact but even chain nullhomotopic.

\begin{remark}\label{rem:G1-versus-G2}
  There is another common resolution, also known as Godement resolution or canonical resolution, and introduced by Godement in the same book \cite[II.4.3]{godement}.  Godement writes \(\mathcal F^*(X;\sheaf F)\) for the resolution we denote as \(\god\sheaf F\) here, and \(\mathcal C^*(X;\sheaf F)\) for the resolution we denote \(\god'\sheaf F\).   This latter resolution \(\god'\sheaf F\) is used in \cite[II.2]{bredon}, for example, and also described in \cite[II.3.6]{iversen}.
  The two resolutions are closely related: there is a canonical injection \(\god'\sheaf F\to \god \sheaf F\), and this injection induces an isomorphism on the associated cohomology groups (see \cite[II.6.4 (d) (I)]{godement}).
\end{remark}

\begin{remark}[sites]
  The Godement resolution can be defined more generally for a site \(X\) with a set of conservative points \(X'\). Indeed, in this situation we have a geometric morphism of topoi \(\cat Sh(X') \to \cat Sh(X)\) (cf.\ \cite[after Def.~7.1.1]{johnstone}) and hence an adjunction \(\cat Ab(X') \leftrightarrows \cat Ab(X)\) that allows us to define a resolution exactly as above (cf.\ \cite[Theorem~8.20]{johnstone}).  Thus, when working with schemes, Godement resolutions can be used to compute not just Zariski sheaf cohomology but also étale or Nisnevich sheaf cohomology. However, in this paper we will have no use for this generality.
  \end{remark}

As the Godement resolution \(\god\sheaf F\) is flasque, we can use it to compute sheaf cohomology with coefficients in \(\sheaf F\).  To be more precise,  let us temporarily denote cohomology computed using Godement resolutions as \(\op{H}^i_{\text{god}}(X,\sheaf F) := \op{H}^i(\Gamma\god\sheaf F)\), and cohomology computed using injective resolutions as \(\op{H}^*_{\text{inj}}(X,-)\).  For both cohomologies, we have canonical identifications \(\op{H}^0_{\text{god}}(X,\sheaf F)\cong \Gamma\sheaf F\) and \(\op{H}^0_{\text{inj}}(X,\sheaf F)\cong \Gamma\sheaf F\), and for both cohomologies we have long exact sequences associated with short exact sequences of coefficient sheaves (see \prettyref{prop:Go-sheaf-exact-sequence} below for Godement cohomology and \cite[Theorem~III.1.1A]{hartshorne} for injective cohomology).
\begin{lemma}[Agreement]
  \label{lem:sheaf-cohomology-unique}
  \begin{enumerate}[(a)]\label{enum:scu:godement-is-injective}
  \item
    There is a canonical family of natural isomorphism \(\op{H}^i_{\mathrm{god}}(X,-)\cong \op{H}^i_{\mathrm{inj}}(X,-)\) compatible with the identifications of degree zero cohomology with the global sections functor and with the boundary maps in long exact cohomology sequences.
  \item \label{enum:scu:canonical-map-to-Go-cohomology}
    More generally, for an \emph{arbitrary} resolution \(\sheaf F\to \res_{\sheaf F}\), we have a canonical homomorphism
    \(
    \op{H}^*(\Gamma\res_{\sheaf F}) \to \op{H}^*_{\mathrm{god}}(X,\sheaf F),
    \)
    and this is an isomorphism whenever the sheaves comprising \(\res_{\sheaf F}\) are acyclic.
  \end{enumerate}
\end{lemma}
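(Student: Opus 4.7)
The plan is to prove (b) first and then deduce (a) as a special case. For (b), I will construct the comparison map via the double complex
\[
D^{p,q} := \god^p \res_{\sheaf F}^q
\]
obtained by applying the Godement resolution termwise to \(\res_{\sheaf F}\). The augmentations \(\sheaf F\to \res_{\sheaf F}\) and \(\sheaf F\to \god\sheaf F\), together with functoriality of \(\god\), assemble into maps of sheaf complexes
\[
\god\sheaf F \longrightarrow \op{Tot}(D) \longleftarrow \res_{\sheaf F}.
\]
After applying \(\Gamma\), the canonical map of (b) will be the composition of \(\op{H}^*(\Gamma\res_{\sheaf F})\to \op{H}^*(\Gamma \op{Tot}(D))\) with the inverse of \(\op{H}^*(\Gamma\god\sheaf F)\to \op{H}^*(\Gamma\op{Tot}(D))\), once the latter has been shown to be an isomorphism.

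To verify the required quasi-isomorphisms, I will analyse the two filtration spectral sequences of \(\Gamma\op{Tot}(D)\). Filtering by the Godement index \(p\): the functor \(\god^p = G^{p+1}\) is exact, since \(d^*\) is always exact and \(d_*\), because \(X^\delta\) is discrete, is just the product-over-points functor, which is exact in \(\cat{Ab}\). Consequently \(\god^p\sheaf F\to\god^p \res_{\sheaf F}\) is again a resolution, with flasque terms by construction of \(\god\). Thus \(\Gamma\) preserves the exactness of each row, the \(E_1\)-page collapses onto \(q=0\) with value \(\Gamma\god^p\sheaf F\), and the map \(\Gamma\god\sheaf F\to \Gamma\op{Tot}(D)\) is always a quasi-isomorphism. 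Filtering instead by the resolution index \(q\): each column \(\god\res_{\sheaf F}^q\) is a flasque resolution of \(\res_{\sheaf F}^q\), so its \(\Gamma\)-cohomology computes \(\op{H}^p(X,\res_{\sheaf F}^q)\). When the terms of \(\res_{\sheaf F}\) are acyclic, this is concentrated in \(p=0\) with value \(\Gamma\res_{\sheaf F}^q\); the spectral sequence collapses and \(\Gamma\res_{\sheaf F}\to \Gamma\op{Tot}(D)\) is a quasi-isomorphism. This establishes (b).

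For part (a), I will specialise (b) to an injective resolution \(I^*\) of \(\sheaf F\), which is acyclic, yielding an isomorphism \(\op{H}^i_{\mathrm{inj}}(X,\sheaf F) = \op{H}^i(\Gamma I^*)\cong \op{H}^i_{\mathrm{god}}(X,\sheaf F)\). Naturality in \(\sheaf F\) and independence of the choice of \(I^*\) follow from the standard lifting of morphisms through injective resolutions, combined with the functoriality of \(\god\). Compatibility with the degree-zero identifications is immediate from the augmentations. The main obstacle, and the step that will require the most care, is compatibility with the boundary maps: for a short exact sequence \(0\to\sheaf F'\to\sheaf F\to\sheaf F''\to 0\), I will use the horseshoe lemma to produce a compatible short exact sequence of injective resolutions, while exactness of each \(\god^p\) combined with flasqueness (so that \(\Gamma\) preserves exactness) yields a short exact sequence of Godement resolutions that remains exact after \(\Gamma\). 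Applying the double complex construction simultaneously to all three terms will then produce a short exact sequence of total complexes, and a diagram chase on the associated long exact cohomology sequences will identify the boundary maps under the comparison isomorphism.
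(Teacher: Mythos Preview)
Your proposal is correct and follows exactly the approach of Godement \cite[II.4.7--4.8]{godement}, which is precisely what the paper cites as its proof; the double complex \(\god^p\res_{\sheaf F}^q\) with its two augmentation maps \(\res_{\sheaf F}\to\op{Tot}(\god\res_{\sheaf F})\leftarrow\god\sheaf F\) is even recalled explicitly later in the paper (proof of \prettyref{prop:Go-pullback-universal}). Your treatment of~(a), including the use of the horseshoe lemma for boundary compatibility, supplies details that neither the paper nor its reference spell out, but the strategy is the same.
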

\begin{proof}
  See \cite[II.4.7]{godement} and \cite[II.4.8]{godement}.
\end{proof}
Given the lemma, we will again drop the subscripts on cohomology and simply write
\[
  \op{H}^i(X,\sheaf F) := \op{H}^i(\Gamma\god\sheaf F)
\]
from now on.
\begin{lemma}\label{lem:Go-resolution-functorial}
  For a continuous map \(f\colon X\to Y\) there is a natural transformation \(\god f_* \to f_* \god\) such that, if we write \(\mu_X\) for the coaugmentation \(\mu_X\colon \sheaf F\to \god\sheaf F\) of the Godement resolution of a sheaf \(\sheaf F\) on \(X\), the following diagram commutes:
  \[
    \xymatrix{
     f_* \sheaf F\ar[r]_{\mu_Y} \ar@{=}[d] & \god f_*\sheaf F\ar[d] \\
     f_* \sheaf F\ar[r]_{f_*(\mu_X)} & f_*\god \sheaf F
    }
  \]
\end{lemma}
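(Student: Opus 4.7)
My plan is to construct the natural transformation from the monadic description of the Godement resolution recalled just above the statement, and then observe that the compatibility with coaugmentations is essentially automatic from the behaviour of base change with respect to monad units.

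The starting point is that \(\god^\star_X\sheaf F\) is built by iterating the monad \(G_X = (d_X)_* d_X^*\) on \(\cat Ab(X)\). Given a continuous \(f\colon X\to Y\), the same underlying set map is automatically continuous as \(f^\delta\colon X^\delta\to Y^\delta\), and the resulting square
\[
  \xymatrix{
    X^\delta \ar[r]^{f^\delta}\ar[d]_{d_X} & Y^\delta\ar[d]^{d_Y} \\
    X\ar[r]_{f} & Y
  }
\]
commutes on the nose. At the sheaf level this gives two identities, \((d_Y)_*(f^\delta)_* = f_*(d_X)_*\) on pushforwards and \((f^\delta)^* d_Y^* = d_X^* f^*\) on pullbacks. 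From the latter identity and the adjunctions \((f^\delta)^*\dashv (f^\delta)_*\) and \(f^*\dashv f_*\), one produces in the standard way a base change natural transformation \(\beta\colon d_Y^* f_* \to (f^\delta)_* d_X^*\).

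Applying \((d_Y)_*\) to \(\beta\) and rewriting using the pushforward identity gives the desired map at level zero of the monad iteration:
\[
  \alpha\colon G_Y f_* = (d_Y)_* d_Y^* f_* \to (d_Y)_*(f^\delta)_* d_X^* = f_*(d_X)_* d_X^* = f_* G_X.
\]
Iterating \(\alpha\) yields compatible natural transformations \(G_Y^{n+1}f_* \to f_* G_X^{n+1}\) for every \(n\), and these assemble into a morphism of the cosimplicial resolutions \(\god^\star_Y f_* \to f_* \god^\star_X\), since the face and degeneracy maps are built out of the unit and multiplication of \(G\) and \(\alpha\) intertwines these by the usual formal properties of base change. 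Passing to the associated chain complex (and noting that \(f_*\) is exact enough to commute with the construction of the totalisation) gives the required natural transformation \(\god f_* \to f_* \god\).

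For the compatibility diagram, \(\mu_Y\) evaluated at \(f_*\sheaf F\) is just the unit of the monad \(G_Y\), and \(f_*(\mu_X)\) is \(f_*\) applied to the unit of \(G_X\). Commutativity of the square therefore reduces to the fact that the base change map \(\beta\) intertwines the units of the two monads, which is a direct consequence of the triangle identities used to define \(\beta\). The only real work in the proof is the coherence bookkeeping that promotes \(\alpha\) to a morphism of cosimplicial (hence chain) complexes; I do not expect a genuine obstacle here, since everything is dictated by the formalism of monads and base change.
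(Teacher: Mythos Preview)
Your argument is correct and follows essentially the same route as the paper: both start from the commutative square relating $d_X,d_Y,f,f^\delta$, produce from it a comparison map $G_Y f_* \to f_* G_X$ at the level of the underlying monads, and then promote this to a morphism of the cosimplicial Godement resolutions and hence of their associated chain complexes. The paper packages the coherence step you call ``bookkeeping'' by observing that $(f_*,\alpha)$ is a \emph{lax map of monads} in the sense of \cite[II.6.1]{leinster}, and gives an explicit formula for the comparison transformation; your base-change description of $\alpha$ is the same map. One minor remark: the passage from the cosimplicial object to its associated chain complex requires only that $f_*$ be additive, not any exactness hypothesis, so your parenthetical about $f_*$ being ``exact enough'' is unnecessary.
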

\begin{proof}
  We have a commutative square of continuous maps as follows:
  \[\xymatrix@R=8pt{
      X^\delta\ar[d]^{d_X} \ar[r]^{f^\delta} & Y^\delta \ar[d]^{d_Y} \\
      X \ar[r]^{f} & Y
    }\]
  Pass to abelian sheaves, and let \(G_X\) and \(G_Y\) be the monads on \(\cat Ab(X)\) and \(\cat Ab(Y)\) defined by \(d_X\) and \(d_Y\), respectively. Then \(f_*\) defines a lax map of monads \(G_X\to G_Y\) in the sense of \cite[II.6.1]{leinster}, i.e.\ we have a natural transformation \(\psi\colon f_*G_X\to G_Yf_*\) compatible with the multiplication maps \(G_X^2\to G_X\) and \(G_Y^2\to G_Y\) and the unit maps \(\mu_X\colon \id\to G_X\) and \(\mu_Y\colon \id \to G_Y\). Explicitly, if we denote by \(\eta_Y\) the counit of the adjunction \(d_Y^* \dashv (d_Y)_*\) and by \(\phi\) the natural isomorphism \(f_*(d_X)_*\to (d_Y)_*f_*^\delta\), the lax map of monads can be defined as the composition \(\psi:= \nu\circ G_Yf_*(\mu_X)\), where \(\nu:= (\phi_{d^*})^{-1} \circ d_*((\eta_Y)_{f_*^\delta d^*}) \circ G_Y(\phi_{d^*})\).  The axioms of a lax map of monads can be verified by noting that \(\nu\circ (\mu_Y)_{f_*G} = \id\).  As the simplicial boundary and degeneracy maps on \(\god_X(-)\) and \(\god_Y(-)\) are defined purely in terms of the monad structures on \(G_X(-)\) and \(G_Y(-)\), the lax map of monads \(\psi\) induces a natural transformation \(f_*\god_X\to \god_Y f_*\) as required.
\end{proof}
It follows in particular that a morphism of coefficient data \((f,\phi)\colon (X,\sheaf F)\to (Y,\sheaf G)\) induces a morphism of resolutions, cf.\ \cite[II.8]{bredon} for a direct construction:
\[
  f_*\god \sheaf F \xleftarrow{\text{\prettyref{lem:Go-resolution-functorial}}} \god f_*\sheaf F \xleftarrow{\god \phi} \god \sheaf G
\]
\end{ARXIV}
\begin{AKT}
We refer to the classical and established sources for the Godement resolution \(\sheaf F\to \god\sheaf F\) of a sheaf \(\sheaf F\): \cite{godement}, \cite{iversen} and \cite{bredon}.  Here, we are mainly interested in a good definition of cup and cross products.  First, let us note that a morphism of coefficient data \((f,\phi)\colon (X,\sheaf F)\to (Y,\sheaf G)\) induces a morphism of resolutions, cf.\ \cite[II.8]{bredon} for a direct construction:
\[
  f_*\god \sheaf F  \leftarrow \god \sheaf G
\]
\end{AKT}
\begin{definition}\label{def:sheaf-pullback}
  Given a morphism of coefficient data \((f,\phi)\colon (X,\sheaf F)\to (Y,\sheaf G)\), we define the \textbf{pullback in sheaf cohomology}
  \[
    \op{H}^*(X,\sheaf F) \xleftarrow{(f,\phi)^*} \op{H}^*(Y,\sheaf G)
  \]
  to be the morphism induced by passing to global sections \ARXIVONLY{in the sequence }above.
\end{definition}
\begin{ARXIV}
Before discussing the functoriality and further properties of this construction, we will generalize it slightly to take care of supports.
\end{ARXIV}

\subsubsection{Cohomology with supports}\label{sec:cohomology-supports}
Suppose \(i\colon V \hookrightarrow X\) is a closed embedding. Then the pushforward \(i_*\colon \cat Ab(V)\to \cat Ab(X)\) has a right adjoint \(i^!\) \cite[Exposé~IV, Proposition~14.5]{SGA4.I}.  Both functors \(i_*\) and \(i^!\) and the global sections functor \(\Gamma\) on \(X\), are left exact, hence so is the composition \(\Gamma i_*i^!\).

\begin{definition}\label{def:cohomology-with-support}
  Given coefficient data \((X,\sheaf F)\) and a closed embedding \(i\colon V\hookrightarrow X\), as above, we define the \textbf{global sections functor with support on \(V\)} as the composition \(\Gamma_V := \Gamma i_*i^!\), and the \textbf{cohomology with support in \(V\)} via the right derived functors of \(\Gamma_V\) as
  \(
  \op{H}_V^s(X,\sheaf F) := \op{R}^s\Gamma_V\sheaf F
  \).
\end{definition}
\begin{ARXIV}
Let \(j\colon U\to X\) denote the embedding of the open complement of \(V\). For any sheaf \(\sheaf F\) on \(X\), the counit of the adjunction \(i_* \dashv i^!\) and the unit of the adjunction \(j^* \dashv j_*\) can be composed to obtain the following exact sequences \cite[Exposé~IV, Proposition~14.6]{SGA4.I}:
\begin{equation}\label{eq:i!-defining-sequence}
  \begin{aligned}
    0 \to i_*i^!\sheaf F\to &\sheaf F \to j_*j^*\sheaf F\\
    0 \to \Gamma_V\sheaf F\to \Gamma &\sheaf F \to \Gamma j_* j^*\sheaf F
  \end{aligned}
\end{equation}
\begin{remark}\label{rem:i!-defining-sequence-for-flasque}
The rightmost maps in the sequences~\eqref{eq:i!-defining-sequence} are \emph{not} surjective in general.  They are, however, surjective when \(\sheaf F\) is flasque.
\end{remark}
\begin{remark}\label{rem:cohomology-with-support-via-flasque-resolution}
  For a flasque sheaf \(\sheaf F\), all higher cohomology groups with support vanish, i.e.\ \(\op{H}^s_V(X,\sheaf F)\) for all \(s>0\) \cite[Corollary~II.9.3]{iversen}.  Hence, in general, \(\op{H}^s_V(X,\sheaf F)\) can be computed using a flasque resolution of \(\sheaf F\).  For an \emph{arbitrary} resolution \(\sheaf F\to \res_{\sheaf F}\), we still have a canonical map
  \begin{equation}\label{eq:canonical-map-to-Go-cohomology-with-support}
    \op{H}^*(\Gamma_V\res_{\sheaf F}) \to \op{H}^*_V(X,\sheaf F)
  \end{equation}
  as in \prettyref{lem:sheaf-cohomology-unique}\,\eqref{enum:scu:canonical-map-to-Go-cohomology}.
\end{remark}
It follows in particular that cohomology with support can be computed using the Godement resolutions of \prettyref{sec:construction-Godement}: \(\op{H}^s(\Gamma_V\god\sheaf F) \cong \op{H}^s_V(X,\sheaf F)\).
In degree zero, the complex \(\Gamma_V\god\sheaf F\) is more concretely given by
\begin{equation}\label{eq:Go0-concrete-with-support}
  \Gamma_VG \sheaf F \cong \prod_{x\in X^\delta\cap V} \sheaf F_x,
\end{equation}
as follows from \eqref{eq:Go0-concrete} and \eqref{eq:i!-defining-sequence}.
\begin{remark}
  The definition of cohomology with support given here agrees with \cite[Definition~II.9.1]{iversen}. Indeed, the exact sequences~\eqref{eq:i!-defining-sequence} show that the functor \(\Gamma_V \) considered here can be identified with the functor denoted \(\Gamma_V(X,-)\) in \cite[II.9]{iversen}.
  The definition also agrees with, or rather is a special case of, the definitions of cohomology with support used in \cite{godement} and \cite{bredon}.  Indeed, our functor \(\Gamma_V\) is the functor \(\Gamma_{\Phi_V}\) of \cite[I.6]{bredon} for \(\Phi_V\) the family of closed subsets of \(V\).
\end{remark}
\end{ARXIV}

\begin{lemma}\label{lem:pullback-with-support}
  Consider a continuous map \(f\colon X\to Y\).  Suppose \(i_V\colon V\hookrightarrow X\) and \(i_W\colon W\hookrightarrow Y\) are closed subsets such that \(V\supset f^{-1}W\).
  Then we have a canonical natural transformations of functors
  \(
    (i_W)_*i_W^! f_*  \longrightarrow f_*(i_V)_*i_V^!
  \),
  and, hence, a canonical natural transformation \(\Gamma_Wf_* \to \Gamma_V\).  If \(V=f^{-1}W\), then this natural transformation is an isomorphism.
  \AKTONLY{\qed}
\end{lemma}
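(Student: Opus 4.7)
The plan is to identify both sides of the claimed natural transformation with subsheaves of \(f_*\sheaf F\) and then obtain the map as an inclusion of subsheaves. I will then read off the transformation \(\Gamma_W f_* \to \Gamma_V\) and the isomorphism statement in the Cartesian case by taking global sections.

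First, I use the defining exact sequence \eqref{eq:i!-defining-sequence} for \(i_V^!\), where \(j_V\colon X\setminus V\hookrightarrow X\) denotes the open complement, to identify \((i_V)_* i_V^!\sheaf F\) with the subsheaf of \(\sheaf F\) consisting of those local sections vanishing on \(X\setminus V\). Applying the left-exact functor \(f_*\), the sheaf \(f_*(i_V)_* i_V^!\sheaf F\) becomes the subsheaf of \(f_*\sheaf F\) whose sections over an open \(U\subset Y\) are exactly those \(s\in\sheaf F(f^{-1}U)\) whose restriction to \(f^{-1}U\setminus V\) is zero. The analogous exact sequence on \(Y\) applied to \(f_*\sheaf F\) identifies \((i_W)_* i_W^!(f_*\sheaf F)\) with the subsheaf of \(f_*\sheaf F\) whose sections over \(U\) are those \(s\in\sheaf F(f^{-1}U)\) with vanishing restriction to \(f^{-1}(U\setminus W)=f^{-1}U\setminus f^{-1}W\).

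Second, the hypothesis \(V\supset f^{-1}W\) yields the inclusion of complements \(f^{-1}U\setminus V\subset f^{-1}U\setminus f^{-1}W\). Hence any section of \(\sheaf F(f^{-1}U)\) that vanishes on the larger open \(f^{-1}U\setminus f^{-1}W\) a fortiori vanishes on \(f^{-1}U\setminus V\). This provides the required subsheaf containment \((i_W)_* i_W^!f_*\sheaf F\hookrightarrow f_*(i_V)_* i_V^!\sheaf F\), and naturality in \(\sheaf F\) is automatic since the comparison is through inclusions of open subsets. Passing to global sections on \(Y\) and using \(\Gamma\circ f_*=\Gamma\) on \(X\) delivers the induced transformation \(\Gamma_W f_*\to\Gamma_V\).

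Finally, when \(V=f^{-1}W\) the two open subsets \(f^{-1}U\setminus V\) and \(f^{-1}U\setminus f^{-1}W\) coincide, so the two subsheaves are equal and the inclusion is an isomorphism, whence so is the induced map on global sections. I do not anticipate any genuine obstacle in this argument; the only easy pitfall is getting the direction of the inclusion of open sets backwards, which is why it is worth spelling out explicitly that the hypothesis \(V\supset f^{-1}W\) forces \(f^{-1}U\setminus V\subset f^{-1}U\setminus f^{-1}W\), not the reverse.
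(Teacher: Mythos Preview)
Your proof is correct and follows essentially the same approach as the paper: both use the defining exact sequence~\eqref{eq:i!-defining-sequence} to identify the two functors with subsheaves of \(f_*\sheaf F\) cut out by vanishing on the open complements, and then compare these via the inclusion \(f^{-1}U\setminus V\subset f^{-1}(U\setminus W)\). The only cosmetic difference is that the paper packages this as a morphism between two exact sequences and invokes the five lemma for the isomorphism statement, whereas you argue directly that the two subsheaves coincide when \(V=f^{-1}W\).
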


\begin{ARXIV}
\begin{proof}
  Let \(j_V\) and \(j_W\) denote the inclusions of the open complements of \(V\) and \(W\), respectively.
  For any sheaf \(\sheaf F\), we have the following two exact sequences:
  \[
    \xymatrix{
      0  \ar[r] &  (i_W)_*i_W^! f_*\sheaf F \ar[r]  \ar@{..>}[d] & f_*\sheaf F \ar[r] \ar@{=}[d] & (j_W)_*(j_W)^*f_*\sheaf F  \ar@{..>}[d] \\
      0 \ar[r] & f_*(i_V)_*i_V^!\sheaf F \ar[r] & f_*\sheaf F \ar[r] &  f_*(j_V)_*(j_V)^*\sheaf F
    }
  \]
  The upper row is sequence~\eqref{eq:i!-defining-sequence} for \(f_*\sheaf F\) and \(W\subset Y\), while the lower row is the image of sequence~\eqref{eq:i!-defining-sequence} for \(\sheaf F\) and \(V\subset X\) under the left-exact functor \(f_*\).  It follows from our assumptions that, for any open subset \(O\subset Y\), \((f^{-1}O)\cap (X\setminus V)\) is contained in \(f^{-1}(O \cap (Y\setminus W))\). This allows us to define the vertical dotted morphism of sheaves on the right of the diagram, hence we obtain a vertical morphism indicated on the left, as desired.  If \(V=f^{-1}W\), then \((f^{-1}O)\cap (X\setminus V)=f^{-1}(O \cap (Y\setminus W))\), hence the right vertical morphism is an isomorphism.  It follows from the five lemma that so is the vertical morphism on the left in this case.
  \end{proof}
\end{ARXIV}

\begin{definition}[Pullback with support]
  \label{def:Go-pullback-with-support}
  Suppose we have a morphism of coefficient data \((f,\phi)\colon (X,\sheaf F) \to (Y,\sheaf G)\), and suppose we have closed subsets \(V\subset X\) and \(W\subset Y\) such that \(f^{-1}W\subset V\).  Then the \textbf{pullback with support along \((f,\phi)\)} is the morphism
  \[
    \op{H}^s_{V}(X,\sheaf F) \xleftarrow[(f,\phi)^*]{} \op{H}^s_{W}(Y,\sheaf G)
  \]
  induced by the following composition:
\begin{ARXIV}
    \[
      \Gamma_V\god \sheaf F
      \xleftarrow{\text{\prettyref{lem:pullback-with-support}}}
      \Gamma_W f_*\god \sheaf F
      \xleftarrow{\text{\prettyref{lem:Go-resolution-functorial}}}
      \Gamma_W\god f_*\sheaf F
      \xleftarrow{\quad\phi\quad}
      \Gamma_W \god \sheaf G
    \]
    Reading from right to left, the first two arrows are essentially the same morphisms as used in the definition of pullback without support (\prettyref{def:sheaf-pullback}).
\end{ARXIV}
\begin{AKT}
     \[
      \Gamma_V\god \sheaf F
      \xleftarrow{\text{\prettyref{lem:pullback-with-support}}}
      \Gamma_W f_*\god \sheaf F
      \leftarrow
      \Gamma_W \god \sheaf G
    \]
    The first arrow, on the right, is essentially the same morphisms as used in the definition of pullback without support (\prettyref{def:sheaf-pullback}).
\end{AKT}
\end{definition}
  
\begin{proposition}[Universality of the pullback]\label{prop:Go-pullback-universal}
  In the situation of \prettyref{def:Go-pullback-with-support}, suppose we have resolutions \(\sheaf F\to \res_{\sheaf F}\) and \(\sheaf G\to \res_{\sheaf G}\) on \(X\) and \(Y\), respectively, together with a morphism of complexes of sheaves \(\hat \phi\colon f_*\res_{\sheaf F}\leftarrow \res_{\sheaf G}\) such that
  \[
    \xymatrix{
      f_*\res_{\sheaf F}
      &
      \ar[l]^{\hat \phi}
      \res_{\sheaf G}
      \\
      \ar[u]
      f_*\sheaf F
      &
      \ar[u]
      \ar[l]^{\phi}
      \sheaf G
    }
  \]
  commutes. Then also the following square commutes:
  \[
    \xymatrix{
      \op{H}^*(\Gamma_V\res_{\sheaf F})
      \ar[d]
      &
      \ar[l]
      \op{H}^*(\Gamma_W\res_{\sheaf G})
      \ar[d]
      \\
      \op{H}^*_V(X,\sheaf F)
      &
      \ar[l]^{(f,\phi)^*}
      \op{H}^*_W(Y,\sheaf G)
    }
  \]
  Here, the vertical arrows are the canonical maps\ARXIVONLY{ \eqref{eq:canonical-map-to-Go-cohomology-with-support}},
  the upper horizontal arrow is induced by \(\hat \phi\) and the canonical transformation of \prettyref{lem:pullback-with-support},  and the lower horizontal arrow is the pullback of \prettyref{def:Go-pullback-with-support}.
  \AKTONLY{\qed}
\end{proposition}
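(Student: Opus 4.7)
The plan is to lift the square to a diagram of chain complexes of sheaves via the Godement bicomplex $\god\res_{\sheaf F}$, obtained by applying $\god$ termwise to $\res_{\sheaf F}$ and totalising. This is a complex of flasque sheaves resolving $\sheaf F$, equipped with canonical quasi-isomorphisms $\res_{\sheaf F}\to\god\res_{\sheaf F}$ (termwise Godement coaugmentations) and $\god\sheaf F\to\god\res_{\sheaf F}$ (obtained by applying $\god$ to the augmentation $\sheaf F\to\res_{\sheaf F}$); similarly for $\sheaf G$.

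By naturality of $\god$, the compatibility hypothesis on $\hat\phi$ yields a commutative diagram of chain complexes of sheaves on $Y$:
\[
\xymatrix{
f_*\res_{\sheaf F}\ar[r]
&
f_*\god\res_{\sheaf F}
&
f_*\god\sheaf F \ar[l]
\\
\res_{\sheaf G}\ar[u]^{\hat\phi} \ar[r]
&
\god\res_{\sheaf G} \ar[u]^{\god\hat\phi}
&
\god\sheaf G \ar[l] \ar[u]
}
\]
The horizontal arrows are the Godement augmentations (pushed forward by $f_*$ on the top row); the rightmost vertical is the composition of $\god\phi$ with the natural transformation $\god f_*\to f_*\god$ of \prettyref{lem:Go-resolution-functorial}; the middle vertical is $\god$ applied termwise to $\hat\phi$, composed with the same natural transformation. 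The left square commutes by naturality of the Godement coaugmentation together with \prettyref{lem:Go-resolution-functorial}, and the right square is $\god$ applied to the compatibility between $\phi$ and $\hat\phi$, combined with naturality of $\god f_*\to f_*\god$.

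Apply $\Gamma_W$ to this diagram on $Y$, then apply the natural transformation $\Gamma_W f_*\to \Gamma_V$ of \prettyref{lem:pullback-with-support} to its leftmost column, and pass to cohomology. The right-hand horizontal arrows become isomorphisms: both $\god\sheaf F\to\god\res_{\sheaf F}$ and $\res_{\sheaf F}\to\god\res_{\sheaf F}$ are quasi-isomorphisms into a $\Gamma_V$-acyclic complex (and similarly on the $\sheaf G$-side with $\Gamma_W$). Unwinding definitions, the two vertical columns of the resulting 3-by-2 diagram match the top and bottom horizontal maps of the statement (the rightmost via \prettyref{def:Go-pullback-with-support}), while the two rows, after inverting the right-hand isomorphisms, match the canonical vertical maps as in \prettyref{lem:sheaf-cohomology-unique} and \prettyref{rem:cohomology-with-support-via-flasque-resolution}. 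Commutativity of the original square is then immediate. The main technical input underlying this approach is that $\god\res_{\sheaf F}$ is a flasque resolution of $\sheaf F$: flasqueness is clear from the definition of $\god$, while the resolution property follows from the standard bicomplex spectral sequence, using that $\god$ resolves each $\res^q_{\sheaf F}$ column-wise and that $\sheaf F\to\res_{\sheaf F}$ is itself exact.
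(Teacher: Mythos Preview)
Your argument is correct and follows essentially the same route as the paper: both hinge on the totalised Godement bicomplex $\op{Tot}(\god\res_{\sheaf F})$ as an intermediary, receiving quasi-isomorphisms from $\res_{\sheaf F}$ and from $\god\sheaf F$, and then use naturality of $\god f_*\to f_*\god$ and of $\Gamma_W f_*\to\Gamma_V$. The paper merely organises this slightly differently, factoring the square through $(Y,f_*\sheaf F)$ and outsourcing the ``change of coefficient'' half to \cite[Th\'eor\`eme~4.7.2]{godement}, whereas you assemble everything into a single $2\times 3$ diagram; the content is the same. A couple of phrasing issues to clean up: the natural transformation $\Gamma_W f_*\to\Gamma_V$ should be applied to the entire \emph{top row} (all three $f_*$-entries), not the ``leftmost column''; and in your sentence about which maps become isomorphisms after $\Gamma_V$, be careful to claim this only for $\god\sheaf F\to\god\res_{\sheaf F}$ (both source and target $\Gamma_V$-acyclic), not for $\res_{\sheaf F}\to\god\res_{\sheaf F}$, whose source need not be acyclic---that map only furnishes the canonical comparison map, which is all you need.
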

\begin{ARXIV}
\begin{proof}
  Factor the diagram into two parts:
  \[
    \xymatrix{
      \op{H}^*(\Gamma_V\res_{\sheaf F})
      \ar[d]
      &
      \ar[l]
      \op{H}^*(\Gamma_Wf_*\res_{\sheaf F})
      \ar[d]
      &
      \ar[l]_-{\hat \phi}
      \op{H}^*(\Gamma_W\res_{\sheaf G})
      \ar[d]
      \\
      \op{H}^*_V(X,\sheaf F)
      &
      \ar[l]
      \op{H}^*_W(Y,f_*\sheaf F)
      &
      \ar[l]^-{(\id,\phi)^*}
      \op{H}^*_W(Y,\sheaf G)
    }
  \]
  For the commutativity of the square on the right, see \cite[Théorème~4.7.2]{godement}.  For the commutativity of the square on the left, recall from \cite[II.4.7]{godement} that the canonical vertical arrows are induced by  (mono)morphisms of complexes of sheaves \( \res_{\sheaf F} \rightarrow \op{Tot}(\god\res_{\sheaf F})\leftarrow \god\sheaf F\), where \(\op{Tot}\) denotes the associated total complex of a double complex. The upper and lower horizontal maps are induced by the natural transformation of \prettyref{lem:pullback-with-support} for \(\res_{\sheaf F}\) and \(\god\sheaf F\), respectively.  As this natural transformation can also be defined on \(\op{Tot}(\god\res_{\sheaf F})\), we find that this square commutes as well.
\end{proof}
\end{ARXIV}

\begin{proposition}[Functoriality of pullback]\label{prop:sheaf-pullback-functorial}
  Sheaf pullback is functorial with respect to morphisms of coefficient data.  That is, given two composable morphism of coefficient data \((f,\phi)\) and \((g,\gamma)\) and adequate choices of support,
\begin{AKT}
    \((f,\phi)^*\circ(g,\gamma)^* = ((g,\gamma)\circ(f,\phi))^*\). \qed
\end{AKT}
\begin{ARXIV}
  \[
    (f,\phi)^*\circ(g,\gamma)^* = ((g,\gamma)\circ(f,\phi))^*.
  \]
\end{ARXIV}
\end{proposition}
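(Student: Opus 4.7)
The plan is to compare the two pullback maps by tracing both through \prettyref{def:Go-pullback-with-support} at the level of complexes of global sections with support, and then reducing the claim to compositionality of each of the three basic ingredients that enter the construction.

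First I would write $(f,\phi)^* \circ (g,\gamma)^*$ as the zig-zag of six arrows obtained by concatenating the two defining diagrams. Using naturality of the transformations from \prettyref{lem:pullback-with-support} and \prettyref{lem:Go-resolution-functorial} in the sheaf argument, the middle arrows can be reshuffled so that the overall composition takes the shape
\[
\Gamma_V \god \sheaf F_X \leftarrow \Gamma_{W'} (gf)_* \god \sheaf F_X \leftarrow \Gamma_{W'} \god (gf)_* \sheaf F_X \leftarrow \Gamma_{W'} \god \sheaf F_Z,
\]
which mirrors the defining diagram of $((g,\gamma)\circ(f,\phi))^*$. To pin down the equality, I would then verify three compositionality statements. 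Writing $A(f)\colon \Gamma_W f_* \to \Gamma_V$ for the natural transformation of \prettyref{lem:pullback-with-support} and $\alpha(f)\colon \god f_* \to f_* \god$ for that of \prettyref{lem:Go-resolution-functorial}, these are: (i)~$A(gf) = A(f) \circ A(g)_{f_*}$, which is immediate from tracing through the short exact sequences \eqref{eq:i!-defining-sequence}; (ii)~$\alpha(gf) = g_*\alpha(f) \circ \alpha(g)_{f_*}$; and (iii)~by functoriality of $\god$ and naturality of $\alpha$, $\god(g_*\phi\circ\gamma)$ splits as $g_*\god\phi \circ \alpha(g)_{\sheaf F_Y}\circ \god\gamma$, matching the composition~\eqref{eq:composition-of-morphisms-of-coefficient-data}. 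Together (i)--(iii) identify the two candidate morphisms of complexes, and passage to cohomology yields the claim.

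Alternatively, one may invoke \prettyref{prop:Go-pullback-universal} with resolutions $\god\sheaf F_X$ on $X$ and $\god\sheaf F_Z$ on $Z$, equipped with the composite morphism $(gf)_*\god\sheaf F_X \leftarrow \god\sheaf F_Z$ built from $g_*\alpha(f)$, $g_*\god\phi$, $\alpha(g)_{\sheaf F_Y}$ and $\god\gamma$; this realises $(f,\phi)^*\circ(g,\gamma)^*$ on one side of the universal square and $((g,\gamma)\circ(f,\phi))^*$ on the other, and shortens the diagram chase considerably.

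The main obstacle is the coherence identity (ii). Unwinding the proof of \prettyref{lem:Go-resolution-functorial}, $\alpha(f)$ is assembled from the counit of the adjunction $d_Y^* \dashv (d_Y)_*$ together with the canonical comparison $f_*(d_X)_* \cong (d_Y)_* f_*^\delta$ over the square of discretizing maps lying over $f$. The desired multiplicativity of $\alpha$ reduces, via naturality of the counits, to the fact that the comparisons for $f$, $g$ and $gf$ fit into a compatible pasting of the two adjacent squares of discretizing maps over $X \to Y \to Z$. This is a 2-functoriality statement for the assignment $f\mapsto\alpha(f)$: straightforward, but needing some care to spell out in diagrams because of the two levels (monad structure and simplicial totalization) hidden in the definition of $\god$.
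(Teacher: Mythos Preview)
Your proposal is correct, and your ``alternative'' approach via \prettyref{prop:Go-pullback-universal} is precisely the paper's own proof: the authors simply cite the functoriality of the Godement resolution (\prettyref{lem:Go-resolution-functorial}), the definition of composition~\eqref{eq:composition-of-morphisms-of-coefficient-data}, and the universality statement \prettyref{prop:Go-pullback-universal}, without spelling out any diagram chase.

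Your first, more explicit approach unpacks what the paper leaves implicit. In particular, the coherence identity~(ii) that you flag as the main obstacle is exactly the content of ``functoriality of the Godement resolution'' that the paper invokes without proof; your sketch of why it holds (pasting of the discretization squares and 2-functoriality of the lax map of monads) is the honest work hiding behind that citation. So your longer route is not a different argument but rather a filled-in version of the paper's one-liner, and your shorter route matches the paper exactly.
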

\begin{ARXIV}
\begin{proof}
  This follows from the functoriality of the Godement resolution (\prettyref{lem:Go-resolution-functorial}), the definition of composition in \eqref{eq:composition-of-morphisms-of-coefficient-data} and the universality of the pullback (\prettyref{prop:Go-pullback-universal}). (It is also claimed without proof in \cite[II.8]{bredon}. Note however that the statement there requires some amount of interpretation as Bredon does not even define the composition of ``cohomomorphisms'' (cf.\ \prettyref{rem:cohomomorphisms}).)
\end{proof}
\end{ARXIV}

\begin{proposition}[Localization sequence]\label{prop:localization-sequence}
Let $X$ be a topological space, $i\colon V\hookrightarrow X$ the inclusion of a closed subspace with open complement $j\colon U\hookrightarrow X$. With the definition of cohomology with support from \prettyref{def:cohomology-with-support}, we have a long exact sequence of cohomology groups as follows:
  \[
    \dots \to \op{H}^s_V(X,\sheaf F)\to \op{H}^s(X,\sheaf F) \to \op{H}^s(U,j^*\sheaf F) \to \op{H}^{s+1}_V(X,\sheaf F) \to \dots
  \]
  The maps preserving cohomological degree are the obvious pullback maps, as defined in \prettyref{def:Go-pullback-with-support}.
  \AKTONLY{\qed}
\end{proposition}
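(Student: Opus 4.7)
The plan is to produce a short exact sequence of complexes and take its associated long exact sequence in cohomology. Concretely, I would apply the defining sequence~\eqref{eq:i!-defining-sequence} termwise to the Godement resolution \(\god \sheaf F\) of \(\sheaf F\). Since each \(\god^n \sheaf F\) is flasque, \prettyref{rem:i!-defining-sequence-for-flasque} applies and the rightmost map is surjective. Applying the (left exact) global sections functor \(\Gamma\) and using that \(\god^n \sheaf F\) is flasque again preserves the surjectivity, so we obtain a short exact sequence of complexes
\[
  0 \to \Gamma_V \god \sheaf F \to \Gamma \god \sheaf F \to \Gamma j_* j^* \god \sheaf F \to 0.
\]

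Next I would identify the cohomology of each of these three complexes. For the first two this is immediate from the definitions. For the third, note that \(j^*\) preserves flasqueness, so \(j^* \god \sheaf F\) is a flasque resolution of \(j^*\sheaf F\) on \(U\); hence
\[
  \op{H}^s(\Gamma j_* j^* \god \sheaf F) = \op{H}^s(\Gamma(U, j^* \god \sheaf F)) \cong \op{H}^s(U, j^* \sheaf F),
\]
the last isomorphism being a special case of \prettyref{lem:sheaf-cohomology-unique}. The long exact sequence in cohomology associated with our short exact sequence of complexes then gives exactly the claimed long exact sequence of abelian groups.

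Finally I would verify that the degree-preserving connecting maps agree with the pullbacks of \prettyref{def:Go-pullback-with-support}. The map \(\op{H}^s_V(X,\sheaf F)\to \op{H}^s(X,\sheaf F)\) arises from the inclusion \(\Gamma_V\god\sheaf F\hookrightarrow \Gamma\god\sheaf F\), which by construction is the pullback associated to the morphism of coefficient data \((\id_X,\id_{\sheaf F})\) applied with supports \(V\subset X\) on the source and \(X\) on the target. The map \(\op{H}^s(X,\sheaf F) \to \op{H}^s(U, j^*\sheaf F)\) arises from applying \(\Gamma\) to the morphism \(\god\sheaf F\to j_*j^*\god\sheaf F\) induced by the unit \(\sheaf F\to j_*j^*\sheaf F\); this is exactly the map computed by \prettyref{prop:Go-pullback-universal} from the morphism of coefficient data \((j, \eta)\colon (U, j^*\sheaf F)\to (X,\sheaf F)\), where \(\eta\) is the unit, using the flasque resolution \(j^*\god\sheaf F\) of \(j^*\sheaf F\) on \(U\) together with the obvious morphism \(j^*\god\sheaf F\leftarrow j^*\god\sheaf F\).

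The main obstacle is not the exactness or the construction of the long exact sequence itself, but this last bookkeeping step of identifying the connecting maps with the pullbacks of \prettyref{def:Go-pullback-with-support}; this is where \prettyref{prop:Go-pullback-universal} is essential, since it lets us compute the pullbacks via the convenient choice of flasque resolutions coming from Godement applied on \(X\) and restricted to \(U\), rather than the a priori less tractable resolutions \(\god_X\) and \(\god_U\circ j^*\).
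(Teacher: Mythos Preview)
Your proposal is correct and follows essentially the same approach as the paper: take the Godement resolution, use flasqueness (via \prettyref{rem:i!-defining-sequence-for-flasque}) to get the short exact sequence \(0\to \Gamma_V\god\sheaf F\to \Gamma\god\sheaf F\to \Gamma j^*\god\sheaf F\to 0\), note that \(j^*\god\sheaf F\) is a flasque resolution of \(j^*\sheaf F\), and invoke \prettyref{prop:Go-pullback-universal} to identify the restriction map with the pullback of \prettyref{def:Go-pullback-with-support}. Your write-up is more explicit about the bookkeeping, but the argument is the same.
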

\begin{ARXIV}
\begin{proof}
  Take a flasque resolution of \(\sheaf F\), for example the Godement resolution \(\god F\).
  Then \(0\to \Gamma_V\god\sheaf F\to \Gamma\god \sheaf F\to \Gamma j^*\god \sheaf F\to 0\) is a short exact sequence of complexes of abelian groups.  Indeed, it is exact on the right by \prettyref{rem:i!-defining-sequence-for-flasque}.
  As \(j^*\) is simply restriction to an open subset, \(j^*\god\sheaf F\) is a flasque resolution of \(j^*\sheaf F\).  So the long exact cohomology sequence associated with this short exact sequence has the form displayed above.  For the identification of the map that restricts to \(U\), use \prettyref{prop:Go-pullback-universal}.
\end{proof}
\end{ARXIV}
\begin{proposition}
  \label{prop:Go-sheaf-exact-sequence}
  \label{prop:pullback-exact-seq}
  Given a short exact sequence of sheaves \(0\to \sheaf E\to\sheaf F\to \sheaf G\to 0\) on \(X\), we have an associated long exact sequence of cohomology groups:
  \[
    \dots \to \op{H}^s_V(X,\sheaf E) \to \op{H}^s_V(X,\sheaf F) \to \op{H}^s_V(X,\sheaf G)\xrightarrow{\partial}\op{H}^{s+1}_V(X,\sheaf E)\to \dots
  \]
  The maps preserving cohomological degree are the obvious change-of-coefficient morphisms.
\begin{ARXIV}

\end{ARXIV}
  Moreover, given a continuous map \(f\colon X\to Y\) such that \(0\to f_*\sheaf F\to f_*\sheaf G\to f_*\sheaf H\to 0\) is still exact on \(Y\), and given a closed subset \(W\subset Y\) such that \(f^{-1}W\subset V\), the pullback along \(f\) and the two long exact cohomology sequences fit together to
  \AKTONLY{a commutative ladder diagram. \qed}%
\begin{ARXIV}%
  a commutative ladder diagram as follows:
  \[
    \xymatrix@C=1em{
      \cdots \ar[r]
      &
      \op{H}^s_W(Y,f_\ast\mathcal{E})
      \ar[r]
      \ar[d]
      &
      \op{H}^s_W(Y,f_\ast\mathcal{F})
      \ar[r]
      \ar[d]
      &
      \op{H}^s_W(Y,f_\ast\mathcal{G})
      \ar[r]
      \ar[d]
      &
      \op{H}^{s+1}_W(Y,f_\ast\mathcal{E})
      \ar[r]
      \ar[d]
      &
      \cdots
      \\
      \cdots
      \ar[r]
      &
      \op{H}^s_V(X,\mathcal{E})
      \ar[r]
      &
      \op{H}^s_V(X,\mathcal{F})
      \ar[r]
      & \
      \op{H}^s_V(X,\mathcal{G})
      \ar[r]
      &
      \op{H}^{s+1}_V(X,\mathcal{E})
      \ar[r]
      &
      \cdots
      \\
    }
  \]
\end{ARXIV}
\end{proposition}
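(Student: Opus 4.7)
The plan is to deduce the long exact sequence from a short exact sequence of complexes obtained by applying the Godement resolution and then the support functor $\Gamma_V$. First I would verify that the Godement functor $\god$ preserves short exact sequences: since $\god\sheaf F=\prod_{x\in X^\delta}\sheaf F_x$ in degree zero and iterates of $G=d_*d^*$ in higher degree, exactness reduces to exactness of $d^*$ (restriction to the discrete space) and $d_*$ (product over points), both of which are clearly exact. Thus $0\to\god\sheaf E\to\god\sheaf F\to\god\sheaf G\to 0$ is a short exact sequence of flasque resolutions.

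Next I would show that $\Gamma_V$ applied to a short exact sequence of flasque sheaves yields a short exact sequence of abelian groups. For flasque $\sheaf H$, the sequence $0\to\Gamma_V\sheaf H\to\Gamma\sheaf H\to\Gamma j_*j^*\sheaf H\to 0$ from~\eqref{eq:i!-defining-sequence} is exact on the right by \prettyref{rem:i!-defining-sequence-for-flasque}, and $j^*$ preserves flasqueness. Applying this to each term of the short exact sequence of flasque resolutions and using the snake lemma on the resulting $3\times 3$ diagram yields a short exact sequence of complexes
\[
  0\to\Gamma_V\god\sheaf E\to\Gamma_V\god\sheaf F\to\Gamma_V\god\sheaf G\to 0.
\]
The associated long exact cohomology sequence is the one claimed, and the identification of the degree-preserving maps with the canonical change-of-coefficient morphisms follows from \prettyref{prop:Go-pullback-universal} applied to the identity map of spaces together with $\phi=\sheaf E\to\sheaf F$ (and similarly for $\sheaf F\to\sheaf G$).

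For the ladder diagram, I would apply the same construction to the short exact sequence $0\to f_*\sheaf E\to f_*\sheaf F\to f_*\sheaf G\to 0$ on $Y$ (which is exact by hypothesis) and compose the functorial morphism $f_*\god\to \god f_*$ of \prettyref{lem:Go-resolution-functorial} with the natural transformation $\Gamma_W f_*\to\Gamma_V$ of \prettyref{lem:pullback-with-support}. This gives a morphism of short exact sequences of complexes
\[
  \xymatrix@C=1em{
    0\ar[r] & \Gamma_W\god f_*\sheaf E\ar[r]\ar[d] & \Gamma_W\god f_*\sheaf F\ar[r]\ar[d] & \Gamma_W\god f_*\sheaf G\ar[r]\ar[d] & 0 \\
    0\ar[r] & \Gamma_V\god\sheaf E\ar[r] & \Gamma_V\god\sheaf F\ar[r] & \Gamma_V\god\sheaf G\ar[r] & 0
  }
\]
whose horizontal cohomology ladders are, by construction and by \prettyref{def:Go-pullback-with-support}, precisely the pullback maps $(f,\phi)^*$. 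Commutativity of the ladder diagram then follows from naturality of the long exact cohomology sequence associated with a morphism of short exact sequences of complexes.

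The main obstacle I anticipate is bookkeeping rather than conceptual: one must ensure that the vertical maps obtained from stitching \prettyref{lem:Go-resolution-functorial} together with \prettyref{lem:pullback-with-support} really induce the pullback morphism of \prettyref{def:Go-pullback-with-support} on cohomology, and that this identification is compatible with the various change-of-coefficient maps. This is precisely the kind of universality encoded in \prettyref{prop:Go-pullback-universal}, which should be applied to the resolution $\god\sheaf E\to\god\sheaf F\to\god\sheaf G$ viewed as a resolution of each of its terms (after appropriate truncations). No new ingredient beyond what has been developed so far in the section should be required.
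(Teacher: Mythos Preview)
Your proposal is correct and follows essentially the same approach as the paper. The only difference is that the paper establishes exactness of $0\to\Gamma_V\god\sheaf E\to\Gamma_V\god\sheaf F\to\Gamma_V\god\sheaf G\to 0$ more directly, using the explicit formula $\Gamma_V G\sheaf H\cong\prod_{x\in X^\delta\cap V}\sheaf H_x$ from~\eqref{eq:Go0-concrete-with-support} together with the fact that products of exact sequences of abelian groups are exact; this bypasses your snake-lemma/$3\times 3$ argument via the sequence~\eqref{eq:i!-defining-sequence}, though both routes are valid.
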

\begin{ARXIV}
\begin{proof}
  In the category of abelian groups, products of exact sequences are exact \cite[Example~A.4.5 and Ex.~A.4.5.1]{weibel}.  Thus, we see from \eqref{eq:Go0-concrete-with-support} that \(0\to\Gamma_V\god\sheaf F\to \Gamma_V\god\sheaf G\to \Gamma_V\god\sheaf H\to 0\) is exact in each degree.  The sequence displayed is the associated long exact cohomology sequence.
  The commutativity of the ladder diagram follows from the naturality of the transformation of \prettyref{lem:pullback-with-support}.
\end{proof}
\end{ARXIV}

\begin{ARXIV}
The next two lemmas will be needed to define the Godement cup product.
\begin{lemma}\label{lem:support-intersection}
  For closed subsets \(V,W\subset X\), we have \(\Gamma_V\sheaf F \cap \Gamma_W\sheaf F = \Gamma_{V\cap W}\sheaf F\) in \(\Gamma\sheaf F\).
\end{lemma}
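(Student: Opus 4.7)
The plan is to identify $\Gamma_V\sheaf F$ inside $\Gamma\sheaf F$ explicitly as the kernel of restriction to the open complement, and then invoke the sheaf axiom to recognize the intersection. Concretely, from the defining sequence~\eqref{eq:i!-defining-sequence} I read off that
\[
  \Gamma_V\sheaf F = \ker\bigl(\Gamma\sheaf F \xrightarrow{j_V^*} \Gamma(X\setminus V,\sheaf F)\bigr),
\]
where $j_V\colon X\setminus V\hookrightarrow X$ is the inclusion of the open complement, and similarly for $W$ and $V\cap W$.

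Given these descriptions, a global section $s\in\Gamma\sheaf F$ lies in $\Gamma_V\sheaf F\cap\Gamma_W\sheaf F$ if and only if $s|_{X\setminus V}=0$ and $s|_{X\setminus W}=0$. Since $(X\setminus V)\cup(X\setminus W)=X\setminus(V\cap W)$, the sheaf axiom applied to this open cover of $X\setminus(V\cap W)$ shows that the pair of vanishing conditions is equivalent to the single condition $s|_{X\setminus(V\cap W)}=0$, i.e.\ to $s\in\Gamma_{V\cap W}\sheaf F$. This gives the desired equality of subgroups of $\Gamma\sheaf F$.

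The argument is entirely formal and there is no serious obstacle; the only thing to double-check is that the identification of $\Gamma_V\sheaf F$ as a kernel that I am using does coincide with the definition via $\Gamma i_*i^!$ from \prettyref{def:cohomology-with-support}, but this is exactly the content of the left exact sequence~\eqref{eq:i!-defining-sequence} recorded just after the definition.
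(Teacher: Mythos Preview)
Your argument is correct and is exactly what the paper has in mind: the paper's own proof of this lemma simply reads ``\prettyref{lem:support-intersection} follows directly from the definitions,'' and your use of the kernel description from~\eqref{eq:i!-defining-sequence} together with the sheaf axiom on the cover \((X\setminus V)\cup(X\setminus W)=X\setminus(V\cap W)\) is precisely the intended unpacking of that sentence.
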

\begin{lemma}\label{lem:support-tensor}
  For closed subsets \(V,W\subset X\) and abelian sheaves \(\sheaf M\) and \(\sheaf N\) on \(X\), the natural transformation \(\Gamma \sheaf M\otimes \Gamma\sheaf N\to\Gamma(\sheaf M\otimes\sheaf N)\) restricts to a natural transformation \(\Gamma_V\sheaf M\otimes\Gamma_W\sheaf N\to \Gamma_{V\cap W}(\sheaf M\otimes\sheaf N)\).

  More generally, given a sheaf of commutative rings \(\sheaf A\) on \(X\) and \(\sheaf A\)-modules \(\sheaf M\) and \(\sheaf N\), we have a natural transformation of \(\Gamma_{V\cap W}\sheaf A\)-modules \(\Gamma_V\sheaf M\otimes_{\Gamma_{V\cap W}\sheaf A} \Gamma_W\sheaf N\to \Gamma_{V\cap W}(\sheaf M\otimes\sheaf N)\).
\end{lemma}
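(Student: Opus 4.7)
The approach is to use the concrete description of sections with support provided by the exact sequence \eqref{eq:i!-defining-sequence}: for any closed \(Z\subset X\) with open complement \(j\colon X\setminus Z \hookrightarrow X\), the group \(\Gamma_Z\sheaf F\) is precisely the kernel of the restriction map \(\Gamma\sheaf F \to \Gamma j_*j^*\sheaf F\), i.e.\ the global sections whose restriction to \(X\setminus Z\) vanishes. Given this, both parts of the lemma reduce to a single observation about the natural map \(\alpha\colon \Gamma\sheaf M\otimes\Gamma\sheaf N \to \Gamma(\sheaf M\otimes\sheaf N)\).

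First I would verify the key point: if \(s\in\Gamma_V\sheaf M\) and \(t\in\Gamma_W\sheaf N\), then \(\alpha(s\otimes t)\) lies in \(\Gamma_{V\cap W}(\sheaf M\otimes\sheaf N)\). The naturality of \(\alpha\) with respect to the open inclusions \(X\setminus V\hookrightarrow X\) and \(X\setminus W\hookrightarrow X\) implies that \(\alpha(s\otimes t)|_{X\setminus V}=\alpha(s|_{X\setminus V}\otimes t|_{X\setminus V})=0\), and similarly the restriction to \(X\setminus W\) vanishes. Since \(\{X\setminus V,\,X\setminus W\}\) is an open cover of the open complement \(X\setminus (V\cap W)\), the sheaf axiom on \(\sheaf M\otimes\sheaf N\) forces \(\alpha(s\otimes t)|_{X\setminus(V\cap W)}=0\), which is exactly the condition characterizing membership in \(\Gamma_{V\cap W}(\sheaf M\otimes\sheaf N)\). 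Bilinearity and naturality in \((\sheaf M,\sheaf N)\) of the restricted map are then inherited from \(\alpha\).

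For the module version, I would observe that the map \(\alpha\) is visibly \(\Gamma\sheaf A\)-balanced (i.e.\ satisfies \(\alpha(as\otimes t)=\alpha(s\otimes at)\) for \(a\in\Gamma\sheaf A\)) since this can be checked stalkwise or on each open set before sheafification. Restricting to elements \(a\in\Gamma_{V\cap W}\sheaf A\subset\Gamma\sheaf A\) therefore yields a \(\Gamma_{V\cap W}\sheaf A\)-balanced map on \(\Gamma_V\sheaf M\times\Gamma_W\sheaf N\), which factors uniquely through the tensor product \(\Gamma_V\sheaf M\otimes_{\Gamma_{V\cap W}\sheaf A}\Gamma_W\sheaf N\). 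The resulting homomorphism is \(\Gamma_{V\cap W}\sheaf A\)-linear because \(\alpha\) is \(\Gamma\sheaf A\)-linear (on either factor), and the whole construction is plainly natural in \(\sheaf M\) and \(\sheaf N\) as \(\sheaf A\)-modules.

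No step looks genuinely difficult; the only item requiring care is invoking the sheaf axiom correctly to combine the vanishing of \(\alpha(s\otimes t)\) on \(X\setminus V\) and on \(X\setminus W\) into vanishing on their union \(X\setminus(V\cap W)\). Once that is in place, both statements follow from the definitions and the universal property of the tensor product.
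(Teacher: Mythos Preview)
Your proposal is correct and follows essentially the same approach as the paper: both show that the image of \(\Gamma_V\sheaf M\otimes\Gamma_W\sheaf N\) under the canonical map lands in \(\Gamma_V(\sheaf M\otimes\sheaf N)\) and in \(\Gamma_W(\sheaf M\otimes\sheaf N)\), and then intersect. The only cosmetic difference is that the paper isolates the step ``\(\Gamma_V\sheaf F\cap\Gamma_W\sheaf F=\Gamma_{V\cap W}\sheaf F\)'' as a separate preparatory lemma (\prettyref{lem:support-intersection}), whereas you unwind it directly via the sheaf axiom on the cover \(\{X\setminus V,\,X\setminus W\}\) of \(X\setminus(V\cap W)\); the content is the same.
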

\begin{proof}[Proofs]
  \prettyref{lem:support-intersection} follows directly from the definitions. For \prettyref{lem:support-tensor}, consider the following commutative diagram:
  \[
    \xymatrix@R=2ex{
      &
      \Gamma_V\sheaf M\otimes \Gamma_W\sheaf N
      \ar[d]
      \\
      &
      \Gamma_V\sheaf M \otimes \Gamma \sheaf N
      \ar[r]
      \ar@{..>}[dd]
      &
      \Gamma \sheaf M \otimes \Gamma \sheaf N
      \ar[r]
      \ar[dd]
      &
      \sheaf M(X\setminus V) \otimes \Gamma \sheaf N
      \ar[d]
      \\
      &
      &
      &
      \sheaf M(X\setminus V)\otimes \sheaf N(X\setminus V)
      \ar[d]
      \\
      0
      \ar[r]
      &
      \Gamma_V(\sheaf M \otimes \sheaf N)
      \ar[r]
      &
      \Gamma(\sheaf M\otimes \sheaf N)
      \ar[r]
      &
      (\sheaf M\otimes \sheaf N)(X\setminus V)
    }
  \]
  As the lowest row is exact and the composition of the two morphisms in the upper row is zero, we find that the composition \(\Gamma_V\sheaf M\otimes \Gamma_W\sheaf N \to \Gamma \sheaf M\otimes \Gamma\sheaf N \to \Gamma(\sheaf M\otimes\sheaf N)\) factors through \(\Gamma_V(\sheaf M\otimes \sheaf N)\). By symmetry, the same composition also  factors through \(\Gamma_W(\sheaf M\otimes \sheaf N)\). So by applying \prettyref{lem:support-intersection}, we obtain a factorization \(\Gamma_V\sheaf M\otimes \Gamma_W\sheaf N\to \Gamma_{V\cap W}(\sheaf M\otimes\sheaf N)\), as claimed.  The assertion for \(\sheaf A\)-modules follows similarly.
\end{proof}
\end{ARXIV}

Finally, we note that on \emph{Noetherian} spaces cohomology commutes with sequential colimits. Note that all schemes considered in this article are Noetherian, as they are assumed to be of finite type over a field.
\begin{lemma}\label{lem:cohomology-of-colimits}
  Let \(X\) be a Noetherian topological space, \(V\subset X\) a closed subset.  For any sequence of abelian sheaves \(\sheaf F_1\to \sheaf F_2\to\sheaf F_3 \to \dots\) on \(X\), the canonical maps \(\colim_j \op{H}^s_V(X,\sheaf F_j)\to \op{H}^s_V(X,\colim_j \sheaf F_j)\) are isomorphisms.
  \AKTONLY{\qed}
\end{lemma}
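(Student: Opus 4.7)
The plan is to reduce to the statement without supports, which is the classical result of Grothendieck that sheaf cohomology on a Noetherian topological space commutes with filtered colimits (see e.g.\ \cite[III.2.9]{hartshorne} or \cite[II.4.12]{godement}). The key input is the localization sequence of \prettyref{prop:localization-sequence}, which allows us to bootstrap from cohomology without supports to cohomology with supports via a five-lemma argument.

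Concretely, let \(j\colon U\hookrightarrow X\) denote the inclusion of the open complement of \(V\). Since \(X\) is Noetherian, so is the open subspace \(U\). For each sheaf \(\sheaf F_j\) in our sequence, \prettyref{prop:localization-sequence} provides a long exact sequence, and these sequences are natural in \(\sheaf F_j\). Since filtered colimits are exact in the category of abelian groups, applying \(\colim_j\) preserves exactness, yielding a long exact sequence involving \(\colim_j\op{H}^s_V(X,\sheaf F_j)\), \(\colim_j\op{H}^s(X,\sheaf F_j)\), and \(\colim_j\op{H}^s(U,j^*\sheaf F_j)\). On the other hand, the localization sequence for the colimit sheaf \(\colim_j\sheaf F_j\) yields an analogous exact sequence, and the canonical comparison maps assemble into a commutative ladder.

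By Grothendieck's theorem, applied to both \(X\) and \(U\), the comparison maps
\[
  \colim_j\op{H}^s(X,\sheaf F_j)\to \op{H}^s(X,\colim_j\sheaf F_j) \quad\text{and}\quad \colim_j\op{H}^s(U,j^*\sheaf F_j)\to \op{H}^s(U,j^*\colim_j\sheaf F_j)
\]
are isomorphisms for all \(s\); here we use that \(j^*\) is a left adjoint and hence commutes with colimits. The five lemma, applied to the ladder of long exact sequences, then forces the maps \(\colim_j\op{H}^s_V(X,\sheaf F_j)\to\op{H}^s_V(X,\colim_j\sheaf F_j)\) to be isomorphisms as well.

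The only substantive issue is the naturality and functoriality required to set up the ladder, but this is immediate from the construction of the localization sequence via Godement resolutions in \prettyref{prop:localization-sequence} together with \prettyref{prop:Go-pullback-universal}; no additional argument is needed.
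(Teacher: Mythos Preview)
Your proof is correct and takes a somewhat different route from the paper's. The paper works at the level of the underived functor: it observes that it suffices to show that \(\Gamma_V\) itself commutes with sequential colimits, and deduces this from the case \(V=X\) (the Stacks Project tag \stacktag{009F}) via the exact sequence~\eqref{eq:i!-defining-sequence}. The passage to higher cohomology is then implicit in the standard fact that on a Noetherian space filtered colimits of flasque sheaves remain flasque, so a colimit of flasque resolutions is still a flasque resolution. Your argument instead invokes the full statement of Grothendieck's theorem (Hartshorne~III.2.9) for \(X\) and \(U\) and uses the localization sequence plus the five lemma to deduce the supported case in all degrees at once. Both approaches ultimately rest on the same Noetherian input; the paper's reduction is carried out in degree zero, yours in all degrees simultaneously. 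Your version is slightly less economical in what it quotes but perhaps more transparent, and avoids having to unpack why commutation with colimits for \(\Gamma_V\) propagates to its derived functors.
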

\begin{ARXIV}
\begin{proof}
  It suffices to show that the canonical morphisms \(\colim_j (\Gamma_V\sheaf F_j)\to \Gamma_V(\colim_j \sheaf F_j)\) is an isomorphism.  For \(V=X\), this is \cite[\stacktag{009F}\,(4)]{stacks}.  (Here, we use the assumption that \(X\) is Noetherian, or, equivalently, that every open subset of \(X\) is quasi-compact \cite[Ex.~II.2.13\,(a)]{hartshorne}.)  The case of general \(V\) follows by considering the sequences~\eqref{eq:i!-defining-sequence}.
\end{proof}
\end{ARXIV}

\subsubsection{Godement's cup and cross products}
\label{sec:Godement-product}
Of course, the above pullbacks could also be constructed using injective resolutions.  For the following cup product structure on sheaf cohomology, however, we know of no construction using injective resolutions.

\begin{theorem}[Cup product \cite{godement,bredon}]\label{thm:Go-cup-product}
  Let \(X\) be a topological space, or more generally a site with a conservative set of points.  Let \(V, V'\subset X\) be closed subsets.  Consider a sheaf of commutative rings \(\sheaf A\) on \(X\) and the ring of global sections over \(V\cap V'\), which we abbreviate to \(\Gamma'\sheaf A := \Gamma_{V\cap V'}\sheaf A\).  For \(\sheaf A\)-modules \(\sheaf M\) and \(\sheaf N\), there exists a unique cup product
  \begin{align*}
    \op{H}^p_V(X,\sheaf M)\otimes_{\Gamma'\sheaf A} \op{H}_{V'}^q(X,\sheaf N)
    &\rightarrow \op{H}^{p+q}_{V\cap V'}(X,\sheaf M \otimes_{\sheaf A} \sheaf N)\\
    a\otimes b  &\mapsto a\cup b
  \end{align*}
  That is, there exist unique such homomorphisms that are natural in \(\sheaf M\) and \(\sheaf N\) and satisfy the following properties:
  \begin{compactenum}[(i)]
  \item
    For \(p=q=0\), \(\Gamma_V\sheaf M\otimes_{\Gamma'\sheaf A} \Gamma_{V'}\sheaf N \to \Gamma_{V\cap V'}(\sheaf M\otimes_{\sheaf A}\sheaf N)\) is the canonical map\ARXIVONLY{ of \prettyref{lem:support-tensor}}.
  \item
    The cup product is compatible with the boundary maps \(\partial\) in the long exact cohomology sequences associated with short exact sequences of sheaves:  given an \(\sheaf A\)-module \(\sheaf N\) and a short exact sequence of left \(\sheaf A\)-modules \(0\to \sheaf M'\to \sheaf M \to \sheaf M'' \to  0\) that remains exact after tensoring over \(\sheaf A\) with \(\sheaf N\), we have \(\partial(a\cup b) = (\partial a)\cup b\).
  \item
    The cup product is graded commutative: \(a\cup b = (-1)^{p,q}\tau^*(b\cup a)\), where \(\tau\colon \sheaf N \otimes \sheaf M\to \sheaf M\otimes \sheaf N\) denotes the canonical isomorphism.
  \item
    The cup product is associative.
    \AKTONLY{\qed}
  \end{compactenum}
\end{theorem}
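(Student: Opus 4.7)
The plan is to construct the cup product at the level of Godement resolutions, verify the four properties by direct chain-level computation, and conclude uniqueness by a standard dimension-shifting induction.

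For the construction, I would exploit that the Godement endofunctor $G=d_\ast d^\ast$ carries a canonical natural morphism $G\sheaf M \otimes_{\sheaf A} G\sheaf N \to G(\sheaf M\otimes_{\sheaf A}\sheaf N)$, which at each point $x\in X^\delta$ is simply the canonical pairing on stalks $\sheaf M_x\otimes_{\sheaf A_x}\sheaf N_x \to (\sheaf M\otimes_{\sheaf A}\sheaf N)_x$. Iterating $G$ and applying an Eilenberg--Zilber/Alexander--Whitney construction to the associated chain complexes promotes this to a morphism of complexes of sheaves
\[
  \god\sheaf M \otimes_{\sheaf A} \god\sheaf N \longrightarrow \god(\sheaf M\otimes_{\sheaf A}\sheaf N).
\]
Composing with the natural transformation supplied by \prettyref{lem:support-tensor}, then taking global sections and cohomology, yields the desired pairing at the level of $\op{H}^\ast_{V\cap V'}$.

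The verification of the properties is then mostly bookkeeping. Property (i) is immediate from the definition, because in bidegree $(0,0)$ the chain-level pairing is exactly the map of \prettyref{lem:support-tensor}. Property (ii) follows from the Leibniz rule satisfied by the chain-level multiplication together with the observation that, thanks to the explicit description \eqref{eq:Go0-concrete-with-support}, a short exact sequence of $\sheaf A$-modules which remains exact after tensoring with $\sheaf N$ produces a short exact sequence of complexes that is still short exact after pairing with $\god\sheaf N$. Graded commutativity (iii) and associativity (iv) are the subtle points: the two orderings of the chain-level pairing typically do not agree on the nose, but they are joined by canonical chain homotopies of Eilenberg--Zilber type, which is sufficient to obtain equalities after passing to cohomology.

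For uniqueness, suppose given a second family of natural pairings satisfying (i) and (ii). I would induct on $p+q$, the case $p=q=0$ being forced by (i). For the induction step, embed $\sheaf M$ into a flasque sheaf (for instance into $G\sheaf M$); the long exact sequence of \prettyref{prop:Go-sheaf-exact-sequence} writes any class $a\in \op{H}^p_V(X,\sheaf M)$ as $a=\partial a'$ with $\deg a'=p-1$, and property (ii) together with naturality then forces $a\cup b = \partial(a'\cup b)$, reducing the claim to the inductive hypothesis; an entirely analogous argument treats the second variable. The genuine obstacle is not existence or uniqueness as such, but the chain-level verification of graded commutativity, which requires producing explicit homotopies because the Godement resolution is not symmetric; fortunately this technical heart of the argument is worked out in detail in \cite{godement} and \cite{bredon}, and the only new input here is the straightforward extension to cohomology with supports via \prettyref{lem:support-intersection} and \prettyref{lem:support-tensor}.
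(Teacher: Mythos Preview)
Your proposal is correct and follows essentially the same route as the paper's proof, which likewise constructs the pairing via the natural transformation $G\sheaf M\otimes G\sheaf N\to G(\sheaf M\otimes\sheaf N)$ and Eilenberg--Zilber, then defers properties (i)--(iv) to \cite[II.6.5]{godement} and uniqueness to \cite[II.7]{bredon}. One small point worth making explicit in your uniqueness step: the short exact sequence $0\to\sheaf M\to G\sheaf M\to Q\to 0$ splits stalkwise (the unit $\sheaf M_x\to(G\sheaf M)_x$ has a retraction given by projection to the $x$-component), so it does remain exact after tensoring with $\sheaf N$ and property~(ii) applies as you claim.
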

\begin{ARXIV}
\begin{proof}[Proof/references]
  In the following exposition, we suppress the module structures over \(\sheaf A\) and \(\Gamma'\sheaf A\) for simplicity.  Starting from the canonical simplicial resolution described at the beginning of \prettyref{sec:construction-Godement}, one construction of the cup product is as follows \cite[II.6.6\,(b)]{godement}:
  Given sheaves \(\sheaf M\) and \(\sheaf N\), we have a canonical natural transformation \(G\sheaf M\otimes G\sheaf N \to G(\sheaf M\otimes \sheaf N)\) inducing a natural transformation of simplicial resolutions \(\god^\star \sheaf M \times \god^\star \sheaf N \to \god^\star(\sheaf M \otimes \sheaf N)\).
  Passing to the associated chain complexes and using the Eilenberg--Zilber Theorem \cite[I.3.9.1]{godement}, we obtain a morphism of complexes of sheaves \(\god \sheaf M \otimes \god\sheaf N \to \god(\sheaf M \otimes \sheaf N)\).  Using \prettyref{lem:support-tensor}, we now obtain a morphism \(\Gamma_V(\god\sheaf M)\otimes \Gamma_{V'}(\god \sheaf N)\to \Gamma_{V\cap V'}(\god\sheaf M\otimes\god\sheaf N)\to \Gamma_{V\cap V'}\god(\sheaf M\otimes \sheaf N)\), from which the cup product can easily be constructed.  Properties (i)--(iv) are established in \cite[II.6.5]{godement}.  Godement gives details only for the closely related cross product, but the cup product can be treated similarly.

  Alternative constructions of the cup product are described in \cite[II.6.1 and II.6.6]{godement} and \cite[II.6 and II.7]{bredon}.  These constructions use the variation \(\god'(-)\) of Godement's resolution mentioned in \prettyref{rem:G1-versus-G2}.

  The uniqueness of a cup product satisfying the properties (i)--(iv) is established by Bredon in \cite[II.7]{bredon}: see Theorem~7.1, Corollary~7.2 and the following two remarks.  In particular, all constructions agree.  The arguments used in Godement and Bredon's proofs all work in the generality of sites with conservative sets of points.
\end{proof}
\end{ARXIV}
For the comparison theorems of this paper, we will need a few further properties of the cup product in sheaf cohomology.

\begin{proposition}[Naturality of the cup product]\label{prop:Go-product-natural}
  The cup product is natural with respect to continuous maps: Given a continuous map \(f\colon X\to Y\) and closed subsets \(V,V'\subset X\) and \(W,W'\subset Y\) such that \(f^{-1}W\subset V\) and \(f^{-1}W'\subset V'\), the following square commutes:
  \[\xymatrix{
      \op{H}^*_W(Y,f_*\sheaf M)\otimes_{\Gamma'\sheaf A} \op{H}^*_{W'}(Y,f_*\sheaf N) \ar[r]^-{\cup}\ar[d]^{f^*\otimes f^*}
      & \op{H}^*_{W\cap W'}(Y,f_*\sheaf M \otimes_{f_*\sheaf A}f_*\sheaf N)\ar[d]^{f^*}
      \\
      \op{H}^*_V(X,\sheaf M)\otimes_{\Gamma'\sheaf A} \op{H}^*_{V'}(X,\sheaf N) \ar[r]^-{\cup}
      & \op{H}^*_{V\cap V'}(X,\sheaf M \otimes_{\sheaf A}\sheaf N)
    }
  \]
  \AKTONLY{\qed}
\end{proposition}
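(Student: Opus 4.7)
The plan is to trace both pullback and cup product back to natural transformations of Godement complexes, and then use the universality of the pullback (\prettyref{prop:Go-pullback-universal}) to reduce the assertion to a sheaf-level compatibility on $Y$.

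From the construction in \prettyref{thm:Go-cup-product}, the cup product is induced, after composition with the tensor-with-supports transformation of \prettyref{lem:support-tensor}, by a canonical natural transformation $\alpha_{\sheaf M,\sheaf N}\colon \god \sheaf M \otimes \god \sheaf N \to \god(\sheaf M\otimes \sheaf N)$. The pullback map $f^*$ is induced by the composition of the natural transformation $\god f_*\to f_*\god$ from \prettyref{lem:Go-resolution-functorial} with the support-restriction transformation from \prettyref{lem:pullback-with-support}. Applying \prettyref{prop:Go-pullback-universal} to the (acyclic) resolution $f_*\sheaf M\otimes f_*\sheaf N\to f_*\god\sheaf M\otimes f_*\god\sheaf N$, the proposition reduces to the commutativity of the square
\[
\xymatrix@C=3em{
\god(f_*\sheaf M)\otimes\god(f_*\sheaf N)\ar[r]^-{\alpha_{f_*\sheaf M,f_*\sheaf N}}\ar[d] & \god(f_*\sheaf M\otimes f_*\sheaf N) \ar[d] \\
f_*\god \sheaf M\otimes f_*\god \sheaf N\ar[r]^-{f_*\alpha_{\sheaf M,\sheaf N}} & f_*\god(\sheaf M\otimes\sheaf N)
}
\]
of complexes of sheaves on $Y$; the right-hand vertical arrow additionally uses the canonical map $f_*\sheaf M\otimes f_*\sheaf N\to f_*(\sheaf M\otimes\sheaf N)$.

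Now $\alpha$ is built simplicially via the Eilenberg--Zilber shuffle from a natural transformation of monads $G\sheaf M\otimes G\sheaf N \to G(\sheaf M\otimes\sheaf N)$, while the vertical arrows above stem from the lax map of monads $\psi\colon f_*G_X\to G_Yf_*$ constructed in the proof of \prettyref{lem:Go-resolution-functorial}. By naturality of Eilenberg--Zilber, the displayed square reduces, degreewise, to the corresponding statement with $\god$ replaced by $G$, which is immediate from the explicit description $G\sheaf F = \prod_{x\in X^\delta}\sheaf F_x$: both composites simply take the pointwise tensor product, transported along $f^\delta\colon X^\delta\to Y^\delta$.

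The main obstacle is bookkeeping of the various support restrictions $\Gamma_V,\Gamma_{V'},\Gamma_W,\Gamma_{W'}$: one must verify that the tensor-with-supports map of \prettyref{lem:support-tensor} is compatible with the support-restriction transformation of \prettyref{lem:pullback-with-support}. This ultimately reduces to the elementary inclusion $f^{-1}(W\cap W')\subset V\cap V'$ implied by the hypotheses, and poses no further difficulty.
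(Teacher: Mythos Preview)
Your overall strategy---reducing to a chain-level commutativity of Godement complexes and then checking that via the monad description and Eilenberg--Zilber---is sound and is exactly the ``direct naturality check'' the paper has in mind. The support bookkeeping you flag at the end is also correctly identified and handled.

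However, your invocation of \prettyref{prop:Go-pullback-universal} is not justified as stated. You claim that $f_*\sheaf M\otimes f_*\sheaf N\to f_*\god\sheaf M\otimes f_*\god\sheaf N$ is an acyclic resolution, but in general it is neither: the complex $f_*\god\sheaf M$ need not be a resolution of $f_*\sheaf M$ (its cohomology computes the higher direct images $\op{R}^qf_*\sheaf M$, which need not vanish), and even when it is, tensor products of resolutions are not resolutions without flatness hypotheses; likewise there is no reason for the resulting sheaves to be acyclic. So the reduction step via universality, as you have phrased it, has a gap.

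Fortunately this step is unnecessary. The displayed square of complexes of sheaves on $Y$ that you write down, once established, already yields the commutativity of the cohomology square directly: apply the appropriate $\Gamma_W$, $\Gamma_{W'}$, $\Gamma_{W\cap W'}$ (using \prettyref{lem:support-tensor} and \prettyref{lem:pullback-with-support}), note that the pullback $f^*$ and the cup product are by definition induced by these very chain maps, and pass to cohomology. No appeal to universality of the pullback is needed. With that correction, your argument is essentially a fleshed-out version of the paper's one-line proof.
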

\begin{ARXIV}
\begin{proof}
  The claim can be checked directly using the naturality of the transformation in \prettyref{lem:support-tensor} and the naturality of the other constructions involved.
\end{proof}
\end{ARXIV}

\begin{ARXIV}
\begin{theorem}[Universality of cup product]\label{thm:Go-cup-product-universal}
  In the situation of \prettyref{thm:Go-cup-product}, suppose we have resolutions \(\sheaf M \to \res_{\sheaf M}\), \(\sheaf N\to \res_{\sheaf N}\) and \(\sheaf M\otimes_{\sheaf A}\sheaf N\to \res_{\otimes}\) together with a homomorphism
  \[
    \res_{\sheaf M} \otimes \res_{\sheaf N} \xrightarrow{\quad\mu\quad} \res_{\otimes}
  \]
  that is compatible with the two augmentation maps from \(\sheaf M \otimes \sheaf N\).  Then the following square commutes:
  \[\xymatrix{
      \op{H}^*(\Gamma_V\res_{\sheaf M}) \otimes_{\Gamma'\sheaf A} \op{H}^*(\Gamma_W\res_{\sheaf N}) \ar[d] \ar[r]_-{\mu}
      &
      \op{H}^*(\Gamma_{V\cap W}\res_{\otimes}) \ar[d]\\
      \op{H}^*_V(X,\sheaf M) \otimes_{\Gamma'\sheaf A} \op{H}^*_W(Y,\sheaf N)  \ar[r]_-{\cup}
      &
      \op{H}^*_{V\cap W}(X,\sheaf M\otimes\sheaf N)
    }\]
  Here, the top horizontal map is induced by \(\mu\), while the two vertical maps are the canonical maps\ARXIVONLY{ \eqref{eq:canonical-map-to-Go-cohomology-with-support}}.
\end{theorem}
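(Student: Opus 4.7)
The plan is to bridge the given resolutions to the Godement resolutions used in \prettyref{thm:Go-cup-product}. For each sheaf \(\sheaf F\in\{\sheaf M,\sheaf N,\sheaf M\otimes_{\sheaf A}\sheaf N\}\), form the double complex \(\god^p\res^q_{\sheaf F}\) and consider the pair of quasi-isomorphisms
\[
  \res_{\sheaf F}\longrightarrow \op{Tot}(\god\res_{\sheaf F})\longleftarrow \god\sheaf F,
\]
where the arrow on the right is \(\god\) applied degreewise to \(\sheaf F\to\res_{\sheaf F}\) (using that \(\god\) is exact, since its stalks are chain nullhomotopic), and the arrow on the left is induced by \(\mu_X\colon \id\to\god\). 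Both arrows refine to morphisms of resolutions; after applying \(\Gamma_V\) and passing to cohomology they induce the canonical comparison map of \prettyref{rem:cohomology-with-support-via-flasque-resolution}, exactly as in the proof of \prettyref{prop:Go-pullback-universal}.

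Next, I would combine these bridging complexes with the product structure. On Godement resolutions we have a canonical natural transformation \(\god\sheaf F\otimes\god\sheaf G\to\god(\sheaf F\otimes\sheaf G)\) (the Eilenberg--Zilber/shuffle map underlying the construction of the Godement cup product in \cite[II.6.6\,(b)]{godement}). Applied row-wise to \(\god\res_{\sheaf M}\) and \(\god\res_{\sheaf N}\), composed with \(\op{Tot}\god\) applied to \(\mu\), and combined with naturality of the coaugmentations \(\res_{\sheaf F}\to\op{Tot}(\god\res_{\sheaf F})\), one obtains a diagram of complexes of sheaves in which the three routes from \(\res_{\sheaf M}\otimes\res_{\sheaf N}\) and \(\god\sheaf M\otimes\god\sheaf N\) down to \(\op{Tot}(\god\res_\otimes)\) all agree. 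Applying \prettyref{lem:support-tensor} degreewise yields the analogous diagram with sections supported on \(V\), \(W\) and \(V\cap W\); on cohomology the asserted square then reads off immediately from the outer boundary.

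The bookkeeping of sign conventions and the verification that every face of the resulting diagram of (double) complexes of global sections commutes on the nose (or up to a specified homotopy that vanishes on cohomology) is the main technical obstacle. A more conceptual alternative is to appeal to uniqueness: the pairing induced via the given \(\mu\) and the canonical comparison maps is natural in \(\sheaf M\) and \(\sheaf N\), reduces in degree zero to the canonical map of \prettyref{lem:support-tensor}, is compatible with boundary maps (as \(\mu\) is a morphism of complexes and the canonical comparison maps respect connecting homomorphisms), and inherits graded commutativity and associativity from the product on Godement resolutions. It therefore coincides with the Godement cup product by the uniqueness clause of \prettyref{thm:Go-cup-product}, proved in \cite[II.7]{bredon}.
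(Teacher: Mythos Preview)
The paper's own proof is a single citation to \cite[Th\'eor\`eme~II.6.6.1]{godement}; your first approach---bridging each resolution to its Godement resolution via \(\op{Tot}(\god\res_{\sheaf F})\) and using the Eilenberg--Zilber product on the Godement side---is essentially a reconstruction of Godement's argument, so in substance you are aligned with the paper.

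One caution about your alternative ``uniqueness'' route: the uniqueness clause of \prettyref{thm:Go-cup-product} characterizes a pairing on \emph{sheaf cohomology} \(\op{H}^*_V(X,\sheaf M)\otimes \op{H}^*_W(X,\sheaf N)\to \op{H}^*_{V\cap W}(X,\sheaf M\otimes\sheaf N)\) that is natural in \(\sheaf M\) and \(\sheaf N\). The \(\mu\)-pairing lives on \(\op{H}^*(\Gamma_V\res_{\sheaf M})\otimes \op{H}^*(\Gamma_W\res_{\sheaf N})\), and the canonical comparison maps go \emph{from} these groups \emph{to} sheaf cohomology, not the other way; since the resolutions \(\res_{\sheaf M},\res_{\sheaf N},\res_\otimes\) are fixed and need not be acyclic, there is no evident way to push the \(\mu\)-pairing down to a pairing on sheaf cohomology to which the uniqueness statement would apply, nor is naturality in \(\sheaf M,\sheaf N\) available. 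So the uniqueness shortcut, as phrased, does not close the argument; the double-complex bridge is the part that actually does the work.
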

\begin{proof}
  This is \cite[Théorème~II.6.6.1]{godement}.
\end{proof}
\end{ARXIV}

We also have a cross product in sheaf cohomology.  Given two topological spaces \(X\) and \(Y\), let \(\pi_X\colon X\times Y\to X\) and \(\pi_Y\colon X\times Y\to Y\) denote the canonical projections.
For sheaves of abelian groups \(\sheaf M\) over \(X\) and \(\sheaf N\) over \(Y\), we write \(\sheaf M \boxtimes \sheaf N := \pi_X^*\sheaf M \otimes \pi_Y^*\sheaf N\) for the exterior tensor product sheaf over \(X\times Y\), and similarly for complexes of sheaves.
\begin{definition}[Cross product]
  \label{def:Go-cross-product}
For sheaves \(\sheaf M\) on \(X\) and \(\sheaf N\) on \(Y\), and for closed subsets \(V\subset X\) and \(W\subset Y\), the cross product
\begin{equation}\label{eq:Go-cross-product}
  \times\colon \op{H}^p_V(X, \sheaf M) \otimes \op{H}^q_W(Y, \sheaf N) \rightarrow \op{H}^{p+q}_{V\times W}(X\times Y, \sheaf M \boxtimes \sheaf N)
\end{equation}
is given by \(a\times b := \pi_X^*a \cup \pi_Y^*b\).
\end{definition}
Alternatively, this cross product can be obtained directly by a construction entirely analogous to the construction of the cup product, but we will not need this. If we take \(X = Y\) in \prettyref{def:Go-cross-F-product} and write \(\Delta\colon X\to X\times X\) for the diagonal, we clearly have \(a\cup b = \Delta^*(a\times b)\) in \(\op{H}^*(X,\sheaf M\otimes \sheaf N)\), by the naturality of the cup product, cf.\ \prettyref{prop:Go-product-natural}.

Now suppose \(X\) and \(Y\) are schemes over our base field \(F\).  Then we need to distinguish (the underlying topological space of) the scheme-theoretic product \(X\times_F Y\) from the topological product \(X\times Y\).  Let \(\pi_{F,X}\colon X\times_F Y \to X\) and \(\pi_{F,Y}\colon X\times_F Y \to Y\) denote the respective scheme-theoretic projections. As these morphisms are continuous, we have an induced continuous map
\begin{equation}\label{eq:scheme-to-top-product}
  u \colon X\times_FY \to X\times Y
\end{equation}
such that \(\pi_X u = \pi_{X,F}\) and \(\pi_Y u = \pi_{Y,F}\). Given closed subsets \(V\subset X\) and \(W\subset Y\), we have \(V\times_F W\subset u^{-1}(V\times W)\).  Now consider sheaves \(\sheaf M\) and \(\sheaf N\) on \(X\) and \(Y\), respectively.  The scheme-theoretic exterior tensor product of \(\sheaf M\) and \(\sheaf N\) is given by
\begin{align*}
  \sheaf M\boxtimes_F\sheaf N
  &:= \pi_{F,X}^*\sheaf M\otimes\pi_{F,Y}^*\sheaf N\\
  &\;\cong u^*(\pi_X^*\sheaf M\otimes\pi_Y^*\sheaf N)
\end{align*}

\begin{definition}[Cross product for schemes]\label{def:Go-cross-F-product}
  For \(F\)-schemes \(X\) and \(Y\), closed subsets \(V\subset X\) and \(W\subset Y\), and sheaves \(\sheaf M\) and \(\sheaf N\) on \(X\) and \(Y\), respectively, the \textbf{sheaf-theoretic cross product} \(\times_F\) is the composition of the usual cross product with the pullback along \(u\):
  \[
    \op{H}^p_V(X, \sheaf M) \otimes \op{H}^q_W(Y, \sheaf N) \xrightarrow{\times} \op{H}^{p+q}_{V\times W}(X\times Y, \sheaf M \boxtimes \sheaf N) \xrightarrow{u^*} \op{H}^{p+q}_{V\times_F W}(X\times_F Y, \sheaf M \boxtimes_F \sheaf N)
  \]
\end{definition}
We clearly have the following analogues of the formulas relating the cup and the usual cross product:
\begin{align}
  a\times_F b &= \pi_{X,F}^*a \cup \pi_{Y,F}^*b \label{eq:cross-F-from-cup}\\
   a \cup b &= \Delta_F^*(a\times_F b) \quad \text{ when \(X = Y\)}\label{eq:cup-from-cross-F}
\end{align}
Here, \(\Delta_F\colon X\to X\times_F X\) denotes the scheme-theoretic diagonal.

\begin{theorem}[Universality of the cross product]\label{thm:Go-cross-F-product-universal}
  Suppose we have resolutions
  \[
  \sheaf M\to \res_{\sheaf M}, \sheaf N\to \res_{\sheaf N} \textrm{ and } \sheaf M\boxtimes_F \sheaf N\to \res_{\boxtimes},
  \]together with a homomorphism
  \[
    \res_{\sheaf M} \boxtimes_F \res_{\sheaf N} \xrightarrow{\quad\mu\quad} \res_{\boxtimes}
  \]
  that is compatible with the two augmentation maps from \(\sheaf M \boxtimes_F \sheaf N\).  Then the following diagram commutes:
  \[\xymatrix{
      \op{H}^*(\Gamma_V\res_{\sheaf M}) \otimes \op{H}^*(\Gamma_W\res_{\sheaf N}) \ar[d] \ar[r]^-{\mu}
      &
      \op{H}^*(\Gamma_{V\times_FW}\res_{\boxtimes}) \ar[d]\\
      \op{H}^*_V(X,\sheaf M) \otimes \op{H}^*_W(Y,\sheaf N)  \ar[r]_-{\times}
      &
      \op{H}^*_{V\times_F W}(X\times_F Y,\sheaf M\boxtimes_F\sheaf N)
    }\]
  Here, the top horizontal map is induced by the pairing of complexes, while the two vertical maps are induced by the augmentations.
  \AKTONLY{\qed}
\end{theorem}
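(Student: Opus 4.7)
The plan is to exploit the factorization $a\times_F b=\pi_{X,F}^*a\cup\pi_{Y,F}^*b$ of~\eqref{eq:cross-F-from-cup}, which reduces the universality of the cross product to a combination of \prettyref{prop:Go-pullback-universal} and \prettyref{thm:Go-cup-product-universal}.

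First, I would transport the data onto $X\times_F Y$. Since inverse image of abelian sheaves is exact, $\pi_{X,F}^*\res_{\sheaf M}$ is a resolution of $\pi_{X,F}^*\sheaf M$, and analogously for the $Y$-side. By the very definition of the exterior tensor product, $\res_{\sheaf M}\boxtimes_F\res_{\sheaf N}=\pi_{X,F}^*\res_{\sheaf M}\otimes\pi_{Y,F}^*\res_{\sheaf N}$, so $\mu$ is a pairing on $X\times_F Y$ between two pulled-back resolutions, compatible with the augmentations into the resolution $\res_{\boxtimes}$ of $\sheaf M\boxtimes_F\sheaf N$. Applying \prettyref{thm:Go-cup-product-universal} on $X\times_F Y$ with coefficient sheaves $\pi_{X,F}^*\sheaf M$, $\pi_{Y,F}^*\sheaf N$ and supports $V\times_F Y$, $X\times_F W$ (whose intersection is precisely $V\times_F W$) then identifies the map induced by $\mu$ on resolution cohomology with the sheaf-cohomological cup product on $X\times_F Y$, up to the canonical comparison maps.

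Next, I would invoke \prettyref{prop:Go-pullback-universal} twice: once for the canonical morphism of coefficient data $(X\times_F Y,\pi_{X,F}^*\sheaf M)\to(X,\sheaf M)$ of \prettyref{eg:canonical-morphisms-of-coefficient-data} with the resolutions $\pi_{X,F}^*\res_{\sheaf M}$ and $\res_{\sheaf M}$, and once analogously with the $Y$-projection. In each case, the required lift $\hat\phi\colon(\pi_{X,F})_*\pi_{X,F}^*\res_{\sheaf M}\leftarrow\res_{\sheaf M}$ is simply the unit of the adjunction $\pi_{X,F}^*\dashv(\pi_{X,F})_*$ applied degreewise, whose compatibility with the augmentation is automatic from naturality. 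This identifies the canonical comparison map $\op{H}^*(\Gamma_V\res_{\sheaf M})\to\op{H}^*(\Gamma_{V\times_F Y}\pi_{X,F}^*\res_{\sheaf M})$, postcomposed with the comparison to sheaf cohomology, with the sheaf-cohomological pullback $\pi_{X,F}^*$, and analogously for $\pi_{Y,F}^*$.

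Splicing these two results together via~\eqref{eq:cross-F-from-cup} yields the commutativity of the desired square. I expect the principal obstacle to be essentially bookkeeping rather than substance: one has to keep careful track of which resolutions live on which space, which supports appear at each intermediate stage, and how the unit of the adjunction interacts with $\mu$ when pushed through $(\pi_{X,F})_*$ and $(\pi_{Y,F})_*$. A small auxiliary verification is that $(V\times_F Y)\cap(X\times_F W)=V\times_F W$ inside $X\times_F Y$, which is immediate.
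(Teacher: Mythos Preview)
Your proposal is correct and follows essentially the same approach as the paper: factor the square via \eqref{eq:cross-F-from-cup} into a pullback step handled by \prettyref{prop:Go-pullback-universal} (applied to each projection) and a cup product step handled by \prettyref{thm:Go-cup-product-universal} on $X\times_F Y$. The paper's proof is slightly terser but the decomposition and the two cited results are identical to yours.
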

\begin{ARXIV}
\begin{proof}
  Using the description \(\times_F\) in terms of the cup product \eqref{eq:cross-F-from-cup}, we can factor the above square in two squares.  In the first square, the horizontal morphisms are given by the tensor product of the pullbacks along \(\pi_{X,F}\) and \(\pi_{Y,F}\).  More precisely, for the lower horizontal arrow we need to consider the morphisms of coefficient data \((\pi_{X,F},\phi_X)\) and \((\pi_{Y,F},\phi_Y)\), where \(\phi_X\colon (\pi_{X,F})_*\pi_{X,F}^*\sheaf M\to M\) and \(\phi_Y\) are the canonical morphisms. This first square commutes by \prettyref{prop:Go-pullback-universal}.  The second square encodes the cup product, and commutes by \prettyref{thm:Go-cup-product-universal}.
\end{proof}
Of course, an analogous theorem also holds for the usual cross product, but we will not need this here.
\end{ARXIV}
From now on, whenever dealing with schemes over \(F\), the product \(X\times Y\) and the cross-product will be understood to be taken over \(F\), even when the subscript \(F\) is suppressed.

\subsubsection{Twisted coefficients}\label{sec:twistedcoefficients}
So far, we have been considering arbitrary sheaves of abelian groups on arbitrary topological spaces (or even sites) in this section.  We now consider an operation on coefficient sheaves that requires slightly more structure:  ringed spaces \((X,\sheaf O_X)\), and coefficient sheaves of abelian groups \(\sheaf F\) that come equipped with a \(\Gm\)-action.  We always assume this action to be \emph{linear} in the sense that sections of \(\Gm\) act on \(\sheaf F\) as automorphisms of abelian groups.

\begin{definition}\label{def:general-twist}
  Consider a ringed space \((X,\sheaf O_X)\) equipped with a line bundle \(\sheaf L\), i.e.\ with a locally free \(\sheaf O_X\)-module of rank one. Given an abelian sheaf \(\sheaf F\) over \(X\) with a (linear) \(\Gm\)-action, the \textbf{twisted sheaf} \(\sheaf F(\sheaf L)\) is defined as \(\sheaf F\otimes_{\Z[\Gm]}\Z[\sheaf L^\times]\), i.e.\ as the sheafification of the presheaf $U\mapsto \sheaf F(U)\otimes_{\Z[\sheaf O_X^\times(U)]}\Z[\mathcal{L}(U)^\times]$.
  Given an equivariant morphism of sheaves with \(\Gm\)-action \(\phi\colon \sheaf F\to\sheaf G\), the \textbf{twisted morphism} \(\phi(\sheaf L)\colon \sheaf F(\sheaf L)\to\sheaf G(\sheaf L)\) is defined as \(\phi\otimes\id\).
\end{definition}

Here, $\Z[\mathbb{G}_{\op{m}}(U)]$ denotes the free abelian group generated by the invertible functions on $U$ and $\Z[\mathcal{L}(U)^\times]$ denotes the free abelian group generated by the nowhere vanishing sections of the line bundle $\mathcal{L}$ over $U$. (See \cite[Definition~2.6]{asok-fasel:euler}, where the notation $\mathcal{L}^\circ$ is used to denote the complement of the zero section of the line bundle $\mathcal{L}$; $\mathcal{L}^\circ(U) = \mathcal{L}(U)^\times$.)

\begin{ARXIV}
\begin{remark}\label{rem:twisted-sheaf-is-presheaf-tensor}
  Over any open subset \(U\subset X\) over which \(\sheaf L\) is trivial, the sheafification in \prettyref{def:general-twist} can be ignored.  Indeed, over such open \(U\), the \emph{presheaf} tensor product \(\sheaf F\otimes_{\Z[\sheaf O^\times]}\Z[\sheaf L^\times]\) is already a sheaf, isomorphic to \(\sheaf F_{|U}\) (though the isomorphism depends on the trivialization of $\sheaf L$).  As \(X\) can be covered by open subsets over which \(\sheaf L\) is trivial, we see in particular, that \(\sheaf F(\sheaf L)\) is always locally isomorphic to~\(\sheaf F\).
\end{remark}
\end{ARXIV}

\begin{lemma}\label{lem:general-twist-of-structure-morphism}
  Consider a morphism of ringed spaces \(f\colon (X,\sheaf O_X)\to (Y,\sheaf O_Y)\).
  For any sheaf \(\sheaf F_X\) over \(X\) with \(\Gm\)-action and any line bundle \(\sheaf L\) over \(Y\), we have a canonical isomorphism of sheaves of abelian groups \(f_*(\sheaf F_X(f^*\sheaf L))\cong (f_*\sheaf F_X)(\sheaf L)\), natural in \(\sheaf F_X\) and \(\sheaf L\).\footnote{
    In the context of line bundles, \(f^*\sheaf L\) denotes the pullback of \(\sheaf L\) as an \(\sheaf O_X\)-module, so that \(f^*\sheaf L\) is again a line bundle.
  }
  \AKTONLY{\qed}
\end{lemma}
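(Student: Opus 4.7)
The plan is to construct a canonical comparison morphism $(f_*\sheaf F_X)(\sheaf L) \to f_*(\sheaf F_X(f^*\sheaf L))$ and then to verify it is an isomorphism by working locally on $Y$ over opens that trivialize $\sheaf L$.

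First I would construct the morphism. Pullback along the morphism of ringed spaces $f$ sends a section $s\in \sheaf L(V)$ to its image $1\otimes s \in (f^*\sheaf L)(f^{-1}V)$. When $s$ is nowhere-vanishing, it locally generates $\sheaf L$, so $1\otimes s$ locally generates $f^*\sheaf L$ and is therefore also nowhere-vanishing. This produces a morphism of abelian sheaves $\sheaf L^\times \to f_*(f^*\sheaf L)^\times$ on $Y$, equivariant with respect to the canonical morphism $\Gm \to f_*\Gm$ underlying $f$. Applying $\Z[{-}]$ linearly and tensoring with $f_*\sheaf F_X$ over $\Z[\Gm]$, one uses the standard comparison $(f_*A)\otimes_R (f_*B) \to f_*(A\otimes_S B)$ (available whenever $R\to f_*S$) to produce the desired morphism
\begin{equation*}
  (f_*\sheaf F_X)(\sheaf L) = (f_*\sheaf F_X) \otimes_{\Z[\Gm]} \Z[\sheaf L^\times] \longrightarrow f_*\bigl(\sheaf F_X \otimes_{\Z[\Gm]} \Z[(f^*\sheaf L)^\times]\bigr) = f_*(\sheaf F_X(f^*\sheaf L)).
\end{equation*}
Naturality in $\sheaf F_X$ and $\sheaf L$ follows directly from the construction.

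To check that this morphism is an isomorphism, I would work locally on $Y$. Cover $Y$ by opens $V_i$ on which $\sheaf L$ admits a nowhere-vanishing section $s_i$; then $f^*\sheaf L$ is trivialized on $f^{-1}V_i$ by $f^*s_i$. On any such trivializing open the sheafification in the definition of the twist is unnecessary: the presheaf tensor product already computes the correct answer (cf.\ \prettyref{rem:twisted-sheaf-is-presheaf-tensor}), so both sides of the comparison restrict on $V_i$ to something canonically isomorphic to $(f_*\sheaf F_X)|_{V_i}$. Tracing the construction under these compatible trivializations, one sees that the comparison morphism becomes the identity; hence it is an isomorphism on each $V_i$, and therefore globally.

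The main technical point is simply the behaviour under trivializations. Since a trivialization of $\sheaf L$ over $V_i$ functorially induces one of $f^*\sheaf L$ over $f^{-1}V_i$, this reduces to a routine compatibility check with the $\Gm$-equivariance and poses no serious obstacle; the real content of the lemma is the construction of the canonical comparison morphism in the first place.
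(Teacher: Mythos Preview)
Your proof is correct and follows the same overall strategy as the paper: construct a canonical comparison morphism and then verify it is an isomorphism locally over opens trivializing \(\sheaf L\), invoking \prettyref{rem:twisted-sheaf-is-presheaf-tensor}. The only difference is in how the morphism is built. You work directly on \(Y\), using the map \(\sheaf L^\times \to f_*(f^*\sheaf L)^\times\) together with the lax monoidal structure of \(f_*\). The paper instead works on \(X\): it observes that the counit \(f^*f_*\sheaf F_X \to \sheaf F_X\) is \(\Gm\)-equivariant, tensors with \(\id_{f^*\sheaf L}\), precomposes with the canonical isomorphism \(f^*((f_*\sheaf F_X)(\sheaf L)) \cong (f^*f_*\sheaf F_X)(f^*\sheaf L)\), and then takes the adjoint. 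These two descriptions unwind to the same morphism; your route is slightly more concrete, while the paper's formulation makes the later compatibility checks (e.g.\ \prettyref{lem:twisted-composition}) marginally easier to phrase in terms of units and counits.
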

\begin{ARXIV}
\begin{proof}
  Let us temporarily write \(f^p\) for the presheaf inverse image functor.  It can easily be checked explicitly that the counit \(\sheaf F_X\leftarrow f^pf_*\sheaf F_X\) is \(\Gm\)-equivariant, and it follows from this that the counit \(\eta\colon \sheaf F_X \leftarrow f^*f_*\sheaf F_X\) is likewise \(\Gm\)-equivariant.  Tensoring with the identity on \(f^*\sheaf L\) and precomposing with the canonical isomorphism \(f^*f_*\sheaf F_X\otimes_{\Z[\sheaf O_X^\times]}\Z[f^*\sheaf L^\times] \cong f^*(f_*\sheaf F_X\otimes_{\Z[\sheaf O_Y^\times]}\Z[\sheaf L^\times])\), we obtain a morphism \(\sheaf F_X(f^*\sheaf L)\leftarrow f^*((f_*\sheaf F_X)(\sheaf L))\).  The adjoint morphism is the morphism we are looking for.  Explicitly, it can be written as a composition
  \begin{equation}\label{eq:canonical-twist-composition}
    f_*(\sheaf F(f^*\sheaf L)) \xleftarrow{f_*(\eta\otimes\id)}
    f_*(f^*f_*\sheaf F(f^*\sheaf L))  \cong
    f_*f^*\left(f_*\sheaf F(\sheaf L)\right) \xleftarrow{\text{unit}}
    (f_*\sheaf F)(\sheaf L)
  \end{equation}
  Evaluate this composition on an open subset \(U\subset Y\) over which \(\sheaf L\) is trivial. Using \prettyref{rem:twisted-sheaf-is-presheaf-tensor} and the identification \((f^*f_*\sheaf F)(f^{-1}U)\cong \sheaf F(f^{-1}U)\), we find that this evaluation is an isomorphism, hence \eqref{eq:canonical-twist-composition} is an isomorphism of sheaves.
\end{proof}
Another way of formulating this lemma is to say that the canonical morphism of coefficient data \((X,\sheaf F_X) \to (Y,f_*\sheaf F_X)\) of \prettyref{eg:canonical-morphisms-of-coefficient-data} can be twisted by an arbitrary line bundle \(\sheaf L\) on \(Y\) to yield a new morphism of coefficient data \((X,\sheaf F_X(f^*\sheaf L))\to (Y,(f_*\sheaf F_X)(\sheaf L))\).
\end{ARXIV}

More generally, we can twist an arbitrary morphism of coefficient data.  To formalize this, let us a call a pair \(((X,\sheaf O_X),\sheaf F)\) consisting of a ringed space \((X,\sheaf O_X)\) and a sheaf of abelian groups \(\sheaf F\) on \(X\) with a linear \(\Gm\)-action a \textbf{ringed coefficient datum}.  Note that \(\sheaf F\) is not assumed to be an \(\sheaf O\)-module; when we write \(f^*\sheaf F\) in the following, we still mean the pullback of \(\sheaf F\) as sheaf of abelian groups.  If we forget the structure sheaf and the \(\Gm\)-action, we obtain an underlying coefficient datum \((X,\sheaf F)\).  A \textbf{morphism of ringed coefficient data}
\[
  (f,\phi)\colon ((X,\sheaf O_X),\sheaf F_X)\to ((Y,\sheaf O_Y),\sheaf F_Y)
\]
is a pair such that \((f,\phi)\colon (X,\sheaf F_X)\to (Y,\sheaf F_Y)\) is morphism of coefficient data in the previous sense, and such that \(f\colon (X,\sheaf O_X)\to (Y,\sheaf O_Y)\) is a morphism of ringed spaces whose structure morphism \(f_*\sheaf O_X\leftarrow \sheaf O_Y\) is compatible with the given actions of \(\sheaf O_X^\times\) on \(\sheaf F_X\) and \(\sheaf O_Y^\times\) on \(\sheaf F_Y\).

\begin{example}\label{eg:morphism-on-big-site-as-morcd}
  Suppose we are given an abelian sheaf \(\sheaf F\) with linear $\mathbb{G}_{\op{m}}$-action on the \emph{big} Zariski site \(\op{Sm}/F\). Then any morphism of smooth \(F\)-schemes \(f\colon X\to Y\) can be viewed as a morphism of ringed coefficient data \(((X,\sheaf O_X),\sheaf F_{|X}) \to ((Y,\sheaf O_Y),\sheaf F_{|Y})\).
\end{example}

\begin{definition}\label{def:twisted-morcd}
  Consider a morphism \((f,\phi)\colon ((X,\sheaf O_X),\sheaf F_X)\to ((Y,\sheaf O_Y),\sheaf F_Y)\) of ringed coefficient data. Given a line bundle \(\sheaf L\) over \(Y\), the \textbf{twisted morphism of ringed coefficient data}
  \[
    (f,\phi)(\sheaf L) := (f,\phi_{\sheaf L})\colon ((X,\sheaf O_X),\sheaf F_X(f^*\sheaf L)) \to ((Y,\sheaf O_Y),\sheaf F_Y(\sheaf L))
  \]
  has the same underlying morphism of ringed spaces \(f\), while the structure morphism \(\phi_{\sheaf L}\) is defined as the composition of the canonical isomorphism of \prettyref{lem:general-twist-of-structure-morphism} with the morphism \(\phi(\sheaf L)\) of \prettyref{def:general-twist}:
  \(
  f_*(\sheaf F_X(f^*\sheaf L))\cong (f_*\sheaf F_X)(\sheaf L) \xleftarrow[\phi(\sheaf L)]{} \sheaf F_Y(\sheaf L)
  \).
\end{definition}

\begin{lemma}\label{lem:twisted-composition}
  Twisting is compatible with composition:
  Given two composable morphisms of ringed coefficient data
  \[
    ((X,\sheaf O_X),\sheaf F_X)\xrightarrow{(f,\phi)} ((Y,\sheaf O_Y),\sheaf F_Y) \xrightarrow{(g,\gamma)}((Z,\sheaf O_Z),\sheaf F_Z)
  \]
  and a line bundle \(\sheaf L\) over \(Z\), \((g,\gamma)(\sheaf L) \circ (f,\phi)(g^*\sheaf L) = \big((g,\gamma)(f,\phi)\big)(\sheaf L)\), up to the canonical identification \(f^*g^*\sheaf L \cong (gf)^*\sheaf L\) on \(X\).
  \AKTONLY{\qed}
\end{lemma}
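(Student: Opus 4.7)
The plan is a routine diagram chase after unfolding \prettyref{def:twisted-morcd}. Abbreviate the canonical isomorphism of \prettyref{lem:general-twist-of-structure-morphism} by
\[
\kappa_h(\sheaf F,\sheaf L)\colon (h_*\sheaf F)(\sheaf L)\xrightarrow{\cong} h_*(\sheaf F(h^*\sheaf L))
\]
(for $h$ a morphism of ringed spaces, $\sheaf F$ on the source with linear $\Gm$-action, $\sheaf L$ on the target); then by definition $\phi_{\sheaf L}=\kappa_f(\sheaf F_X,\sheaf L)\circ\phi(\sheaf L)$, and similarly for $\gamma_{\sheaf L}$ and for the structure morphism of $((g,\gamma)\circ(f,\phi))(\sheaf L)$. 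Both sides of the claimed equality have the same underlying morphism $gf$ of ringed spaces, so by the composition formula \eqref{eq:composition-of-morphisms-of-coefficient-data} it suffices to compare their structure morphisms. Since twisting by $\sheaf L$ is just tensoring with the identity on $\Z[\sheaf L^\times]$, it is functorial on $\Gm$-equivariant morphisms: $(\alpha\beta)(\sheaf L)=\alpha(\sheaf L)\circ\beta(\sheaf L)$. Using this, unfolding both sides and cancelling the common rightmost factor $\gamma(\sheaf L)$, the claim reduces to the identity
\[
g_*(\kappa_f(\sheaf F_X,g^*\sheaf L))\circ g_*(\phi(g^*\sheaf L))\circ \kappa_g(\sheaf F_Y,\sheaf L)\;=\;\kappa_{gf}(\sheaf F_X,\sheaf L)\circ (g_*\phi)(\sheaf L).
\]

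This identity is in turn the concatenation of two statements about $\kappa$. First, naturality of $\kappa_g(-,\sheaf L)$ in its sheaf argument, applied to the $\Gm$-equivariant morphism $\phi\colon \sheaf F_Y\to f_*\sheaf F_X$, yields
\[
g_*(\phi(g^*\sheaf L))\circ \kappa_g(\sheaf F_Y,\sheaf L)=\kappa_g(f_*\sheaf F_X,\sheaf L)\circ (g_*\phi)(\sheaf L).
\]
Second, compatibility of $\kappa$ with composition of ringed-space morphisms gives
\[
\kappa_{gf}(\sheaf F_X,\sheaf L)=g_*(\kappa_f(\sheaf F_X,g^*\sheaf L))\circ \kappa_g(f_*\sheaf F_X,\sheaf L),
\]
where we use the canonical identifications $(gf)_*=g_*f_*$ and $(gf)^*\sheaf L\cong f^*g^*\sheaf L$. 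Substituting the first identity into the left-hand side of the reduced equation and then applying the second gives exactly the right-hand side.

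Both auxiliary statements are immediate from the construction of $\kappa$ in the proof of \prettyref{lem:general-twist-of-structure-morphism} as the adjoint of a morphism built from the counit of $h^*\dashv h_*$ together with the standard pullback-tensor isomorphism: naturality in the sheaf variable is automatic, and compatibility with composition reduces to the usual decomposition of counits under a composition of adjunctions. Alternatively, since $\sheaf L$ is locally trivial and $\kappa_h$ is (after a choice of trivialization) the identity on each open over which $\sheaf L$ is trivial, both equations may be verified section-wise on such opens, where they become tautological. The main, and essentially only, obstacle is bookkeeping: keeping track of the many twists, pushforwards, and the coherent isomorphisms between $(gf)^*\sheaf L$ and $f^*g^*\sheaf L$; there is no genuine mathematical difficulty.
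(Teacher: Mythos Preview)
Your proposal is correct and follows essentially the same route as the paper's proof: both introduce a name for the canonical isomorphism of \prettyref{lem:general-twist-of-structure-morphism} (your $\kappa_h$, the paper's $\can_h$), reduce to two facts about it---naturality in the sheaf argument, and the cocycle identity $\kappa_{gf}=g_*(\kappa_f)\circ\kappa_g$---and then verify the latter either via the adjunction formalism or by passing to opens trivializing~$\sheaf L$.
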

\begin{ARXIV}
\begin{proof}
  \newcommand{\can}{\mathrm{can}}
  Let \(\can_{f,\sheaf F,\sheaf L}\) denote the natural isomorphism \(f_*(\sheaf F(f^*\sheaf L))\leftarrow (f_*\sheaf F)(\sheaf L)\) from \prettyref{lem:general-twist-of-structure-morphism}.  The diagram whose commutativity we need to check can be divided into two squares: one square that commutes by the naturality of \(\can_{f,\sheaf F,\sheaf L}\) in \(\sheaf F\), and another square that looks as follows:
  \[\xymatrix{
      {g_*((f_*\sheaf F_X)(g^*\sheaf L))}
      \ar[d]^{g_*(\can_{f,\sheaf F_X,g^*\sheaf L})}_{\cong}
      &
      \ar[l]_-{\can_{g,f_*\sheaf F_X,\sheaf L}}^{\cong}
      {(g_*f_*\sheaf F_X)(\sheaf L)}
      \ar[d]^{\can_{gf,\sheaf F_X,\sheaf L}}_{\cong}
      \\
      {g_*f_*(\sheaf F_X(f^*g^*\sheaf L))}
      &\ar[l]^{\cong}
      {g_*f_*(\sheaf F_X((gf)^*\sheaf L))}
    }\]
  Here, the lower arrow is the arrow induced by the canonical identification \(f^*g^*\sheaf L \cong (gf)^*\sheaf L\).  To check the commutativity of such a square, it suffices to  consider sections over open sets \(U\subset Z\) over which \(\sheaf L\) is trivial. Using \prettyref{rem:twisted-sheaf-is-presheaf-tensor}, we thus reduce the commutativity of the above square to the commutativity of the following square:
  \[\xymatrix{
      \ar[d]_{g_*\eta_fg^*}
      {g_*g^*\sheaf L}
      &
      \ar[l]_{\eta_g}
      {\sheaf L}
      \ar[d]^{\eta_{gf}}
      \\
      {g_*f_*f^*g^*\sheaf L}
      &
      \ar[l]^{\cong}
      {g_*f_*(gf)^*\sheaf L}
    }\]
  Here, \(\eta_g\) is the counit of the adjunction \(g^*\dashv g_*\), \(\eta_f\) and \(\eta_{gf}\) are defined similarly, and the lower horizontal isomorphism is again induced by the canonical natural isomorphism \(f^*g^*\cong (gf)^*\).  This isomorphism rests on the uniqueness of adjoints, and the fact that both \(f^*g^*\) and \((gf)^*\) are left adjoints of \(g_*f_* = (gf)_*\).  The commutativity thus follows from abstract nonsense; see the explanation following Corollary~1 in \cite[\S\,IV.1]{maclane}.
\end{proof}
\end{ARXIV}

We will mostly use this lemma in the following form:
\begin{corollary}\label{cor:twist-key}
  Suppose we are given a commutative square of morphisms of ringed coefficient data:
  \[
    \xymatrix@R=4em@C=5em{
      ((X,\sheaf O_X),\sheaf F_X)
      \ar[r]^{(g,\gamma)}
      \ar[d]^{(f,\phi)}
      & ((X',\sheaf O_{X'}),\sheaf F_{X'})
      \ar[d]^{(f',\phi')}
      \\
      ((Y,\sheaf O_Y),\sheaf F_Y)
      \ar[r]^{(h,\eta)}
      &
      ((Y',\sheaf O_{Y'}),\sheaf F_{Y'})
    }
  \]
  Then for any line bundle \(\sheaf L\) on \(Y'\), the following square still commutes, where the isomorphism in the top left corner is induced by the canonical identification \(f^*h^*\sheaf L \cong g^*f'^*\sheaf L\):
  \[
    \xymatrix@R=5em{
      \cbox[t]{
        $((X,\sheaf O_X),\sheaf F_X(g^*f'^*\sheaf L))$\\
        $\downcong$\\
        $((X,\sheaf O_X),\sheaf F_X(f^*h^*\sheaf L))$
      }
      \ar[r]^-{(g,\gamma)}
      \ar[d]^{(f,\phi)}
      &
      ((X',\sheaf O_{X'}),\sheaf F_{X'}(f'^*\sheaf L))
      \ar[d]^{(f',\phi^*)}
      \\
      ((Y,\sheaf O_Y),\sheaf F_Y(h^*\sheaf L))
      \ar[r]^-{(h,\eta)}
      &
      ((Y',\sheaf O_{Y'}),\sheaf F_{Y'}(\sheaf L))
    }
  \]
  \AKTONLY{\qed}
\end{corollary}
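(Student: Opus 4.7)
The plan is to apply \prettyref{lem:twisted-composition} twice, once to each way around the square, and then to invoke the commutativity of the original (untwisted) square.

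Concretely, I would first apply \prettyref{lem:twisted-composition} to the composition $(f',\phi')\circ(g,\gamma)$ twisted by $\sheaf L$, obtaining
\[
  (f',\phi')(\sheaf L)\circ(g,\gamma)(f'^*\sheaf L) = \bigl((f',\phi')\circ(g,\gamma)\bigr)(\sheaf L)
\]
as morphisms from $((X,\sheaf O_X),\sheaf F_X(g^*f'^*\sheaf L))$ to $((Y',\sheaf O_{Y'}),\sheaf F_{Y'}(\sheaf L))$. Then I would apply the lemma a second time to $(h,\eta)\circ(f,\phi)$ twisted by $\sheaf L$, producing the analogous identity
\[
  (h,\eta)(\sheaf L)\circ(f,\phi)(h^*\sheaf L) = \bigl((h,\eta)\circ(f,\phi)\bigr)(\sheaf L)
\]
with source $((X,\sheaf O_X),\sheaf F_X(f^*h^*\sheaf L))$.

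By the commutativity of the given untwisted square, $(f',\phi')\circ(g,\gamma) = (h,\eta)\circ(f,\phi)$, so twisting both sides by $\sheaf L$ yields the same morphism of ringed coefficient data. The right-hand sides of the two displayed equalities therefore coincide, hence so do the left-hand sides, modulo the identification of their sources via the canonical isomorphism $g^*f'^*\sheaf L\cong f^*h^*\sheaf L$ induced by the equality $f'g = hf$ at the level of ringed spaces. This is precisely the commutativity asserted by the corollary.

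The main obstacle is bookkeeping with these canonical identifications: one should verify that the identification $\sheaf F_X(g^*f'^*\sheaf L)\cong \sheaf F_X(f^*h^*\sheaf L)$ appearing in the left column of the corollary's diagram is indeed the one induced functorially from $f'g = hf$, rather than some twisted variant. This amounts to an application of the naturality statement built into \prettyref{lem:general-twist-of-structure-morphism}, so no essentially new input is needed beyond \prettyref{lem:twisted-composition}.
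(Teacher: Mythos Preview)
Your proposal is correct and matches the paper's approach: the paper presents this corollary as an immediate reformulation of \prettyref{lem:twisted-composition} (introducing it with ``We will mostly use this lemma in the following form'') and gives no separate proof, so the two applications of the lemma you describe are exactly what is intended. Your remark about tracking the canonical identifications is appropriate, since the lemma itself is stated only ``up to the canonical identification \(f^*g^*\sheaf L \cong (gf)^*\sheaf L\)''.
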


\begin{example}
  \label{eg:twist-functorial}
  Let $F$ be a field, let $f\colon X\to Y$ be a morphism of smooth $F$-schemes, and let $\mathcal{L}$ be a line bundle over $Y$. Any equivariant morphism of abelian sheaves with $\mathbb{G}_{\op{m}}$-action $\phi\colon\mathcal{F}\to \mathcal{G}$ on the big Zariski site $\op{Sm}_F$ induces well-defined morphisms of abelian sheaves \(\phi_Y(\sheaf L)\colon \sheaf F|_Y(\sheaf L)\to \sheaf G|_Y(\sheaf L)\) and \(\phi_X(f^*\sheaf L)\colon \sheaf F|_X(f^*\sheaf L)\to \sheaf G|_X(f^*\sheaf L)\) such that the following diagram commutes:
  \[
    \xymatrix@C=7em{
      \mathcal{F}|_Y(\mathcal{L}) \ar[r]^{\phi_Y(\sheaf L)} \ar[d] & \mathcal{G}|_Y(\mathcal{L}) \ar[d] \\
      f_\ast (\mathcal{F}|_X(f^\ast\mathcal{L})) \ar[r]^{f_*\phi_X(f^*\sheaf L)} & f_\ast(\mathcal{G}|_X(f^\ast\mathcal{L}))
    }
  \]
  Here the vertical morphisms are the ones induced from the sheaf restriction morphisms as in \prettyref{eg:morphism-on-big-site-as-morcd}.
\begin{ARXIV}
  Indeed, this is a case of \prettyref{cor:twist-key} corresponding to the following commutative diagram of ringed coefficient data, in which \(\varphi\) and \(\gamma\) denote the structure maps for \(\sheaf F\) and \(\sheaf G\):
  \[
    \xymatrix{
      ((Y,\sheaf O_Y),\sheaf F_Y)             \ar@{<-}[r]^{(\id,\phi_Y)} & ((Y,\sheaf O_Y),\sheaf G_Y)\\
      ((X,\sheaf O_X),\sheaf F_X) \ar[u]^{(f,\varphi)}\ar@{<-}[r]^{(\id,\phi_X)} & ((X,\sheaf O_X),\sheaf G_X) \ar[u]^{(f,\gamma)}
    }
  \]
\end{ARXIV}
\end{example}

\subsection{Recollections on Witt groups and fundamental ideals}
Given a scheme with a line bundle $(X, \mathcal{L})$, let \(\op{VB}(X)\) denote the category of vector bundles over \(X\) and let $\#_\mathcal{L}\colon \op{VB}(X)^{\textnormal{op}} \rightarrow \op{VB}(X) $ denote the functor sending $ \mathcal{F}$ to $ \mathcal{F}^{\vee_\mathcal{L}} : = \mathcal{H}\textnormal{om}_{\mathcal{O}_X}(\mathcal{F}, \mathcal{L}) $.
\begin{definition}
  The Witt group $\op{W}(X, \mathcal{L})$ of the pair $(X, \mathcal{L})$ is the Witt group
  \(
  \op{W}(X, \mathcal{L}) : = \op{W}(\op{VB}(X), \#_\mathcal{L})
  \)
  of the exact category with duality $(\op{VB}(X), \#_\mathcal{L})$, equipped with the usual double-dual identification.
\end{definition}
To simplify notation, we often omit $\mathcal{L}$ when $\mathcal{L}=\sheaf{O}$.
Note that $\op{W}(X,\mathcal{L})$ is a $\op{W}(X)$-module by left multiplication/tensoring with symmetric forms.

There is a well-defined ring homomorphism
\[
  \op{rk}\colon \op{W}(X) \rightarrow \op{H}^0_{\et}(X, \Z/2\Z )
\]
which assigns to an isometry class of a symmetric bilinear form the rank of its underlying vector bundle modulo two. The kernel of the rank homomorphism is denoted $\op{I}(X)$ and is called the fundamental ideal. Let $\op{I}^j(X)$ denote the \(j^{\text{th}}\) power of this fundamental ideal, and let $\op{I}^j(X, \mathcal{L}): = \op{I}^j(X) \cdot \op{W}(X, \mathcal{L})$ denote the \(\op{W}(X)\)-submodule of \(\op{W}(X,\mathcal{L})\) generated by \(\op{I}^j(X)\).

\begin{definition}\label{def:W-and-I-sheaves}
  We write \(\W_{\mathcal L}\) for the Zariski sheafification of the presheaf \((U\subseteq X) \mapsto \op{W}(U, \mathcal{L})\) on the small Zariski site of~$X$, and
  \(\I^j_{\mathcal L}\)  for the Zariski sheafification of the presheaf $(U\subseteq X) \mapsto \op{I}^j(U, \mathcal{L})$.
\end{definition}
Again, \(\mathcal{L}=\sheaf{O}\) will usually be suppressed in the notation. The \(\I^k\)-cohomology of a scheme \(X\) is by definition the sheaf cohomology of \(X\) with coefficients in \(\I^k\),
or with coefficients in the twisted variants \(\I^k_{\mathcal L}\).

\begin{ARXIV}
\begin{remark}
  \label{rem:twotwists}
  As we will see in \prettyref{thm:allIcohomologyagree} below,
  the associated Zariski sheaf cohomology groups coincide with the groups considered by Fasel \cite{fasel:ij} at least when $X$ is smooth over a field $F$. Indeed, the Gersten conjecture for Witt groups is known for local rings containing a field \cite{bgpw}, and we may use \cite{gille} to pass to $\op{I}^j(X, \mathcal{L})$, see \cite[section 2.1]{fasel:ij}.
  This implies in particular that our definition of $\mathbf{I}^j_\mathcal{L}$ agrees with Fasel's definition as a sheafification of the kernel of
  the first map in a Gersten complex in \cite[section 2.1]{fasel:ij}:
  there is a map from our presheaf to
  the one of loc.\ cit., and it is an isomorphism
  on local rings, hence on the associated sheaves.
\end{remark}
\end{ARXIV}

We can also twist the sheaves \(\I^j\) using the construction of \prettyref{def:general-twist}.  Indeed, for any scheme \(U\), we have a morphism \(\Gm(U)\to \op{W}(U)\) sending an invertible function to the associated symmetric form of rank one.  This defines a morphism of sheaves of rings \(\Gm\to \W\) and hence equips each of the sheaves \(\I^j\) with a canonical linear \(\Gm\)-action. Thus, for any line bundle \(\sheaf L\) over \(U\), we have twisted sheaves \(\I^j(\sheaf L)\).  Luckily, twists in this sense agree with twists in the sense considered above:

\begin{proposition}
  \label{prop:identtwist}
  For any smooth variety $X$ over a field and any line bundle $\mathcal{L}$ over $X$ there is an isomorphism $\mathbf{I}^j(\mathcal{L}) \cong \mathbf{I}^j_\mathcal{L}$ of Zariski sheaves which is compatible with the linear \(\Gm\)-action on both sides.
  \AKTONLY{\qed}
\end{proposition}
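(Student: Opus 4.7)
The plan is to build an explicit, $\Gm$-equivariant morphism of presheaves whose sheafification yields the desired isomorphism. At the presheaf level, on an open $U \subseteq X$, I define
\[
  \phi_U\colon \I^j(U) \otimes_{\Z[\mathcal{O}(U)^\times]} \Z[\mathcal{L}(U)^\times] \longrightarrow \op{I}^j(U) \cdot \op{W}(U,\mathcal{L}), \qquad \alpha \otimes [s] \longmapsto \alpha \cdot \langle s \rangle_{\mathcal{L}},
\]
where $\langle s \rangle_{\mathcal{L}} \in \op{W}(U,\mathcal{L})$ denotes the rank-one symmetric form $\mathcal{O}_U \otimes \mathcal{O}_U \to \mathcal{L}|_U$, $a \otimes b \mapsto abs$, and $\cdot$ is the $\op{W}(U)$-module structure on $\op{W}(U,\mathcal{L})$. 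For this to descend to the tensor product (rather than only being defined on the free module on $\mathcal{L}(U)^\times$), the key identity I need is $\langle u \rangle \cdot \langle s \rangle_{\mathcal{L}} = \langle u s \rangle_{\mathcal{L}}$ in $\op{W}(U,\mathcal{L})$ for any unit $u \in \mathcal{O}(U)^\times$; this is a direct unwinding of the definition of the tensor product of symmetric bilinear forms.

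To verify that the induced sheaf map $\phi\colon \I^j(\mathcal{L}) \to \I^j_{\mathcal{L}}$ is an isomorphism, I reduce to a local check on any open $U$ over which $\mathcal{L}$ admits a trivializing section $s_0$. On the one hand, by \prettyref{rem:twisted-sheaf-is-presheaf-tensor}, $\I^j(\mathcal{L})|_U$ is already the presheaf tensor product, and it is identified with $\I^j|_U$ via $\alpha \mapsto \alpha \otimes [s_0]$. On the other hand, the trivialization $s_0$ induces a $\op{W}(V)$-linear isomorphism $\op{W}(V,\mathcal{L}) \cong \op{W}(V)$ for each $V \subseteq U$ with inverse $\beta \mapsto \beta \cdot \langle s_0 \rangle_{\mathcal{L}}$, and this restricts to an isomorphism $\I^j_{\mathcal{L}}|_U \cong \I^j|_U$ since the submodule structure is preserved by $\op{W}(V)$-linear maps. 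Under both identifications, $\phi|_U$ becomes the identity on $\I^j|_U$, so $\phi$ is locally, and hence globally, an isomorphism.

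Finally, I check $\Gm$-equivariance. The action on the source acts on the first tensor factor, $u \cdot (\alpha \otimes [s]) = \langle u \rangle \alpha \otimes [s]$, while the action on the target is given by multiplication via the ring map $\Gm \to \W$, so $u \cdot \beta = \langle u \rangle \beta$. Then $\phi_U(\langle u \rangle \alpha \otimes [s]) = \langle u \rangle \alpha \cdot \langle s \rangle_{\mathcal{L}} = \langle u \rangle \cdot \phi_U(\alpha \otimes [s])$, establishing equivariance. The only non-routine input in the whole argument is the identity $\langle u \rangle \cdot \langle s \rangle_{\mathcal{L}} = \langle u s \rangle_{\mathcal{L}}$, which is both the obstacle to the well-definedness of $\phi_U$ and the engine behind its equivariance; once this is in hand, the rest is bookkeeping with the trivialization.
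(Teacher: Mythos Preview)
Your proof is correct and follows essentially the same approach as the paper: both construct the map at the presheaf level by sending $\alpha \otimes [s]$ to the form obtained by post-composing $\alpha$ with the isomorphism $\mathcal{O}|_U \to \mathcal{L}|_U$ determined by $s$ (which is precisely your $\alpha \cdot \langle s\rangle_{\mathcal{L}}$), and then check that this is an isomorphism locally where $\mathcal{L}$ trivializes. You are somewhat more explicit than the paper about the well-definedness identity $\langle u\rangle\cdot\langle s\rangle_{\mathcal{L}} = \langle us\rangle_{\mathcal{L}}$ and about the $\Gm$-equivariance, both of which the paper leaves implicit.
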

\begin{ARXIV}
\begin{proof}
  \def\LL{\mathcal{L}}
  \def\VV{\mathcal{V}}
  An element of \(\LL(U)^\times\) defines an isomorphism \(\OO|_U\to\LL|_U\).  Thus, from a symmetric vector bundle \(\phi\colon \VV\to \Hom_\OO(\VV,\OO|_U)\) over \(U\) and an element \(u\in \LL(U)^\times\) we can construct a symmetric vector bundle for the \(\LL\)-twisted duality \(u_*\phi\colon \mathcal V\to \Hom_\OO(\VV,\OO|_U)\to \Hom_\OO(\VV,\LL|_U)\).  We thus obtain a morphism of presheaves
  \[
    \gamma\colon \op{I}^j(U)\otimes_{\Z[\Gm(U)]}\Z[\LL(U)^\times]\to \op{I}^j(U,\LL)
  \]
  that sends a pure tensor \((\VV,\phi)\otimes u\) to \((\VV,u_*\phi)\). Over any open \(U\) over which \(\LL\) is trivial, this morphism \(\gamma\) is an isomorphism. In particular, \(\gamma\) induces an isomorphism of the respective sheafifications.
\end{proof}
\end{ARXIV}
In the following, we sometimes denote the sheaf \(\W\) on \(X\) by \(\W_X\) for clarity.  The functoriality of the Witt sheaves furnishes us with structure morphisms
\(
\phi\colon f_*\W_X \leftarrow \W_Y
\)
for morphisms of schemes \(f\colon X\to Y\) over \(F\). In particular, any such morphism $f$ induces a morphism of ringed coefficient data
\[
  (f,\phi)\colon ((X,\sheaf O_X),\W_X)\to ((Y,\sheaf O_Y),\W_Y).
\]
More generally, we have associated morphisms of coefficient data for twists of the fundamental ideal sheaves as follows:

\begin{definition}\label{def:induced-morphism-of-cd}
  Consider a morphism of schemes \(f\colon X\to Y\) over \(F\). Given an exponent \(j\) and a line bundle \(\sheaf L\) over \(Y\), the \textbf{induced morphism of coefficient data}
  \[
    ((X,\sheaf O_X), \I^j(f^*\sheaf L)) \to ((Y,\sheaf O_Y), \I^j(\sheaf L))
  \]
  is obtained by restricting \((f,\phi)\) to the sheaves \(\I^j\) and by twisting via the construction of \prettyref{def:twisted-morcd}.
\end{definition}

\begin{remark}[big site]
  When we work on the big Zariski site $\op{Sm}_F$ of smooth schemes over \(F\), \(\W\) may occasionally denote the sheafification of the presheaf \((U\mapsto \op{W}(U))\) on this big site.  The restriction of this sheaf to a scheme \(X\) agrees with the sheaf \(\W_X\) considered above, i.e.\ with the sheafification of the restricted Witt presheaf, so no confusing seems likely.
\end{remark}

\begin{definition}[Mah\'{e}]\label{def:signature}
  Let $U$ be a scheme. The global signature is the ring homomorphism
  \begin{align*}
    \sign^U\colon \op{W}(U) &\rightarrow \op{C}(U_\r,\Z)=\op{H}^0(U_\r,\Z)  \\
    [\phi] &\mapsto \left(\sign([\phi])\colon (x, P) \mapsto \sign_P(i^*_x\phi)\right)
  \end{align*}
  where $\sign_P \colon \op{W}(\kappa(x)) \rightarrow \Z$ sends a form $[\varphi]$ to its signature with respect to the ordering $P$ on the residue field $\kappa(x)$ and
  $U_\r$ is the real spectrum of $U$ discussed in Subsection \ref{sec:prelim-ret}.

  If U is a scheme over $\R$, we may replace $sign_P$ by the classical signature for the closed point $x$, and we denote the resulting variant $\op{W}(U) \rightarrow \op{C}(U(\R),\Z)$ by $\sign^U$ as well.
\end{definition}

\begin{ARXIV}
There is an obvious relation between the rank homomorphism $\op{rk}$ and
the global signature mod 2,
see e.g.\ \cite[(8.3)]{jacobson}.
\end{ARXIV}

\subsection{Recollections on \texorpdfstring{$\I$}{I}-cohomology}\label{sec:I-recollections}
\begin{definition}\label{def:I-cohomology}
  The \textbf{\(\I^k\)-cohomology} of a scheme \(X\) equipped with a line bundle \(\Lb\) is the Zariski sheaf cohomology of \(X\)  with coefficients in the sheaf of abelian groups $\I^k(\Lb)$ defined in \prettyref{def:W-and-I-sheaves}\slash\prettyref{prop:identtwist}.  We will denote it as \(\op{H}^i_{\rm Zar}(X, \I^k(\Lb))\), or simply as \(\Icohom{i}{X}{\I^k}{\Lb}\).  In general statements that hold for all exponents~\(k\), we simply say \textbf{\(\I\)-cohomology} in place of \(\I^k\)-cohomology.
\end{definition}
In principle, \(\I^k\)-cohomology can be computed using an injective resolution of $\I^k$. However, apart from the existence of pullback maps along morphisms of schemes, most of the fundamental properties of \(\I\)-cohomology are difficult to establish using injective resolutions.  Instead, Fasel uses explicit Gersten resolutions to establish the fundamental properties in his foundational text \cite{fasel:memoir}.  Pushforwards for proper morphisms and pullbacks for flat morphisms are constructed in section 9.3 of loc.\ cit.  The latter extend to arbitrary maps between smooth schemes, see \prettyref{def:sheaf-pullback} and \prettyref{prop:GoGe-pullback-compatible} below for more details. We have a long exact localization sequence \cite[9.3.4 and 9.3.5]{fasel:memoir}, at least if the closed complement is smooth.  We also have homotopy invariance for vector bundles over regular schemes, see \cite [Th\'eor\`eme 11.2.9]{fasel:memoir}. Moreover, the \(\I\)-cohomology groups form a graded ring \cite{fasel:chowwittring}.

For the comparison results in later sections, in particular those regarding the ring structure, the sheaf theoretic constructions are more suitable than the constructions using Gersten complexes.  We therefore discuss and compare both.  We begin by briefly recalling the approach using Gersten complexes in \prettyref{sec:construction-Gersten}, noting in \prettyref{thm:allIcohomologyagree} that the \(\I^k\)-cohomology groups defined in this way indeed coincide with the sheaf cohomology groups with coefficients in \(\I^k\) for smooth \(X\).   In \prettyref{sec:I-pullbacks}, we verify, following \cite{asok-fasel:euler}, that pullbacks constructed using Gersten complexes coincide with the sheaf theoretic pullbacks of \prettyref{sec:construction-Godement}.  In \prettyref{sec:I-products}, we verify that, likewise, products constructed using Gersten complexes coincide with the products constructed via Godement resolutions in \prettyref{sec:Godement-product}.  Lastly, in \prettyref{sec:thomclass}, we recall the construction of Thom classes and pushforwards.  

Everything in this section applies verbatim to $\mathbf{J}$-cohomology, where $\mathbf{J}^k \cong \mathbf{K}_k^{\rm MW}$, with one notable exception concerning the commutativity of the ring structure (see \prettyref{rem:J-graded-commutativity}). In fact, all arguments and almost all references below apply to Chow--Witt groups or their bigraded extension.

\subsubsection{Gersten resolutions}
\label{sec:construction-Gersten}

\newcommand{\II}{\mathbb I} We recall the basic construction of Gersten resolutions from \cite{BW02}, see also \cite{gille} and \cite{fasel:memoir}.  Other constructions of the Gersten complex for $\mathbf{W}$ and $\mathbf{GW}$ are known to agree with this one: the construction in \cite{MField} agrees with the one of Schmid, and the latter one agrees with the ones of Fasel et.\ al.\ discussed here by \cite[section 7]{fasel:memoir}.

Let $X$ be a regular scheme of dimension $n$. Let $\Lb$ be a line bundle. We denote by $(D^{\rm b}(X), \vee_\Lb, \varpi_\Lb)$ the derived category with duality where $ \vee_\Lb\colon D^{\rm b}(X)^\textnormal{op} \rightarrow D^{\rm b}(X):  V \mapsto \Hom_{\OO_X}(V, \Lb) $ and $\varpi_\Lb$ is the double dual identification. Let $D^{(i)} \subset D^{\rm b}(X)$ be the triangulated subcategory of objects whose homology is supported in codimension $\geq i$, and let $D^i$ denote the triangulated category $ D^{(i)}/D^{(i+1)}$.
The augmented Gersten--Witt complex on \(X\) has the form
\[
  0 \rightarrow \op{W}(X, \Lb) \stackrel{d}\rightarrow \op{W}(D^0, \Lb) \stackrel{d}\rightarrow \op{W}^1(D^1, \Lb) \stackrel{d}\rightarrow \cdots \stackrel{d}\rightarrow \op{W}^n(D^n, \Lb) \rightarrow 0
\]
where $\op{W}^i$ denotes Balmer's derived Witt groups. The terms of the complex can be identified via isomorphisms
\begin{equation}\label{eq:identification-of-terms-in-Gersten-complex}
  \Sigma\colon \op{W}^i(D^i, \Lb) \cong \bigoplus_{x \in X^{(i)}} \op{W}(k(x), \omega_{x}^\Lb)
\end{equation}
obtained from localizations and dévissage where $\omega_{x}^\Lb : = \op{Ext}^i_{\OO_x}(k(x), \Lb_x)$ is a one-dimensional vector space over $k(x)$ for each point $x \in X^{(i)}$.

Denote by $\op{I}^{i,j}(X, \Lb) \subset \op{W}^i(D^i, \Lb)$ the subgroup corresponding to the sum of the $j$-th powers of fundamental ideals in the Witt groups \(\op{W}(k(x), \omega_{x}^\Lb)\) under the isomorphism \(\Sigma\), and by $\I^{i,j}_\Lb$ the sheafification of $U\mapsto \op{I}^{i,j}(U, \Lb|_U)$ on $X$. Note that $\I^{i,j}_\Lb$ is a flasque sheaf.
The Gersten--Witt complex is compatible with the fundamental ideal by \cite{gille}, or more precisely \cite[Th\'{e}or\`{e}me 9.2.4]{fasel:memoir} and \cite{fasel:ij}, i.e.,\ the differential $d$ in the Gersten--Witt complex satisfies $d(\op{I}^m(k(x), \omega_x^\Lb)) \subset \op{I}^{m-1}(k(y), \omega_y^\Lb)$ for any $m \in \Z$, $x \in X^{(i)}$ and $y \in X^{(i+1)}$.  We thus obtain, by restriction, a complex of abelian groups
\begin{align*}
  \gersten{\bullet}{X}{\I^j}{\Lb}&\colon  \op{I}^{0,j}(X, \Lb) \rightarrow \op{I}^{1,j-1}(X, \Lb) \rightarrow \cdots \rightarrow \op{I}^{n,j-n}(X, \Lb) \to 0,
                                   \intertext{%
                                   and a complex of flasque sheaves on \(X\)
                                   }
                                   \gersten{\bullet}{-}{\I^j}{\Lb}&\colon  \I^{0,j}_\Lb \rightarrow \I^{1,j-1}_\Lb \rightarrow \cdots \rightarrow \I^{n,j-n}_\Lb \to 0,
\end{align*}
which we also refer to as Gersten--Witt complexes.
\begin{theorem}
  \label{thm:allIcohomologyagree}
  Let $X$ be a smooth $F$-scheme of dimension $n$. Then, for any $i$ and any $j$, we have canonical isomorphisms
  \[
    \op{H}^i(\gersten{\bullet}{X}{\I^j}{\Lb}) \cong \op{H}^i_{\rm Zar}(X, \I^j(\Lb)) \cong   \op{H}^i_{\rm Nis}(X, \I^j(\Lb))
  \]
  We therefore simply write \(\Icohom{i}{X}{\I^j}{\Lb}\) for the above cohomology groups from now onwards.
\end{theorem}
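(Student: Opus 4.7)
The plan is to produce a single flasque (or at least acyclic) resolution of $\I^j(\Lb)$ that simultaneously computes sheaf cohomology in both the Zariski and Nisnevich topology, so that all three groups in the statement get identified with $\op{H}^*$ of the same complex of global sections, namely $\gersten{\bullet}{X}{\I^j}{\Lb}$.

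For the first isomorphism, I would first verify that $\gersten{\bullet}{-}{\I^j}{\Lb}$ is a resolution of $\I^j(\Lb)$ on the small Zariski site of $X$. The stalk at $x \in X$ is the Gersten--Witt complex of the regular local ring $\OO_{X,x}$, whose exactness for $\W$ is the Gersten conjecture established by Balmer--Walter, and the passage to the subgroups $\op{I}^j$ is due to Gille and Fasel, as already cited in \prettyref{rem:twotwists}. Injectivity of the augmentation $\I^j(\Lb)\to\gersten{0}{-}{\I^j}{\Lb}$ also follows stalk-wise. Next, each term is flasque, since sections over $U\subseteq X$ decompose as the direct sum $\bigoplus_{x \in U^{(i)}} \op{I}^{j-i}(k(x),\omega_x^\Lb)$, and extension-by-zero on points outside $U$ provides a splitting of the restriction map. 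An appeal to \prettyref{lem:sheaf-cohomology-unique} then yields $\op{H}^i_{\rm Zar}(X, \I^j(\Lb)) \cong \op{H}^i(\gersten{\bullet}{X}{\I^j}{\Lb})$.

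For the Nisnevich comparison, I would run the same argument after passing to the Nisnevich site. The stalks at Henselian local rings $\OO_{X,x}^h$ remain exact because $\OO_{X,x}^h$ is a filtered colimit of local rings of étale neighborhoods of $x$ (each of which is a local ring of a smooth $F$-scheme and so satisfies the Gersten conjecture), the Gersten--Witt complex is functorial along essentially smooth pullbacks, and filtered colimits preserve exactness. Moreover, each term $\gersten{i}{-}{\I^j}{\Lb}$ is a sum of pushforwards of abelian groups from single-point schemes $\Spec k(x)$ (with $x \in X^{(i)}$), and such schemes admit no non-trivial Nisnevich covers, so the terms remain acyclic for Nisnevich cohomology. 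The same complex of global sections therefore also computes $\op{H}^i_{\rm Nis}(X, \I^j(\Lb))$, giving the second isomorphism.

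The main obstacle is the functoriality of the Gersten--Witt complex under essentially smooth (and in particular étale) morphisms used in the Nisnevich stalk argument; this is not entirely immediate from the definition via Balmer's filtration $D^{(i)} \subset D^{\rm b}(X)$ and the dévissage isomorphism $\Sigma$ in \eqref{eq:identification-of-terms-in-Gersten-complex}, but it is developed in detail in \cite{fasel:memoir}, from which in fact both isomorphisms can be read off directly.
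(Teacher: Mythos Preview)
Your argument for the first isomorphism is essentially the same as the paper's: the augmented Gersten complex is a flasque resolution of $\I^j(\Lb)$, so sheaf cohomology is computed by its global sections. One small correction: the exactness of the Gersten--Witt complex over regular local rings containing a field is due to Balmer--Gille--Panin--Walter \cite{bgpw}, not Balmer--Walter \cite{BW02} (who constructed the complex).

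For the Zariski--Nisnevich comparison you take a genuinely different route from the paper. The paper simply invokes \cite[Corollary~5.43]{MField}, which says that for strictly $\mathbb{A}^1$-invariant Nisnevich sheaves the comparison map between Zariski and Nisnevich cohomology is an isomorphism, and then notes that the sheaves $\I^j$ fit into Morel's framework. You instead argue directly that the same Gersten complex also serves as an acyclic resolution on the Nisnevich site: exactness at Henselian stalks via a filtered-colimit reduction to the smooth finite-type case, and Nisnevich acyclicity of each term because it is a coproduct of pushforwards from spectra of fields. This is a valid and more self-contained approach, in the spirit of the Bloch--Ogus--Gabber formalism; however, the step ``$\Spec k(x)$ admits no non-trivial Nisnevich covers, hence $(i_x)_*A$ is Nisnevich-acyclic on $X$'' hides a nontrivial verification (one must show that the higher direct images $R^q(i_x)_*$ vanish, or equivalently check the behaviour on elementary distinguished squares), which you should at least flag or cite. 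The paper's route trades this verification for checking that $\I^j$ is strictly $\mathbb{A}^1$-invariant, which is packaged in \cite[Example~3.34, Lemma~3.35]{MField}.
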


\begin{proof}
  For the first isomorphism, the augmented Gersten complex provides a flasque resolution of the sheaf \(\I^j_\Lb\), i.e.,\ the complex
  \[
  0 \to \I^j_\Lb \rightarrow \I^{0,j}_\Lb \rightarrow \I^{1,j-1}_\Lb \rightarrow \cdots \rightarrow \I^{n,j-n}_\Lb \rightarrow 0
  \]
  is exact.  This is shown for \(j=0\), i.e.,\ for the sheaf \(\W\), in \cite[Lemma~4.2 and Theorem~6.1]{bgpw}; for the general case of fundamental ideals and twisted line bundles see \cite[Proof of Theorem~9.2.4]{fasel:memoir} and \cite[Cor.~7.2]{gille}.  So the first isomorphism is the canonical isomorphism coming from the uniqueness of sheaf cohomology\ARXIVONLY{ (\prettyref{lem:sheaf-cohomology-unique})}.
  The second isomorphism is \cite[Corollary~5.43]{MField}, using that the sheaves $\mathbf{I}^j$ satisfy the conditions of Morel's framework by \cite[Example 3.34 and Lemma~3.35]{MField}.
\end{proof}

\subsubsection{Supports}
For a closed subset \(i\colon Z\hookrightarrow X\) of a smooth $F$-scheme $X$, we have a definition of \(\I\)-cohomology with support in \(Z\) as the derived functors of \(\Gamma_Z=\Gamma(Z,-)\circ i^!\), cf.~\prettyref{def:cohomology-with-support}. This can also be expressed in terms of Gersten complexes, as follows.
\begin{proposition}\label{prop:GoGe-supports-compatible}
  Let \(j\colon U\hookrightarrow X\) denote the open complement of \(i\colon Z\hookrightarrow X\).  Consider the natural transformation \(\Gamma(X,-)\to \Gamma(U,-)\circ j^*\).  Following \cite[Définition~9.3.3]{fasel:memoir}, define the Gersten complex with support on \(Z\) as the kernel of this natural transformation applied to the Gersten complex on \(X\):
  \[
    \gersten[Z]{\bullet}{X}{\I^j}{\Lb} :=
    \ker\left(
      \gersten{\bullet}{X}{\I^j}{\Lb} \to \gersten{\bullet}{U}{\I^j}{\Lb}
    \right)
  \]
  There is a natural isomorphism \(\Icohom[Z]{s}{X}{\I^j}{\Lb} \cong \op{H}^s(\gersten[Z]{\bullet}{X}{\I^j}{\Lb})\).
\end{proposition}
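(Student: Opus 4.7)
The plan is to deduce this from \prettyref{thm:allIcohomologyagree} together with the general mechanism for computing cohomology with supports via flasque resolutions (\prettyref{rem:cohomology-with-support-via-flasque-resolution}). The starting point is that the sheaf-theoretic Gersten complex \(\I^{\bullet,j-\bullet}_\Lb\) provides a flasque resolution of \(\I^j_\Lb\) on \(X\), since each term \(\I^{i,j-i}_\Lb\) is flasque and the augmented complex is exact by the references cited in the proof of \prettyref{thm:allIcohomologyagree}.

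I would then apply the functor \(\Gamma_Z = \Gamma\circ i_*\circ i^!\) to this resolution.  By \prettyref{rem:cohomology-with-support-via-flasque-resolution}, the cohomology of the resulting complex \(\Gamma_Z\I^{\bullet,j-\bullet}_\Lb\) computes \(\op{H}^*_Z(X,\I^j(\Lb))\).  It remains to identify this complex with \(\gersten[Z]{\bullet}{X}{\I^j}{\Lb}\).  For any flasque sheaf \(\sheaf F\) on \(X\), the defining sequences \eqref{eq:i!-defining-sequence} together with \prettyref{rem:i!-defining-sequence-for-flasque} yield a short exact sequence
\[
0\to \Gamma_Z\sheaf F \to \Gamma(X,\sheaf F) \to \Gamma(U,j^*\sheaf F)\to 0,
\]
so \(\Gamma_Z\I^{i,j-i}_\Lb\) is precisely the kernel of the restriction map \(\Gamma(X,\I^{i,j-i}_\Lb)\to \Gamma(U,j^*\I^{i,j-i}_\Lb)\).

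The key verification is that \(j^*\I^{i,j-i}_\Lb\) coincides with the Gersten sheaf \(\I^{i,j-i}_{\Lb|_U}\) on \(U\), so that the above restriction map is the component in degree \(i\) of the morphism of Gersten complexes \(\gersten{\bullet}{X}{\I^j}{\Lb}\to \gersten{\bullet}{U}{\I^j}{\Lb|_U}\).  This is a local matter on \(U\) and follows from the explicit description of the terms via the identification \eqref{eq:identification-of-terms-in-Gersten-complex} as direct sums over generic points of codimension \(i\), which is manifestly compatible with restriction to open subschemes: the points of codimension \(i\) in \(U\) are exactly those of codimension \(i\) in \(X\) that lie in \(U\), and the corresponding \(\omega_x^{\Lb}\) agree.

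Combining these steps, \(\Gamma_Z\I^{\bullet,j-\bullet}_\Lb\) is naturally isomorphic, as a complex of abelian groups, to \(\gersten[Z]{\bullet}{X}{\I^j}{\Lb}\), and passing to cohomology gives the asserted isomorphism.  I expect the only mild obstacle to be the clean identification \(j^*\I^{i,j-i}_\Lb \cong \I^{i,j-i}_{\Lb|_U}\); everything else is essentially formal consequence of \prettyref{thm:allIcohomologyagree} and the flasque version of the defining sequence for \(\Gamma_Z\).
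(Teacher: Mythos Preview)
Your proposal is correct and follows essentially the same approach as the paper: use the flasque Gersten resolution of \(\I^j_\Lb\), compute \(\op{H}^*_Z\) by applying \(\Gamma_Z\) to this resolution via \prettyref{rem:cohomology-with-support-via-flasque-resolution}, and identify \(\Gamma_Z\) with the kernel of restriction to \(U\) using the sequence~\eqref{eq:i!-defining-sequence}. Your additional verification that \(j^*\I^{i,j-i}_\Lb\) agrees with the Gersten sheaf on \(U\) is a detail the paper leaves implicit, but the argument is otherwise the same.
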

\begin{proof}
  We need to derive the functor \(\Gamma_Z=\Gamma(Z,-)\circ i^!\).
  \ARXIVONLY{As noted in \prettyref{rem:cohomology-with-support-via-flasque-resolution}, we }%
  \AKTONLY{We }%
  can compute \((\op{R}^s\Gamma_Z)(\I^j)\) by evaluating \(\Gamma_Z\) on a flasque resolution.  Choosing this resolution to be the flasque resolution \(\gersten{\bullet}{-}{\I^j}{\Lb}\) of \(\I^j(\Lb)\),
  \ARXIVONLY{and using sequence \eqref{eq:i!-defining-sequence}, }%
  we can identify \(\Gamma_Z\) with the functor \(\ker \left(\Gamma(X,-)\to \Gamma(U,-)\right)\).
\end{proof}
In the above situation, we have a short exact sequence of Gersten complexes \(0\to \gersten[Z]{\bullet}{X}{\I^j}{\Lb} \to \gersten{\bullet}{X}{\I^j}{\Lb} \to \gersten{\bullet}{U}{\I^j}{\Lb}\to 0\). The associated long exact cohomology sequence is the localization sequence of \cite[Théorème~9.3.4]{fasel:memoir}, and coincides via the isomorphism above with the localization sequence in \prettyref{prop:localization-sequence}.
\begin{remark}\label{rem:boundary-map-on-complexes}
The boundary map in this localization sequence is induced by a morphism of complexes \(\partial\colon \gersten{\bullet}{U}{\I^j}{\Lb} \to \gersten[Z]{\bullet+1}{X}{\I^j}{\Lb}\), at least up to a sign.  To see this, one notes that, in each degree, \(\gersten{i}{U}{\I^j}{\Lb}\) is a direct summand of \(\gersten{i}{X}{\I^j}{\Lb}\).  The differential on \(\gersten{i}{X}{\I^j}{\Lb}\) thus has a component that maps the direct summand \(\gersten{i}{U}{\I^j}{\Lb}\) to the direct summand \(\gersten[Z]{i+1}{X}{\I^j}{\Lb}\). See \cite[Lemma~2.10]{asok-fasel:euler} for more details.
\end{remark}

\begin{theorem}[Dévissage]\label{thm:I-devissage}
  Suppose \(i\colon Z\hookrightarrow X\) is a closed embedding of codimension~\(r\) of regular schemes. Let \(\omega_Z\) denote the determinant line bundle of the normal bundle of this embedding, and let \(\Lb\) be an arbitrary line bundle over \(X\).  We have an isomorphism of complexes \(\gersten{\bullet-r}{Z}{\I^{j-r}}{i^*\Lb\otimes\omega_Z} \cong \gersten[Z]{\bullet}{X}{\I^{j}}{\Lb}\)
  inducing isomorphisms in cohomology
  \[
    \Icohom{s-r}{Z}{\I^{j-r}}{i^*\Lb\otimes\omega_Z} \cong \Icohom[Z]{s}{X}{\I^{j}}{\Lb}.
  \]
\end{theorem}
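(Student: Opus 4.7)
The plan is to first exhibit the claimed isomorphism degree-by-degree using the explicit description of the Gersten complex in \eqref{eq:identification-of-terms-in-Gersten-complex}, and then verify compatibility with the Gersten differentials. The cohomology isomorphism is then formal.

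First, I would write out both complexes in degree $s$. On the right, by \prettyref{prop:GoGe-supports-compatible} combined with the description via points,
\[
\gersten[Z]{s}{X}{\I^{j}}{\Lb} = \bigoplus_{x\in X^{(s)}\cap Z} \op{I}^{j-s}(k(x),\omega_{x}^{\Lb,X}),
\]
where $\omega_x^{\Lb,X}=\op{Ext}^{s}_{\OO_{X,x}}(k(x),\Lb_x)$. On the left,
\[
\gersten{s-r}{Z}{\I^{j-r}}{i^{*}\Lb\otimes\omega_{Z}} = \bigoplus_{y\in Z^{(s-r)}} \op{I}^{j-s}(k(y),\omega_{y}^{i^{*}\Lb\otimes\omega_{Z},Z}).
\]
Since $X$ and $Z$ are regular and $i$ has pure codimension $r$, points of $Z^{(s-r)}$ correspond bijectively to points of $X^{(s)}\cap Z$ (with matching residue fields). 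So a degree-wise identification will be obtained term-by-term.

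Next, for a fixed such point $x\in Z^{(s-r)}=X^{(s)}\cap Z$, I would produce a canonical isomorphism of $k(x)$-lines $\omega_{x}^{\Lb,X}\cong \omega_{x}^{i^{*}\Lb\otimes\omega_{Z},Z}$. Locally $\OO_{Z,x}=\OO_{X,x}/(f_{1},\dots,f_{r})$ for a regular sequence $f_{1},\dots,f_{r}$, and the Koszul resolution of $\OO_{Z,x}$ yields the fundamental local isomorphism
\[
\op{Ext}^{r}_{\OO_{X,x}}(\OO_{Z,x},\Lb_{x})\;\cong\;(i^{*}\Lb)_{x}\otimes\omega_{Z,x}.
\]
Combining this with the spectral sequence (or the change-of-rings composition)
\[
\op{Ext}^{s-r}_{\OO_{Z,x}}\bigl(k(x),\op{Ext}^{r}_{\OO_{X,x}}(\OO_{Z,x},\Lb_x)\bigr) \;\Longrightarrow\; \op{Ext}^{s}_{\OO_{X,x}}(k(x),\Lb_x),
\]
which degenerates by depth reasons into the desired isomorphism. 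This gives the required matching of the summands in a canonical way, and hence an isomorphism of graded abelian groups
\[
\gersten{\bullet-r}{Z}{\I^{j-r}}{i^{*}\Lb\otimes\omega_{Z}} \;\xrightarrow{\;\cong\;}\; \gersten[Z]{\bullet}{X}{\I^{j}}{\Lb}.
\]

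The main obstacle is the compatibility with the differentials. The differential in the Gersten--Witt complex is defined via second residue maps, which depend on a choice of uniformizer, and the line-bundle twist by $\omega_Z$ is there precisely to absorb this dependence. I would verify the compatibility by reducing to a local calculation at a pair of adjacent points $x\in Z^{(s-r)}$, $y\in Z^{(s-r+1)}$ with $y\in \overline{\{x\}}$, and unwinding the Koszul identification above to compare second residues on $X$ versus on $Z$. This is precisely the content of the residue compatibility results of \cite{fasel:memoir} (see in particular the construction of the pushforward along a closed embedding), and also appears in the work of Gille; one may alternatively appeal directly to these references to avoid redoing the calculation.

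Finally, once the isomorphism of complexes is established, passage to cohomology gives the displayed isomorphism of $\I$-cohomology groups with support, completing the proof.
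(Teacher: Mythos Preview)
Your proposal is correct and follows essentially the same strategy as the paper: identify the index sets, match the summands termwise, and then defer the compatibility with differentials to the literature. The paper packages the termwise identification slightly differently, routing it through the absolute dualizing lines $\omega_{\kappa(z)/F}$ via \cite[Corollaire~6.2.8]{fasel:memoir} and the finite-length Witt groups $\op{W}^\fl$, rather than via your direct Ext/Koszul computation; both are standard and equivalent. For the differential compatibility, the paper is more specific than your proposal, pointing to the commutative diagram on \cite[p.~339]{Gil07b} together with \cite[Corollary~7.3]{gille} for the preservation of $\I$-powers, and it also remarks that the resulting map coincides with the pushforward $i_*$ of \cite[5.3.6,~6.3.10]{fasel:memoir} after forgetting support---a point worth making explicit since it is used later.
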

\begin{proof}
  This is stated in \cite[Remarque 9.3.5]{fasel:memoir}. We add some details for the reader's convenience. As $Z$ is regular and thus lci, it follows that the index sets $Z^{(s-r)}$ and $Z \cap X^{(s)}$ are in bijection. Using the notation as in 9.2.1/9.2.2 of loc.\ cit., the direct sums in degree $s$ of the relevant Gersten complexes have summands $\op{I}^{j-s}(\OO_{Z,z})$ and $\op{I}^{j-s}(\OO_{X,x})$. Under the above bijection, for corresponding points $z$ and $x$ there is a chain of isomorphisms
  \[
  \op{W}^\fl(\OO_{Z,z})\cong \op{W}(\kappa(z),\omega_{\kappa(z)/F})\cong \op{W}(\kappa(x),\omega_{\kappa(x)/F})\cong \op{W}^\fl(\OO_{X,x})
  \]
  given by applying \cite[Corollaire 6.2.8]{fasel:memoir}, and this isomorphism restricts to an isomorphism of the respective powers of fundamental ideals. To see that this is indeed an isomorphism of complexes, one has to check that these degreewise isomorphisms respect the differentials. For Witt groups, this follows using the two commutative squares in the diagram \cite[page 339]{Gil07b} in which the horizontal maps compose to the boundary of the Gersten complex. For the fact that the differentials respect $\I$-powers (up to a shift of one) see \cite[Corollary 7.3]{gille}.  The proofs in \cite{gille} and in \cite{fasel:memoir} rely on the identification of each term of the Gersten--Witt complex as a direct sum of Witt groups indexed on points of codimension $p$, see \eqref{eq:identification-of-terms-in-Gersten-complex} above and \cite[diagram (14) in Section~5.8]{gille}.  We observe that the vertical maps ${\rm Tr}$ in the diagram of \cite{Gil07b}  yield the (zig-zag) pushforward maps we described above. This can be seen combining the commutative diagram in the proof of \cite[Lemma~3.4]{Gil07b} with the commutative diagram in \cite[Proposition 3.6]{gille}. This also shows that for $f=i$ as above, the map $f_*$ of \cite[5.3.6, 6.3.10]{fasel:memoir} is precisely the one we study above composed with forgetting the support.
\end{proof}

\subsubsection{Pullbacks}
\label{sec:I-pullbacks}
\begin{definition}\label{def:I-pullback}
Given a morphism of schemes \(f\colon X\to Y\), closed subsets \(V\subset X\) and \(W\subset Y\) such that \(f^{-1}W\subset V\) and a line bundle \(\mathcal L\) over \(Y\), the \textbf{pullback in \(\I\)-cohomology}
\[
  \op{H}^*_{V}(X,\I^k(f^*\mathcal L)) \xleftarrow{(f,\phi)^*} \op{H}^*_{W}(Y,\I^k(\mathcal L)),
\]
is defined as in \prettyref{def:Go-pullback-with-support} from the morphism of coefficient data induced by \(f\) according to \prettyref{def:induced-morphism-of-cd}.
We will denote this pullback simply by~\(f^*\).
\end{definition}

The pullback along a morphism of smooth schemes \(f\colon X\to Y\) can also be constructed in terms of Gersten complexes.  To this end, one factors \(f\) as $X \to X \times Y \to Y$, where the first morphism is the graph of $f$ and the second is the projection onto $Y$. The graph of \(f\) is a regular embedding and the projection to \(Y\) is flat, so it suffices to treat these two classes of morphisms separately.

For flat $f$, it is possible to define a pullback directly on the level of Gersten complexes \cite[Section~3]{fasel:memoir}:
\[
\gersten{\bullet}{X}{\I^k}{f^*\Lb} \xleftarrow{f^*} \gersten{\bullet}{Y}{\I^k}{\Lb})
\]
To define a pullback with support, note that this construction is functorial and, in particular, compatible with restriction to open subsets.  Thus we obtain a morphism of complexes of sheaves,
\(
  f_*\gersten{\bullet}{-}{\I^k_X}{f^*\Lb} \leftarrow \gersten{\bullet}{-}{\I^k_Y}{\Lb})
  \).
For any closed subset \(W\subset Y\), we have an induced morphism of complexes of abelian groups $\gersten[f^{-1}W]{\bullet}{X}{\I^k}{\Lb} \leftarrow \gersten[W]{\bullet}{Y}{\I^k}{\Lb}$.  Passing to cohomology, we obtain a pullback with support as in \prettyref{def:I-pullback}.

For a regular embedding of smooth schemes \(i\colon Z\to X\), and a line bundle \(\Lb\) over \(X\), a pullback via Gersten complexes is constructed in \cite[Section~5]{fasel:chowwittring}. This is more delicate: it does not seem possible to define \(i^*\) directly on the level of Gersten complexes.  However, it is possible to define on the level of Gersten complexes a morphism \(\Theta\) that induces the pullback \((iq)^*\) along the composition \(iq\colon \Nb_Z X \to Z \to X\), where \(q\colon \Nb_Z X \to Z\) is the bundle projection of the normal bundle of the embedding.  As \(q\) is flat, we can then consider the following zig-zag of morphisms of complexes:
\[
  \xymatrix{
    \gersten{\bullet}{\Nb_Z X}{\I^k}{q^*i^*\Lb} 
    &
    \gersten{\bullet}{Z}{\I^k}{i^*\Lb} \ar[l]^-{q^*}
    &
    \gersten{\bullet}{X}{\I^k}{\Lb} \ar@/_2em/[ll]_{\Theta}
  }
\]
Once we pass to cohomology, \(q^*\) becomes an isomorphism by homotopy invariance \cite[Th\'eor\`eme 11.2.9]{fasel:memoir} and can be inverted, so we can define the pullback along \(i\) in \(\I\)-cohomology as \(i^* = (q^*)^{-1}\op{H}^\bullet(\Theta)\).

The construction of \(\Theta\) is based on the deformation to the normal bundle, for which we summarize our notation in \prettyref{fig:deformation-to-the-normal-bundle}.
\begin{figure}
\[
  \xymatrix{
    Z \ar[rr] \ar[dd]_s
    &&
    Z\times \A^1 \ar[dd]^{\tilde{i}}
    &&
    Z\times \Gm \ar[dd]^{i\times\id} \ar[ll]
    &&
    Z \ar[dd]^i \ar[ll] \ar@/_2em/[llll]
    \\\\
    \Nb_Z X \ar[rr]^{i_0} \ar[dd] \ar@{..>}[dr]^{q}
    &&
    D(Z,X) \ar[dd]^>(0.8){\pi} \ar@{..>}[dr]^{b}
    &&
    X\times \Gm \ar[ll]^{\iota'} \ar[dd] \ar@{..>}[dl]^{p_X}
    &&
    X \ar[ll] \ar@/_2em/[llll]_-<(0.2){i_1}  \ar[dd] 
    \\
    & Z \ar@{..>}[rr]^>(0.2){i} && X
    \\
    \ast \ar[rr]_{0}
    &&
    \A^1 
    &&
    \Gm \ar[ll]
    &&
    \ast \ar[ll]^{1} 
  }
\]
\caption{Deformation to the normal bundle}
\label{fig:deformation-to-the-normal-bundle}
\end{figure}
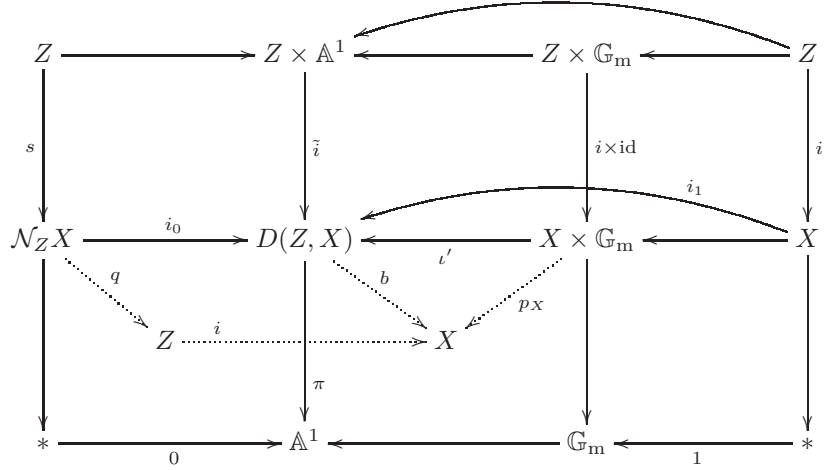
The deformation space $D(Z,X)$ is obtained by blowing up $X\times \mathbb{A}^1$ in $Z\times\{0\}$ and then removing the blowup of $X\times\{0\}$ along $Z\times\{0\}$.  By construction, it comes with a natural morphism to \(X\times \A^1\).  Composing with the projections, we obtain morphisms  \(\pi\colon D(Z,X) \to\A^1\) and \(b\colon D(Z,X)\to X\). The deformation space also comes with a natural embedding $\tilde{i}$ of \(Z\times \A^1\), and the composition of the central vertical arrows $\pi$ and $\tilde{i}$ is the projection onto $\mathbb{A}^1$. The fiber of \(\tilde i\) over $0$ is the zero section \(s\) of the normal bundle \(\Nb_Z X\), and the fibre over all $t\neq 0$ is the given embedding $i\colon Z\to X$.  In other words, the first, third and fourth vertical columns are obtained from the second column by base change along the inclusions of \(\{0\}\), \(\Gm\) or \(\{1\}\) into \(\A^1\), respectively.  In particular, the restriction of the normal bundle $\Nb_{Z\times\mathbb{A}^1}(D(Z,X))$ to any fiber is isomorphic to $\Nb_ZX$.   

The morphism of complexes \(\Theta\) is constructed as a composition \(\Theta = ({i_0}_*)^{-1}\partial m_t p_X^*\), the factors of which we now describe.  The first factor is simply the pullback along the flat projection \(p_X\colon X\times \Gm\to X\).  For the second factor, let $t$ denote the coordinate in $\Gm = \Spec(k[t,t^{-1}])$. Left multiplication with the form $\langle t, -1  \rangle \in \op{W}^0(\Gm)$ induces a morphism of complexes
\begin{equation}\label{eq:form-mt}
  \gersten{\bullet}{X\times \Gm}{\I^{k+1}}{p_X^*\Lb} \xleftarrow{\quad \langle t, -1  \rangle \cup - \quad}  \gersten{\bullet}{X\times \Gm}{\I^k}{p_X^*\Lb}.
\end{equation}
The required morphism $m_t$ of complexes up to a sign is given in degree $j$ by $(-1)^j \langle t, -1  \rangle \cup - $, see \cite[Lemma~5.1]{fasel:chowwittring}.
For the third factor of \(\Theta\), consider the boundary map in the localization sequence associated with \(\Nb_Z X \hookrightarrow D(Z,X) \hookleftarrow X\times \Gm\). As mentioned in \prettyref{rem:boundary-map-on-complexes}, this map can be realized as a morphism of complexes
\[
  \gersten[\Nb_Z X]{\bullet+1}{D(Z,X)}{\I^{k+1}}{b^*\Lb} \xleftarrow{\quad\partial\quad} \gersten{\bullet}{X\times \Gm}{\I^{k+1}}{p_X^*\Lb}  
\]
up to a sign. Hence, the composition $\partial m_t$ becomes a morphism of complexes.
Finally, since $\Nb_Z X$ is a Cartier divisor of $D(Z,X)$, we have the dévissage isomorphism described in \prettyref{thm:I-devissage}:
\[
  \gersten{\bullet}{\Nb_Z X}{\I^k}{q^*i^*\Lb}  \xrightarrow[\cong]{\quad{i_0}_*\quad} \gersten[\Nb_Z X]{\bullet+1}{D(X,Y)}{\I^{k+1}}{b^*\Lb}
\]

\begin{lemma}\label{lem:Fasels-Theta-is-morphism-of-sheaves}
Fasel's morphism \(\Theta\) defines a morphism of complexes of sheaves on \(X\):
\[
  i_*q_* \gersten{\bullet}{-}{\I^k_{\Nb_Z X}}{q^*i^*\Lb}  \xleftarrow{\quad\Theta\quad}  \gersten{\bullet}{-}{\I^k_X}{\Lb}
\]
\end{lemma}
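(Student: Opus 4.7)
The plan is to promote $\Theta$ from a morphism of global Gersten complexes to a morphism of complexes of sheaves on $X$ by verifying that each of its four factors is natural with respect to restriction to open subsets. For any open $U\subseteq X$, set $Z_U := i^{-1}(U)$, and observe that the entire deformation-to-the-normal-bundle diagram restricts to $U$: we obtain the open subschemes $U\times\Gm\subseteq X\times\Gm$, $D(Z_U,U)\subseteq D(Z,X)$, and $\Nb_{Z_U}U = q^{-1}(Z_U)\subseteq \Nb_Z X$, with compatible embeddings $i_{0,U}$, $\tilde{i}_U$, and with the line bundles $p_X^*\Lb$, $b^*\Lb$, $q^*i^*\Lb$ restricting to their analogues over $U$. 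The sheaf $i_*q_*\gersten{\bullet}{-}{\I^k_{\Nb_Z X}}{q^*i^*\Lb}$ evaluated on $U$ is exactly $\gersten{\bullet}{\Nb_{Z_U}U}{\I^k}{q^*i^*\Lb}$, and the source evaluated on $U$ is $\gersten{\bullet}{U}{\I^k}{\Lb}$, so the task reduces to showing that Fasel's construction performed over $U$ coincides with the restriction of the construction performed over $X$.

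I would then treat each of the four factors separately. The flat pullback $p_X^*$ is defined directly at the level of Gersten complexes \cite[Section~3]{fasel:memoir} and is functorial with respect to cartesian squares coming from open immersions in the base, so it descends to a morphism of sheaves. The left multiplication $m_t$ by $\langle t,-1\rangle$ is cup product with a global form on $X\times\Gm$; since this form restricts to the corresponding form on $U\times\Gm$, and since the sign $(-1)^j$ depends only on cohomological degree, $m_t$ is compatible with restriction. The boundary $\partial$ in the localization sequence for $\Nb_Z X \hookrightarrow D(Z,X) \hookleftarrow X\times\Gm$ is, as noted in \prettyref{rem:boundary-map-on-complexes}, a specific component of the differential of the ambient Gersten complex; this component is inherited by the Gersten complex over the open subscheme $D(Z_U,U)$, so $\partial$ is natural with respect to restriction. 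Finally, the dévissage isomorphism ${i_0}_*$ of \prettyref{thm:I-devissage} is built termwise out of local data over the points of $\Nb_Z X$, and therefore restricts to opens without change.

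Composing the four naturality squares yields the desired morphism of complexes of sheaves on $X$. The main (and rather modest) obstacle is the bookkeeping: making sure that at each stage the various line bundle twists on the intermediate spaces restrict compatibly, and that the morphisms remain compatible with the pushforwards $i_*q_*$ used to regard the target complex as a complex of sheaves on $X$ rather than on $\Nb_Z X$. The latter compatibility is automatic since $(i\circ q)^{-1}(U) = q^{-1}(Z_U)$, and the former is essentially formal because the line bundles on $D(Z,X)$, $X\times\Gm$ and $\Nb_Z X$ used in the construction are pulled back from $X$ via $b$, $p_X$ and $i\circ q$, respectively.
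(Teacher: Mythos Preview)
Your proposal is correct and follows essentially the same approach as the paper: restrict to an open $U\subseteq X$, identify the restricted deformation diagram with the deformation diagram for $Z_U\hookrightarrow U$, and check naturality of each of the four factors separately. The paper makes explicit one point you pass over, namely that the identification $D(Z,X)|_U \cong D(Z_U,U)$ requires that blowing up commutes with flat base change \cite[\stacktag{0805}]{stacks}; it also cites the base change formula \cite[Corollaire~12.2.8]{fasel:ij} for the compatibility of ${i_0}_*$ rather than appealing to the termwise local description, but your argument for this step is equally valid.
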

\begin{proof}
  Consider an open subscheme \(U\subset X\).  Pull back the spaces and morphisms in the upper half of \prettyref{fig:deformation-to-the-normal-bundle} along the inclusion of \(U\) into the copy of \(X\) that is the target of \(p_X\) and \(b\).  This yields another such diagram in which \(X\) is replaced by \(U\) and \(Z\) by \(Z\cap U\);  the restriction of \(D(Z,X)\) to \(U\) can be identified with $D(Z\cap U,U)$ since blowing up commutes with flat base change \cite[\stacktag{0805}]{stacks}, and the restriction of $\Nb_Z X$ to \(U\cap Z\) can be identified with $\Nb_{Z\cap U} U$.  We need to check that each factor of \(\Theta\) is compatible with the restriction to \(U\).  
  For the first factor \(p_X^*\), this has already been discussed.  For the second factor \(m_t\), see \cite[Lemma~5.4]{fasel:chowwittring}. The boundary map \(\partial\) commutes with flat pullback, hence in particular with the two inclusions \(U\times \Gm \subset X\times \Gm\) and \(D(Z\cap U,U)\subset D(Z,X)\).  Finally, \({i_0}_*\) commutes with the inclusions \(D(Z\cap U,U)\subset D(Z,X)\) and \(\Nb_{Z\cap U}U\subset \Nb_Z X\) by the base change formula \cite[Corollaire 12.2.8]{fasel:ij}.
\end{proof}
The lemma implies that, for any closed subset \(V\subset X\), we have an induced morphism of complexes \(\Theta_V\colon \gersten[\Nb_Z X|_V]{\bullet}{\Nb_Z X}{\I^k}{\Lb_\Nb} \leftarrow \gersten[V]{\bullet}{X}{\I^k}{\Lb}\).  We can thus define the pullback along \(i\) with support in \(V\) as we did for support in \(X\), as \(i^* = (q^*)^{-1}\op{H}^\bullet(\Theta_V)\).

\begin{proposition}\label{prop:GoGe-pullback-compatible}
Let \(f\colon X\to Y\) be a morphism between smooth schemes over~\(F\).  Suppose we have closed subsets \(V\subset X\) and \(W\subset Y\) such that \(f^{-1}W \subset V\), and a line bundle \(\Lb\) on \(Y\). The induced pullback on \(\I\)-cohomology in terms of Gersten complexes described above agrees with the sheaf-theoretic pullback with support of \prettyref{def:I-pullback} under the identifications of \prettyref{prop:GoGe-supports-compatible}.
\end{proposition}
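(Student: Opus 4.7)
The plan is to factor $f$ and reduce to two special cases, then invoke the universality of the sheaf-theoretic pullback (\prettyref{prop:Go-pullback-universal}) in each case. First, write $f = \pi_Y \circ \Gamma_f$ where $\Gamma_f\colon X \to X\times Y$ is the graph and $\pi_Y$ is the projection onto $Y$. Since $Y$ is smooth over $F$, the projection $X\times Y \to X$ is smooth and its section $\Gamma_f$ is a regular closed embedding; the projection $\pi_Y$ is flat. Because both the sheaf-theoretic pullback is functorial by \prettyref{prop:sheaf-pullback-functorial} and the Gersten-theoretic pullback is functorial by the construction recalled from \cite{fasel:chowwittring}, it suffices to treat flat morphisms and regular closed embeddings separately. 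Support conditions are preserved under this factorization by choosing $\Gamma_f^{-1}(f^{-1}W\times W) = f^{-1}W \subset V$ and $\pi_Y^{-1}W = f^{-1}W \times W$.

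For flat $f$, Fasel's pullback is defined directly on Gersten presheaves and so induces a morphism of complexes of sheaves $\gersten{\bullet}{-}{\I^k_Y}{\Lb} \to f_*\gersten{\bullet}{-}{\I^k_X}{f^*\Lb}$ lifting the structure morphism of the coefficient data of \prettyref{def:induced-morphism-of-cd}. As both Gersten complexes are flasque resolutions, \prettyref{prop:Go-pullback-universal} together with the identifications of \prettyref{prop:GoGe-supports-compatible} immediately yields the agreement of Fasel's flat pullback with the sheaf-theoretic one, both for supports and without.

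For a regular closed embedding $i\colon Z \hookrightarrow X$, which is the main obstacle, the construction $i^* = (q^*)^{-1}\op{H}^\bullet(\Theta)$ proceeds through the deformation to the normal bundle. The key idea is to realize each factor of $\Theta = ({i_0}_*)^{-1}\,\partial\, m_t\, p_X^*$ as a sheaf-theoretic operation. Concretely, the first factor $p_X^*$ is already the flat-pullback case treated above. The boundary factor $\partial$ agrees, via the analysis in \prettyref{rem:boundary-map-on-complexes}, with the boundary in the sheaf-theoretic localization sequence of \prettyref{prop:localization-sequence} applied to the decomposition $\Nb_Z X \hookrightarrow D(Z,X) \hookleftarrow X\times \Gm$. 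The dévissage isomorphism $({i_0}_*)^{-1}$ agrees with the sheaf-theoretic isomorphism of \prettyref{thm:I-devissage}. The remaining factor $m_t$ (multiplication by $\langle t,-1\rangle$) can be promoted to a morphism of complexes of sheaves by \prettyref{lem:Fasels-Theta-is-morphism-of-sheaves} (whose proof already handles this step), and corresponds to cup product with a fixed class in $\op{H}^0(\Gm, \I^1)$.

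Having each factor identified with a sheaf-level construction, applying \prettyref{prop:Go-pullback-universal} and \prettyref{prop:sheaf-pullback-functorial} to the commutative diagram of \prettyref{fig:deformation-to-the-normal-bundle} produces an identity in sheaf cohomology of the form $(q^*)^{-1}\op{H}^\bullet(\Theta) = s^*\,(q^*)^{-1}\,q^*\, i^* = i^*$, where $s$ is the zero section of $q$ (so that $q\circ s = \id_Z$ makes $s^* = (q^*)^{-1}$ on cohomology). Restricting the entire argument to open subsets as in the proof of \prettyref{lem:Fasels-Theta-is-morphism-of-sheaves} handles arbitrary closed supports $V\subset X$ and $W\subset Y$ satisfying $f^{-1}W \subset V$. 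The most delicate bookkeeping lies in verifying that the sheaf-theoretic pullback truly factors through the normal-bundle deformation in the way that $\Theta$ does, but once each of the three ingredients (flat pullback, boundary, dévissage) is matched up to its sheaf-theoretic analogue, the identification is forced by the uniqueness clauses in \prettyref{prop:Go-pullback-universal}.
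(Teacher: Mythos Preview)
Your treatment of the flat case is fine and matches the paper: the Gersten pullback lifts to a morphism of complexes of sheaves compatible with the structure morphism on \(\I^k\), so \prettyref{prop:Go-pullback-universal} applies directly.

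The regular embedding case has a genuine gap. You correctly observe that each factor of \(\Theta = ({i_0}_*)^{-1}\partial\, m_t\, p_X^*\) has a sheaf-theoretic analogue (flat pullback, cup product with a fixed class, localization boundary, d\'evissage). But identifying the factors individually does not finish the argument: none of \(m_t\), \(\partial\), or \(({i_0}_*)^{-1}\) is a pullback of coefficient data, so \prettyref{prop:Go-pullback-universal} and \prettyref{prop:sheaf-pullback-functorial} do not by themselves force the composite to equal the sheaf-theoretic pullback \((iq)^*\). Your displayed identity \((q^*)^{-1}\op{H}^\bullet(\Theta) = s^*(q^*)^{-1}q^*i^*\) is asserted, not proved; what is missing is precisely the verification that the sheaf-level composite \(({i_0}_*)^{-1}\partial\, m_t\, p_X^*\) agrees with \(q^*i^*\).

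The paper closes this gap differently. Rather than matching factor by factor, it invokes \prettyref{prop:Go-pullback-universal} once, for the single morphism of complexes of sheaves \(\Theta\), reducing to checking that the degree-zero part of \(\Theta\) agrees with the presheaf pullback \(q^*i^*\). This is then established by an explicit Witt-group computation (following \cite{asok-fasel:euler}): one first verifies the key identity \({i_0}_*(1) = \partial(m_t(1))\) via the short exact sequence \(0\to \OO_{D(Z,X)}\xrightarrow{t}\OO_{D(Z,X)}\to {i_0}_*\OO_{\Nb_ZX}\to 0\), and then combines it with the projection formula and the Leibniz rule to get \({i_0}_* q^*i^*\alpha = \partial(m_t(p_X^*\alpha))\) for arbitrary \(\alpha\). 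This computation is the substantive content you are missing; without it (or an equivalent argument), the comparison for regular embeddings is incomplete.
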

\begin{proof}
  As noted, a morphism between smooth schemes can be factored into a regular embedding followed by a  flat morphism. Compatibility of the pullbacks can be checked separately for each factor.
%

  For a flat morphism \(f\), it suffices by \prettyref{prop:Go-pullback-universal} to check that the degree zero part of the pullback morphism constructed on the level of Gersten complexes of sheaves is compatible with the usual pullback on \(\I^k\)-sheaves.  This is indeed the case, as in degree zero this pullback is the usual pullback on function fields.

  For a regular embedding \(i\colon Z\to X\), a proof of the desired compatibility in the context of Milnor--Witt sheaves is given in \cite[Section 2]{asok-fasel:euler}.  The argument for \(\I\)-cohomology that we need here is analogous.  Let us provide some details, concentrating for simplicity of notation on the case \(\I^k = \W\).  In the notation above, it suffices to check that \(\op{H}^\bullet(\Theta_V)\) is the usual pullback along the composition \(iq\).  By \prettyref{prop:Go-pullback-universal}, it suffices to verify that the degree zero part of the morphism of complexes of sheaves \(\Theta\) constructed in \prettyref{lem:Fasels-Theta-is-morphism-of-sheaves} is compatible with the usual pullback \(q^*i^*\).
  \newcommand{\pre}{\mathrm{pre}}
  To verify this, we construct another, auxiliary, morphism of presheaves \(\Theta^\pre\).  We then show firstly that \(\Theta^\pre\) is indeed compatible with the degree zero part of \(\Theta\), and secondly that  \(\Theta^\pre\) agrees with the usual pullback \(q^*i^*\) on Witt presheaves.  The compatibility of \(\Theta\) with the sheaf pullback \(q^*i^*\) then follows.
 
  The auxiliary morphism \(\Theta^\pre\colon i_*q_*\op{W}(-;q^*i^*\Lb) \leftarrow \op{W}(-;\Lb)\) of presheaves on \(X\) is defined over an open subset \(U\subset X\) as the composition along the top row of the following diagram:
  \begin{equation}\label{eq:Theta-pre}
    \begin{aligned}
      \tiny
      \xymatrix@C=1.5em{
        \op{W}(\Nb_{Z_U} U)
        \ar[d]
        \ar@{}[dr]|{(a)}
        &
        \op{W}^1_{\Nb_{Z_U} U}(D(Z_U,U))
        \ar[l]_-{{i_0}_*^{-1}}
        \ar@{..>}[d]
        \ar@{}[dr]|{(b)}
        &
        \op{W}(U\times\Gm)
        \ar[l]_-{\partial}
        \ar[d]
        &
        \op{W}(U\times\Gm)
        \ar[l]_-{m_t}
        \ar[dd]
        &
        \op{W}(U)
        \ar[l]_-{p_U^*}
        \ar[dd]
        \\
        \op{W}(\Nb_0)
        \ar[d]^{(\id)}
        \ar@{}[drr]|{(c)}
        &
        \op{W}^1_{\Nb_0}(D_0)
        \ar@{..>}[l]_-{{i_0}_*^{-1}}
        &
        \op{W}(U_0)
        \ar@{..>}[l]_-\partial
        \ar[d]^{(\id)}
        \\
        \op{W}(k(\Nb_Z X))
        &
        \op{W}(k(\Nb_Z X))
        \ar[l]_-{{i_0}_*^{-1}}
        &
        \op{W}(k(X\times \Gm))
        \ar[l]_-{\partial}
        &
        \op{W}(k(X\times \Gm))
        \ar[l]_-{m_t}
        &
        \op{W}(k(X))
        \ar[l]_-{p_X^*}
      }
    \end{aligned}
  \end{equation}
  We have suppressed all twists in this diagram, but it should be clear what the correct twists are by comparison with the definition of \(\Theta\). The notation \(Z_U\) is short for \(Z\cap U\), the morphism \(p_U\) is the obvious projection, $m_t$ again denotes multiplication with the form \(\langle 1 ,-t\rangle\), the boundary map \(\partial\) in the top row is the boundary map of Balmer \cite[Theorem~6.2]{Ba00} in the localization sequence of Witt groups associated with \(\Nb_{Z\cap U} U \hookrightarrow D(Z\cap U,U) \hookleftarrow U\times \Gm\), and \({i_0}_*\) in the top row is the dévissage isomorphism of \cite[Theorem~4.4]{Gil07b}.  The composition of \({i_0}_*\)  with the morphism that forgets support agrees with the pushforward considered in \cite{calmeshornbostel:pushforward} (compare \cite[Remark~4.5]{Gil07b} and \cite[\S\,7.2]{calmeshornbostel:pushforward}), so we can use the base change and projection formulas of \cite{calmeshornbostel:pushforward} in the following.
    
  The composition along the lower row of diagram~\eqref{eq:Theta-pre} is the degree zero part of \(\Theta(U)\), while the vertical morphisms\slash compositions are the natural morphisms from the Witt presheaves to the Gersten complexes.  We need to show that this diagram commutes.  
  This is clear for the two squares on the right. For the big square on the left, we introduce the auxiliary central row.
  We denote by \(D_0\) the local ring of \(D(Z,X)\) at the generic point of \(\Nb_Z X\), by \(\Nb_0\) the local ring of \(\Nb_Z X\) at its generic point, and by \(U_0\) the field of fractions of \(D_0\).  Clearly \(\Nb_0 = k(\Nb_Z X)\) and \(U_0 =  k(X\times\Gm)\).  On the other hand, as \(\Nb_Z X\) is a Cartier divisor, \(D_0\) is a discrete valuation ring with \(\Nb_0\) as residue field, and the spectrum of \(U_0\) is the open complement of the spectrum of \(\Nb_0\) in the spectrum of \(D_0\).
  Thus, square~(b) commutes by naturality of the localization sequence, while square~(a) commutes by the base change formula  \cite[Theorem~6.9]{calmeshornbostel:pushforward}.
  The composition \({i_0}_*^{-1}\partial\) along the central row can be identified with the residue map using \cite[Lemma 8.1]{BW02}, and hence agrees with the composition \({i_0}_*^{-1}\partial\) in the lower row that is used in the construction of \(\Theta\). 
  
  To prove that \(\Theta^\pre\) agrees with the usual pullback \(q^*i^*\) on Witt presheaves, we need to evaluate over each open subset \(U\subset X\).  For simplicity of notation, we spell this out only for \(U=X\).  We first check that
  \begin{equation}\label{eq:pushforward-multiplication-mt}
    {i_0}_*(1) = \partial(m_t(1))
  \end{equation} 
  when $\Lb = \OO_X$.  To see this, we can pass to coherent Witt groups using \cite[Lemma~3.2]{gille:support}. We leave it as an exercise to chase through Balmer's cone construction of $\partial$ and the construction of \({i_0}_*\) in \cite[Theorem~3.2]{Gil07b}; the main additional input is the short exact sequence
  \(
  0 \to \OO_{D(Z,X)} \xrightarrow[t]{} \OO_{D(Z,X)} \to  {i_0}_*\OO_{\Nb_Z X} \to 0
  \)
  on \(D(Z,X)\).

  Using \eqref{eq:pushforward-multiplication-mt}, we now find for arbitrary \(\alpha\in\op{W}^0(X,\Lb)\):
  \begin{alignat*}{7}
    {i_0}_* q^*i^*\alpha  & = {i_0}_* (1\cup i_0^*b^*\alpha) \\
    & = {i_0}_* 1 \cup b^* \alpha  &\quad& \text{(by the projection formula \cite[Theorem~6.5]{calmeshornbostel:pushforward})}\\
    & = \partial(m_t(1)) \cup b^* \alpha   && \text{(by equation~\eqref{eq:pushforward-multiplication-mt})} \\
    & = \partial(m_t(1) \cup  \iota'^* b^* \alpha) &&\text{(by the Leibnize rule \cite[Theorem~2.10]{GN:pairings})}\\
    & = \partial(m_t(p_X^* \alpha))\\
    & = {i_0}_*\Theta^\pre(\alpha)
  \end{alignat*}
  This shows that \(\Theta^\pre\) agrees with \(q^*i^*\), for arbitrary $\Lb$, as claimed.
\end{proof}

\subsubsection{Products}
\label{sec:I-products}
Taking \(X\) to be a scheme, \(\sheaf A := \W\) to be the Witt sheaf and \(\sheaf M\) and \(\sheaf N\) to be powers of \(\I\), possibly twisted by line bundles \(\mathcal L\) and \(\mathcal L'\) over \(X\), Godement's construction described in \prettyref{thm:Go-cup-product} furnishes us with a product
\begin{multline*}
  \op{H}^p(X,\I^k(\mathcal L))\otimes_{\Gamma\W} \op{H}^q(X,\I^l(\mathcal L')) \to \op{H}^{p+q}(X,\I^k(\mathcal L)\otimes_{\W}\I^l(\mathcal L')) \to\\\to \op{H}^{p+q}(X,\I^{k+l}(\mathcal L\otimes\mathcal L'))
\end{multline*}
In particular, for trivial twists we obtain:
\begin{corollary}
  \label{cor:godement-cup-product}
  Godement's cup product
  \[
    \op{H}^p(X,\I^k)\otimes \op{H}^q(X,\I^l) \to \op{H}^{p+q}(X,\I^{k+l})
  \]
  turns \(\bigoplus_{p,k} \op{H}^p(X,\I^k)\) into a bigraded ring.  It is graded commutative with respect to the \(p\)-grading and commutative with respect to the \(k\)-grading.
\end{corollary}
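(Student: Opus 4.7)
The plan is to derive this corollary as a direct specialization of Godement's cup product constructed in \prettyref{thm:Go-cup-product}, combined with the ring structure on the graded sheaf \(\bigoplus_k \I^k\).

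First, I would apply \prettyref{thm:Go-cup-product} with the sheaf of commutative rings \(\sheaf A := \W\) and \(\sheaf A\)-modules \(\sheaf M := \I^k\), \(\sheaf N := \I^l\), taking both supports equal to \(X\). This already yields a natural, \(\Gamma\W\)-bilinear pairing
\[
\op{H}^p(X,\I^k)\otimes_{\Gamma\W} \op{H}^q(X,\I^l) \to \op{H}^{p+q}(X,\I^k\otimes_{\W}\I^l).
\]
Since \(\I\) is an ideal in \(\W\), multiplication in \(\W\) restricts to sheaf morphisms \(\mu_{k,l}\colon\I^k\otimes_{\W}\I^l\to\I^{k+l}\) for all \(k,l\geq 0\). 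Post-composing the Godement pairing with the induced map on cohomology produces the cup product claimed in the corollary. Bi-additivity is automatic.

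For associativity, I would combine part (iv) of \prettyref{thm:Go-cup-product} with the associativity of the sheaf multiplications \(\mu_{k,l}\); together they make the obvious two iterated products \(\op{H}^p(X,\I^k)\otimes\op{H}^q(X,\I^l)\otimes\op{H}^r(X,\I^m)\to\op{H}^{p+q+r}(X,\I^{k+l+m})\) coincide. For \(p\)-graded commutativity, part (iii) of \prettyref{thm:Go-cup-product} gives \(a\cup b = (-1)^{pq}\tau^*(b\cup a)\) in \(\op{H}^{p+q}(X,\I^k\otimes_{\W}\I^l)\), where \(\tau\colon\I^l\otimes_{\W}\I^k\xrightarrow{\cong}\I^k\otimes_{\W}\I^l\) is the swap. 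The key point, which is where the absence of sign on the \(k\)-grading comes from, is that \(\W\) is an \emph{ungraded} commutative ring, so the diagram
\[
\xymatrix@R=1em{
\I^l\otimes_{\W}\I^k \ar[rr]^{\tau} \ar[rd]_{\mu_{l,k}} && \I^k\otimes_{\W}\I^l \ar[ld]^{\mu_{k,l}} \\
& \I^{k+l} &
}
\]
commutes. Applying \(\mu_{k,l}\) to both sides of the Godement identity therefore gives \(a\cup b = (-1)^{pq}(b\cup a)\), with no additional sign in the \(\I\)-degrees.

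There is essentially no serious obstacle: the corollary is a packaging of \prettyref{thm:Go-cup-product} using the ring structure on \(\bigoplus_k \I^k\). The only subtlety worth flagging is precisely the asymmetry between the two gradings, and it is handled by the observation that commutativity, as opposed to graded commutativity, of \(\W\) forces the triangle above to commute strictly.
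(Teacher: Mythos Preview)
Your proposal is correct and matches the paper's approach exactly: the corollary is stated without proof, immediately after the paragraph that sets up precisely your specialization (\(\sheaf A=\W\), \(\sheaf M=\I^k\), \(\sheaf N=\I^l\)) of \prettyref{thm:Go-cup-product}, and the commutativity claims are meant to be read off from parts (iii) and (iv) together with the commutativity of \(\W\), just as you do.
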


We can also construct a cross product for \(\I\)-cohomology in this way, as follows.  For schemes \(X\) and \(Y\) over \(F\), Godement's cross product (\prettyref{def:Go-cross-F-product}) gives us a map
\(
\op{H}^*(X,\I^k) \times \op{H}^*(Y,\I^l) \to \op{H}^*(X\times Y, \I^k \boxtimes \I^l).
\)
We suppress the subscript \(F\) here to lighten notation, but of course all products considered are scheme-theoretic and taken over \(F\).
On the other hand, on \(X\times Y\), we have a morphism of sheaves
\begin{equation*}
  \I^k_X \boxtimes \I^l_Y \to \I^k_{X\times Y}\otimes \I^l_{X\times Y} \xrightarrow{\cdot} \I^{k+l}_{X\times Y}.
\end{equation*}
So by changing coefficients along this morphism, we obtain a product
\begin{equation*}
  \op{H}^p(X,\I^k) \times \op{H}^q(Y,\I^l) \to \op{H}^{p+q}(X\times Y, \I^{k+l}).
\end{equation*}
More generally, given line bundles \(\sheaf L\) on \(X\) and \(\sheaf L'\) on \(Y\), this product takes the form
\begin{equation}\label{eq:ext-product-on-I-cohomology}
  \op{H}^p(X,\I^k(\mathcal L)) \times \op{H}^q(Y,\I^l(\mathcal L')) \to \op{H}^{p+q}(X\times Y, \I^{k+l}(\mathcal L\boxtimes \mathcal L')).
\end{equation}

We now compare these products to the products defined by Fasel via Gersten complexes.  
Fasel first defines a cross product, and then pulls back along the diagonal.

\begin{theorem}\label{thm:Ge-ext-product}
  Consider a product of smooth \(F\)-schemes \(X\times Y\). Let $\Lb_X$ and $\Lb_Y$ be line bundles on $X$ and $Y$ respectively. Define $\Lb_{XY}:= \Lb_X \boxtimes \Lb_Y$.
  For open subsets \(U\subset X\) and \(V\subset Y\), the exterior product on Witt groups induces a morphism of complexes of abelian groups
  \[
    \star\colon \gersten{\bullet}{U}{\I^i}{\Lb_X} \otimes \gersten{\bullet}{V}{\I^j}{\Lb_Y}\rightarrow \gersten{\bullet}{U\times V}{\I^{i+j}}{\Lb_{XY}}
  \]
  as well as a morphism of the corresponding complexes of flasque sheaves on \(X\times Y\). Then, the following square (in which the vertical arrows are the canonical augmentations) commutes:
  \[\xymatrix@R=12pt{
      \I^i_{\Lb_X} \boxtimes \I^j_{\Lb_Y} \ar[d] \ar[r]^-{\times} & \I^{i+j}_{\Lb_{XY}} \ar[d] \\
      \gersten{\bullet}{-}{\I^i}{\Lb_X} \boxtimes \gersten{\bullet}{-}{\I^j}{\Lb_Y} \ar[r]^-{\star} & \gersten{\bullet}{-}{\I^{i+j}}{\Lb_{XY}}
    }\]
\end{theorem}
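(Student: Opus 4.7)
My plan is to first construct the exterior product $\star$ on Gersten complexes, then to verify the Leibniz rule so that $\star$ becomes a morphism of complexes (of presheaves, hence of sheaves), and finally to reduce the commutativity of the square to a check in degree zero on generic points.

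For the construction, I would recall from \eqref{eq:identification-of-terms-in-Gersten-complex} that each term decomposes as $\gersten{p}{U}{\I^i}{\Lb_X} \cong \bigoplus_{x\in U^{(p)}} \op{I}^{i-p}(k(x),\omega_x^{\Lb_X})$. Given $x\in U^{(p)}$ and $y\in V^{(q)}$, every generic point $z$ of an irreducible component of $\overline{\{x\}}\times\overline{\{y\}}$ lies in $(U\times V)^{(p+q)}$, and the scalar extensions $k(x)\to k(z)\leftarrow k(y)$ together with a canonical isomorphism $\omega_x^{\Lb_X}\otimes_{k(z)}\omega_y^{\Lb_Y}\cong \omega_z^{\Lb_{XY}}$ yield the exterior product of symmetric forms $\op{W}(k(x),\omega_x^{\Lb_X})\otimes \op{W}(k(y),\omega_y^{\Lb_Y})\to \op{W}(k(z),\omega_z^{\Lb_{XY}})$, which restricts to $\op{I}^{i-p}\otimes \op{I}^{j-q}\to \op{I}^{i+j-p-q}$. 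Summing over all $x$, $y$ and $z$ defines $\star$ in each degree; this is exactly Fasel's exterior product from \cite{fasel:chowwittring}.

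The second step is to verify the Leibniz rule $d\circ\star = \star\circ(d\otimes 1 + (-1)^p\otimes d)$. For Witt groups this is a classical computation reducing, via naturality and base-change, to the case of a two-dimensional regular local ring; the refinement keeping track of powers of the fundamental ideal then follows from \cite[Corollaire~7.3]{gille} and the Leibniz-type results in \cite{fasel:chowwittring}. Compatibility with restriction to open subsets is immediate from the construction (the Gersten summands at a point $x$ and its image in a smaller open are canonically identified), so the same formulas define a morphism of complexes of sheaves on $X\times Y$.

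For the compatibility square, both composites land in $\gersten{0}{-}{\I^{i+j}}{\Lb_{XY}}$ which, stalkwise, is a product over generic points. It thus suffices to check that, for a generic point $z\in X\times Y$ lying over the generic points $x\in X$ and $y\in Y$, both composites coincide with the exterior product $\op{I}^i(k(x),\omega_x^{\Lb_X})\otimes \op{I}^j(k(y),\omega_y^{\Lb_Y}) \to \op{I}^{i+j}(k(z),\omega_z^{\Lb_{XY}})$. For the lower composite this is built in. For the upper composite one uses that the augmentation $\I^n(\Lb)\to\gersten{0}{-}{\I^n}{\Lb}$ is restriction to generic points, and that the multiplication map $\I^i_{\Lb_X}\boxtimes\I^j_{\Lb_Y}\to\I^{i+j}_{\Lb_{XY}}$ is defined, via \prettyref{prop:identtwist}, by pulling back sections along the projections and multiplying in the Witt ring of $\OO_{X\times Y,z}$; evaluated at the generic point this is again the exterior product of forms. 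I expect the main obstacle to be the bookkeeping of twists and signs in verifying the Leibniz rule, but the underlying computations are already available in \cite{fasel:chowwittring,gille,fasel:memoir} and need only be adapted to the present situation.
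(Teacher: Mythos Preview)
Your proposal is correct and follows essentially the same approach as the paper: both rely on Fasel's construction of the exterior product on Gersten complexes from \cite{fasel:chowwittring} (the paper simply cites \cite[Theorem~4.16 and Lemma~4.1--Prop.~4.7]{fasel:chowwittring} rather than spelling out the construction as you do), and both reduce the commutativity of the square to the observation that the product is compatible with the augmentation, i.e.\ with the degree-zero restriction to generic points. The paper's proof is considerably terser than yours, but the substance is the same.
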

\begin{proof}
  The product on the Gersten complexes is constructed in \cite[Theorem~4.16]{fasel:chowwittring} by replacing  $G$-cohomology with \(\I\)-cohomology. This can be done by following the argument in \cite[Lemma~4.1 to Prop.~4.7]{fasel:chowwittring}. By construction, the product is compatible with the augmentation \(\op{W}(-,\Lb_X) \to \gersten{\bullet}{-}{\I^0}{\Lb_X}\), so the same is true after restriction to \(\I^j\).
\end{proof}

\begin{definition}
  The product of Gersten complexes in \prettyref{thm:Ge-ext-product} induces a product on cohomology of the form
  \[
    \star\colon \op{H}^p(\gersten{\bullet}{X}{\I^i}{\Lb_X}) \otimes \op{H}^q(\gersten{\bullet}{Y}{\I^j}{\Lb_Y}) \rightarrow \op{H}^{p+q}(\gersten{\bullet}{X\times Y}{\I^{i+j}}{\Lb_{XY}}). \]
\end{definition}

\begin{proposition}\label{prop:GoGe-products-compatible}
  Fasel's cross product on \(\I\)-cohomology is compatible with Godement's cross product.
  Likewise, Fasel's cup product on \(\I\)-cohomology is compatible with Godement's cup product.
\end{proposition}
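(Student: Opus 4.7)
The plan is to deduce both assertions from the universality of Godement's cross product, \prettyref{thm:Go-cross-F-product-universal}, by feeding in the Gersten resolutions on $X$, $Y$ and $X\times Y$ together with the pairing $\star$ from \prettyref{thm:Ge-ext-product}.

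First I would handle the cross product. By \prettyref{thm:allIcohomologyagree} (and the flasqueness of the sheaves $\I^{i,j-i}_{\Lb}$ making up the Gersten complex), $\gersten{\bullet}{-}{\I^k}{\Lb}$ is a flasque resolution of $\I^k(\Lb)$ on every smooth $F$-scheme. In particular, the three resolutions $\I^i_{\Lb_X}\to \gersten{\bullet}{-}{\I^i}{\Lb_X}$ on $X$, $\I^j_{\Lb_Y}\to\gersten{\bullet}{-}{\I^j}{\Lb_Y}$ on $Y$ and $\I^{i+j}_{\Lb_{XY}}\to\gersten{\bullet}{-}{\I^{i+j}}{\Lb_{XY}}$ on $X\times Y$ together with the morphism of complexes of sheaves $\star$ from \prettyref{thm:Ge-ext-product} supply exactly the data required by \prettyref{thm:Go-cross-F-product-universal}. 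The hypothesis that $\star$ is compatible with the two augmentations from $\I^i_{\Lb_X}\boxtimes\I^j_{\Lb_Y}$ is precisely the commutative square asserted in \prettyref{thm:Ge-ext-product}. The universality theorem then yields the commutative square whose bottom arrow is Godement's cross product \eqref{eq:Go-cross-product} and whose top arrow is, by construction, Fasel's cross product, thereby identifying the two.

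Second, the cup product case would reduce to the cross product case. Godement's cup product is, by \eqref{eq:cup-from-cross-F}, the pullback of Godement's cross product along the scheme-theoretic diagonal $\Delta_F\colon X\to X\times_F X$, and Fasel's cup product is constructed in the analogous manner from his cross product (cf.\ \cite{fasel:chowwittring}). Combining the cross product compatibility just established with the compatibility of pullbacks in \prettyref{prop:GoGe-pullback-compatible} gives the cup product compatibility at once.

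The main obstacle, and the only genuinely nontrivial input, is already packaged in \prettyref{thm:Ge-ext-product}: Fasel's pairing $\star$ must be a genuine morphism of complexes of \emph{sheaves} rather than merely of global sections, and it must commute with the sheaf-level augmentations. Once that is granted, everything that remains is a direct application of universality and of the already established pullback compatibility.
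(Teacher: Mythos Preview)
Your proposal is correct and follows essentially the same route as the paper: apply \prettyref{thm:Go-cross-F-product-universal} with the Gersten resolutions and the pairing $\star$ from \prettyref{thm:Ge-ext-product} to obtain the cross product compatibility, then deduce the cup product compatibility by pulling back along the diagonal using \eqref{eq:cup-from-cross-F} and \prettyref{prop:GoGe-pullback-compatible}. Your observation that the only nontrivial input is the sheaf-level statement of \prettyref{thm:Ge-ext-product} is exactly the point.
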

\begin{proof}
  The compatibility of the cross products follows from \prettyref{thm:Go-cross-F-product-universal}: the necessary assumption, namely the compatibility of the cross product on Gersten complexes with the cross product on the \(\I^j\)-sheaves, has already been verified in \prettyref{thm:Ge-ext-product}.  The compatibility of the cup products follows from the compatibility of the cross products and \prettyref{prop:GoGe-pullback-compatible}, since both cup products can be obtained from the corresponding cross product by pulling back along the diagonal, cf.\ equation~\eqref{eq:cup-from-cross-F} before \prettyref{thm:Go-cross-F-product-universal}.
\end{proof}

\begin{remark}\label{rem:J-graded-commutativity}
  As the Witt ring is commutative, the only signs appearing in the ring structure on \(\I\)-cohomology are those arising from point (iii) of \prettyref{thm:Go-cup-product}. For $\mathbf{J}$-cohomology we have additional signs coming from the graded commutativity of $\bigoplus_k \mathbf{J}^k \cong \bigoplus_k \mathbf{K}_k^{\rm MW}$: for the cup product
  \[
    \op{H}^p(X,\mathbf{J}^k)\otimes \op{H}^q(X,\mathbf{J}^l) \to \op{H}^{p+q}(X,\mathbf{J}^{k+l})
  \]
  one finds that \(a\cup b = (-1)^{pq}\epsilon^{kl} b\cup a\) with \(\epsilon = -\langle -1\rangle\).
\end{remark}

\subsubsection{Thom class and pushforward}\label{sec:thomclass}
Let $i\colon Z \hookrightarrow X$ be a regular embedding of regular schemes of codimension $r$, with normal bundle \(p\colon \Nb \to Z\). Denote the dévissage isomorphism of \prettyref{thm:I-devissage} as
\[
  i_*\colon
  \op{H}^s(Z, \I^j(i^*\Lb \otimes \det \Nb))
  \xrightarrow{\cong}
  \op{H}^{s+r}_Z(X, \I^{j+r}(\Lb)).
\]
The composition of \(i_*\) with extension of support agrees with the usual pushforward map in \(\I\)-cohomology, as pointed out at the end of the proof of \prettyref{thm:I-devissage}.  However, a more indirect definition of \(i_*\) will be more useful for the comparison with topology.  For this alternative definition, we first consider the case when \(i\) is the zero section of a vector bundle:
\begin{definition}
  \label{def:algebraic-thom-class}
  Let $p \colon V \rightarrow X$ be a rank $d$ vector bundle with zero section $s\colon X \hookrightarrow V$. The \textbf{Thom class} $\op{th}(V)\in \op{H}^d_X(V,\I^d(p^*\det V^\vee))$ is
  the image of the unit element \(1\in\op{H}^0(X,\W)\) under the isomorphism of \prettyref{thm:I-devissage} and the canonical trivialization of \(s^*p^*\det V^\vee \otimes \det V\).
\end{definition}
We now define the pushforward in \(\I\)-cohomology as follows:
\begin{definition}\label{def:pushforwardWittgroups}
  Let $i\colon Z \hookrightarrow X$ be a regular embedding of codimension $r$ between regular $F$-schemes, with normal bundle $p\colon \Nb\rightarrow Z$. The pushforward in \(\I\)-cohomology is defined as the composition
  \[
    i_*\colon
    \op{H}^s(Z, \I^j(i^*\Lb \otimes \det \Nb))
    \xrightarrow{\op{th}(\Nb) \cup p^*(-)}
    \op{H}^{s+r}_Z(\Nb,\I^{j+r}(p^* i^*\Lb))
    \xrightarrow[\cong]{d(Z,X)}
    \op{H}^{s+r}_Z(X, \I^{j+r}(\Lb)),
  \]
  where $d(Z,X)$ is the deformation isomorphism induced by deformation to the normal bundle.  The same name and notation are used for the composition of this map with the extension of support \(\op{H}^{s+r}_Z(X, \I^{j+r}(\Lb)) \rightarrow \op{H}^{s+r}(X, \I^{j+r}(\Lb))\).
\end{definition}
For the deformation to the normal bundle, see \prettyref{sec:I-pullbacks} above, in particular \prettyref{fig:deformation-to-the-normal-bundle}. 
The deformation isomorphism \(d(Z,X)\) is defined as the zig-zag $d(Z,X)=i_1^\ast\circ(i_0^\ast)^{-1}$, in view of the fact that both pullbacks with support $i_0^\ast$ and $i^\ast_1$ are isomorphisms, cf.\ \cite{fasel:excess} or \cite[Theorem~2.12]{asok-fasel:euler}.  In the context of \(\A^1\)-homotopy theory, the existence of \(d(X,Z)\) is a manifestation of homotopy purity.  See \cite[Section~3.5.2]{asok-fasel:euler} for a more detailed discussion.

In any case, the definition of pushforward using this deformation isomorphism and the Thom class is known to agree with the usual one:
\begin{lemma}\label{lem:pushforwards-agree}
  The dévissage isomorphism of \prettyref{thm:I-devissage} agrees with the composition in \prettyref{def:pushforwardWittgroups}.
\end{lemma}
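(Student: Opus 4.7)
The plan is to reduce the comparison to the case of the zero section of a vector bundle and then handle that case via the projection formula for the dévissage pushforward. Let me write $i^{\Sigma}_*$ for the dévissage isomorphism of \prettyref{thm:I-devissage}, to distinguish it temporarily from the composition in \prettyref{def:pushforwardWittgroups}.

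The first step is to show that $i^{\Sigma}_* = d(Z,X)\circ s^{\Sigma}_*$, where $s\colon Z\hookrightarrow\Nb$ is the zero section of the normal bundle. In the notation of \prettyref{fig:deformation-to-the-normal-bundle}, the regular embedding $\tilde{i}\colon Z\times\A^1\hookrightarrow D(Z,X)$ restricts to the zero section $s$ over $\{0\}$ and to the given embedding $i$ over $\{1\}$, producing transverse Cartesian squares in both cases. The base change formula for pushforwards along regular embeddings (cf.\ \cite[Corollaire 12.2.8]{fasel:ij}) therefore gives $i_0^*\circ\tilde{i}^{\Sigma}_* = s^{\Sigma}_*\circ j_0^*$ and $i_1^*\circ\tilde{i}^{\Sigma}_* = i^{\Sigma}_*\circ j_1^*$, where $j_0, j_1\colon Z\hookrightarrow Z\times\A^1$ are the inclusions of the fibres over $0$ and $1$. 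The pullbacks $j_0^*, j_1^*$ are mutually inverse isomorphisms by homotopy invariance, and $i_0^*, i_1^*$ are isomorphisms by homotopy purity. Combining these identities yields $i^{\Sigma}_* = i_1^*(i_0^*)^{-1}\circ s^{\Sigma}_* = d(Z,X)\circ s^{\Sigma}_*$.

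For the second step, I take an arbitrary class $\alpha$ in the source of $s^{\Sigma}_*$. Using $s^*p^*=\id$ together with the projection formula for the dévissage pushforward along the zero section,
\[
  s^{\Sigma}_*(\alpha) = s^{\Sigma}_*(1\cup s^*p^*\alpha) = s^{\Sigma}_*(1)\cup p^*\alpha = \op{th}(\Nb)\cup p^*\alpha,
\]
where the last equality is the very definition of the Thom class from \prettyref{def:algebraic-thom-class}, modulo the canonical identification $s^*p^*\det\Nb^\vee\otimes\det\Nb\cong\OO_Z$. Substituting into the conclusion of the first step gives $i^{\Sigma}_* = d(Z,X)\circ(\op{th}(\Nb)\cup p^*(-))$, which is precisely the composition of \prettyref{def:pushforwardWittgroups}.

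The main obstacle will be to make the transverse base change identities in the first step rigorous on the level of Gersten complexes, where both sides of each equation involve careful tracking of the twisting line bundles and of the signs arising from the form $\langle t,-1\rangle$ in the pullback construction recalled in \prettyref{sec:I-pullbacks}. \prettyref{prop:GoGe-pullback-compatible} allows us to pass freely between the Gersten-complex description and the sheaf-theoretic one whenever it is convenient, so these verifications should reduce to standard bookkeeping extracted from Fasel's memoir.
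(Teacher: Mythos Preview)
Your proposal is correct and follows essentially the same route as the paper: both arguments use the projection formula to identify the dévissage pushforward along the zero section with multiplication by the Thom class, and then use base change (equivalently, the excess intersection formula with trivial excess, as the paper phrases it) on the two fibre squares of the deformation space to identify $i^{\Sigma}_*$ with $d(Z,X)\circ s^{\Sigma}_*$. The only cosmetic difference is the order in which the two steps are presented.
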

\begin{proof}
  In this proof, we reserve the notation \(s_*\) and \(i_*\) for the d\'evissage isomorphisms of \prettyref{thm:I-devissage}.
  Let \(s\colon Z \to \Nb\) denote the zero section of the normal bundle \(p\colon \Nb\to Z\). The projection formula \cite[\S\,2.2]{fasel:ij} shows that \(s_* = \op{th}(V)\cup p^*(-)\).  It remains to show that \(d(Z,X)s_* = i_*\).
  This follows from the excess intersection formula of loc.\ cit., applied to the top two squares in the description of deformation to the normal bundle in \prettyref{fig:deformation-to-the-normal-bundle} above \prettyref{prop:comparedefconetubnbhd} below.  (Note that the normal bundles of the inclusions \(Z\to Z\times \A^1\) at \(0\) and \(1\) are trivial, so the Euler classes appearing in the excess intersection formula are also trivial.)
\end{proof}

\subsection{Recollections on real-\'etale cohomology}
\label{sec:prelim-ret}

Recall from \cite{scheiderer} that the real spectrum of a scheme $X$ is given as follows. For a commutative ring $A$, the \emph{real spectrum} $\op{sper} A$ is the topological space of pairs $(x,<_x)$ where $x\in \op{Spec} A$ is a point and $<_x$ is an ordering of the residue field $\kappa(x)$. A subbasis for the topology is given by domains of positivity $\op{D}(a)=\{(x,>_x)\in \op{sper}A\mid a>_x0\textrm{ in }\kappa(x)\}$, $a\in A$. These data can be glued giving rise to the \emph{real spectrum} $X_{\op{r}}$ of a scheme $X$. Forgetting the orderings of the residue fields yields a continuous map $\op{supp}\colon X_{\op{r}} \to X$ of topological spaces, called the \emph{support map}.

The real-\'etale topology for a scheme $X$ is then obtained by defining \emph{real-\'etale coverings} to be those families $\{f_i\colon U_i\to X\}_{i\in I}$ of morphisms where each $f_i$ is \'etale and the induced family $(f_i)_{\op{r}}\colon (U_i)_{\op{r}}\to X_{\op{r}}$ of morphisms on real spectra is jointly surjective. For a scheme $X$, the real-\'etale site $X_{\ret}$ associated to $X$ is the small \'etale site of $X$ equipped with the real-\'etale coverings.

By \cite[Theorem~1.3]{scheiderer}, the topos of sheaves on the real spectrum $X_{\op{r}}$ is equivalent to the topos of sheaves on the real-\'etale site $X_{\ret}$. The equivalence is induced by sending a sheaf $\sheaf F$ on $X_{\op{r}}$ to the real-\'etale sheaf
\[
(f\colon Y\to X)\in X_{\ret}\mapsto \op{H}^0(Y_{\op{r}},f_{\op{r}}^\ast \sheaf F).
\]
In particular, sheaf cohomology can be computed either using the real spectrum or the real-\'etale site.

For a scheme $X$, the support map $\op{supp}\colon X_{\op{r}}\to X$ induces a direct image functor
\[
\op{supp}_{\ast}\colon {\mathcal{A}}b(X_{\op{r}})\to {\mathcal{A}}b(X_{\op{Zar}})\colon \sheaf F\mapsto \left((U\subseteq X)\mapsto \sheaf F(\op{supp}^{-1}(U))\right).
\]
By \cite[Theorem~19.2]{scheiderer}, this functor is exact and faithful. For any scheme $X$ and any sheaf $\sheaf F$ on $X_{\op{r}}$, \cite[Lemma~4.6]{jacobson} provides  natural isomorphisms
\[
\op{H}^p(X_{\op{r}}, \sheaf F)\xleftarrow{\cong} \op{H}^p_{\op{Zar}}(X,\op{supp}_\ast \sheaf F), p\geq 0
\]
which arise as edge maps for the Grothendieck spectral sequence for the composition of $\op{supp}_\ast$ and the global sections functor. (In loc.\ cit.\ the above arrow is drawn in the wrong direction.) It can be shown that this arrow is given by $\op{supp}^*$ constructed as in \prettyref{def:sheaf-pullback} above.

\subsection{Recollections on singular cohomology}\label{sec:singcohomology}

We write \(\op{H}^*(-)\) for singular cohomology with integral coefficients and \(\op{h}^*(-)\) for singular cohomology with \(\Z/2\)-coefficients.  Similarly, \(\op{CH}^*(-)\) denotes Chow groups, and \(\op{Ch}^*(-)\) denotes Chow groups modulo two.

There are several well-known related definitions of cohomology for topological spaces, using singular cochains, sheaf cohomology, \v Cech cohomology, Eilenberg--Mac~Lane spectra, etc.  Even better, each of these comes with at least one and often several different constructions of fundamental structures such as pullbacks, pushforwards along closed embeddings, or (cup) products. In this article, we will always use the sheaf cohomology variant and usually restrict to locally constant sheaves. We will not discuss the comparison with all the other wonderful constructions in topology in detail\ARXIVONLY{, but see Remark \ref{folklore}}.
Standard textbook references for sheaf cohomology include \cite{bredon}, Godement \cite{godement} and Iversen \cite{iversen}.

\begin{ARXIV}
\begin{remark}\label{folklore}
  Assume that $X$ is a locally contractible paracompact space, e.g.\ a topological manifold, and $A$ an abelian group with $\underline{A}$ the associated constant sheaf of abelian groups. Then by \cite[Theorem~III.1.1]{bredon} or \cite[5.32]{warner}, there is an isomorphism $\op{H}^*(X,\underline{A}) \cong \op{H}^*(X,A)$ between sheaf cohomology and singular cohomology. As \cite{bredon} shows, this isomorphism is natural, that is compatible with pullbacks, and respects the ring structure. It holds more generally for {\em locally} constant sheaves $\mathcal{A}$, involving    singular cohomology with coefficients as described in section I.7 of loc.\ cit. The isomorphism also refines to cohomology with supports. This easily implies a compatibility of Thom classes in both settings, which in turn leads to a compatibility of pushforwards with respect to closed embeddings (when defining them as in  \cite[VIII.2.4]{iversen}, compare \prettyref{def:pushforwardWittgroups} and Section~\ref{sec:comp:push} below). Using Brown representability, see e.g.\ \cite[Theorem~9.12]{switzer}, these definitions (for constant $A$) are compatible with the cohomology theory represented by the Eilenberg--Mac~Lane spectrum $\mathbb{H}A$, including the compatibility with pullbacks. The ring structure on sheaf or singular cohomology theory may then be compared to the one relying on the monoid structure of $\mathbb{H}A$. More precisely, the ring structures for sheaf cohomology (see \prettyref{thm:Go-cup-product}) or  singular cohomology (using cochains or using $\mathbb{H}A$) all come from compatible exterior products in cohomology. These are then composed with the respective pullbacks along the diagonal, which are compatible by the discussion above, and with the product map $A \otimes A \to A$ or its refinement $\mathbb{H}A \wedge \mathbb{H}A \to \mathbb{H}A$. Further details concerning the $\mathbb{H}A$-case may be extracted from \cite[Chapter~III.9]{adams} and \cite[Chapter~13]{switzer}. A reader interested in \v Cech cohomology may consult \cite[Th\'eor\`eme II.5.10.1]{godement} or \cite[section 3.4]{bredon}.
\end{remark}
\end{ARXIV}

We briefly recall the definition and key properties of Thom classes, notably the Thom isomorphism, cf.\ \cite[sections VIII.2+3]{iversen} for sheaf cohomology and \cite[sections  8--10]{milnorstasheff} for singular cohomology. Other textbook references on Thom classes in singular cohomology include \cite{spanierbook}, \cite{switzer}, \cite{hatcher} and \cite{tomdieck}.

\begin{definition}
  \label{def:topological-thom-class}
  For any real rank $d$ topological vector bundle $V \to X$, a {\em Thom class} $\op{th}(V) \in \op{H}^d_{X}(V,\Z(p^*\det V^\vee))$ is a cohomology class $\op{th}(V)$ such that for any point $x \in X$ its restriction to the fiber $V_x$ is a generator of $\op{H}^d_{x}(V_x,\Z)$.
\end{definition}

Most references only consider the case of an oriented vector bundle. In this case, or when passing to $\Z/2\Z$-coefficients, the local coefficient system $\Z(p^*\det V^\vee)$ in \prettyref{def:topological-thom-class} is constant. For topological Thom classes with twisted coefficients for non-oriented vector bundles, we refer to \cite[Chapter~5, Exercise J.4]{spanierbook}.
In the present article, twisted coefficients in classical topology are described by twisting the locally constant sheaf $\Z$ with a line bundle $\Lb$ as in \prettyref{def:general-twist}.
As explained in section~\ref{localcoeffsection}, these $\Z(\Lb)$ may be translated to other descriptions -- e.g.\ back to local coefficient systems -- of twisted coefficients as used e.g.\ in \cite{spanierbook} and \cite{milnorstasheff} whenever necessary.

Thom classes and local generators -- if they exist -- are unique up to sign, and this ambiguity disappears if we choose compatible orientations, see e.g.\ \cite[Theorem~9.1]{milnorstasheff}. Finally, observe that some topology textbooks  -- but not \cite[Chapter~III.10] {adams} and \cite[Chapter~14]{switzer} -- work with disk and sphere bundles, rather than vector bundles and complements of zero sections. By excision and homotopy invariance, this doesn't change the relevant cohomology groups with support: we have
\[
\op{H}^t_{\{0\}}(\R^d,\Z) = \op{H}^t(\R^d,\R^d-\{0\},\Z) \cong \op{H}^t(D^d,S^{d-1},\Z),
\]
and the ``topological Thom space'' of the free real rank $d$ bundle over a point is $D^d/S^{d-1}=S^d$.

The existence of a Thom class for a rank $d$ vector bundle $V\to X$ leads to a {\em Thom isomorphism}
\begin{align*}
  \op{H}^{\ast}(X,\Z)&\to \op{H}^{\ast + d}_{X}(V,\Z(p^*\det V^\vee))\\
  u &\mapsto \op{th}(V)\cup p^*(u),
\end{align*}
as explained in any of the references above, see e.g.\ \cite[Theorem~5.7.10]{spanierbook}, \cite[Corollary 4D.9]{hatcher} or \cite[Theorem~17.9.1]{tomdieck}. In particular, the image of $1 \in  \op{H}^0(X,\Z)$ under the Thom isomorphism is $\op{th}(V)$.

Composing the Thom isomorphism $\op{H}^{*}(Z) \to \op{H}^{*+r}_Z(\Nb)$ for a closed embedding $i\colon Z \to X$ of codimension $r$ with an oriented normal bundle $\mathcal N$ with a ``deformation to the normal bundle'' or ``tubular neighborhood''-isomorphism and then forgetting the support $\op{H}^*_Z(X) \to \op{H}^*(X)$ is one of the standard definitions of the pushforward both for cohomology of abelian sheaves and for singular cohomology, which in this setting is sometimes called ``Gysin map''. Some further details are provided in the proofs of \prettyref{thm:pushforwards-compatible} and \prettyref{prop:comparedefconetubnbhd} below.

\subsubsection{Recollections on line bundles and local systems}\label{localcoeffsection}

Recall that a {\em local system} on a topological space $X$ is a functor from the fundamental groupoid of $X$ to the category of abelian groups, cf.~\cite[Exercises~1.F]{spanierbook}. For path-connected spaces, local systems with values in a fixed abelian group \(A\) are equivalent to group homomorphisms \(f\colon \pi_1(X,x)\to {\rm Aut}(A)\).
\begin{ARXIV}
  (To recover the local system from $f$, choose an inverse to the skeletal inclusion of $\pi_1(X,x)$ into the fundamental groupoid. Two such local systems are naturally isomorphic if and only if the corresponding maps \(\pi_1(X,x)\to {\rm Aut}(A)\) agree up to conjugation in \({\rm Aut}(A)\).)
\end{ARXIV}
For reasonable spaces like topological manifolds, local systems are moreover equivalent to locally constant sheaves of abelian groups, cf.~\cite[Exercise~6.F]{spanierbook} or \cite[Theorem~IV.9.7]{iversen}.

We will be mainly interested in locally constant sheaves of rank one, i.e., sheaves whose stalks at all points are isomorphic to $\Z$.  These correspond to rank one local systems (or, for path-connected $X$, to homomorphisms $f\colon \pi_1(X,x)\to{\rm Aut}(\Z)\cong\Z/2\Z$).

Any real line bundle \(\sheaf L\) over \(X\) determines a rank one locally constant sheaf \(\Z(\sheaf L)\), as follows.  Let \(\cont_X\) denote the sheaf of continuous real-valued functions on \(X\).  For any nowhere-vanishing function \(f\) on an open subset \(U\subset X\), the sign map defines a locally constant map \(\sign(f)\colon U\to \{\pm 1\}\).  We thus obtain a morphism of abelian sheaves \(\cont_X^\times \to \{\pm 1\}\).  As \(\{\pm 1\}\) acts on any sheaf of abelian groups \(\sheaf A\) in a canonical way, we obtain an induced action of \(\cont_X^\times\) on \(\sheaf A\).  So any sheaf of abelian groups \(\sheaf A\) on \(X\) can be viewed as a \(\Z[\cont_X^\times]\)-module. We can therefore apply the construction of \prettyref{def:general-twist} to \(\sheaf A\) and any line bundle \(\sheaf L\) over \(X\) to obtain a twisted sheaf \(\sheaf A(\sheaf L) := \sheaf A\otimes_{\Z[\cont(X)^\times]}\Z[\sheaf L^\times]\). In particular, taking \(\sheaf A\) to be the constant sheaf \(\Z\), we obtain a locally constant sheaf \(\Z(\sheaf L)\), as promised, and we write $\Z({\sheaf L})$ for the associated local system.

\begin{proposition}\label{prop:line-bundles-versus-local-systems}
  For any paracompact Hausdorff space \(X\), the assignment that associates with a line bundle \(\sheaf L\) the locally constant sheaf \(\Z(\sheaf L)\) induces a bijection between the set of isomorphism classes of real line bundles over \(X\) and the set of isomorphism classes of rank one locally constant sheaves on \(X\).  Both sets can moreover be identified with the (sheaf) cohomology group \(\op{H}^1(X,\Z/2\Z)\) in a natural way, such that the tensor product of line bundles\slash sheaves corresponds to the addition of cohomology classes.
  \AKTONLY{\qed}
\end{proposition}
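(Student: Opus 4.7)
The plan is to match both sides of the asserted bijection with the same first Čech cohomology group, and then verify that the functor $\sheaf L \mapsto \Z(\sheaf L)$ realizes this identification at the level of cocycles.

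First, I would recall the Čech cocycle descriptions. Let $\cont_X^\times$ denote the sheaf of nowhere vanishing continuous real-valued functions on $X$. Real line bundles over $X$ are classified, up to isomorphism, by $\check{\op{H}}^1(X,\cont_X^\times)$ (a cover-and-transition-function argument valid for paracompact Hausdorff $X$). The sign map $\sign\colon \cont_X^\times \to \constsheaf{\{\pm 1\}}$ is a morphism of sheaves of abelian groups whose kernel is the subsheaf $\cont_X^{>0}$ of strictly positive continuous functions. Because $\cont_X^{>0}$ admits partitions of unity on a paracompact Hausdorff space, it is soft (in particular acyclic for Čech cohomology) and the short exact sequence $1\to \cont_X^{>0}\to \cont_X^\times \to \constsheaf{\{\pm 1\}}\to 1$ yields an isomorphism $\sign_*\colon \check{\op{H}}^1(X,\cont_X^\times)\xrightarrow{\cong}\check{\op{H}}^1(X,\Z/2\Z)$, which is the first Stiefel--Whitney class. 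On the other hand, rank one locally constant sheaves on $X$ are torsors under $\underline{\op{Aut}(\Z)}=\underline{\{\pm 1\}}$, hence classified by the same $\check{\op{H}}^1(X,\Z/2\Z)$. On paracompact Hausdorff spaces, Čech and sheaf cohomology agree, so both sets are in natural bijection with $\op{H}^1(X,\Z/2\Z)$.

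Next, I would verify that the functor $\sheaf L\mapsto\Z(\sheaf L)$ realizes this identification. Choose a trivializing open cover $\{U_i\}$ for $\sheaf L$ with transition cocycle $g_{ij}\in \cont_X^\times(U_{ij})$. By the local description of \prettyref{def:general-twist} (cf.\ \prettyref{rem:twisted-sheaf-is-presheaf-tensor} in the arxiv variant, but also directly from the defining tensor product: choosing a trivializing section $s$ of $\sheaf L$ over $U$ yields an isomorphism $\Z(\sheaf L)|_U\cong \constsheaf{\Z}|_U$ sending $1\otimes fs\mapsto \sign(f)$), the sheaf $\Z(\sheaf L)$ is locally constant with cocycle $\sign(g_{ij})\in \{\pm 1\}(U_{ij})$. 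Thus $\Z(\sheaf L)$ corresponds to $\sign_*(\sheaf L)=w_1(\sheaf L)\in \op{H}^1(X,\Z/2\Z)$, which is an isomorphism by the above. This gives the bijection of isomorphism classes compatibly with the identifications with $\op{H}^1(X,\Z/2\Z)$.

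Finally, for the multiplicativity: given line bundles $\sheaf L,\sheaf L'$, the canonical pairing $\sheaf L^\times\times \sheaf L'^\times\to (\sheaf L\otimes\sheaf L')^\times$ induces a morphism $\Z(\sheaf L)\otimes \Z(\sheaf L')\to \Z(\sheaf L\otimes \sheaf L')$. Evaluated over open subsets trivializing both bundles, both sides are canonically $\Z$ and the map is the identity, so it is an isomorphism of sheaves. On the cocycle level this sends $(\sign(g_{ij}),\sign(g'_{ij}))$ to $\sign(g_{ij}g'_{ij})=\sign(g_{ij})\sign(g'_{ij})$, which on $\op{H}^1(X,\Z/2\Z)$ is addition. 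The only subtlety is the identification of line bundle isomorphism classes with $\op{H}^1(X,\Z/2\Z)$ on a general paracompact Hausdorff space (as opposed to a CW complex); this is the one step whose standard reference (e.g.\ through the classifying space $B\op{O}(1)=\R\op{P}^\infty$) one should cite carefully, but it is classical and poses no real obstacle.
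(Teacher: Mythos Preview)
Your proof is correct and follows essentially the same strategy as the paper: both reduce to \v{C}ech cocycles and identify the assignment $\sheaf L\mapsto \Z(\sheaf L)$ with the sign map on transition functions, then invoke the agreement of \v{C}ech and sheaf cohomology on paracompact Hausdorff spaces.

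The one noteworthy difference is how the isomorphism $\check{\op{H}}^1(X,\cont_X^\times)\cong \check{\op{H}}^1(X,\Z/2\Z)$ is obtained. You use the short exact sequence $1\to \cont_X^{>0}\to \cont_X^\times \to \constsheaf{\{\pm 1\}}\to 1$ together with softness of $\cont_X^{>0}$; the paper instead passes through classifying spaces, arguing that $\{\pm 1\}\hookrightarrow \R^\times$ is a homotopy equivalence of topological groups (with inverse $\sign$), so that $B(\{\pm 1\})\simeq B(\R^\times)$, and then translates between homotopy classes of maps, principal bundles, and cocycles. Your route is more self-contained within sheaf theory and avoids the classifying-space machinery (and the attendant need to note that all covers on a paracompact Hausdorff space are numerable); the paper's route makes the homotopical origin of the bijection more transparent. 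Both land on the same cocycle-level verification that $\Z(\sheaf L)$ has transition data $\sign(g_{ij})$.
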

\begin{ARXIV}
\begin{proof}
  The assumption that \(X\) is a paracompact Hausdorff space is made for simplicity.  It allows us to assume that all open covers of \(X\) are numerable \cite[Theorem~13.1.3]{tomdieck}. We then have natural bijections between the following sets:
  \begin{compactenum}[$(a)$]
  \item homotopy classes of continuous maps \(X\to B(\R^\times)\), where \(B(\R^\times)\) denotes the classifying space of the topological group \(\R^\times\)
  \item isomorphism classes of principal \(\R^\times\)-bundles over \(X\)
  \item equivalence classes of cocycles \((\{U_i\}_i,\{f_{ij}\}_{i,j})\), where \(\{U_i\}_i\) is an open cover of \(X\) and the \(f_{ij}\) are continuous maps \(U_i\cap U_j\to \R^\times\), up to refinement of the cover
  \item isomorphism classes of rank one locally free sheaves on the ringed space \((X,\cont(X))\)
  \item isomorphism classes of real line bundles over \(X\)
  \end{compactenum}
  Moreover, the tensor product in $(a)$, $(b)$, $(d)$, and $(e)$ corresponds to pointwise multiplication of transition functions in $(c)$.
  See \cite[Theorem~14.4.1]{tomdieck} for $(a)$ versus $(b)$ (here we use that all covers are numerable),
  \cite[Chapter~5, \S\S 2 and 3]{husemoller} for $(b)$ versus $(c)$,
  \cite[\stacktag{00AL} and \stacktag{00AM}]{stacks} for $(c)$ versus $(d)$,
  and \cite[Theorem~14.2.7]{tomdieck} for $(b)$ versus $(e)$.

  Similarly, we have natural bijections between the following sets:
  \begin{compactenum}[$(a')$]
  \item homotopy classes of maps \(X\to B(\{\pm 1\})\)
  \item isomorphism classes of principal \(\{\pm 1\}\)-bundles over \(X\)
  \item equivalence classes of cocycles \((\{U_i\}_i,\{f_{ij}\}_{i,j})\), where \(\{U_i\}_i\) is an open cover of \(X\) and the \(f_{ij}\) are continuous maps \(U_i\cap U_j\to \{\pm 1\}\), up to refinement of the cover
  \item isomorphism classes of rank one locally constant sheaves on \(X\)
  \end{compactenum}
  The inclusion \(i\colon \{\pm 1\}\hookrightarrow \R^\times\) is a group homomorphism and a homotopy equivalence, with homotopy inverse the sign map \(\sign\colon \R^\times \to \{\pm 1\}\).  In fact, the composition \(i\circ \sign\) is homotopic to the identity through maps of groups, so the induced maps \(Bi\) and \(B(\sign)\) are again homotopy equivalences.  This shows that the sets described by $(a)$ and $(a')$ are in natural bijection.

  It is not difficult to describe this bijection in terms of the sets $(b)$ and $(b')$, or $(c)$ and $(c')$: in either case, the map in one direction is some form of ``extension of scalars'', while the map in the other direction is induced by the sign map.  By tracing through the different bijections, we moreover see that the corresponding bijection from $(d)$ to $(d')$ can be described by the assignment \(\sheaf L\mapsto \Z(\sheaf L)\).

  Finally, we can also identify the set $(d)$ with the first \v{C}ech cohomology group \(\check{\op{H}}^1(X,\Z/2\Z)\), such that the pointwise product of cocycles considered above corresponds to the sum of cohomology classes, and \v{C}ech cohomology agrees with sheaf cohomology in degree one \cite[II.5.9]{godement}.

  In fact, under our assumptions on \(X\), \v{C}ech cohomology agrees with sheaf cohomology in all degrees \cite[II.5.10]{godement}.  Under mild additional hypothesis, e.g.\ if \(X\) has the homotopy type of a CW complex or is locally contractible, we could equivalently work with the first singular cohomology group with \(\Z/2\Z\)-coefficients; cf.\ Remark~\ref{folklore}. The bijection between the set of isomorphism classes of real line bundles and the cohomology group would then be given by the first Stiefel--Whitney class, which indeed takes the tensor product of two line bundles to the sum of the corresponding cohomology classes.
\end{proof}
\end{ARXIV}
Over a scheme $X/\R$, an algebraic line bundle \(\sheaf L\) has an underlying real line bundle \(\sheaf L_\R\) over \(X(\R)\). Let $\gamma\colon{\rm Pic}(X)/2\to \op{H}^1(X(\R),\Z/2\Z)$ denote the natural homomorphism that sends the isomorphism class of an algebraic line bundle $\mathcal{L}$ (up to squares) to the isomorphism class of the real topological line bundle $\sheaf L_\R$.

\begin{proposition}
  For any scheme $X/\R$, the map $\gamma\colon \op{Pic}(X)/2\to \op{H}^1(X(\R),\Z/2\Z)$ described above is a group homomorphism. The map is an isomorphism for smooth projective cellular varieties.
\end{proposition}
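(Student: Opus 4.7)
The homomorphism property is essentially formal from \prettyref{prop:line-bundles-versus-local-systems}. The functor $\sheaf L \mapsto \sheaf L_\R$ from algebraic line bundles on $X$ to real topological line bundles on $X(\R)$ is monoidal: $(\sheaf L \otimes \sheaf L')_\R \cong \sheaf L_\R \otimes \sheaf L'_\R$. Under the isomorphism of \prettyref{prop:line-bundles-versus-local-systems}, the tensor product of real line bundles corresponds to addition in $\op{H}^1(X(\R),\Z/2\Z)$, so the assignment $\mathcal L \mapsto w_1(\mathcal L_\R)$ is additive on isomorphism classes. Since the target is $2$-torsion, squares of algebraic line bundles map to zero, so the map descends to $\op{Pic}(X)/2$ and defines a group homomorphism.

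For the isomorphism statement on smooth projective cellular $X$, I would reduce to a cell-theoretic comparison. Writing $X = \bigsqcup_i C_i$ with $C_i \cong \A^{d_i}$, the Picard group $\op{Pic}(X) = \op{CH}^1(X)$ is free abelian on the closures $\overline{C_i}$ of the codimension-$1$ cells, so $\op{Pic}(X)/2$ is a $\Z/2$-vector space with these as a basis. On the topological side, the real loci $C_i(\R) \cong \R^{d_i}$ endow $X(\R)$ with a CW structure containing one real cell of dimension $d_i$ per algebraic cell. The map $\gamma$ sends $[\overline{C_i}]$ to $w_1(\sheaf O(\overline{C_i})_\R)$, which by the classical identification of $w_1$ of a line bundle with the Poincar\'e dual of the vanishing locus of a transverse section agrees with the $\Z/2$-coefficient Poincar\'e dual of $[\overline{C_i(\R)}]$ in $\op{H}^1(X(\R),\Z/2\Z)$. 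Thus $\gamma$ takes the natural cellular basis of $\op{Pic}(X)/2$ to the family of $\Z/2$-Poincar\'e duals of the real loci of codimension-$1$ cells.

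It therefore remains to verify that these Poincar\'e duals form a basis of $\op{H}^1(X(\R),\Z/2\Z)$. I expect to handle this by induction on the cells, mirroring the inductive arguments to be developed in Section~\ref{sec:cellular}: an open--closed decomposition of $X$ into a largest open cell and its complement yields, via the localization sequence in both $\op{Pic}/2$ and $\op{H}^1(-,\Z/2\Z)$, a reduction to lower-dimensional (or lower-cell-count) cellular varieties. Alternatively, one can compare $\gamma$ with the mod-$2$ Borel--Haefliger cycle class map via Bloch's formula $\op{Pic}(X)/2 \cong \op{H}^1(X,\mathbf{K}^{\op M}_1/2) \cong \op{H}^1(X,\I/\I^2)$ and deduce the cellular isomorphism from a mod-$2$ variant of the real cycle class map comparison.

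The main obstacle will be the cellular computation of $\op{H}^1(X(\R),\Z/2\Z)$: for a general CW complex there may be nonzero cellular differentials, and one must rule these out (or compensate for them) in the relevant range. This is likely the technical heart of the argument, as the identification of $\gamma$ with a geometric cycle class map and the homomorphism property are essentially automatic from the setup.
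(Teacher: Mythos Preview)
Your proposal is correct, and your second alternative---identifying $\gamma$ with the mod-$2$ Borel--Haefliger cycle class map in degree~$1$---is exactly the route the paper takes. The paper's proof of the isomorphism statement is a single sentence: it cites \prettyref{prop:cellular-cycle-isos}, which shows that the mod-$2$ real cycle class map $\op{Ch}^n(X)\to\op{h}^n(X(\R))$ is an isomorphism for all smooth cellular $X$; specializing to $n=1$ and using $\op{Pic}(X)/2\cong\op{Ch}^1(X)$ gives the result. The cellular induction you sketch (with the worry about nonzero differentials in the real CW complex) is precisely what is carried out once and for all in the proof of \prettyref{prop:cellular-cycle-isos}, where surjectivity of the restriction maps is established inductively and then fed into a five-lemma argument.

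Your first approach, matching cellular bases directly via Poincar\'e duals of real loci of codimension-$1$ cells, would also work in the projective case, but it is more laborious: you would need to argue separately that these classes span and are independent, and the Poincar\'e duality interpretation of $w_1$ as the class of a zero locus needs the compactness hypothesis. The paper's approach avoids this by treating all degrees at once and not invoking Poincar\'e duality. One small point you leave implicit (and so does the paper): one must check that $\gamma$ really agrees with the degree-$1$ Borel--Haefliger map under the identification $\op{Pic}(X)/2\cong\op{Ch}^1(X)$; this follows from the standard fact that $w_1(\sheaf L_\R)$ is the mod-$2$ fundamental class of the zero locus of a generic section, i.e.\ the Borel--Haefliger class of the divisor of $\sheaf L$.
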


\begin{proof}
  The assignment that maps an algebraic line bundle $\sheaf L$ over $X/\R$ to the real line bundle $\sheaf L_\R$ over $X(\R)$ is functorial and hence gives rise to a well-defined morphism on sets of isomorphism classes. The tensor product of algebraic line bundles is induced from the multiplication in $\mathbb{G}_{\op{m}}$, and the tensor product of real line bundles is induced from the multiplication in $\R^\times$. The above map from algebraic to real line bundles takes the algebraic transition functions to the corresponding continuous functions on real points. This is obviously compatible with the multiplication, and consequently with the tensor product of line bundles. This proves the first statement.

  The second statement is a special case of \prettyref{prop:cellular-cycle-isos} below.
\end{proof}

\begin{remark}\label{example:picfornoncellular}
  The natural homomorphism $\gamma\colon \op{Pic}(X)/2\to \op{H}^1(X(\R),\Z/2\Z)$ is in general not surjective. For example, when $X=\mathbb{A}^2\setminus\{0\}$ we have $\op{Pic}(\mathbb{A}^2\setminus\{0\})/2\cong\op{Ch}^1(\mathbb{A}^2\setminus\{0\})=0$ by the localization sequence for Chow groups. On the other hand, $\mathbb{A}^2\setminus\{0\}(\R)\simeq S^1$ and $\op{H}^1(S^1,\Z/2\Z) \cong \Z/2\Z$. Thus there exists a non-trivial local system on the real points of $\mathbb{A}^2\setminus\{0\}$. In particular, the isomorphism for smooth projective cellular varieties does not extend to ``linear'' varieties.

  To see that in general $\gamma$ is not injective either, consider an elliptic curve $E/\R$ such that $E(\R)$ is isomorphic to $S^1\times{\Z/2\Z}$, e.g.\ the completion of the affine curve given by $Y^2=X^3-X$. Then there are rational points on $E$ which are not 2-divisible, hence $\op{Pic}^0(E)/2$ will be nontrivial. Any corresponding non-trivial line bundle will map to $0$ under $\gamma$.
\end{remark}

\subsubsection{Comparing real-\'etale and singular cohomology}
        \label{compare:retsing}

        For $F=\R$, we wish to identify real-\'etale cohomology with singular cohomology over $\R$.
        A general reference is \cite{scheiderer}, some further discussion can be found in \cite[Remark 4.4]{jacobson}.

   The inclusion $\iota\colon X(\R) \rightarrow X_\r$ sending $x$ to $(x, \R_{\geq 0})$ is continuous.   Let $\mathcal{F}$ be a locally constant sheaf on $X_\r$. Consider the sheaf $\iota^*\mathcal{F}$ on $X(\R)$. This is a locally constant sheaf.
If $X$ is separated of finite type over $\R$ and $M$ is an abelian group, then we have a canonical isomorphism $\op{H}^i(X_\r,M) \cong \op{H}^i_\sing(X(\R),M)$, see e.g.\ \cite[Remark 4.4]{jacobson}. This is generalized by the following Lemma.

        \begin{lemma}
          \label{lem:iota-on-locally-constant-sheaves}
          \label{lem:iota-on-constant-sheaves}
           If $X$ is a scheme over $F=\R$, then
          the map $\iota^*\colon \op{H}^i(X_\r,\iota_*\mathcal{F}) \rightarrow \op{H}^i(X(\R), \mathcal{F})$  is an isomorphism for any \emph{locally} constant sheaf \(\sheaf F\) on \(X(\R)\).  Moreover, \(\iota_*\) defines an equivalence of categories between the category of constant sheaves on \(X_\r\) and the category of constant sheaves on \(X(\R)\).  For an abelian group \(M\) and associated constant sheaves \(M_\r\) and \(M_\R\) on \(X_\r\) and \(X(\R)\), respectively, we have \(\iota_*M_\R = M_\r\).
        \end{lemma}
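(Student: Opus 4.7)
The plan is to handle the statements for constant sheaves first by a direct sheaf-theoretic computation, and then bootstrap to locally constant sheaves via a local-to-global argument. The existing case of constant coefficients (Jacobson's Lemma~4.6 cited above, i.e.\ $\op{H}^i(X_\r,M)\cong \op{H}^i_\sing(X(\R),M)$) will be used as a black box.

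First, I would prove the identity $\iota_* M_\R = M_\r$ directly on sections. Over an open $U\subset X_\r$, the group $(\iota_* M_\R)(U)$ consists of locally constant functions $U\cap X(\R)\to M$, while $M_\r(U)$ consists of locally constant functions $U\to M$; the restriction along $\iota$ gives a natural map from the second group to the first. To show it is bijective it suffices to check on a basis of (real-\'etale, equivalently semi-algebraic) opens $U\subset X_\r$, where the key input is the standard fact from the theory of the real spectrum that the inclusion $U\cap X(\R)\hookrightarrow U$ induces a bijection on connected components. This can be extracted from Scheiderer's semi-algebraic framework (see \cite{scheiderer}), using that every semi-algebraic cell in a triangulation of $X_\r$ meets $X(\R)$ and is connected on both sides. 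Combined with the obvious identity $\iota^* M_\r = M_\R$, the unit and counit of the adjunction $(\iota^*,\iota_*)$ restrict to natural isomorphisms on the full subcategories of constant sheaves, giving the equivalence of categories.

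For a general locally constant sheaf $\sheaf F$ on $X(\R)$, I would choose a finite semi-algebraic open cover $X(\R)=\bigcup_\alpha U_\alpha$ such that $\sheaf F|_{U_\alpha}$ is constant with value $M_\alpha$, arranged (by refining via a semi-algebraic triangulation) so that all finite intersections $U_{\alpha_0\cdots\alpha_n}$ are contractible. Each $U_\alpha$ is the real trace of a canonically associated real-\'etale open $V_\alpha\subset X_\r$ with $V_\alpha\cap X(\R)=U_\alpha$, and this passes to finite intersections. On each $V_{\alpha_0\cdots\alpha_n}$, Step~1 identifies $\iota_*\sheaf F$ with a constant sheaf, and Jacobson's constant-coefficient isomorphism plus the contractibility of $U_{\alpha_0\cdots\alpha_n}$ yields that $\op{H}^p(V_{\alpha_0\cdots\alpha_n},\iota_*\sheaf F)\to \op{H}^p(U_{\alpha_0\cdots\alpha_n},\sheaf F)$ is an isomorphism in all degrees (vanishing for $p\geq 1$). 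Comparing the two \v{C}ech-to-sheaf-cohomology spectral sequences along the Čech nerves of $\{V_\alpha\}$ and $\{U_\alpha\}$, the induced map on $E_2$-pages is an isomorphism, hence so is the map on abutments, giving the desired isomorphism $\iota^*\colon \op{H}^i(X_\r,\iota_*\sheaf F)\xrightarrow{\cong}\op{H}^i(X(\R),\sheaf F)$.

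The main obstacle is the last technical input from semi-algebraic topology: producing a cover of $X(\R)$ by semi-algebraic opens whose lifts to $X_\r$ are also good (in particular, have contractible real traces on all finite intersections, and are compatible with the trace operation $V\mapsto V\cap X(\R)$). This rests on the existence of semi-algebraic triangulations over $\R$ and the fact that the components of a semi-algebraic open of $X_\r$ are in bijection with those of its real trace, which is the key reason the pushforward of a locally constant sheaf remains well-behaved on the real spectrum. Once these facts are in place, the remainder is a routine comparison of spectral sequences.
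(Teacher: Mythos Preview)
The paper does not actually give a proof here: it simply refers to \cite[Chapter~II, Theorem~5.7]{delfs} for the isomorphism on locally constant sheaves and to \cite[Remark~4.4]{jacobson} for the statement about constant sheaves. So your proposal is not competing with an argument in the paper but rather sketching a route to results that the authors import wholesale from the semi-algebraic literature.

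Your outline is sound and is essentially how one would unpack Delfs' theorem. The computation of $\iota_* M_\R$ via bijection of connected components on basic semi-algebraic opens is the right idea; over $\R$ the closed points of $X_\r$ are exactly $X(\R)$, and the specialization structure gives the needed bijection. The bootstrap via a good semi-algebraic cover and comparison of \v{C}ech spectral sequences is also the standard mechanism. Two points deserve care. First, the ``canonically associated open $V_\alpha\subset X_\r$'' for a semi-algebraic open $U_\alpha\subset X(\R)$ is Delfs' tilde construction $\tilde U_\alpha$; you should check that it preserves openness and finite intersections, which it does, but this is part of the semi-algebraic machinery you are invoking. Second, when you apply the constant-coefficient isomorphism on the pieces $V_{\alpha_0\cdots\alpha_n}$, note that these are semi-algebraic opens of $X_\r$ rather than real spectra of schemes, so you need the version of the comparison that holds for arbitrary locally semi-algebraic spaces---again exactly what Delfs proves. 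In short, your argument is correct but its technical inputs (triangulations, the tilde functor, cohomological comparison for semi-algebraic spaces) amount to reproving a chunk of \cite{delfs}, which is why the paper simply cites it.
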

        \begin{proof}
          For the first statement, see \cite[Chapter~II, Theorem~5.7]{delfs}.  For the second statement, see \cite[Remark~4.4]{jacobson}.
        \end{proof}

To establish the desired compatibility results of the above isomorphism $\iota^*$ with various functors
        (see Remark \ref{retvariant}), one employs sheaf-theoretic arguments. In particular, given a map $f:X \to Y$ of schemes, we have a commutative
        square of continuous maps
        $f_\r \circ \iota_X = \iota_Y \circ f(\R)$
        which still commutes when applying $(-)^\ast$.

\section{Cycle class maps for real varieties}
\label{sec:cyclesheaf}

In the following section, we will discuss real cycle class maps for real varieties and schemes. We fix a scheme $X$ over \(\R\), and we write \(X(\R)\) for the set of real points of \(X\) equipped with the analytic topology, and \(\rho\colon X(\R)\hookrightarrow X\) for the inclusion of the real points into the underlying topological space of the variety \(X\).

To define a cycle class map with target the singular cohomology of $X(\R)$, there are three approaches one could pursue:
\begin{enumerate}[(A)]
\item A sheaf-theoretic approach to the definition of cycle class maps has been developed, for mod 2 coefficients by Scheiderer \cite{scheiderer:purity} and for integral coefficients by Jacobson \cite{jacobson}. This factors through real or  real-\'etale cohomology as an intermediate step.
\item A more ``geometric'' approach would be through a definition of fundamental classes in Borel--Moore homology of the real points. This is similar to the cycle class map for complex varieties \cite{fultonbook} and was done for mod 2 homology by Borel--Haefliger \cite{borel-haefliger}.
\item On a formal homotopy-theoretic level, one can also define a cycle class map as follows. The cohomology theory $\bigoplus_{i,j}\op{H}^i(-,\mathbf{I}^j)$ (here with a full bigrading) is representable in the stable homotopy category $\mathcal{SH}(F)$ by the Eilenberg--Mac~Lane spectrum for the homotopy module $\I^\bullet$. Associated with the real embedding $\sigma:F\hookrightarrow \R$ there is a real realization functor $\op{Real}:\mathcal{SH}(F)\to \mathcal{SH}$ which maps a smooth $F$-scheme $X$ to $X(\R)$ equipped with the usual euclidean topology, cf.\ \cite[Section 10]{bachmann}. Formally, the real realization functor will induce a cycle class map
  \[
    \op{H}^i(X,\mathbf{I}^j)=[X,\mathbb{H}\mathbf{I}^j[i]]_{\mathcal{SH}(F)}\to [X(\R),\op{Real}(\mathbb{H}\mathbf{I}^j)[i]]_{\mathcal{SH}}.
  \]
\end{enumerate}

In this section, we will consider the first two of these possible definitions of real cycle class maps and show that they agree. (See Remark \ref{homotopyoutline} for the third approach.) We explain the sheaf-theoretic construction of the cycle class map in \prettyref{sec:sheaf-cycle}, along with modifications involving twists and cohomology with supports in \prettyref{sec:twist-support}. Then we will discuss a more geometric description of the cycle class map (related to fundamental classes in Borel--Moore homology) in Subsection~\ref{sec:cyclegeom}. Finally, we will discuss an extension of the cycle class map to the Chow--Witt ring in Subsection~\ref{sec:chowwittcycle}. Compatibilities of these identifications and the basic operations in sheaf cohomology (pullbacks, pushforwards, products etc) will be established in \prettyref{sec:compatibilities}.

\subsection{Sheaf-theoretic description of cycle class map}
\label{sec:sheaf-cycle}

We first consider the sheaf-theoretic approach where the real cycle class map is obtained as the following composition:
\begin{equation}\label{big-diagram}
  \begin{aligned}
    \xymatrix@C=6em{
      &\op{H}^n(X,\mathbf{I}^t)\ar[ld]_{\textrm{can}} \ar[d] \ar[rd]^{\textrm{real cycle class}}\\
      \op{H}^n(X,\colim_j\mathbf{I}^j) \ar[r]^-\cong_-{\sign_\infty}
      & \op{H}^n(X,\rho_\ast\Z) \ar[r]_-{\rho^*}^-\cong  &  \op{H}^n(X(\R),\Z)
    }
  \end{aligned}
\end{equation}
We first discuss the map $\op{can}$ in the diagram which is induced by the canonical map of Zariski sheaves $\op{can}\colon \mathbf{I}^t \to \colim_j\mathbf{I}^j$.

\begin{definition}
  Multiplication with $\pfist{-1}$ induces morphisms $\pfist{-1}\colon \mathbf{I}^j\to \mathbf{I}^{j+1}$. These maps give rise to a diagram
  \[
    \cdots \to \mathbf{I}^j \xrightarrow{\pfist{-1}} \mathbf{I}^{j+1}\xrightarrow{\pfist{-1}} \mathbf{I}^{j+2}\to  \cdots
  \]
  We denote the colimit of this system by $\colim_j\mathbf{I}^j$.
\end{definition}

\begin{proposition}
  \label{prop:can-sheaf-map}
  The morphisms $\pfist{-1}\colon \mathbf{I}^j\to \mathbf{I}^{j+1}$ as well as the canonical map $\op{can}\colon \mathbf{I}^t\to \colim_j\mathbf{I}^j$ are $\Gm$-equivariant morphisms of sheaves with $\Gm$-action on the big site $\op{Sm}_{F,\op{Zar}}$. In particular, the results of subsection \ref{sec:twistedcoefficients} apply.
\end{proposition}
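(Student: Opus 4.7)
My plan is to reduce both statements to the commutativity of multiplication in the Witt ring, and then appeal to general nonsense for the colimit.

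First I would recall the $\mathbb{G}_{\mathrm{m}}$-action. By definition, the action on $\mathbf{W}$ is induced by the morphism of sheaves of rings $\mathbb{G}_{\mathrm{m}}\to \mathbf{W}$ sending an invertible function $u$ to the rank one form $\langle u\rangle$; the action on $\mathbf{I}^j$ is the restriction of the resulting $\mathbf{W}$-module structure. Concretely, for $u\in \mathcal{O}(U)^\times$ and $\phi\in \mathbf{I}^j(U)$, one has $u\cdot\phi = \langle u\rangle\cdot\phi$. Both the source and target of the multiplication map $\pfist{-1}\colon \mathbf{I}^j\to \mathbf{I}^{j+1}$ carry such actions, and $\pfist{-1}\in \op{I}(F)$ can be viewed as a global section of $\mathbf{I}$ restricted from $\op{Spec} F$.

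Next I would verify $\mathbb{G}_{\mathrm{m}}$-equivariance of $\pfist{-1}\cdot(-)$ directly. For any open $U$ and any $u\in \mathcal{O}(U)^\times$, $\phi\in \mathbf{I}^j(U)$, one computes
\[
\pfist{-1}\cdot(u\cdot\phi) = \pfist{-1}\cdot\langle u\rangle\cdot\phi = \langle u\rangle\cdot\pfist{-1}\cdot \phi = u\cdot(\pfist{-1}\cdot\phi),
\]
the middle equality being the commutativity of the Witt ring $\op{W}(U)$. The same argument works on the big Zariski site $\op{Sm}_{F,\op{Zar}}$, since the Witt presheaves and the maps between them are functorial in rings and $\pfist{-1}$ pulls back to $\pfist{-1}$ along any morphism of schemes.

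Finally, for the canonical map $\op{can}\colon \mathbf{I}^t\to \colim_j \mathbf{I}^j$, I would observe that $\colim_j\mathbf{I}^j$ is taken in the category of $\mathbb{G}_{\mathrm{m}}$-equivariant abelian sheaves: the transition maps are $\mathbb{G}_{\mathrm{m}}$-equivariant by the previous step, so the colimit inherits a canonical $\mathbb{G}_{\mathrm{m}}$-action such that each structure map $\mathbf{I}^j\to \colim_j\mathbf{I}^j$ is equivariant. The map $\op{can}$ is by definition one of these structure maps.

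I do not anticipate a genuine obstacle here; the only subtlety is the distinction between the small and big site. This is handled by noting that all constructions---the Witt presheaf, the rank map, the fundamental ideal powers, and multiplication by $\pfist{-1}$---are defined uniformly in $U$ and commute with restriction along arbitrary morphisms in $\op{Sm}_F$, so sheafification on the big site yields $\mathbb{G}_{\mathrm{m}}$-equivariant sheaves and the above computation descends. Once this is established, Proposition~\ref{prop:identtwist} together with \prettyref{def:general-twist} and \prettyref{def:twisted-morcd} automatically furnish twisted versions of $\pfist{-1}$ and $\op{can}$ for every line bundle $\mathcal{L}$.
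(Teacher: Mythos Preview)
Your proof is correct and follows essentially the same approach as the paper. Both arguments reduce $\Gm$-equivariance of $\pfist{-1}\cdot(-)$ to the fact that multiplication by $\pfist{-1}$ and multiplication by $\langle u\rangle$ arise from the same commutative multiplicative structure (the paper phrases this as both coming from the $\mathbf{K}^{\op{MW}}_\bullet$-module structure of $\I^\bullet$, whereas you invoke commutativity of the Witt ring directly), and then pass to the colimit by general nonsense.
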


\begin{proof}
  Multiplication by $\pfist{-1}$ is compatible with residue maps for $\mathbf{I}^j$ because it is a global unramified symbol. This implies that $\pfist{-1}\colon\mathbf{I}^j\to\mathbf{I}^{j+1}$ induces a morphism of unramified Zariski sheaves. It is also compatible with the $\mathbb{G}_{\op{m}}$-action, because both $\pfist{-1}$ and multiplication by $\langle u\rangle$ arise from the $\mathbf{K}^{\op{MW}}_\bullet$-module structure of $\I^\bullet$. Consequently, the colimit $\colim_j\mathbf{I}^j$ is a sheaf of abelian groups with $\mathbb{G}_{\op{m}}$-action on the big site $\op{Sm}_{F,\op{Zar}}$, and the canonical map $\op{can} \colon \mathbf{I}^i\to\colim_j\mathbf{I}^j$ is compatible with these structures.
\end{proof}

\begin{remark}
  There is a more sophisticated interpretation of this map. Namely, the collection of $\Gm$-sheaves $\mathbf{I}^t$ may be interpreted as a homotopy module, as studied in detail in \cite{MField}. See Examples 3.33 and 3.34 as well as Lemma~3.35 in \cite{MField}, the relevant contractions are computed in \cite[Lemma~2.7, Proposition~2.8]{AsokFaselSpheres}.
  Then the map $\mathbf{I}^t \to \colim_j\mathbf{I}^j$ is nothing but the localization  of the homotopy module $\mathbf{I}^t$ with respect to $-[-1]\in \op{K}^{\op{MW}}_1(F)$. Using results and techniques from \cite{MField} and \cite{bachmann}, it is possible to obtain many of the results in
  the following sections using the theory of homotopy modules. We do not pursue this approach here.
\end{remark}

\begin{remark}\label{rem:can-multiplicative}
  The product \(\I^k\otimes \I^l\to \I^{k+l}\) induces a product on \(\colim_j \I^j\) such that the following diagram is commutative:
  \[
    \xymatrix{ \I^k \otimes \I^l \ar[r] \ar[d]^{\op{can}\otimes\op{can}} & \I^{k+l} \ar[d]^{\op{can}} \\
      \colim_j \I^j \otimes\colim_j \I^j \ar[r] & \colim_j\I^{j} \\}
  \]
\end{remark}

Next, following \cite{scheiderer} and \cite{jacobson}, we recall the map of Zariski sheaves which the latter reference is denoted by $\colim_j\mathbf{I}^j \to \op{supp}_\ast\circ\iota_* \Z$. Here $\iota\colon X(\mathbb{R})\to X_{\op{r}}$ is the natural morphism from the real points of $X$ to the real spectrum, and $\op{supp}\colon X_{\op{r}}\to X_{\op{Zar}}$ is the natural morphism from the real spectrum to the Zariski topology of $X$. Since the real spectrum and real-\'etale topology are not relevant for our considerations, we will only deal with the composition $\rho = \op{supp}\circ\iota\colon X(\mathbb{R})\to X_{\op{Zar}}$ taking a real point to the corresponding closed point in $X$.

\begin{definition}
  As in \cite[Section 8]{jacobson}, the signature of \prettyref{def:signature} induces a ring homomorphism $\op{sign}\colon \op{W}(X)\to \op{H}^0(X_{\op{r}},\Z)\cong \op{H}^0(X(\R),\Z)$. After Zariski sheafification and restriction to fundamental ideals, this induces a morphism of sheaves $\op{sign}\colon \mathbf{I}^n\to \rho_\ast 2^n\Z$ which we can interpret as morphisms of coefficient data
  \[
    (X,\I^j) \xleftarrow{(\rho,\sign)} (X(\R),2^j\Z)
  \]
  Since  $\pfist{-1}$ realizes to $2\in \op{W}(\mathbb{R})\cong\Z$, we get a morphism of diagrams inducing a morphism
  \[
    \op{sign}_\infty\colon \colim_j\mathbf{I}^j\rightarrow \rho_* \Z.
  \]
\end{definition}

The following is a direct consequence of the fundamental work of Jacobson~\cite{jacobson}:
\begin{proposition}
  \label{prop:J-homotopy-modules}
  The morphism $\op{sign}_\infty\colon \colim_n \mathbf{I}^n \to \rho_\ast\Z$  is an isomorphism of  sheaves of abelian groups with $\Gm$-action on the big site $\op{Sm}_{F,\op{Zar}}$. In particular, the results of subsection \ref{sec:twistedcoefficients} can be applied.

  We have a commutative diagram of sheaves with $\mathbb{G}_{\op{m}}$-action on the big site:
  \[
    \xymatrix{
      \I^j \ar[r]_-{\mathrm{can}} \ar[d]_{\sign} &
      \colim_j \I^j \ar[d]^{\sign_\infty}\\
      \rho_*2^j\Z \ar[r]_-{\mathrm{can}}^-{\cong} & \rho_*(\colim_j 2^j\Z)\mathrlap{\cong \rho_*\Z}
    }
  \]

  A similar statement is true for the mod 2 identification $\colim_n\overline{\mathbf{I}}^n\cong\rho_*(\Z/2\Z)$.
\end{proposition}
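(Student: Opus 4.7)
The strategy is to reduce to a stalkwise isomorphism supplied by Jacobson's work, with the $\Gm$-equivariance and the commutative square then following essentially formally.

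First, I would check the isomorphism $\sign_\infty\colon \colim_n \I^n \to \rho_\ast \Z$ on Zariski stalks. At a point $x\in X$ with local ring $A = \OO_{X,x}$, the left-hand stalk is $\colim_n \op{I}^n(A)$, since filtered colimits of sheaves are computed stalkwise. Using \prettyref{lem:iota-on-constant-sheaves} to rewrite $\rho_\ast \Z \cong \op{supp}_\ast \Z$, I would identify the right-hand stalk with $\op{H}^0_\ret(\op{Spec} A, \Z)$, the group of locally constant $\Z$-valued functions on the real spectrum of $A$. Jacobson's main theorem in \cite{jacobson} then states precisely that the total signature induces an isomorphism $\colim_n \op{I}^n(A) \xrightarrow{\cong} \op{H}^0_\ret(\op{Spec} A, \Z)$ for any such local ring, delivering the required stalkwise isomorphism.

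Second, the $\Gm$-equivariance on the big site $\op{Sm}_{F,\op{Zar}}$ follows from the identity $\sign(\langle u\rangle \cdot \varphi) = \sign(u)\cdot\sign(\varphi)$ for units $u$ and forms $\varphi$ over a real closed field: the $\Gm$-action on $\rho_\ast \Z$ factors through the sign character $\Gm \to \{\pm 1\}$ as described in \prettyref{sec:singcohomology}, so this identity on sections descends to the sheaves. Similarly, the commutative square reduces to the observation that $\sign_\infty$ is, by construction, the filtered colimit of the component signatures $\sign\colon \I^n \to \rho_\ast 2^n\Z$ (compatible because $\sign(\pfist{-1}) = 2$), together with the evident identification $\colim_n 2^n\Z \cong \Z$ sending $[2^n k]_n \mapsto k$, obtained stalkwise from the colimit in $\Z$ along multiplication by $2$.

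The mod $2$ statement is deduced by an identical argument, replacing the total signature by its reduction $\overline{\sign}\colon \overline{\I}^n \to \rho_\ast(\Z/2)$ and invoking the mod $2$ part of Jacobson's theorem. The only substantive step is the first one, which is a direct appeal to Jacobson; the main obstacle, such as it is, lies purely in matching notation, namely translating Jacobson's real-\'etale formulation into a statement about the Zariski pushforward $\rho_\ast \Z$, which is handled by \prettyref{lem:iota-on-constant-sheaves} and the preceding discussion in \prettyref{sec:prelim-ret}.
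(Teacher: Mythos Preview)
Your proposal is correct and follows essentially the same approach as the paper: both reduce the isomorphism to Jacobson's theorem (the paper cites \cite[Theorem~8.6]{jacobson} directly, using $\rho_* = \op{supp}_* \circ \iota_*$ and $\iota_*\Z = \Z$, while you phrase the same reduction stalkwise), and both derive the $\Gm$-equivariance from the identity $\sign(\langle u\rangle\cdot\varphi) = \sign(u)\cdot\sign(\varphi)$. One small correction: the $\Gm$-action on $\rho_*\Z$ via the sign character is not spelled out in \prettyref{sec:singcohomology} but rather introduced in the paper's own proof of this proposition; your description of it is nonetheless correct.
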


\begin{proof} The isomorphism follows from \cite[Theorem~8.6]{jacobson},
keeping in mind that $\rho_* = \op{supp}_* \circ \iota_*$ and $\iota_* \Z = \Z$. We now explain where the $\mathbb{G}_{\op{m}}$-action on $\rho_\ast\Z$ comes from. For $X$ a scheme over $\mathbb{R}$, $U\subseteq X$ open and $u\in \mathcal{O}_X(U)^\times$ a unit, we get continuous invertible function $U(\mathbb{R})\to \mathbb{R}^\times$. The $\mathbb{G}_{\op{m}}$-action is then given by multiplying a section $\sigma\in (\rho_\ast\Z)(U)\cong \Z(U(\mathbb{R}))$ by the composition $U(\mathbb{R})\to \mathbb{R}^\times\to \pm 1$. The compatibility of the signature maps $\op{sign}$ and $\op{sign}_\infty$ with the $\mathbb{G}_{\op{m}}$-actions is then clear.
The commutativity of the diagram is then also clear, cf.\ also \cite[Section~8]{jacobson}.
\end{proof}

\begin{remark}
  The identification $\colim_n\mathbf{I}^n\xrightarrow{\cong} \rho_\ast\Z$ is  also compatible with transfers on fields, cf.\ \cite[Lemma~20]{bachmann}. This ultimately implies compatibility with pushforwards along finite morphisms, but we won't discuss this in the present paper.

  The identification is also compatible with residue maps, hence induces a morphism of Gersten complexes. The residue maps for $\rho_\ast\Z$ are denoted by $\beta$ and their compatibility with the residue maps on $\mathbf{I}^n$ and $\colim_j\mathbf{I}^j$ are discussed in \cite[Section~3 ]{jacobson}.
\end{remark}

It should be noted at this point, that the left triangle in diagram~\eqref{big-diagram} commutes by definition because the identification $\op{sign}_\infty\colon \colim_j\mathbf{I}^j\xrightarrow{\cong} \rho_\ast\Z$ is induced from the signature map. The morphism $\op{H}^s(X,\mathbf{I}^t)\to \op{H}^s(X,\rho_\ast\Z)$ can either be viewed as $\op{sign}_\infty\circ\op{can}$ or $\op{can}\circ\op{sign}$ in the diagram of \prettyref{prop:J-homotopy-modules}.

The final morphisms in diagram~\eqref{big-diagram} are simply pullback morphisms induced by the morphism $\rho\colon X(\R)\to X_{\rm Zar}$, as in Example~\ref{eg:canonical-morphisms-of-coefficient-data} and \prettyref{def:sheaf-pullback}. The fact that these pullback morphisms are in fact isomorphisms is established in \cite[Remark~4.4 and Lemma~4.6]{jacobson} and \cite[Chapter~II, Theorem~5.7]{delfs}.

\begin{remark}\label{homotopyoutline}
  At this point, we can already outline how the formal homotopy-theoretic definition of the cycle class map agrees with the sheaf-theoretic definition. A central point in the sheaf-theoretic approach to cycle class maps is Jacobson's identification $\colim_n\mathbf{I}^n\cong\rho_\ast\Z$.  Combining this with \cite[Corollary 38]{bachmann}, we see that the Eilenberg--Mac~Lane spectrum for $\I^\bullet$ is mapped by real realization to the Eilenberg--Mac~Lane spectrum $\mathbb{H}\Z$ representing integral cohomology. Moreover, it is possible to trace through the definitions to see that the formal homotopy-theoretic cycle class map agrees with the sheaf-theoretic, essentially by virtue of the definition (or \cite[Proposition~36]{bachmann}). For a comparison of Thom classes obtained this way with ours, the reader may consult \cite[section 3.5]{asok-fasel:euler}.
\end{remark}

\begin{remark}
  Our discussion of cycle classes above makes no assumptions about singularities of the schemes involved, similar to the discussion in \cite{scheiderer:purity} and \cite{jacobson} for cycle classes with mod 2 and integral coefficients, respectively. However, some of the compatibility results discussed later will require restriction to smooth schemes.
\end{remark}

\subsection{Remarks on twisting and supports}
\label{sec:twist-support}

In this section, we discuss the modifications necessary to add twists by line bundles and supports to the real cycle class map discussed above. For a scheme $X/\R$ with closed subvariety $Z\hookrightarrow X$ and a line bundle $\mathcal{L}$ over $X$, this extension of the real cycle class map should have the form \[
\op{H}^s_Z(X,\mathbf{I}^t(\mathcal{L}))\to \op{H}^s_{Z(\R)}(X(\R),\Z(\mathcal{L})).
\]
As previously, the cycle class map is obtained as a composition in a diagram:
\begin{equation}\label{diagram-twist-support}
  \begin{aligned}
    \xymatrix@C=5em{
      &\op{H}^n_Z(X,\mathbf{I}^t(\mathcal{L}))\ar[ld]_{\textrm{can}} \ar[d] \ar[rd]^{\textrm{real cycle class}}\\
      \op{H}^n_{Z}(X,\colim_j\mathbf{I}^j(\mathcal{L})) \ar[r]_-{\sign_\infty}^-\cong & \op{H}^n_{Z}(X,\rho_*\Z(\mathcal{L})) \ar[r]_-{\rho^*}^-\cong &
      \op{H}^n_{Z(\R)}(X(\R),\Z(\mathcal{L}))
    }
  \end{aligned}
\end{equation}

We will spend the rest of the section explaining why the maps in the above diagram exist, and discuss some of their properties.

By \prettyref{prop:can-sheaf-map}, we know that $\op{can}\colon \mathbf{I}^t\to \colim_j\mathbf{I}^j$ is a $\mathbb{G}_{\op{m}}$-equivariant morphism of sheaves on the big site. In particular, \prettyref{def:general-twist} provides, for any real scheme $X$ with a line bundle $\mathcal{L}$, a twisted canonical map $\op{can}\colon \mathbf{I}^t(\mathcal{L})\to \colim_j\mathbf{I}^j(\mathcal{L})$ of sheaves on the small Zariski site of $X$. Note also that the isomorphism of \prettyref{prop:identtwist} is compatible with the \(\Gm\)-action, hence we have induced isomorphism \(\colim_n \I^n_{\mathcal{L}} \cong \colim_n \I^n(\mathcal{L})\). Consequently, there is no difference between the two possible ways to twist the map $\op{can}\colon \mathbf{I}^t\to \colim_j\mathbf{I}^j$, discussed in Definitions~\ref{def:W-and-I-sheaves} and \ref{def:general-twist}. Now the first map in the diagram,
\[
  \op{can}\colon \op{H}^s_Z(X,\mathbf{I}^t(\mathcal{L}))\to \op{H}^s_Z(X,\colim_j\mathbf{I}^j(\mathcal{L}))
\]
is then given by the pullback with supports, cf.\ \prettyref{def:Go-pullback-with-support}, for the twisted canonical map $\mathbf{I}^t(\mathcal{L})\to \colim_j\mathbf{I}^j(\mathcal{L})$. The same argument applies to the identification $\op{sign}_\infty\colon \colim_j\mathbf{I}^j\xrightarrow{\cong} \rho_\ast\Z$, by \prettyref{prop:J-homotopy-modules}.

We briefly discuss the middle vertical arrow
\[
  \op{H}^s_Z(X,\mathbf{I}^t(\mathcal{L}))\to \op{H}^n_Z(X,\rho_\ast\mathbb{Z}(\mathcal{L}))
\]
which can be viewed as a  twisted versions of the signature map. In fact, this morphism is the pullback on cohomology with supports for the morphism of sheaves $\mathbf{I}^t(\mathcal{L})\to \rho_\ast \Z(\mathcal{L})$, and the latter is simply the composition $\op{sign}_\infty\circ \op{can}=\op{can}\circ\op{sign}$ of \prettyref{prop:J-homotopy-modules}. It would also be possible to write out in detail how to obtain this twisted version of the signature from \prettyref{def:signature}, but we won't need this.

The signature is equivariant with respect to the natural \(\Gm\)-actions on both sides, so we can twist this morphism by an arbitrary line bundle \(\sheaf L\) over \(X\) and get the following twisted morphism of ringed coefficient data:
\[
  (X(\R),\Z(\rho^\ast\mathcal{L}))
  \xrightarrow{(\rho,\op{sign}_\infty)} (X,\colim_j\I^j(\sheaf L))
\]
For a closed subscheme $Z\subset X$,  \prettyref{def:Go-pullback-with-support} provides an induced pullback morphism on cohomology with supports
\[
  (\rho, \op{sign}_\infty)^\ast\colon \op{H}^s_Z(X,\colim_j\mathbf{I}^j(\mathcal{L})) \to \op{H}^s_{Z(\R)}(X(\R),\Z(\mathcal{L}))
\]
The real cycle class map in diagram~\eqref{diagram-twist-support} is the composition of the previous twisted signature map with the above pullback with supports.

We briefly mention the following twisted version of Jacobson's result \cite[Theorem~8.6 (i) and Corollary 8.9 (i)]{jacobson}. As in the untwisted case, it holds for 
schemes $X$ with $2$ invertible in $\mathcal{O}_X$ rather than only schemes over $\R$.
\begin{theorem}\label{thm:sign}
Let $\sheaf L$ be a line bundle on $X$.
  \begin{enumerate}
  \item The twisted signature map $\op{sign}_\infty\colon \colim_n\mathbf{I}^n(\sheaf L)\to
    (\rho_*\Z)(\sheaf L) \cong \rho_*(\Z(\rho^*\sheaf L))$ is an isomorphism.
    (Here we use the canonical isomorphism of \prettyref{lem:general-twist-of-structure-morphism}.)
  \item
    The map
    \[
      \sign_{\sheaf L}\colon \op{H}^i(X, \mathbf{I}^j(\sheaf L)) \rightarrow \op{H}^i(X, \rho_* \Z(\sheaf L)) \cong \op{H}^i(X(\R), \Z(\sheaf L(\R)))
    \]
    induced from the twisted signature map is an isomorphism of groups for $j\geq dim(X) +1 $.
  \end{enumerate}
\end{theorem}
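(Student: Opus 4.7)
The plan is to reduce both statements to Jacobson's untwisted results already recalled in \prettyref{prop:J-homotopy-modules}, exploiting that twisting by a line bundle $\sheaf L$ is Zariski-local and that all maps involved are $\Gm$-equivariant (\prettyref{prop:can-sheaf-map}, \prettyref{prop:J-homotopy-modules}). For part (1), I would verify the claim on a Zariski cover of $X$ by opens $U$ over which $\sheaf L$ is trivial. A choice of trivialization of $\sheaf L|_U$ gives compatible identifications $\mathbf{I}^n(\sheaf L)|_U\cong \mathbf{I}^n|_U$ and $\rho_\ast\Z(\sheaf L)|_U\cong \rho_\ast\Z|_U$, under which the twisted $\sign_\infty$ is transported to the untwisted signature map. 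The latter is an isomorphism by \prettyref{prop:J-homotopy-modules}, hence so is the former.

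For part (2), passing to cohomology in the sheaf isomorphism of (1) and applying \prettyref{lem:cohomology-of-colimits} yields
\[
\colim_n \op{H}^i(X,\mathbf{I}^n(\sheaf L)) \;\cong\; \op{H}^i(X, \colim_n\mathbf{I}^n(\sheaf L)) \;\cong\; \op{H}^i(X,\rho_\ast\Z(\sheaf L)).
\]
The right-hand side is identified with $\op{H}^i(X(\R),\Z(\sheaf L(\R)))$ by the same local argument: a trivializing cover of $\sheaf L$ reduces the claim to the untwisted pullback isomorphism of \cite[Remark~4.4 and Lemma~4.6]{jacobson}. What thus remains is to show that for $j\geq \dim(X)+1$, the canonical map $\op{H}^i(X,\mathbf{I}^j(\sheaf L))\to \colim_n \op{H}^i(X,\mathbf{I}^n(\sheaf L))$ is an isomorphism, i.e., that multiplication by $\pfist{-1}$ induces an isomorphism $\op{H}^i(X,\mathbf{I}^j(\sheaf L))\to \op{H}^i(X,\mathbf{I}^{j+1}(\sheaf L))$ for $j\geq \dim(X)+1$.

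This last point is the main obstacle, and should be handled via the Gersten resolution of \prettyref{thm:allIcohomologyagree}. The morphism $\pfist{-1}$ acts summand-wise on the Gersten complex, sending $\op{I}^{j-p}(k(x),\omega_x)$ to $\op{I}^{j+1-p}(k(x),\omega_x)$ for each $x\in X^{(p)}$, and it suffices to check these maps are isomorphisms when $j\geq \dim(X)+1$. This is a purely field-theoretic stabilization, namely that for a field $k$ of virtual $2$-cohomological dimension $d$, multiplication by $\pfist{-1}$ is an isomorphism $\op{I}^m(k)\to \op{I}^{m+1}(k)$ for $m\geq d+1$; the twist by the one-dimensional $k(x)$-vector space $\omega_x$ is a $\op{W}(k(x))$-module isomorphism and does not affect the argument. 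Since a codimension-$p$ point of a smooth variety over $\R$ of dimension $n$ has residue field of transcendence degree $n-p$ over $\R$ and hence virtual $2$-cohomological dimension at most $n-p$, the condition $j-p\geq (n-p)+1$ reduces exactly to $j\geq n+1=\dim(X)+1$, completing the argument.
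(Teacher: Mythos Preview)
Your proof is correct and follows essentially the same strategy as the paper: for (1), reduce to the untwisted case of Jacobson via $\Gm$-equivariance and locality of the twist; for (2), show that the transition maps $\pfist{-1}\colon \op{H}^i(X,\I^j(\sheaf L))\to \op{H}^i(X,\I^{j+1}(\sheaf L))$ stabilize once $j\geq \dim(X)+1$, so that the cohomology of $\I^j(\sheaf L)$ agrees with that of $\colim_n\I^n(\sheaf L)\cong \rho_*\Z(\sheaf L)$. The paper simply cites Jacobson for this last step (``as Corollary~8.9(i) follows from Theorem~8.6(i)''), whereas you spell it out via the Gersten resolution and the $\vcd_2$-bound for residue fields; this is precisely the content of Jacobson's argument (cf.\ also the proof of \prettyref{prop:mult2-I}), so there is no substantive difference.

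One small imprecision: the identification $\op{H}^i(X,\rho_*\Z(\sheaf L))\cong \op{H}^i(X(\R),\Z(\sheaf L(\R)))$ is not something you can reduce to trivializing opens for $\sheaf L$, since cohomology is not Zariski-local. The correct justification is that the pullback $\rho^*$ is an isomorphism for \emph{any} locally constant sheaf on $X(\R)$: $\support_*$ is exact (Scheiderer) so the Leray spectral sequence collapses, and $\iota^*$ is an isomorphism for locally constant coefficients by \prettyref{lem:iota-on-locally-constant-sheaves}. No case analysis on $\sheaf L$ is needed. Also note that your Gersten argument for (2) uses smoothness of $X$, while Jacobson's sheaf-level argument (showing $\I^j(\sheaf L)\to\colim_n\I^n(\sheaf L)$ is an isomorphism of sheaves for large $j$) does not; the paper remarks that the result holds for any $X$ with $2$ invertible.
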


\begin{proof}
  (1) The map is obtained using  \prettyref{def:general-twist} applied to the map $\op{sign}_\infty$, noting that  \prettyref{prop:J-homotopy-modules} provides the required $\Gm$-equivariance. Moreover, it follows from \prettyref{def:general-twist} that twisting an isomorphism of sheaves by a line bundle produces an isomorphism. The claim then follows locally from the untwisted isomorphism $\op{sign}_\infty\colon \colim_j\mathbf{I}^j\xrightarrow{\cong} \rho_\ast\Z$ of \cite[Theorem~8.6]{jacobson}.

  (2) follows from (1), as Corollary 8.9 (i) follows from Theorem~8.6 (i) in \cite{jacobson}.
\end{proof}

\begin{remark}
  \label{rem:realization-support}
  It  follows easily from the above that the real cycle class maps are compatible with the maps $\op{H}^n_{Z}(X,\I^t) \to \op{H}^n(X,\I^t)$ forgetting the support.
\end{remark}

\subsection{Geometric description of cycle class maps}
\label{sec:cyclegeom}

In this section, we discuss a more geometric and explicit description of the real cycle class map which is mostly based on the coniveau spectral sequence. It produces Gersten--Rost--Schmid-type complexes which compute the integral singular cohomology of $X(\R)$ for a smooth $\R$-scheme $X$. This is an integral version of the Gersten-type resolutions in \cite{scheiderer:purity}.

First, we note that the second part of the real cycle class map is given by an isomorphism
\[
  \op{H}^s(X,\rho_\ast \Z(\mathcal{L}))\xrightarrow{\cong} \op{H}^s(X(\mathbb{R}),\Z(\mathcal{L})).
\]
To make this isomorphism more explicit, we consider the coniveau spectral sequence computing $\op{H}^s(X,\rho_\ast\Z(\mathcal{L}))$ arising from the filtration by codimension of supports.

\begin{proposition}
  \label{prop:scheiderer-21}
  Let $X$ be a smooth scheme over $\mathbb{R}$ and let $\mathcal{L}$ be a line bundle over $X$. Then there is a complex of abelian groups
  \begin{multline*}
    0\to \bigoplus_{x\in X^{(0)}} \op{H}^0_x(X(\R),\Z(\mathcal{L})) \to \bigoplus_{x\in X^{(1)}} \op{H}^1_x(X(\R),\Z(\mathcal{L})) \to\\\to \bigoplus_{x\in X^{(2)}} \op{H}^2_x(X(\R),\Z(\mathcal{L})) \to\cdots
  \end{multline*}
  whose $q$-th cohomology is canonically isomorphic to $\op{H}^q(X(\mathbb{R}),\Z(\mathcal{L}))$. Here, we use the notation
  \[
    \op{H}^\ast_x(X(\mathbb{R}),\Z(\mathcal{L})):= \colim \op{H}^\ast_{(\overline{x}\cap U)(\R)}(U(\R),\Z(\mathcal{L}))
  \]
  with the colimit taken over Zariski open neighborhoods $U$ of $x$.
\end{proposition}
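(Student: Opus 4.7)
The natural approach is to deduce the complex from the coniveau (Cousin) spectral sequence for the sheaf $\rho_\ast\Z(\mathcal{L})$ on the Zariski site of $X$. Using the filtration of $X$ by codimension of Zariski support, one has for any abelian sheaf $\mathcal{F}$ on $X$ a convergent spectral sequence
\[
  E_1^{p,q} \;=\; \bigoplus_{x\in X^{(p)}} \op{H}^{p+q}_x(X,\mathcal{F}) \;\Longrightarrow\; \op{H}^{p+q}(X,\mathcal{F}),
\]
where $\op{H}^{*}_x(X,\mathcal{F})$ denotes the colimit of $\op{H}^{*}_{\overline{\{x\}}\cap U}(U,\mathcal{F})$ over Zariski open neighborhoods $U$ of $x$. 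I would apply this to $\mathcal{F}=\rho_\ast\Z(\mathcal{L})$ and use the isomorphism $\op{H}^{s}(X,\rho_\ast\Z(\mathcal{L}))\cong \op{H}^{s}(X(\R),\Z(\mathcal{L}))$ to identify the abutment. This is exactly the strategy of Scheiderer in the mod~$2$ case \cite{scheiderer:purity} and of Jacobson in the untwisted integral case \cite{jacobson}.

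The key step is a local-to-global computation: for a point $x\in X^{(p)}$, one needs to identify
\[
  \op{H}^{s}_{x}(X,\rho_\ast\Z(\mathcal{L})) \;\cong\; \op{H}^{s}_{x}(X(\R),\Z(\mathcal{L})),
\]
with the right-hand side defined as in the statement. Since $\rho\colon X(\R)\to X$ is the composition of the inclusion $\iota\colon X(\R)\to X_\r$ with the support map $\op{supp}\colon X_\r\to X$, and both of these have exact pushforwards on the relevant sheaves (see \prettyref{sec:prelim-ret} and the discussion around \prettyref{lem:iota-on-locally-constant-sheaves}), the local cohomology on $X$ with coefficients in $\rho_\ast\Z(\mathcal{L})$ translates to local cohomology on $X(\R)$ via a Leray argument, with support the real points of $\overline{\{x\}}$ in an open neighborhood. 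This is the content of the analogous identifications in Scheiderer and Jacobson's work, and the twisted version follows by local triviality of $\mathcal{L}$ together with the compatibility of twisting with $\rho_\ast$ (cf.\ \prettyref{lem:general-twist-of-structure-morphism} and \prettyref{thm:sign}\,(1)).

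Having this identification, the main technical input is the vanishing $E_1^{p,q}=0$ for $q\neq 0$, i.e.\ purity: for each $x\in X^{(p)}$, the groups $\op{H}^{s}_{x}(X(\R),\Z(\mathcal{L}))$ vanish for $s\neq p$. Locally on a Zariski neighborhood of $x$, the line bundle $\mathcal{L}$ trivializes, so we reduce to the untwisted case, which is precisely Scheiderer's real purity \cite[Theorem~21]{scheiderer:purity} (with integral coefficients treated in Jacobson's setting). Granting purity, the spectral sequence degenerates onto the single row $q=0$, and the $E_1$-differentials assemble into the claimed complex whose cohomology computes $\op{H}^{q}(X,\rho_\ast\Z(\mathcal{L}))\cong \op{H}^{q}(X(\R),\Z(\mathcal{L}))$.

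The hard part is really establishing the purity statement with integral twisted coefficients: away from the real points of $\overline{\{x\}}$ everything has to vanish, and near them the local structure must mimic that of a codimension-$p$ submanifold in an ambient manifold, with appropriate orientation data absorbed into the twist by $\mathcal{L}$. Given that the untwisted case is available from \cite{jacobson} and the twist is locally trivial, I expect this to reduce to invoking the existing results pointwise; the main care is to keep track of the line-bundle twist throughout the Leray/colimit arguments, which is straightforward given the formalism of \prettyref{sec:twistedcoefficients}.
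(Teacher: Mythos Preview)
Your proposal is correct and takes essentially the same approach as the paper: both run the coniveau spectral sequence and degenerate it via a purity\slash vanishing input, following Scheiderer's template. Two small remarks. First, the paper filters $X(\R)$ directly by real points of Zariski closed subsets rather than going through the Zariski sheaf $\rho_*\Z(\mathcal L)$ on $X$; this is equivalent to your route via the Leray-with-supports identification you sketch, so the difference is purely cosmetic. Second, the full purity statement $\op{H}^s_x(X(\R),\Z(\mathcal L))=0$ for $s\neq p$ that you invoke is not literally available by citation---\cite{scheiderer:purity} is mod~$2$ and \cite{jacobson} does not isolate it in this form---so you should supply the short tubular-neighbourhood argument; the paper does exactly this, but records and proves it separately as the proposition immediately following.
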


\begin{proof}
  The argument is the standard one for coniveau spectral sequences, cf.\ \cite[Theorem~2.1]{scheiderer:purity}. As in loc.\ cit., let $\mathcal{Z}^p$ be the set of closed reduced subschemes of $X$ of codimension $\geq p$, and $\mathcal{Z}^p/\mathcal{Z}^{p+1}$ be the set of pairs $(Y,Z)$ with $Y\in\mathcal{Z}^p$, $Z\in \mathcal{Z}^{p+1}$ and $Z\subset Y$. Denoting
  \[
    \op{H}^n_{\mathcal{Z}^p/\mathcal{Z}^{p+1}}(X(\mathbb{R}),\Z(\mathcal{L})):= \colim_{(Y,Z)\in \mathcal{Z}^p/\mathcal{Z}^{p+1}} \op{H}^n_{(Y\setminus Z)(\R)}((X\setminus Z)(\R),\Z(\mathcal{L})).
  \]
  the arguments in \cite{scheiderer:purity} (including the well-known analogue of the semi-purity statement \cite[Corollary 1.12]{scheiderer:purity} for singular cohomology of real points) imply the existence of a spectral sequence
  \[
    E^{p,q}_1=\op{H}^{p+q}_{\mathcal{Z}^p/\mathcal{Z}^{p+1}}(X(\R),\Z(\mathcal{L})) \Rightarrow \op{H}^{p+q}(X(\R),\Z(\mathcal{L})).
  \]
  For the rest of the argument, the isomorphism
  \[
    \phi\colon \op{H}^\ast_{\mathcal{Z}^p/\mathcal{Z}^{p+1}}(X(\R),\Z(\mathcal{L})) \cong \bigoplus_{x\in X^{(p)}}\op{H}^\ast_x(X(\R),\Z(\mathcal{L}))
  \]
  is established as in \cite{scheiderer:purity}.

  The claim then follows from the vanishing of $\op{H}^n_x(X(\R),\Z(\mathcal{L}))$ for $n\geq \op{codim} x$. By considering small enough Zariski neighborhoods of $x$, we can assume that $\overline{x}\cap U$ is smooth in $U$, and the required vanishing of such local cohomology groups follows from the cohomological dimension properties of singular cohomology.
\end{proof}

\begin{remark}
  Since the complex is a row in the $E_1$-term of a spectral sequence arising from an exact couple, we can describe the canonical morphisms from the cohomology of the complex in \prettyref{prop:scheiderer-21} to $\op{H}^q(X(\R),\Z(\mathcal{L}))$. We use the notation from the proof of the proposition. Consider the relevant piece of the complex:
  \begin{multline*}
    \cdots\to \bigoplus_{x\in X^{(q-1)}} \op{H}^{q-1}_x(X(\R),\Z(\mathcal{L})) \to \bigoplus_{x\in X^{(q)}} \op{H}^q_x(X(\R),\Z(\mathcal{L})) \xrightarrow{\partial} \\\xrightarrow{\partial} \bigoplus_{x\in X^{(q+1)}} \op{H}^{q+1}_x(X(\R),\Z(\mathcal{L})) \to\cdots
  \end{multline*}
  The boundary morphism is given by the boundary in the localization sequences. For a pair $(Y,Z)\in\mathcal{Z}^q/\mathcal{Z}^{q+1}$ it maps a class in $\op{H}^q_{(Y\setminus Z)(\R)}((X\setminus Z)(\R),\Z(\mathcal{L}))$ to its boundary in $\op{H}^{q+1}_{Z(\R)}(X(\R),\Z(\mathcal{L}))$. Consequently, a class in the kernel of $\partial$ is actually closed in $X(\R)$, and the canonical map $\ker\partial\to \op{H}^q(X(\R),\Z(\mathcal{L}))$ is given by lifting the class to $\op{H}^q_{Y(\R)}(X(\R),\Z(\mathcal{L}))$ and then forgetting the support.
\end{remark}

In the complex of \prettyref{prop:scheiderer-21}, we can further identify the local cohomology groups. The result is actually the Rost--Schmid complex in the sense of Morel \cite{MField} for the strictly $\mathbb{A}^1$-invariant sheaf of abelian groups $\colim_j\mathbf{I}^j\cong\rho_\ast\Z$. For details on the construction of Rost--Schmid complexes (in particular, the description of the differential which we won't reproduce here) see \cite[Section 5.1]{MField}.

\begin{definition}
  \label{def:local-twist}
  For a point \(x\) of a smooth $\R$-scheme \(X\) with residue field \(\kappa(x)\), we define
  \[
  \op{H}^0(x,\rho_\ast\Z):=\colim_U \op{H}^0((\bar x\cap U)(\R),\mathbb{Z}),
  \]
  with the colimit taken over the Zariski neighborhoods $U$ of $x\in X$.  This colimit has a natural action of \(\kappa(x)^\times\) defined as follows:  A section $\sigma$ of $\rho_\ast\Z$ on a neighborhood $U$ of $x$ is a section of $\Z$ on $U(\R)$.  For a sufficiently small neighborhood $U$ of $x$, an element $f\in \kappa(x)^\times$ yields an invertible $\mathbb{R}$-valued function $f\colon U(\R)\to\mathbb{R}$, and the action is given by $(f,\sigma)\mapsto \op{sgn}(f)\cdot\sigma$. More generally, for a one-dimensional \(\kappa(x)\)-vector space \(L\), we define
  \[
    \op{H}^0\left(x,\rho_\ast \Z(L)\right):= \op{H}^0\left(x,\rho_\ast\Z\right)\otimes_{\Z[\kappa(x)^\times]} \Z[L^\times].
  \]
\end{definition}

\begin{remark}
  Given a line bundle \(\sheaf L\) over \(X\), or over a Zariski open neighborhood of \(x\) in \(X\), let \(\sheaf L_x\) denote the fiber of \(\sheaf L\) at \(x\).  For this one-dimensional \(\kappa(x)\)-vector space, we have canonical isomorphisms
  \[
    \op{H}^0\left(x,\rho_\ast \Z(\sheaf L_x)\right) \cong \colim_U \op{H}^0\left((\bar x\cap U)(\R),\Z(\sheaf L)\right),
  \]
  where the colimit is again taken over the Zariski neighborhoods $U$ of $x\in X$.
\end{remark}

\begin{proposition}
  \label{prop:real-purity}
  Let $X$ be a  smooth $\R$-scheme and let $\mathcal{L}$ be a line bundle over $X$. For a point $x\in X$ of codimension $p$, we have $\op{H}^n_x(X(\R),\Z(\mathcal{L}))=0$ for $n\neq p$, and there is a canonical isomorphism
  \[
    \op{H}^p_x(X(\R),\Z(\mathcal{L}))\cong \op{H}^0(x,\rho_\ast \Z(\mathcal{L}_x\otimes \Lambda^X_x)),
  \]
  where
  \(
    \Lambda^X_x:=\bigwedge\nolimits_{\kappa(x)}^p(\mathfrak{m}_x/\mathfrak{m}_x^2)^\vee
  \)
  is the fiber of the conormal sheaf of $x$ in $X$.
\end{proposition}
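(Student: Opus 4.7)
The plan is to reduce the local cohomology on the real manifold $X(\R)$ with support at $x$ to ordinary singular cohomology via the real tubular neighborhood theorem and the real Thom isomorphism, then pass to the colimit over shrinking Zariski neighborhoods.

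By generic smoothness in characteristic zero, after shrinking the Zariski neighborhood $U$ of $x$ I may assume that $\bar x \cap U$ is smooth of pure codimension $p$ in $U$. Write $Y := (\bar x \cap U)(\R)$ and $M := U(\R)$. If $Y = \emptyset$ for some such $U$, then $\op{H}^n_Y(M,-)$ vanishes for all $n$, and the colimit defining both sides of the claimed isomorphism is zero, so there is nothing to prove. Otherwise $Y$ is a smooth closed real submanifold of $M$ of codimension $p$, whose real normal bundle $\mathcal N$ agrees with the restriction to $Y$ of the real realisation of the algebraic normal bundle $\mathcal N_{\bar x/X}$. The real tubular neighborhood theorem identifies a neighborhood of $Y$ in $M$ with the total space of $\mathcal N$, and combined with the real Thom isomorphism recalled in \prettyref{sec:singcohomology} this yields a natural isomorphism
\[
\op{H}^n_Y\bigl(M,\, \Z(\mathcal L)\bigr) \;\cong\; \op{H}^{n-p}\bigl(Y,\, \Z(\mathcal L|_Y \otimes \det \mathcal N)\bigr),
\]
which vanishes immediately for $n < p$.

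For $n = p$, the colimit over shrinking $U$ of the right-hand side becomes, by \prettyref{def:local-twist}, exactly $\op{H}^0\bigl(x,\, \rho_\ast\Z(\mathcal L_x \otimes \Lambda^X_x)\bigr)$, once one identifies the fiber of $\det \mathcal N_{\bar x/X}$ at $x$ with $\Lambda^X_x = \bigwedge^p (\mathfrak m_x/\mathfrak m_x^2)^\vee$ and matches the linear $\kappa(x)^\times$-action on this one-dimensional $\kappa(x)$-vector space with the $\Gm$-action used to define the twisted local system $\Z(\Lambda^X_x)$ at real points. For $n > p$, the colimit
\(
\colim_U \op{H}^{n-p}\bigl(Y_U,\, \Z(\cdots)\bigr)
\)
is the stalk at $x \in \bar x$ of the higher direct image sheaf $R^{n-p}\rho_\ast \Z(\cdots)$ on $\bar x$. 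After further shrinking $U$ so that the twisting algebraic line bundle on $\bar x \cap U$ is trivial (hence the corresponding topological line bundle on $Y_U$ is trivial), the statement reduces to $R^q\rho_\ast \Z = 0$ for $q > 0$ on the smooth scheme $\bar x \cap U$. The latter vanishing is equivalent, via the Leray spectral sequence for $\rho$, to the isomorphism $\op{H}^n(V,\rho_\ast\Z) \cong \op{H}^n(V(\R),\Z)$ for smooth $V/\R$, which is a consequence of Jacobson's theorem and \prettyref{prop:J-homotopy-modules}.

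The main obstacle is the vanishing for $n > p$: one has to justify the ``triviality after shrinking'' reduction carefully, and then invoke Jacobson's integral real-purity statement on the smooth scheme $\bar x \cap U$. The second technical point, more bookkeeping than conceptual, is the twist compatibility at $n = p$: one must track how the real topological determinant $\det \mathcal N$ on $Y$ and its associated local system specialize, in the colimit over Zariski neighborhoods of $x$, to the action of $\kappa(x)^\times$ on the $\kappa(x)$-line $\Lambda^X_x$ that enters \prettyref{def:local-twist}.
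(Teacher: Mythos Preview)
Your approach is essentially the paper's: both pass to a cofinal system of Zariski neighborhoods where $\bar x\cap U$ is smooth, then invoke purity for the closed real submanifold $Y=(\bar x\cap U)(\R)\subset U(\R)$. You phrase purity as tubular neighborhood plus the twisted Thom isomorphism; the paper phrases it sheaf-theoretically as $R^p i^!\mathcal F\cong i^*\mathcal F\otimes\omega_{Y/U}$ following Scheiderer. These are the same statement, and your twist bookkeeping at $n=p$ is exactly the paper's observation that tensoring with $\omega_{Y/U}$ removes the dependence on a choice of orientation of $Y$.

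One imprecision in your $n>p$ step: you assert that the vanishing $R^q\rho_*\Z=0$ is ``equivalent, via the Leray spectral sequence, to'' the isomorphism $\op{H}^n(V,\rho_*\Z)\cong\op{H}^n(V(\R),\Z)$. Only the forward implication is Leray; the converse is not formal. But you do not actually need the full sheaf vanishing, only the stalk at the generic point $x\in\bar x$. That stalk is $\colim_V\op{H}^{q}(V(\R),\Z)\cong\colim_V\op{H}^q(V,\rho_*\Z)$ (using the isomorphism you cite, which the paper establishes independently via exactness of $\op{supp}_*$ and Delfs' comparison), and the right-hand colimit vanishes for $q>0$ because Zariski sheaf cohomology over a filtered system of neighborhoods of a point computes the cohomology of the stalk complex of an injective resolution, which is exact. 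The paper itself is brief here, deferring the $n\neq p$ vanishing to ``cohomological dimension properties'' in the proof of the preceding proposition.
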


\begin{proof}
Note that the colimits on both sides have the same indexing sets, the Zariski neighborhoods $U$ of the point $x\in X$. In particular, for a Zariski neighborhood $U$ of $x$ and its closed subspace $Y:=\overline{x}\cap U$, it suffices to produce canonical isomorphisms
\[
  \op{H}^p_{Y(\R)}(U(\R),\mathbb{Z}(\mathcal{L}))\cong \op{H}^0(Y(\R),\mathbb{Z}(\mathcal{L}\otimes\omega_{Y/U})),
\]
where \(\omega_{Y/U}\) is the conormal sheaf of \(Y\) in \(U\).
If they are canonical, they'll assemble into an isomorphism of diagrams and induce a canonical isomorphism of colimits. In fact, we only need to establish such canonical isomorphisms for a cofinal system of such neighborhoods. In particular, we can assume that $Y:=\overline{x}\cap U$ is smooth and a complete intersection inside $U$, defined by $p$ equations. The choice of such equations provides a section of $\omega_{Y/U}$, and equips every one of the connected components of $Y(\R)$ with a choice of orientation. Now we can apply cohomological purity: the arguments in \cite[Section 1]{scheiderer:purity} go through for singular cohomology of the real points and show that $\op{R}^pi^!\mathcal{F}\cong i^\ast\mathcal{F}$ for a locally constant sheaf $\mathcal{F}$ on $Y(\R)$.
This isomorphism is not canonical, as it depends on the choice of orientations of the components of $Y(\R)$.  But we obtain a canonical isomorphism $\op{R}^pi^!\mathcal{F}\cong i^\ast\mathcal{F}\otimes \omega_{Y/U}$ by tensoring with the section of $\omega_{Y/U}$.  For \(\sheaf F = \Z(\sheaf L)\), this canonical isomorphism of sheaves induces an isomorphism
\[
\op{H}^p_{Y(\R)}(U(\R),\mathbb{Z}(\mathcal{L}))\to \op{H}^0(Y(\R),\mathbb{Z}(\mathcal{L}\otimes\omega_{Y/U}))
\]
This map does not depend on the choice of orientation (or presentation of $Y$ as complete intersection inside $U$), and hence provides the required canonical isomorphisms and proves the claim.  In a sense, the twisting by $\Lambda^X_x$ takes care of the ambiguity of choices of orientations as discussed in \cite[Remark 1.8]{scheiderer:purity}.
\end{proof}

\begin{corollary}
  \label{cor:rost-schmid-singular}
  Let $X$ be a smooth $\R$-scheme, let $Y$ be a closed subscheme and let $\mathcal{L}$ be a line bundle on $X$.
  \begin{enumerate}
  \item Via the identifications of \prettyref{prop:real-purity}, the complex of \prettyref{prop:scheiderer-21} is canonically isomorphic to the Rost--Schmid complex for $\rho_\ast\Z$ twisted by $\mathcal{L}$:
    \[
      0\to \bigoplus_{x\in X^{(0)}}\op{H}^0(x,\rho_\ast\Z(\mathcal{L}_x\otimes \Lambda^X_x)) \to \bigoplus_{y\in X^{(1)}}\op{H}^0(y,\rho_\ast\Z(\mathcal{L}_y\otimes \Lambda^X_y))\to \cdots
    \]
    In particular, the $q$-th cohomology of the Rost--Schmid complex above is canonically isomorphic to $\op{H}^q(X(\R),\Z(\mathcal{L}))$.
  \item The $q$-th cohomology of the subcomplex
    \[
      0\to \bigoplus_{x\in X^{(0)}\cap Y}\op{H}^0(x,\rho_\ast\Z(\mathcal{L}_x\otimes \Lambda^X_x)) \to  \bigoplus_{y\in X^{(1)}\cap Y}\op{H}^0(y,\rho_\ast\Z(\mathcal{L}_y\otimes \Lambda^X_y))\to \cdots
    \]
    of the Rost--Schmid complex appearing in (1) is canonically isomorphic to $\op{H}^q_{Y(\R)}(X(\R),\Z(\mathcal{L}))$.
  \item For a quasi-projective $\R$-scheme $Y$, the $q$-th homology of the homological Rost--Schmid complex \[
      \cdots\to \bigoplus_{y\in Y_{(1)}}\op{H}^0(y,\rho_\ast\Z(\mathcal{L}_y\otimes \Lambda^Y_y)) \to \bigoplus_{x\in Y_{(0)}}\op{H}^0(x,\rho_\ast\Z(\mathcal{L}_x\otimes \Lambda^Y_x)) \to 0
    \]
    computes the Borel--Moore homology $\op{H}_n^{\op{BM}}(Y(\R),\Z(\mathcal{L}))$.
  \end{enumerate}
\end{corollary}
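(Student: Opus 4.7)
For part (1), the term-wise identification is immediate from \prettyref{prop:real-purity}, so the content is the compatibility of differentials. The differential in the complex of \prettyref{prop:scheiderer-21} is induced by the boundary in the singular cohomology localization sequence, whereas the Rost--Schmid differential for $\rho_\ast\Z(\mathcal{L})$ is defined via residue maps as in \cite[Section~5.1]{MField}. I would match them by reducing to the universal codimension one situation: given points $x \in X^{(p)}$ and $y \in X^{(p+1)}$ with $y$ in the closure of $x$, one shrinks $X$ to a neighborhood in which the relevant part of $\overline{\{x\}}$ becomes a smooth divisor in a smooth scheme, and checks that the two boundary maps agree there. Using the identification $\rho_\ast\Z \cong \colim_j \I^j$ of \prettyref{prop:J-homotopy-modules} (which is compatible with residues, as noted immediately after that proposition), this compatibility reduces to the analogous and already known statement for the Gersten--Witt resolution of the sheaves $\I^j$.

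For part (2), the coniveau spectral sequence argument behind \prettyref{prop:scheiderer-21} admits an evident variant with supports on $Y$: filter $\op{H}^\ast_{Y(\R)}(X(\R),\Z(\mathcal{L}))$ by codimension of supports inside $Y$, and the same semi-purity argument shows that the $E_1$ page is concentrated in one row, yielding precisely the subcomplex displayed in the statement, with its differential again matched to the Rost--Schmid differential by the argument of part (1).

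For part (3), I would first handle the smooth case via Poincar\'e--Lefschetz duality: for smooth $Y$ of pure dimension $d$, there is a canonical isomorphism $\op{H}^{\op{BM}}_n(Y(\R),\Z(\mathcal{L})) \cong \op{H}^{d-n}(Y(\R),\Z(\mathcal{L}\otimes \omega_Y))$, where $\omega_Y$ is the orientation line bundle; this matches $Y_{(i)}$ with $Y^{(d-i)}$ and converts $\Lambda^Y_y \otimes \omega_{Y,y}$ into the twist required by part (1), so (1) yields the conclusion. For a general quasi-projective $Y$, I would stratify into smooth locally closed pieces and induct on dimension, comparing the localization long exact sequence for Borel--Moore homology with the short exact sequence of homological Rost--Schmid complexes coming from the decomposition of the indexing set of points into those lying in a closed stratum and those lying in its open complement. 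The main technical obstacle throughout is the careful bookkeeping of twists, orientations, and sign conventions, particularly ensuring that the canonical isomorphisms of \prettyref{prop:real-purity} involving $\Lambda^X_x$ align correctly with the orientation data entering Poincar\'e duality and with Morel's formulas for the Rost--Schmid differential.
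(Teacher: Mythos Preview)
Your proposal is correct and follows the same broad lines as the paper, but the paper's proof is much shorter because it delegates to existing references. For (1), rather than matching differentials by hand via the identification $\rho_\ast\Z\cong\colim_j\I^j$, the paper simply invokes Morel's general result \cite[Corollary~5.44]{MField}, which already shows that for any strictly $\mathbb{A}^1$-invariant sheaf the Rost--Schmid complex coincides with the Gersten-type complex coming from the coniveau spectral sequence; this bypasses all of your bookkeeping with orientations and signs in one stroke. For (2) and (3), the paper simply says they follow from (1) ``as in \cite[Section~3]{scheiderer:purity}''. Your argument for (2) is essentially what Scheiderer does. For (3), however, the more standard route (and presumably Scheiderer's) is to use the quasi-projectivity of $Y$ to embed it as a closed subscheme of a smooth $X$, then identify Borel--Moore homology of $Y(\R)$ with cohomology of $X(\R)$ with support in $Y(\R)$ via Poincar\'e--Lefschetz duality for the ambient manifold, and apply (2) directly; this avoids your induction over a stratification and the attendant comparison of long exact sequences. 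Your approach would also work, but is more laborious.
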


\begin{proof}
  (1) follows from the computation of Rost--Schmid complexes with the Gersten-type complexes coming out of the coniveau spectral sequence machinery as in Section 5.3, especially Corollary 5.44 of \cite{MField}. (2) and (3) follow from (1) as in \cite[Section 3]{scheiderer:purity}.
\end{proof}

Now we can give a more explicit description of the real cycle class map
\[
  \op{H}^s(X,\mathbf{I}^t(\mathcal{L}))\to \op{H}^s(X(\R),\Z(\mathcal{L}))
\]
for a smooth $\R$-scheme $X$ with a line bundle $\mathcal{L}$. The first morphism $\op{H}^s(X,\mathbf{I}^t(\mathcal{L}))\to \op{H}^s(X,\rho_\ast\Z(\mathcal{L}))$  simply changes the coefficients; on the level of Gersten--Rost--Schmid complexes, it is induced from the twisted signature maps $\op{sgn}\colon\op{W}(\kappa(x),\mathcal{L}_x\otimes \Lambda^X_x)\to \rho_\ast\Z(\mathcal{L}_x\otimes\Lambda^X_x)$. The second morphism
\[
  \op{H}^s(X,\rho_\ast\Z(\mathcal{L}))\to \op{H}^s(X(\R),\Z(\mathcal{L}))
\]
has an explicit description in terms of Rost--Schmid complexes as discussed above. The real cycle class map is the composite of these two maps.

\begin{remark}
  Although the above results are close parallels of \cite[Section~3]{scheiderer:purity}, we cannot obtain a definition of fundamental classes as in the definition before Proposition~3.3 in loc.\ cit.: First of all, an irreducible closed subscheme $Z\subset X$ of codimension $q$ with generic point $z$ does not come with a canonical section ``\(1\)'' of $\op{H}^0(z,\Z(\mathcal{L}_z\otimes \Lambda^X_z))$.  Secondly, and more importantly, there is no reason to expect any such non-trivial section to be unramified, i.e.\ to define a cocycle in the complex of \prettyref{cor:rost-schmid-singular}~(2) and hence a class in $\op{H}^0_{Z(\R)}(X(\R),\Z(\mathcal{L}))$.  In case $Z$ is smooth, the line bundle $\mathcal{L}|_Z\otimes\omega_{Z/X}$ could be nontrivial; worse problems arise in the singular case. It's the same orientability problem as always: closed subvarieties in $X$ don't necessarily give rise to cohomology classes for $X(\R)$, this only works with $\Z/2\Z$ coefficients. On the other hand, even though fundamental classes generally don't exist for subvarieties, this is no obstruction for the cycle class map $\op{H}^s(X,\mathbf{I}^t(\mathcal{L}))\to \op{H}^s(X(\R),\Z(\mathcal{L}))$, since the cycles in \(\I\)-cohomology have the solutions to the orientability problem built into their coefficients.
\end{remark}

For later use, and for the sake of example, we provide a computation of the cycle class map for $\mathbb{A}^n\setminus\{0\}$ over $\R$.

\begin{proposition}
  \label{prop:computation-cycle-class}
  For $\mathbb{A}^n=\mathbb{A}^n_{\R}$, the real cycle class map $\op{H}^{n-1}(\mathbb{A}^n\setminus\{0\},\mathbf{I}^n)\to \op{H}^{n-1}(\mathbb{R}^n\setminus\{0\},\Z)$ is an isomorphism for all $n \geq 0$.

  More precisely, equip $\mathbb{A}^n\setminus\{0\}$ with the orientation given by trivializing $\omega_{\mathbb{A}^n\setminus\{0\}}$ by $\wedge_{i=1}^n{\op{d}}x_i$, and similarly equip $\mathbb{R}^n\setminus\{0\}$ with the orientation given by the standard coordinate functions. For $n \geq 2$, these choices of orientations provide isomorphisms $\op{H}^{n-1}(\mathbb{A}^n\setminus\{0\},\mathbf{I}^n)\cong \op{W}(\R)\cong\Z$ and $\op{H}^{n-1}(\R^n\setminus\{0\},\Z)\cong\Z$ such that the following composition is the identity \[
    \Z\cong \op{H}^{n-1}(\mathbb{A}^n\setminus\{0\},\mathbf{I}^n)\to \op{H}^{n-1}(\mathbb{R}^n\setminus\{0\},\Z)\cong\Z.
  \]
\end{proposition}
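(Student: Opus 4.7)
The plan is to deduce the proposition from the localization sequences for the closed embedding $\{0\}\hookrightarrow \mathbb{A}^n$, using the Gersten--Rost--Schmid presentations of \prettyref{cor:rost-schmid-singular} to obtain compatibility of the real cycle class map with boundary maps, and then to reduce everything to the ordinary signature $\op{W}(\R)\to\Z$ on local cohomology at the origin.

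For $n\geq 2$, I would first consider the algebraic localization sequence
\[
  \op{H}^{n-1}(\mathbb{A}^n,\mathbf{I}^n)\to \op{H}^{n-1}(\mathbb{A}^n\setminus\{0\},\mathbf{I}^n)\xrightarrow{\partial}\op{H}^{n}_{\{0\}}(\mathbb{A}^n,\mathbf{I}^n)\to \op{H}^{n}(\mathbb{A}^n,\mathbf{I}^n).
\]
Homotopy invariance reduces the outer terms to $\op{H}^{\geq 1}(\Spec\R,\mathbf{I}^n)=0$, so $\partial$ is an isomorphism. By the d\'evissage of \prettyref{thm:I-devissage} the target identifies with $\op{W}(\R,\det\mathcal{N})$, where $\mathcal{N}$ is the normal bundle of $\{0\}$ in $\mathbb{A}^n$, and the orientation $\bigwedge_{i=1}^{n}\mathrm{d}x_i$ trivializes $\det\mathcal{N}$ so as to give an identification with $\op{W}(\R)\cong\Z$.

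An analogous topological localization sequence, together with the contractibility of $\R^n$, identifies $\op{H}^{n-1}(\R^n\setminus\{0\},\Z)$ with $\op{H}^n_{\{0\}}(\R^n,\Z)\cong\Z$, with the generator fixed by the chosen orientation. Compatibility of the cycle class map with both localization sequences follows immediately from \prettyref{cor:rost-schmid-singular}: the map is induced by the termwise signature on the Gersten--Rost--Schmid complexes, and the boundary maps on either side arise from the filtration by codimension of support in the very same manner. This reduces the claim to checking that the induced map $\op{W}(\R)\to\Z$ on the local cohomology at $\{0\}$ is the ordinary signature, sending $\langle 1\rangle$ to $1$; with the chosen orientations this is immediate, and one concludes that the cycle class map is an isomorphism $\Z\xrightarrow{\cong}\Z$ identified with the identity.

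The case $n=0$ is tautological, and the case $n=1$ can be checked by a direct signature calculation on the generators $\langle 1\rangle,\langle t\rangle$ of $\op{W}(\R[t,t^{-1}])$, which shows that the cycle class map on $\op{I}(\R[t,t^{-1}])\cong \Z^2$ is given by a unimodular matrix and hence an isomorphism. The main obstacle is the orientation bookkeeping in the d\'evissage step: one has to verify that \prettyref{thm:I-devissage} and the topological purity of \prettyref{prop:real-purity}, together with the signature, compose to an isomorphism $\op{W}(\R)\xrightarrow{\cong}\Z$ without extra sign under the chosen orientations. This is a local matter which amounts to tracing through the construction of the Thom class of \prettyref{def:algebraic-thom-class} and comparing it with the topological Thom class of \prettyref{def:topological-thom-class} along compatible trivializations.
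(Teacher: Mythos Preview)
Your approach is correct and very close in spirit to the paper's, but you work on the opposite side of the localization boundary: the paper stays on the open side and traces the explicit generator $\langle 1,X_1\rangle$ supported on the line $X_2=\cdots=X_n=0$ through the signature directly in degree $n-1$, while you pass via $\partial$ to $\op{H}^n_{\{0\}}(\mathbb{A}^n,\mathbf{I}^n)$ and check the signature there. Both routes are fine; yours is somewhat more structural, the paper's is a concrete one-line cycle computation. The compatibility with the boundary map that you need is exactly what the Gersten--Rost--Schmid framework gives (termwise signature is a morphism of complexes), and the paper later packages this as \prettyref{prop:proto-localization}.

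One caution: your final paragraph drifts toward circularity. You phrase the remaining orientation check as ``tracing through the construction of the Thom class of \prettyref{def:algebraic-thom-class} and comparing it with the topological Thom class of \prettyref{def:topological-thom-class}''. In the paper's logical order that comparison (\prettyref{prop:thom-class-comparison-local}) \emph{uses} the present proposition, so you must not invoke it here. Fortunately you do not need it: the signature identification at the origin is immediate from the Rost--Schmid description---both complexes with support in $\{0\}$ are concentrated in degree~$n$, and the map is literally $\op{W}(\R)(\Lambda^{\mathbb{A}^n}_0)\to\Z(\Lambda^{\mathbb{A}^n}_0)$ given by the signature, with the orientation $\bigwedge_i\mathrm{d}x_i$ trivializing both twists compatibly. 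This is precisely what the paper records in the Remark immediately following the proposition. So drop the reference to Thom classes and state the local check directly; then your argument is complete and self-contained.
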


\begin{proof}
  When \(n=1\), we need to compare \(\op{H}^0(\A^1\setminus\{0\},\W)\) and \(\op{H}^0(\R\setminus\{0\},\Z)\), both of which are isomorphic to \(\Z\oplus \Z\).  We leave it as an exercise to the reader to verify that the cycle class map is an isomorphism in this case,
  and assume \(n\geq 2\) for the remainder of the proof.
  We first investigate the cohomology group $\op{H}^{n-1}(\mathbb{A}^n\setminus\{0\},\mathbf{I}^n)$. The following statements can essentially be found in \cite[Section 3.3]{fasel:orbits}, see also \cite[Lemma 4.2.5]{ABF:models}. A cycle in the cohomology group consists of codimension $n-1$-points $x$, i.e., generic points of curves in $\mathbb{A}^n\setminus\{0\}$, and the multiplicities are elements in $\op{I}(\kappa(x))$. By the standard computations of cohomology of spheres, $\op{H}^{n-1}(\mathbb{A}^n\setminus\{0\},\mathbf{I}^n)\cong \op{W}(F)$, and the canonical morphism sends a cycle to its residue at the origin in $\mathbb{A}^n$. In particular, an explicit generator for $\op{H}^{n-1}(\mathbb{A}^n\setminus\{0\},\mathbf{I}^n)$ is given by the form $\langle 1, X_1\rangle$ on the line $X_2=\cdots=X_n=0$ through the origin; the better way of writing this which takes proper care of the choices of orientation is using the Koszul complex as in \cite[Section 3.3]{fasel:orbits}. Note that by definition of the boundary map in the localization sequence, this class corresponds to the class in $\op{H}^n_{\{0\}}(\mathbb{A}^n,\mathbf{I}^n)$ of the form $\langle 1\rangle$ supported on the origin. Moreover, the choice of orientation formulated in the statement of the proposition is such that the above generator is mapped to 1 under the isomorphism $\op{H}^{n-1}(\mathbb{A}^{n}\setminus\{0\},\mathbf{I}^n)\cong \op{W}(F)$.

  The other cohomology group we want to consider is the one arising from the Gersten-type complex for integral cohomology of $\mathbb{R}^n\setminus\{0\}$. The relevant piece of the complex is
  \[
    \op{H}^{n-2}_{\mathcal{Z}^{n-2}/\mathcal{Z}^{n-1}}(\mathbb{R}^n\setminus\{0\},\Z) \to \op{H}^{n-1}_{\mathcal{Z}^{n-1}/\mathcal{Z}^{n}}(\mathbb{R}^n\setminus\{0\},\Z) \xrightarrow{\partial} \op{H}^{n}_{\mathcal{Z}^{n}}(\mathbb{R}^n\setminus\{0\},\Z)
  \]
  The boundary morphism $\partial$ is given by the boundary in the localization sequence, for a pair $(Y,Z)$ of irreducible closed subschemes of $\mathbb{A}^n\setminus\{0\}$ of codimensions $n-1$ and $n$, respectively, it maps a class in $\op{H}^{n-1}_{Y\setminus Z(\mathbb{R})}(\mathbb{R}^n\setminus (\{0\}\cup Z(\mathbb{R})),\Z)$ to its boundary, which is a class in $\op{H}^n_{Z(\mathbb{R})}(\mathbb{R}^n\setminus\{0\},\Z)$. The canonical map from the cohomology of the above complex at $\op{H}^{n-1}$ to the sheaf cohomology of $\mathbb{R}^n\setminus\{0\}$ is the morphism  $\ker\partial\to \op{H}^{n-1}(\mathbb{R}^n\setminus\{0\},\Z)$ which forgets the support. In this picture, we can also identify a generator of the cohomology in the Gersten complex: take a generator of $\op{H}^{n-1}(\mathbb{R}^n\setminus\{0\},\Z)$ and use the canonical identification
  \[
    \op{H}^{n-1}(\mathbb{R}^n\setminus\{0\},\Z) \cong \op{H}^{n-1}(\mathbb{R}^n\setminus\{0\},\mathbb{R}^n \setminus \mathbb{R}^1,\Z)\cong \op{H}^{n-1}_{\mathbb{R}^1\setminus \{0\}}(\mathbb{R}^n\setminus\{0\},\Z)
  \]
  where the embedding $\mathbb{R}^1\hookrightarrow\mathbb{R}^n$ is given by the first coordinate axis. As before, the choice of orientation used here is such that the isomorphism $\op{H}^{n-1}_{\mathbb{R}^1\setminus\{0\}}(\mathbb{R}^n\setminus\{0\},\Z)\cong \Z$ takes the section $\op{sgn}(X_1)$ of the sheaf $\Z$ on the line $X_2=\cdots=X_n=0$ to $1\in\Z$.

  Now we can trace through the explicit description of the cycle class map, using the signature to go from the Gersten complex for $\mathbf{I}^n$ to the Gersten complex for sheaf cohomology of $\Z$ on $X(\mathbb{R})$. The generator $\langle 1,X_1\rangle$ in the Gersten complex for $\mathbf{I}^n$ has signature 2 on the half-line $X_1>0$ and -2 on the half-line $X_1<0$, viewed as a generator of the global sections of the sheaf $\rho_\ast 2\Z$ on the line $X_2=\cdots=X_n=0$. Its image under the canonical map $\op{can}\colon\rho_\ast 2\Z\to \rho_\ast (\colim_j 2^j\Z)\cong \rho_\ast \Z$ is then exactly the generator of the global sections of $\rho_\ast\Z$ discussed above, see also the notational square at the beginning of Section 3.
\end{proof}

\begin{remark}
  We can also provide a quick and easy description of the cycle class map $\op{H}^n_{\{0\}}(\mathbb{A}^n,\mathbf{I}^n)\to \op{H}^n_{\{0\}}(\R^n,\Z)$. The complex computing \(\I\)-cohomology with support is simply
  \[
    \cdots\to 0\to \op{W}(\mathbb{R})(\Lambda^{\mathbb{A}^n}_0)\to 0\to \cdots
  \]
  where the nontrivial group is placed in degree $n$, coming from the closed point $0\in\mathbb{A}^n$. By \prettyref{cor:rost-schmid-singular}, the complex computing $\op{H}^n_{\{0\}}(\R,\Z)$ is similarly given by
  \[
    \cdots\to 0\to \Z(\Lambda^{\A^n}_0)\to 0\to\cdots
  \]
  The orientations used are in both cases $\bigwedge\nolimits_{i=1}^n X_i$ (which is an element of $\Lambda^{\mathbb{A}^n}_0$ and therefore provides an isomorphism $\Lambda^{\mathbb{A}^n}_0\cong\mathbb{R}$), so that the morphism $\op{W}(\mathbb{R})(\Lambda^{\mathbb{A}^n}_0)\to \Z(\Lambda^{\mathbb{A}^n}_0)$ is simply given by the signature morphism $\op{W}(\R)\to \Z$ mapping the form $\langle 1\rangle$ to 1.
\end{remark}

In slight variation of the discussion in \cite[Section 3]{scheiderer:purity}, the mod 2 cycle class map $\op{Ch}^n(X)\to \op{H}^n(X(\R),\Z/2\Z)$ of Borel--Haefliger \cite{borel-haefliger} has the following sheaf-theoretic description: $\op{H}^n(X(\R),\Z/2\Z)$ can be computed in terms of a Gersten resolution
\[
  0\to \bigoplus_{x\in X^{(0)}}\op{H}^0(x,\rho_\ast\Z/2\Z)\to \bigoplus_{y\in X^{(1)}}\op{H}^0(y,\rho_\ast\Z/2\Z)\to \bigoplus_{z\in X^{(2)}}\op{H}^0(z,\rho_\ast\Z/2\Z)\to \cdots
\]
The cycle class map is then induced from mapping an irreducible closed subvariety $Y\subset X$ with generic point $y$ to the constant section $1\in\op{H}^0(y,\rho_\ast\Z/2\Z)$.

\begin{proposition}
  \label{prop:jacobson-vs-borel-haefliger}
  Let $X$ be a smooth variety over $\R$. Then there is a commutative triangle
  \[
    \xymatrix{
      \op{Ch}^n(X) \ar[rr] \ar[rd]_\cong && \op{H}^n(X(\R),\Z/2\Z) \\
      &\op{H}^n(X,\mathbf{I}^n/\mathbf{I}^{n+1}) \ar[ru]
    }
  \]
  where the top horizontal map is the Borel--Haefliger cycle class map, the downward isomorphism is induced from the identification $\mathbf{K}^{\op{M}}_n/2\cong  \mathbf{I}^n/\mathbf{I}^{n+1}$ and the upward arrow is Jacobson's cycle class map.
\end{proposition}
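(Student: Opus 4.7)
The strategy is to verify the commutativity of the triangle at the level of Gersten\slash Rost--Schmid complexes, comparing the image of a cycle class $[Y]$ (with $Y$ an irreducible subvariety of codimension $n$, generic point $y$) on the codimension-$n$ summand of each complex. All three cohomology groups in the triangle are computed by a Gersten complex whose final term is a direct sum indexed by points $x\in X^{(n)}$, with each summand canonically isomorphic to $\Z/2\Z$, and $[Y]$ always corresponds to the generator $1$ at the position $y$.

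More precisely, $\op{Ch}^n(X)\cong\op{H}^n(X,\mathbf{K}^{\op{M}}_n/2)$ is computed by the Rost complex whose codimension-$n$ term is $\bigoplus_{x\in X^{(n)}}K^{\op{M}}_0(\kappa(x))/2 = \bigoplus_{x\in X^{(n)}}\Z/2\Z$. The singular cohomology $\op{H}^n(X(\R),\Z/2\Z)$ is computed by the mod-2 Rost--Schmid complex recalled in the paragraph preceding the proposition (the twist by $\Lambda^X_x$ being trivial mod 2), and the Borel--Haefliger cycle class sends $[Y]$ to the constant section $1\in\op{H}^0(y,\rho_\ast\Z/2\Z)$. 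Finally, $\op{H}^n(X,\mathbf{I}^n/\mathbf{I}^{n+1})$ is computed by the obvious quotient of the Gersten--Witt complex (as in \prettyref{sec:construction-Gersten}), whose codimension-$n$ term is $\bigoplus_{x\in X^{(n)}}W(\kappa(x),\omega_x)/I(\kappa(x),\omega_x)\cong \bigoplus_{x\in X^{(n)}}\Z/2\Z$ via the rank mod 2, with generator at $y$ the class of $\langle 1\rangle$.

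The downward isomorphism is induced by the Milnor conjecture (Orlov--Vishik--Voevodsky) isomorphism of sheaves $\mathbf{K}^{\op{M}}_n/2\cong\mathbf{I}^n/\mathbf{I}^{n+1}$, sending $\{u_1,\ldots,u_n\}$ to $\pfist{u_1,\ldots,u_n}$. This isomorphism is compatible with the Gersten differentials (this is Kato's result on the $K$-theory side and is built into the construction of the Gersten--Witt complex as recalled in \prettyref{sec:construction-Gersten}), so it induces an isomorphism of Gersten complexes, and on the final term reduces to the identity $\Z/2\Z\to\Z/2\Z$ in view of the canonical identifications $K^{\op{M}}_0(\kappa(x))/2\cong\Z/2\Z\cong I^0/I^1(\kappa(x),\omega_x)$. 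Hence the downward image of $[Y]$ is represented by $\langle 1\rangle$ at~$y$.

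It remains to compute Jacobson's cycle class of $\langle 1\rangle$ at $y$. By construction, this map factors through the mod-2 signature $\mathbf{I}^n/\mathbf{I}^{n+1}\to\rho_\ast(\Z/2\Z)$, obtained by reducing the diagram of \prettyref{prop:J-homotopy-modules} modulo $2$. On the codimension-$n$ summand of the Gersten complex this map is nothing but rank mod 2, and so sends the class of $\langle 1\rangle$ at $y$ precisely to the constant section $1\in\op{H}^0(y,\rho_\ast\Z/2\Z)$. Comparing with the Borel--Haefliger image identified above proves commutativity of the triangle. The only delicate point in carrying this out is the identification of the Milnor conjecture isomorphism with the differentials in intermediate codimensions (needed to turn it into an isomorphism of Gersten \emph{complexes}); modulo this, the argument at the final term is essentially tautological once the Rost--Schmid complex descriptions of all three groups are in place.
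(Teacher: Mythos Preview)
Your proof is correct and follows essentially the same approach as the paper: both arguments verify commutativity at the level of Gersten complexes by tracking the generator at a codimension-$n$ point $y$, and both reduce to the observation that the signature mod~$2$ agrees with the rank mod~$2$ (the paper states this for real closed fields, you state it as ``the mod-$2$ signature \ldots\ is nothing but rank mod~$2$'', which is the same fact). Your closing caveat about compatibility of the Milnor-conjecture isomorphism with the Gersten differentials is unnecessary---this is part of the standing framework recalled in \prettyref{sec:construction-Gersten} and the paper does not flag it as an issue---but it does no harm.
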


\begin{proof}
  As mentioned above, \cite[Section 3]{scheiderer:purity} implies that the horizontal map is induced on Gersten complexes by mapping an irreducible closed subvariety $Y\subset X$ with generic point $y$ to $1\in \op{H}^0(y,\rho_\ast\Z/2\Z)$. The isomorphism $\op{Ch}^n(X)\cong \op{H}^n(X,\mathbf{I}^n/\mathbf{I}^{n+1})$ is also induced from a morphism on Gersten complexes: in the relevant degree, we have \[
    \bigoplus_{y\in X^{(n)}}\Z/2\Z\cong \bigoplus_{y\in X^{(n)}}\op{W}(\kappa(y))/\op{I}(\kappa(y))
  \]
  with the identification (from right to left) given by mapping a quadratic form to its rank mod 2. Finally, Jacobson's cycle class map $\op{H}^n(X,\mathbf{I}^n/\mathbf{I}^{n+1})\to \op{H}^n(X(\R),\Z/2\Z)$ is induced on the level of Gersten complexes by the direct sum of signature maps
  \[
    \bigoplus_{y\in X^{(n)}}\op{W}(\kappa(y))/\op{I}(\kappa(y)) \to \bigoplus_{y\in X^{(n)}}\op{H}^0(y,\rho_\ast\Z/2\Z).
  \]
  The commutativity of the triangle then follows from the fact that for a real closed field $F$, the two morphisms $\Z\cong \op{W}(F)\to \Z/2\Z$ given by rank mod 2 and signature mod 2 agree.
\end{proof}

\begin{remark}
  Kirsten Wickelgren has informed us that the above identification of the reduction of Jacobson's cycle class map with the one of Borel--Haefliger was already established by Jacobson in unpublished work.
\end{remark}

\begin{remark}
  At some point we wondered if there could be a real analogue of Totaro's factorization \cite{totaro:mu} of the cycle class map
  \[
    \op{CH}^n(X)\to \op{MU}^\ast(X(\mathbb{C}))\otimes_{\op{MU}^\ast}\Z \to \op{H}^{2n}(X(\mathbb{C}),\Z)
  \]
  for a smooth complex variety $X$. It now seems that a comparable factorization of the real cycle class map $\op{H}^n(X,\mathbf{I}^n)\to \op{H}^n(X(\R),\Z)$ through $\op{MSO}$ would not be possible (and a factorization of the mod 2 cycle class map through $\op{MO}$ would be useless because $\op{MO}$ splits as a wedge of Eilenberg--Mac Lane spaces).

  The basic reason lies in orientability issues. To map a cycle in $\op{H}^n(X,\mathbf{I}^n)$ to an element of $\op{MSO}^\ast(X(\R))\otimes_{\op{MSO}^\ast}\Z$, we would take an irreducible subvariety $Z\subset X$, take a resolution of singularities $\tilde{Z}\to Z$ and take the realization  $\tilde{Z}(\R)\to X(\R)$ of the composition $\tilde{Z}\to Z\to X$. For this to yield a class in $\op{MSO}^\ast(X)$, the map needs to be orientable, but we don't have enough control over the resolution of singularities to guarantee that. Moreover, when we want to show independence of resolutions, further blowups are required to be able to compare two given resolutions $\tilde{Z}_1\to Z$ and $\tilde{Z}_2\to Z$. Depending on the (co)dimensions, the real realization of a blowup of a smooth subvariety isn't orientable, and then there are no pushforward maps to compare fundamental classes. So the construction of an MSO-factorization seems to break down at a rather fundamental level.
\end{remark}

\subsection{Cycle class maps for Chow--Witt rings}
\label{sec:chowwittcycle}

In this section, we will discuss a cycle class map for Chow--Witt rings. We first recall the equivariant cycle class map on Chow groups of real varieties introduced by Benoist and Wittenberg in \cite{benoist:wittenberg} and then combine this with the cycle class map on $\I^n$-cohomology discussed above.

Based on earlier work of Krasnov and van Hamel, Benoist and Wittenberg \cite{benoist:wittenberg} discuss a cycle class map  $\op{CH}^\bullet(X)\to \op{H}^\bullet_{\op{C}_2}(X(\mathbb{C}),\Z)$ for $X$ a real variety, which maps from the Chow groups to $\op{C}_2$-equivariant cohomology of the complex points with integral coefficients \cite{benoist:wittenberg}. For an irreducible closed smooth subvariety $i\colon Z\subset X$ of codimension $k$, the cycle class map takes $[Z]$ to the fundamental class $[Z]\in \op{H}^{2k}_{\op{C}_2}(X(\mathbb{C}),\Z(k))$, which is given as the image of $1\in \Z= \op{H}^0_{\op{C}_2}(Z(\mathbb{C}),\Z)$
under the proper pushforward map
\[
  i_\ast\colon\op{H}^{0}_{\op{C}_2}(Z(\mathbb{C}),\Z)\cong \op{H}^{2k}_{\op{C}_2,Z(\mathbb{C})}(X(\mathbb{C}),\Z(k))\to  \op{H}^{2k}_{\op{C}_2}(X(\mathbb{C}),\Z(k)).
\]
The general definition of the cycle class map for a potentially singular irreducible closed subvariety $Z\subset X$ is obtained by d\'evissage. The equivariant cycle class map $\op{CH}^\bullet(X)\to \op{H}^\bullet_{\op{C}_2}(X(\mathbb{C}),\Z)$ is then compatible with proper pushforward, products and pullbacks, cf.\ \cite[Propositions 2.1.3, 2.3.3]{krasnov}. It is also easy to see that the equivariant cycle class map is compatible with the classical cycle class map via complexification (or alternatively forgetting the $\op{C}_2$-equivariance), since the latter also takes $Z\subset X$ to $i_\ast 1$ with
\[
  i_\ast\colon\op{H}^{0}(Z(\mathbb{C}),\Z)\cong \op{H}^{2k}_{Z(\mathbb{C})}(X(\mathbb{C}),\Z)\to  \op{H}^{2k}(X(\mathbb{C}),\Z).
\]

The images of algebraic cycles under the equivariant cycle class maps satisfy additional constraints related to the Steenrod squares. Benoist and Wittenberg consider the composition
\[
  \op{H}^{2k}_{\op{C}_2}(X(\mathbb{C}),\Z(k)) \to \op{H}^{2k}_{\op{C}_2}(X(\R),\Z(k))\to \bigoplus_{0\leq p\leq 2k, p\equiv k\bmod 2} \op{H}^p(X(\R),\Z/2\Z)
\]
where the second map arises from the decomposition
\[
  \op{H}^i_{\op{C}_2}(X(\R),\Z) \cong \bigoplus_{p+q=i}\op{H}^p(X(\R),\op{H}^q(\op{C}_2,\Z(j))).
\]
For a class $\alpha\in \op{H}^{2k}_{\op{C}_2}(X(\mathbb{C}),\Z(k))$, the component of the image in $\op{H}^p(X(\R),\Z/2\Z)$ is denoted by $\alpha_p$. With this notation, the restricted equivariant cohomology is defined as
\[
  \op{H}^{2k}_{\op{C}_2}(X(\mathbb{C}),\Z(k))_0 =\{\alpha\mid \alpha_{k+i}=\op{Sq}^i(\alpha_k) \textrm{ for all } i\in 2\Z\}\subseteq \op{H}^{2k}_{\op{C}_2}(X(\mathbb{C}),\Z(k)).
\]
Due to results of Kahn and Krasnov, the equivariant cycle class map factors through a morphism $\op{CH}^k(X)\to \op{H}^{2k}_{\op{C}_2}(X(\mathbb{C}),\Z(k))_0$.

The following compatibility of cycle class maps is noted in \cite{benoist:wittenberg}, after Definition~1.17.

\begin{proposition}
  \label{prop:krasnov-vs-borel-haefliger}
  Let $X$ be a smooth quasi-projective variety over $\R$ (or more generally over a real closed field $F$). Then there is a commutative diagram
  \[
    \xymatrix{
      \op{CH}^n(X) \ar[r] \ar[d] & \op{H}^{2n}_{\op{C}_2}(X(\mathbb{C}),\Z(n))_0 \ar[d] \\
      \op{Ch}^n(X) \ar[r] & \op{H}^n(X(\R),\Z/2\Z)
    }
  \]
  where the top horizontal is the refined equivariant cycle class map discussed above, and the bottom horizontal is the Borel--Haefliger cycle class map \cite{borel-haefliger}, which we may reinterpret in our terms using \prettyref{prop:jacobson-vs-borel-haefliger} above. The left vertical map is the natural reduction mod 2, and the right-hand vertical map is the reduction map  \cite[Eq. (1.58)]{benoist:wittenberg}.
\end{proposition}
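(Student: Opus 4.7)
The plan is to reduce to the case of a smooth irreducible closed subvariety and then to trace the image of the fundamental class through both possible routes in the diagram, using naturality of the reduction-to-real-points map with respect to pushforward along closed embeddings.

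First, since the Benoist--Wittenberg cycle class map is defined on classes of arbitrary irreducible closed subvarieties by dévissage (resolution of singularities plus the observation that the construction is compatible with proper pushforward, cf.\ the discussion preceding the proposition), and since both the mod~$2$ reduction $\op{CH}^n(X)\to\op{Ch}^n(X)$ and the Borel--Haefliger cycle class map are similarly defined by (or respect) this dévissage, it suffices to verify commutativity of the square on classes $[Z]$ for $Z\subset X$ a smooth irreducible closed subvariety of codimension~$n$. Let $i\colon Z\hookrightarrow X$ denote the inclusion and $i_\R\colon Z(\R)\hookrightarrow X(\R)$ its restriction to real points.

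Second, I would unwind both compositions on the class $[Z]$. Going right then down: the top horizontal sends $[Z]$ to $i_*(1)\in\op{H}^{2n}_{\op{C}_2}(X(\mathbb{C}),\Z(n))$, where $1\in\op{H}^0_{\op{C}_2}(Z(\mathbb{C}),\Z)$, and the right vertical then extracts the $p=n$ component of its restriction to $X(\R)$ in the decomposition
\[
\op{H}^{2n}_{\op{C}_2}(X(\R),\Z(n))\cong \bigoplus_{p+q=2n}\op{H}^p(X(\R),\op{H}^q(\op{C}_2,\Z(n))).
\]
Going down then right: the reduction $[Z]\mapsto[Z]_{\mod 2}\in\op{Ch}^n(X)$ is sent by Borel--Haefliger, using \prettyref{prop:jacobson-vs-borel-haefliger}, to the class in $\op{H}^n(X(\R),\Z/2)$ represented in the Gersten--Rost--Schmid resolution by the constant section $1$ at the generic point of~$Z$, i.e.\ to $(i_\R)_*(1)$ where $1\in\op{H}^0(Z(\R),\Z/2)$.

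The core step is then to show that the reduction map from $C_2$-equivariant cohomology of $X(\mathbb{C})$ to $\op{H}^n(X(\R),\Z/2\Z)$ commutes with pushforward along $i$, and sends the unit class $1\in\op{H}^0_{\op{C}_2}(Z(\mathbb{C}),\Z)$ to $1\in\op{H}^0(Z(\R),\Z/2\Z)$. The latter is immediate by inspection of the decomposition. For the former, one uses that the reduction map is induced by restriction along $X(\R)\hookrightarrow X(\mathbb{C})$ followed by an algebraic splitting of equivariant cohomology of the fixed locus, both of which are natural with respect to equivariant closed embeddings by base change (since $Z(\R) = Z(\mathbb{C})\cap X(\R)$ transversally, as $Z$ is smooth and the $C_2$-action is semi-free on the tangent spaces).

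The hard part will be this last compatibility: namely verifying that the algebraic projection onto the $(p=n)$-summand in the decomposition above intertwines with pushforward along $i_\R$ in ordinary cohomology, given that $i_*$ in equivariant cohomology lives in bidegree $(2n, n)$ while $(i_\R)_*$ lives in degree $n$. This amounts to checking that the equivariant Thom class of the normal bundle $\Nb_{Z/X}(\mathbb{C})$ restricts on $X(\R)$ to the real Thom class of $\Nb_{Z(\R)/X(\R)}$ modulo~$2$ in the appropriate summand, which follows from the explicit form of the $C_2$-equivariant Thom isomorphism (as in Krasnov--van Hamel) combined with the fact that complex conjugation acts on $\Nb_{Z/X}(\mathbb{C})$ as a real structure with real form $\Nb_{Z(\R)/X(\R)}$. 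Alternatively, one can invoke the pushforward compatibility for the Benoist--Wittenberg map already established in \cite{benoist:wittenberg} and reduce to the tautological case $Z=\op{pt}$, $X=\A^n$, which is a direct computation analogous to \prettyref{prop:computation-cycle-class}.
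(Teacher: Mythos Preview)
The paper does not actually prove this proposition: it introduces it with the sentence ``The following compatibility of cycle class maps is noted in \cite{benoist:wittenberg}, after Definition~1.17'' and then states the proposition with no \texttt{proof} environment. So there is nothing to compare your argument against; the authors treat this as a known fact from the literature and simply cite it.

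Your proposal is a reasonable outline of how one would prove such a compatibility from scratch. The reduction to smooth $Z$ via d\'evissage and proper pushforward is the standard move, and identifying the key point as compatibility of the reduction map $\op{H}^{2n}_{\op{C}_2}(X(\mathbb{C}),\Z(n))_0\to \op{H}^n(X(\R),\Z/2)$ with pushforward along closed embeddings is correct. Your two suggested endgames (Thom class comparison, or reduction to a point in $\A^n$) are both viable. One small caution: in the d\'evissage step, after resolving singularities the map $\tilde Z\to X$ is proper but no longer a closed immersion, so you need compatibility of all four maps with \emph{proper} pushforward, not just pushforward along embeddings; this is available (Krasnov establishes it for the equivariant cycle class, and it is classical for Borel--Haefliger), but you should say so explicitly rather than folding it into ``closed embeddings''. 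If you want the argument to be self-contained, the cleanest route is probably your alternative at the end: invoke the pushforward compatibility for the Benoist--Wittenberg map already proved in \cite{benoist:wittenberg} and reduce to a local computation.
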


Finally, we can combine the cycle class maps considered above into a cycle class map defined on the Chow--Witt ring. Recall that there is a fiber product description of the Milnor--Witt K-theory sheaves, cf.\ \cite{morel:puissances}:
\[
  \mathbf{K}^{\op{MW}}_n\cong \mathbf{K}^{\op{M}}_n\times_{\mathbf{I}^n/\mathbf{I}^{n+1}} \mathbf{I}^n
\]
where the maps are the natural projection $\mathbf{I}^n\to \mathbf{I}^n/\mathbf{I}^{n+1}$
and the composition of the projection $\mathbf{K}^{\op{M}}_n\to \mathbf{K}^{\op{M}}_n/2$ with Milnor's homomorphism $\mathbf{K}^{\op{M}}_n/2 \to \mathbf{I}^n/\mathbf{I}^{n+1}$.  (By the solution to the Milnor conjecture, the homomorphism of Milnor that appears here is an
isomorphism.) The fiber product presentation above induces a commutative square
\[
  \xymatrix{
    \widetilde{\op{CH}}^n(X,\sheaf L) \ar[r] \ar[d] & \ker\partial \ar[d] \\
    \op{H}^n(X,\mathbf{I}^n(\mathcal{L}))\ar[r] & \op{Ch}^n(X)
  }
\]
where $\partial\colon \op{CH}^n(X)\to \op{H}^{n+1}(X,\mathbf{I}^n(\mathcal{L}))$ is the boundary map in sheaf cohomology associated to the short exact sequence of sheaves $0\to \mathbf{I}^{n+1}(\mathcal{L})\to \mathbf{K}^{\op{MW}}_n(\mathcal{L})\to \mathbf{K}^{\op{M}}_n\to 0$, an analogue of integral Bockstein maps in classical topology. By \cite[Proposition~2.11]{hornbostelwendt}, the map $\widetilde{\op{CH}}^n(X,\sheaf L) \to \op{H}^n(X,\mathbf{I}^n(\mathcal{L})) \times_{\op{Ch}^n(X)} \ker\partial$ is surjective, and injective whenever the 2-torsion in $\op{CH}^n(X)$ is trivial.

By \prettyref{prop:krasnov-vs-borel-haefliger}, the composition of the equivariant cycle class map of \cite{benoist:wittenberg}  $\op{CH}^n(X)\xrightarrow{\cong} \op{H}^{2n}_{\op{C}_2}(X(\mathbb{C}), \Z(n))_0$ of \prettyref{prop:bw-iso} with the inclusion $\ker\partial\subseteq \op{CH}^n(X)$ factors through the kernel of the composition
\[
  \psi\colon \op{H}^{2n}_{\op{C}_2}(X(\mathbb{C}),\Z(n))_0\to \op{H}^n(X(\mathbb{R}),\Z/2\Z)\xrightarrow{\beta} \op{H}^{n+1}(X(\mathbb{R}),\Z).
\]

\begin{corollary}\label{CWcycleclassmap}
  Let $X$ be a smooth quasi-projective variety over $\R$ (or more generally over a field $F$ with a real embedding $\sigma\colon F\hookrightarrow \R$).
  Then there is a well-defined cycle class map
  \[
    \widetilde{\op{CH}}^n(X,\sheaf L)\to  \op{H}^n(X(\R),\Z(\mathcal{L})) \times_{\op{H}^n(X(\R),\Z/2\Z)} \ker\psi.
  \]
\end{corollary}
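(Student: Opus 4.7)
The plan is to construct the map out of $\widetilde{\op{CH}}^n(X,\sheaf L)$ factor by factor, using the fiber-square presentation recalled just above the corollary:
\[
  \widetilde{\op{CH}}^n(X,\sheaf L) \twoheadrightarrow \op{H}^n(X,\mathbf{I}^n(\mathcal{L})) \times_{\op{Ch}^n(X)} \ker\partial.
\]
For the first factor, I will compose with the real cycle class map $\op{H}^n(X,\mathbf{I}^n(\mathcal{L})) \to \op{H}^n(X(\R),\Z(\mathcal{L}))$ of diagram \eqref{diagram-twist-support} to produce a candidate image in $\op{H}^n(X(\R),\Z(\mathcal{L}))$. For the second factor, I will compose with the inclusion $\ker\partial\hookrightarrow \op{CH}^n(X)$ followed by the Benoist--Wittenberg equivariant cycle class map $\op{CH}^n(X) \to \op{H}^{2n}_{\op{C}_2}(X(\mathbb{C}),\Z(n))_0$. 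The paragraph immediately preceding the corollary already observes that this composition takes values in $\ker\psi$, so no further work is needed there. The two maps together then give a morphism
\[
  \widetilde{\op{CH}}^n(X,\sheaf L) \to \op{H}^n(X(\R),\Z(\mathcal{L})) \times \ker\psi.
\]

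The main point to verify is that this morphism lands in the asserted fiber product over $\op{H}^n(X(\R),\Z/2\Z)$; in other words, that for every class in $\widetilde{\op{CH}}^n(X,\sheaf L)$ the two images produce the same class modulo~$2$. I will trace a class through both paths: on one side, its image in $\op{H}^n(X,\mathbf{I}^n(\mathcal{L}))$ is mapped by the real cycle class map to $\op{H}^n(X(\R),\Z(\mathcal{L}))$ and then reduced mod~$2$; on the other side, its image in $\op{CH}^n(X)$ is mapped by Benoist--Wittenberg to $\op{H}^{2n}_{\op{C}_2}(X(\mathbb{C}),\Z(n))_0$ and then pushed through the ``$\alpha_n$'' component to $\op{H}^n(X(\R),\Z/2\Z)$. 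Both routes factor through $\op{Ch}^n(X)$ by the fiber-product presentation, so it suffices to show that the two resulting maps $\op{Ch}^n(X)\to \op{H}^n(X(\R),\Z/2\Z)$ agree. But both of them are the Borel--Haefliger cycle class map: the $\I$-cohomology side via \prettyref{prop:jacobson-vs-borel-haefliger}, and the equivariant side via \prettyref{prop:krasnov-vs-borel-haefliger}.

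The final step is well-definedness, that is, independence of the lift along the surjection in the fiber square. Two lifts differ by an element of the kernel of the surjection, which is torsion contained in $2\cdot \widetilde{\op{CH}}^n(X,\sheaf L)$ and lies over zero in both $\op{H}^n(X,\mathbf{I}^n(\mathcal{L}))$ and $\op{CH}^n(X)$; hence it maps to zero in both components of the target, so the construction descends to a well-defined map on $\widetilde{\op{CH}}^n(X,\sheaf L)$.

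The step I expect to require the most care is the mod~$2$ compatibility. In particular, one needs that the $\alpha_n$-projection used to cut out $\ker\psi$ is the Borel--Haefliger reduction of the equivariant cycle class map (this is the content of \prettyref{prop:krasnov-vs-borel-haefliger} combined with the definition of $\psi$), and that Jacobson's signature-based reduction $\op{H}^n(X,\mathbf{I}^n(\mathcal{L}))\to \op{H}^n(X(\R),\Z/2\Z)$ coincides with Borel--Haefliger through $\op{Ch}^n(X)$ (this is \prettyref{prop:jacobson-vs-borel-haefliger}). Once these two identifications are in place, the diagram chase collapses to the tautology ``Borel--Haefliger equals Borel--Haefliger'', and the factorization through the fiber product is automatic.
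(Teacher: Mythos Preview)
Your proposal is essentially correct and follows the same route as the paper: project $\widetilde{\op{CH}}^n(X,\sheaf L)$ to the fiber product $\op{H}^n(X,\mathbf{I}^n(\mathcal{L})) \times_{\op{Ch}^n(X)} \ker\partial$, apply Jacobson's real cycle class map and the Benoist--Wittenberg map componentwise, and check the mod~$2$ images agree via Propositions~\ref{prop:jacobson-vs-borel-haefliger} and~\ref{prop:krasnov-vs-borel-haefliger}. The paper makes one ingredient slightly more explicit than you do: to see that the reduction $\op{H}^n(X,\mathbf{I}^n(\mathcal{L}))\to\op{H}^n(X(\R),\Z(\mathcal{L}))\to\op{H}^n(X(\R),\Z/2\Z)$ actually factors through $\op{Ch}^n(X)$, one needs the commutative square relating the integral and mod~$2$ Jacobson cycle class maps (a direct consequence of the signature being compatible with reduction mod~$2$); you invoke this implicitly when you say ``both routes factor through $\op{Ch}^n(X)$''.

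Your final paragraph on ``well-definedness'' is superfluous and slightly misframed. The cycle class map is a \emph{composition} starting at $\widetilde{\op{CH}}^n(X,\sheaf L)$; there is no lift along the surjection and hence no descent to verify. You may simply delete that paragraph.
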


\begin{proof}
  The cycle class map is the composition of the above natural projection map $\widetilde{\op{CH}}^n(X,\sheaf L) \to \op{H}^n(X,\mathbf{I}^n(\mathcal{L})) \times_{\op{Ch}^n(X)} \ker\partial$ with the map
  \[
    \op{H}^n(X,\mathbf{I}^n(\mathcal{L})) \times_{\op{Ch}^n(X)} \ker\partial \to \op{H}^n(X(\R),\Z(\mathcal{L})) \times_{\op{H}^n(X(\R),\Z/2\Z)} \ker\psi
  \]
  which is the fiber product of Jacobson's cycle class map
  \[
    \op{H}^n(X,\mathbf{I}^n(\mathcal{L}))\to \op{H}^n(X(\R),\Z(\mathcal{L}))
  \]
  discussed above, and the refined cycle class map $\op{CH}^n(X)\to \op{H}^{2n}_{\op{C}_2}(X(\mathbb{C}),\Z(n))_0$ of Benoist--Wittenberg \cite{benoist:wittenberg} (restricted to $\ker\partial\to\ker\psi$ as discussed above). The result follows from Propositions~\ref{prop:krasnov-vs-borel-haefliger} and \ref{prop:jacobson-vs-borel-haefliger} and the following commutative diagram
  \[
    \xymatrix{
      \op{H}^n(X,\mathbf{I}^n(\mathcal{L})) \ar[d] \ar[r] & \op{H}^n(X(\R),\Z(\mathcal{L})) \ar[d] \\
      \op{H}^n(X,\mathbf{I}^n/\mathbf{I}^{n+1}) \ar[r] & \op{H}^n(X(\R),\Z/2\Z)
    }
  \]
  where the horizontal maps are Jacobson's cycle class maps and the vertical maps are the morphisms induced from the natural projection maps $\mathbf{I}^n\to \mathbf{I}^n/\mathbf{I}^{n+1}$ and $\Z\to\Z/2\Z$, respectively. The latter commutative diagram follows directly from \cite{jacobson} since both cycle class maps are induced from the signature on quadratic forms, hence compatible with reduction mod 2.
\end{proof}

\begin{remark}
  It would be interesting if this could be refined to a cycle class map whose target is the cohomology of a fiber product of sheaves in the same way as $\mathbf{K}^{\op{MW}}_n\cong \mathbf{I}^n\times_{\mathbf{I}^n/\mathbf{I}^{n+1}}\mathbf{K}^{\op{M}}_n$.
  This would require to state and prove a topological version of the Milnor conjecture. Then we could hope to detect the 2-torsion in the kernel of the natural projection
  \[
    \widetilde{\op{CH}}^\bullet(X,\sheaf L)\to \op{H}^\bullet(X,\mathbf{I}^\bullet(\sheaf L)) \times_{\op{Ch}^\bullet(X)}\op{CH}^\bullet(X).
  \]
  Of course, for cellular varieties as discussed in Section~\ref{sec:cellular}, this doesn't play any role at all.
\end{remark}

\section{Compatibilities}
\label{sec:compatibilities}

In this section, we prove the main results on compatibility of the real cycle class maps with pullbacks, pushforwards and the multiplicative structure.

\subsection{Compatibility with pullbacks}

\begin{proposition}\label{prop:pullbacks-compatible}
  Let $f\colon X\to Y$ be a morphism of smooth schemes over $\R$, let $i\colon Z\hookrightarrow Y$ be a (reduced) closed subscheme and let $\sheaf L$ be a line bundle over $Y$. The morphisms in diagram~\eqref{diagram-twist-support} are compatible with pullback along~\(f\).  In particular, the following diagram commutes for all non-negative integers \(s\) and \(t\):
  \[ \xymatrix{
      \op{H}^s_{f^{-1}(Z)}(X,\mathbf{I}^t(f^*\sheaf L))\ar[r] &\op{H}^s_{f^{-1}(Z)(\R)}(X(\R),\Z(f^*\sheaf L))
      \\
      \op{H}^s_Z(Y,\mathbf{I}^t(\sheaf L)) \ar[u]_{f^\ast}  \ar[r] &\op{H}^s_{Z(\R)}(Y(\R),\Z(\sheaf L)) \ar[u]_{f^\ast}
    }
  \]
\end{proposition}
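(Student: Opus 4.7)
The plan is to factor the real cycle class map as the three-step composition of diagram~\eqref{diagram-twist-support} -- the canonical map $\op{can}$, the twisted signature $\op{sign}_\infty$, and the pullback $\rho^\ast$ -- and to verify that $f^\ast$ commutes with each step separately. All three maps are pullbacks along morphisms of (ringed) coefficient data with suitable supports, and so is $f^\ast$, so Proposition~\ref{prop:sheaf-pullback-functorial} reduces the claim to checking three commutative squares of morphisms of coefficient data (together with the verification of the supports, via Lemma~\ref{lem:pullback-with-support}).

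For the first two factors, the relevant morphisms of sheaves $\op{can}\colon \I^t \to \colim_j \I^j$ and $\op{sign}_\infty\colon \colim_j \I^j \to \rho_\ast\Z$ are $\Gm$-equivariant morphisms of abelian sheaves on the big Zariski site $\op{Sm}_{\R,\op{Zar}}$, by Proposition~\ref{prop:can-sheaf-map} and Proposition~\ref{prop:J-homotopy-modules}. Example~\ref{eg:twist-functorial} applied to each of these morphisms, twisted by $\sheaf L$, produces the required commutative squares of ringed coefficient data, which pass to cohomology with supports after verifying that $f^{-1}(Z)$ pulls back correctly on each side.

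For the third factor, I would use the evident commutative square of continuous maps $f \circ \rho_X = \rho_Y \circ f_\R$, where $f_\R\colon X(\R)\to Y(\R)$ is the restriction of $f$ to real points, and its promotion to a commutative square of morphisms of ringed spaces. Corollary~\ref{cor:twist-key} applied to this square with the twist by $\sheaf L$ on $Y$ produces a commutative square of ringed coefficient data relating $(Y,\rho_{Y,\ast}\Z(\sheaf L))$, $(Y(\R),\Z(\sheaf L|_{Y(\R)}))$, $(X,\rho_{X,\ast}\Z(f^\ast\sheaf L))$ and $(X(\R),\Z(f^\ast\sheaf L|_{X(\R)}))$; Proposition~\ref{prop:sheaf-pullback-functorial} then yields compatibility on cohomology.

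The support conditions are straightforward to check: $\rho_Y^{-1}(Z) = Z(\R)$ and $f_\R^{-1}(Z(\R)) = f^{-1}(Z)(\R)$, so Lemma~\ref{lem:pullback-with-support} applies at each stage. The only real bookkeeping issue is keeping track of the various twists under the canonical identification $(f^\ast\sheaf L)|_{X(\R)} \cong f_\R^\ast(\sheaf L|_{Y(\R)})$, but this is automatic from the functoriality of pullbacks of line bundles together with Lemma~\ref{lem:twisted-composition}. I do not expect any significant obstacle beyond this notational bookkeeping.
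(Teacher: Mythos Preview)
Your proposal is correct and follows essentially the same route as the paper: the three-step decomposition into $\op{can}$, $\op{sign}_\infty$, and $\rho^\ast$, with each square handled via \prettyref{prop:sheaf-pullback-functorial}, \prettyref{cor:twist-key}, and \prettyref{eg:twist-functorial} using that the relevant sheaf maps are $\Gm$-equivariant on the big site. The only item the paper adds that you omit is an appeal to \prettyref{prop:GoGe-pullback-compatible} in step~(1) to identify the sheaf-theoretic pullback on $\I$-cohomology with Fasel's Gersten-complex pullback, but this is a side remark rather than an ingredient in the commutativity argument itself.
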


\begin{proof}
  Recall from \prettyref{sec:cohomology-supports} that we define the cohomology groups with support $\op{H}^s_Z(Y,\mathbf{I}^t(\mathcal{L}))$ as the derived functor of $\Gamma_Z:=\Gamma i_\ast i^!$ evaluated on the sheaf $\mathbf{I}^t(\mathcal{L})$, and similarly for all the other cohomology groups and sheaves involved. We denote by $i'\colon f^{-1}(Z) \hookrightarrow X$ the base change of $i$ along $f$. Below, all cohomology groups are sheaf cohomology groups, and the pullback maps are those defined in \prettyref{def:Go-pullback-with-support} and \prettyref{def:I-pullback}. The claim will follow from three commutative diagrams, involving the maps can, $\sign_\infty$ and $\rho^\ast$ in the definition of the real cycle class map of diagram~\eqref{diagram-twist-support}. The commutativity of the diagrams below will be checked using the functoriality statement for pullbacks, cf.~\prettyref{prop:sheaf-pullback-functorial}.

  (1) We first discuss the map $\op{can}\colon \mathbf{I}^t\to\colim_j\mathbf{I}^j$. By \prettyref{prop:GoGe-pullback-compatible}, the sheaf pullback $f^\ast\colon \op{H}^s_Z(Y,\mathbf{I}^t(\mathcal{L}))\to \op{H}^s_{f^{-1}(Z)}(X,\mathbf{I}^t(f^\ast\mathcal{L}))$ agrees with the usual pullback defined in terms of Gersten complexes. By \prettyref{prop:sheaf-pullback-functorial} and \prettyref{cor:twist-key}, it suffices to show that the following diagram of morphisms of coefficient data is commutative
  \[
  \xymatrix{
  (X,\mathbf{I}^t(f^\ast\mathcal{L}))\ar[d]_{(f,\phi)} & (X,\colim_j\mathbf{I}^j(f^\ast\mathcal{L}))  \ar[l]_-{(\mathrm{id}, \mathrm{can})} \ar[d]^{(f,\phi_{\colim})} \\
  (Y,\mathbf{I}^t(\mathcal{L}))  & (Y,\colim_j\mathbf{I}^j(f^\ast\mathcal{L})) \ar[l]^-{(\mathrm{id},\mathrm{can})}.
  }
  \]
  Hopefully the direction of the horizontal arrows following the conventions in the definition of coefficient data at the beginning of \prettyref{sec:prelims} is not too confusing. The commutativity of the above diagram then is equivalent to the commutativity of the following diagram of morphisms of sheaves on $Y$:
  \[
  \xymatrix{
  \mathbf{I}^t(\mathcal{L}) \ar[r]^\phi \ar[d]_{\mathrm{can}} & f_\ast \mathbf{I}^t(f^\ast\mathcal{L}) \ar[d]^{f_\ast \mathrm{can}} \\
  \colim_j\mathbf{I}^j(\mathcal{L}) \ar[r]_{\phi_{\colim}} & f_\ast \colim_j\mathbf{I}^j(f^\ast\mathcal{L}),
  }
  \]
where the horizontal morphisms $\phi$ and $\phi_{\colim}$ are the structure morphisms discussed before \prettyref{def:induced-morphism-of-cd}. The commutativity of this diagram  follows from \prettyref{eg:twist-functorial} since the morphism $\op{can}\colon \mathbf{I}^t\to \colim_j\mathbf{I}^j$ is a morphism of sheaves on the big site.

  (2) Now we consider the identification $\sign_\infty\colon \colim_j\mathbf{I}^j\cong \rho_\ast\Z$. As in Step (1), using \prettyref{prop:sheaf-pullback-functorial} and \prettyref{cor:twist-key}, it suffices to prove commutativity of the following diagram of sheaves on $Y$:
  \[
    \xymatrix{
      \colim_n\mathbf{I}^n_Y(\sheaf L) \ar[r]^{\sign_\infty} \ar[d]_{\phi_{\colim}} & (\rho_Y)_\ast \Z(\sheaf L) \ar[d]^\tau \\
      f_\ast \colim_n\mathbf{I}^n_X(f^\ast\sheaf L) \ar[r]_{\sign_\infty} & f_\ast(\rho_X)_\ast \Z(f^\ast\sheaf L)
    }
  \]
  The commutativity of this square is again a consequence of \prettyref{eg:twist-functorial} since Jacobson's isomorphism $\sign_\infty\colon \colim_j\mathbf{I}^j\cong\rho_\ast\Z$ is a morphism of sheaves with $\mathbb{G}_{\op{m}}$-action on the big site and consequently can be twisted, cf.\ \prettyref{thm:sign}.

  (3) The last square concerns the compatibility of pullback morphisms with the pullback isomorphism $\rho^\ast$ to singular cohomology. As before, using \prettyref{prop:sheaf-pullback-functorial} and \prettyref{cor:twist-key}, it suffices to show that the following square of morphisms of coefficient data is commutative:
  \[
  \xymatrix{
  (X,\rho_\ast \mathbb{Z}(f^\ast\mathcal{L})) \ar[d]_{(f,\tau)} & (X(\R),\mathbb{Z}(f^\ast\mathcal{L})) \ar[l]_\rho \ar[d]^{(f,\tau)} \\
  (Y,\rho_\ast \Z(\mathcal{L}))  & (Y(\R),\Z(\mathcal{L})) \ar[l]^\rho.
  }
  \]
Note that here and below we are also tacitly using the canonical isomorphism of line bundles \((f^*\sheaf L)(\R) \cong f_\R^*\sheaf L(\R)\) on \(X(\R)\).

  On the level of continuous maps of topological spaces, the relevant input is the obvious commutativity of the diagram
  \[
  \xymatrix{
  X(\R) \ar[r]^\rho \ar[d]_f & X_{\mathrm{Zar}} \ar[d]^f \\ Y(\R) \ar[r]_\rho & Y_{\mathrm{Zar}}.
  }
  \]
  On the level of morphisms of sheaves on $Y$, this commutative diagram also implies the commutativity of the triangle
  \[
  \xymatrix{
  \rho_\ast\mathbb{Z}(\mathcal{L})\ar[r]^\tau \ar[rd]_{\rho_\ast(\tau)} & f_\ast \rho_\ast \mathbb{Z}(f^\ast\mathcal{L}) \ar[d]^\cong \\& \rho_\ast f_\ast\Z(f^\ast\mathcal{L})
  }
  \]
  and this implies the claim.

\end{proof}

\begin{remark} \label{retvariant}
The results and techniques of this subsection obviously apply also to real cohomology (with pullbacks defined as in  \prettyref{sec:construction-Godement}). More precisely, they apply to the map \(\op{supp}\colon X_{\op{r}}\to X\) for $X$ over any base field $F$, instead of \(\rho\colon X(\R)\to X\) and $X$ over $\R$.
If $F$ has a real embedding $F \hookrightarrow \R$, the results also apply to the induced map \(\iota\colon X_{\R}(\R)\to (X_{\R})_{\op{r}}\). Note that for \(F=\R\), the map $\rho$ is simply the composition \(\rho = \op{supp} \circ \iota\).
Similar statements apply to the compatibility results on products in the next subsection. For pushforwards, the real variant is less obvious, as one first would have to establish a theory of Thom classes in this setting. For other results related to descriptions using Gersten complexes in the real setting as e.g.\ in \cite{scheiderer:purity} we refer to Appendix~\ref{sec:app}.
\end{remark}

\subsection{Compatibility with ring structures}
\label{sec:compatibilities:ring}

\begin{proposition}
  \label{prop:cup-products-compatible}
  Let $X$ be a smooth scheme over $\R$, let $V, W \subset X$ be two closed subsets, and let $\mathcal{L}_1,\mathcal{L}_2$ be line bundles over $X$. The real cycle class map with supports as defined in diagram~\eqref{diagram-twist-support} is compatible with the cup product in the sense that the following diagram commutes for all integers $p,q,k,l$:
  \[
    \xymatrix{
      \op{H}^p_V(X,\mathbf{I}^k(\mathcal{L}_1))\otimes \op{H}^q_W(X,\mathbf{I}^l(\mathcal{L}_2)) \ar[r]^-\cup \ar[d] & \op{H}^{p+q}_{V\cap W}(X,\mathbf{I}^{k+l}(\mathcal{L}_1\otimes\mathcal{L}_2)) \ar[d] \\
      \op{H}^p_{V(\R)}(X(\R),\Z(\mathcal{L}_{1})) \otimes \op{H}^q_{W(\R)}(X(\R),\Z(\mathcal{L}_{2})) \ar[r]_-\cup & \op{H}^{p+q}_{V(\R)\cap W(\R)}(X(\R), \Z(\mathcal{L}_{1}\otimes \mathcal{L}_{2}))
    }
  \]
\end{proposition}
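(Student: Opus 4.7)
The plan is to decompose the real cycle class map into its three constituent morphisms, as in diagram~\eqref{diagram-twist-support}, and verify multiplicative compatibility for each separately. Specifically, the cycle class map is the composition of pullbacks along the morphisms of coefficient data induced by $\op{can}\colon \I^t(\sheaf L)\to \colim_j\I^j(\sheaf L)$, $\sign_\infty\colon \colim_j \I^j(\sheaf L)\to \rho_*\Z(\sheaf L)$, and $\rho\colon X(\R)\to X$. For each of these, we will verify a commutative square of cup products by invoking the naturality of the Godement cup product (\prettyref{prop:Go-product-natural}), which handles pullbacks along continuous maps, together with the observation that compatibility with change of coefficient sheaves follows once the sheaf-level morphism is verified to be multiplicative (via the universality of the cup product, \prettyref{thm:Go-cup-product}).

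First I would handle the untwisted case $\sheaf L_1=\sheaf L_2=\sheaf O$, to isolate the multiplicativity question from the bookkeeping of twists. The step for $\op{can}$ follows directly from \prettyref{rem:can-multiplicative}, which gives exactly the commutative square of sheaf multiplications $\I^k\otimes\I^l\to \I^{k+l}$ and $\colim\I\otimes\colim\I\to \colim\I$ connected by $\op{can}$. The step for $\sign_\infty$ follows because the signature is a ring homomorphism (\prettyref{def:signature}), which sheafifies to make $\sign_\infty$ a morphism of sheaves of rings; the sheaf $\rho_*\Z$ is a ring in the obvious way, and this ring structure is what gives rise via Godement to the cup product on $\op{H}^*(X,\rho_*\Z)$. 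The step for $\rho^*$ is an instance of \prettyref{prop:Go-product-natural}, applied to $\rho\colon X(\R)\to X$. Composing the three squares yields the untwisted claim.

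Next I would incorporate twists. The three relevant morphisms $\op{can}$, $\sign_\infty$, $\rho$ are all $\Gm$-equivariant on the big Zariski site $\op{Sm}_F$ (by \prettyref{prop:can-sheaf-map}, \prettyref{prop:J-homotopy-modules}, and the construction in \prettyref{localcoeffsection}), so by \prettyref{def:twisted-morcd} and \prettyref{eg:twist-functorial} each step can be twisted separately by $\sheaf L_1$, by $\sheaf L_2$, or by $\sheaf L_1\otimes\sheaf L_2$, yielding twisted morphisms of ringed coefficient data. The key observation, analogous to \prettyref{rem:can-multiplicative}, is that each of these twisted morphisms fits into a commutative square of sheaf multiplications with the appropriate twisted products $\sheaf F(\sheaf L_1)\otimes\sheaf G(\sheaf L_2)\to (\sheaf F\otimes\sheaf G)(\sheaf L_1\otimes\sheaf L_2)$, which is just the natural map on presheaf tensor products prior to sheafification. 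Applying \prettyref{prop:Go-product-natural} (for $\rho$) and the universality/naturality of the Godement cup product (for the coefficient morphisms $\op{can}$ and $\sign_\infty$, via \prettyref{thm:Go-cup-product}) in this twisted setting yields the desired multiplicativity at each stage.

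Finally, supports must be incorporated. All three compatibility squares above extend to cohomology with supports: \prettyref{prop:Go-product-natural} is stated with supports, and the universality argument for change of coefficients works verbatim with supports since $\Gamma_V$ is simply a left exact subfunctor of $\Gamma$ whose Godement resolution is the restriction of the Godement resolution of $\Gamma$ by \eqref{eq:Go0-concrete-with-support}. The support on the left becomes $V$, on the right becomes $W$, and on the cup product becomes $V\cap W$; these are preserved at every stage since $\rho^{-1}V=V(\R)$ and likewise for $W$. The main potential obstacle is simply the bookkeeping required to track the line bundle twists through the three steps and verify the requisite $\Gm$-equivariance compatibility of the sheaf multiplications; but this is essentially formal, combining \prettyref{cor:twist-key} (for each of the three morphisms, to commute twisting with composition) with the explicit description of twisted tensor products. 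Concatenating the three commutative squares yields the claim.
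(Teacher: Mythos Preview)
Your proposal is correct and follows essentially the same approach as the paper: decompose the real cycle class map into the three steps $\op{can}$, $\sign_\infty$, $\rho^*$, and verify multiplicativity for each using \prettyref{rem:can-multiplicative}, the fact that the signature is a ring homomorphism, and \prettyref{prop:Go-product-natural}, respectively. The paper handles twists and supports more tersely (noting only that twisting preserves the relevant commutative squares and that supports merely complicate notation), but your more explicit treatment of these points is consistent with, and a mild elaboration of, the paper's argument.
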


\begin{proof}
  Both cup products here can be obtained using Godement's construction, cf.\ \prettyref{thm:Go-cup-product}. For the cup product in \(\I\)-cohomology,  we use the multiplicative structure of the sheaves $\mathbf{I}^j$ as in the discussion above \prettyref{cor:godement-cup-product}.  Recall that this agrees with Fasel's definition of cup product via Gersten complexes by \prettyref{prop:GoGe-products-compatible}.

  As before, the result is the consequence of the commutativity of diagrams involving the maps \(\op{can}\), $\op{sign}_\infty$ and the sheaf pullback $\rho^\ast$.  We ignore the supports in the following more detailed argument, as these only complicate the notation without changing the argument.

  (1) The map $\op{can}\colon \mathbf{I}^t\to\colim_j\mathbf{I}^j$ is multiplicative in the sense that we have a commutative square as in \prettyref{rem:can-multiplicative}.  This square still commutes after twisting by line bundles. Using the naturality of Godement's cup product in \prettyref{thm:Go-cup-product} with respect to morphisms of sheaves, we thus find that the following square commutes:
  \[
    \xymatrix{
      \op{H}^p(X,\mathbf{I}^k(\mathcal{L}_1))\otimes \op{H}^q(X,\mathbf{I}^l(\mathcal{L}_2)) \ar[r]^-\cup \ar[d]^{\op{can}\otimes\op{can}} & \op{H}^{p+q}(X,\mathbf{I}^{k+l}(\mathcal{L}_1\otimes\mathcal{L}_2)) \ar[d]^{\op{can}} \\
      \op{H}^p(X,\colim_j\mathbf{I}^j(\mathcal{L}_1)) \otimes \op{H}^q(X,\colim_j\mathbf{I}^j(\mathcal{L}_2)) \ar[r]_-\cup & \op{H}^{p+q}(X,\colim_j\mathbf{I}^j(\mathcal{L}_1\otimes \mathcal{L}_2))
    }
  \]

  (2) In the second step, we establish the compatibility of cup product with the identification $\op{sign}_\infty\colon \colim_j\mathbf{I}^j\cong\rho_\ast\Z$. We claim that the following diagram is commutative:
  \[
    \xymatrix{
      \op{H}^p(X,\colim_j\mathbf{I}^j(\mathcal{L}_1))\otimes \op{H}^q(X,\colim_j\mathbf{I}^j(\mathcal{L}_2)) \ar[r]^-\cup \ar[d] & \op{H}^{p+q}(X,\colim_j\mathbf{I}^{j}(\mathcal{L}_1\otimes\mathcal{L}_2)) \ar[d] \\
      \op{H}^p(X,\rho_\ast\Z(\mathcal{L}_{1}))) \otimes \op{H}^q(X,\rho_\ast\Z(\mathcal{L}_{2})) \ar[r]_-\cup & \op{H}^{p+q}(X, \rho_\ast\Z(\mathcal{L}_{1}\otimes \mathcal{L}_{2}))
    }
  \]
  The vertical maps are the identifications $\op{sign}_\infty$, and the horizontal maps are cup products. Again, the naturality of Godement's cup product with respect to morphisms of sheaves implies the commutativity of the diagram once we verify that the identification $\op{sign}_\infty$ preserves the multiplicative structures on both sides. This follows because the signature map $\op{sign}\colon \op{W}(X)\to \rho_\ast\Z$ is a ring homomorphism.

  (3) Finally, we need to check the compatibility of the following diagram, encoding the compatibility of the sheaf pullback with the cup products:
  \[
    \xymatrix{
      \op{H}^p(X,\rho_\ast\Z(\mathcal{L}_{1}))) \otimes \op{H}^q(X,\rho_\ast\Z(\mathcal{L}_{2})) \ar[r]_-\cup \ar[d]^{\rho^*\otimes\rho^*} & \op{H}^{p+q}(X,\rho_\ast\Z(\mathcal{L}_{1}\otimes \mathcal{L}_{2})) \ar[d]^{\rho^*} \\
      \op{H}^p(X(\R),\Z(\mathcal{L}_{1})) \otimes \op{H}^q(X(\R),\Z(\mathcal{L}_{2})) \ar[r]_-\cup & \op{H}^{p+q}(X(\R), \Z(\mathcal{L}_{1}\otimes \mathcal{L}_{2}))
    }
  \]
  The commutativity of this diagram follows from the naturality of the cup product with respect to continuous maps, cf.\ \prettyref{prop:Go-product-natural}.
\end{proof}

\begin{corollary}
  \label{cor:realization-is-ring-homo}
  For any smooth scheme $X$ over $\R$, the real cycle class map
  $$
  \bigoplus_{s,t}\op{H}^s(X,\I^t) \to \bigoplus_{s} \op{H}^s(X(\R),\Z)
  $$
  is a morphism of graded rings.
\end{corollary}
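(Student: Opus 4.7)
The plan is to deduce this directly from \prettyref{prop:cup-products-compatible}. Specializing that proposition to $\mathcal{L}_1=\mathcal{L}_2=\mathcal{O}_X$ and $V=W=X$ yields, for every quadruple $(p,q,k,l)$, a commutative square
\[
  \xymatrix{
    \op{H}^p(X,\I^k)\otimes \op{H}^q(X,\I^l) \ar[r]^-{\cup} \ar[d] & \op{H}^{p+q}(X,\I^{k+l}) \ar[d] \\
    \op{H}^p(X(\R),\Z)\otimes \op{H}^q(X(\R),\Z) \ar[r]_-{\cup} & \op{H}^{p+q}(X(\R),\Z)
  }
\]
in which the vertical arrows are the real cycle class maps. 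Assembling these squares over all bidegrees shows that the cycle class map respects cup products and preserves the cohomological grading. The internal $t$-grading on the source is automatically forgotten, because the target does not see it -- consistent with the fact that the cycle class map factors through cohomology with coefficients in $\colim_j \I^j \cong \rho_*\Z$, a sheaf independent of $t$.

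The only remaining point is unitality: the unit $1\in \op{H}^0(X,\W)\subseteq \bigoplus_{s,t}\op{H}^s(X,\I^t)$ must be sent to $1\in \op{H}^0(X(\R),\Z)$. I would verify this by tracing through diagram~\eqref{big-diagram}: the canonical map $\op{can}\colon \W=\I^0\to \colim_j\I^j$ is multiplicative by \prettyref{rem:can-multiplicative}, the identification $\sign_\infty\colon \colim_j\I^j\xrightarrow{\cong}\rho_*\Z$ is an isomorphism of sheaves of rings (since the signature is a ring homomorphism, cf.\ \prettyref{prop:J-homotopy-modules}), and the final pullback $\rho^*$ is induced by a morphism of ringed spaces and hence preserves units. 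So $1\mapsto 1$ at every stage, concluding the proof.

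I do not expect any serious obstacle here: the real content lies in \prettyref{prop:cup-products-compatible}, and the present statement is essentially a repackaging of it that discards the $\mathcal{L}$-twists and supports. The only thing to be mildly careful about is the bookkeeping of the two different gradings on the source, but these cause no difficulty once one observes that only the cohomological grading survives on the topological side.
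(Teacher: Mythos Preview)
Your proposal is correct and matches the paper's approach: the corollary is stated immediately after \prettyref{prop:cup-products-compatible} with no separate proof, so it is indeed meant as the specialization you describe. Your additional care about unitality is a reasonable supplement, though the paper evidently considers the whole thing immediate.
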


\subsection{Compatibility with pushforwards and the localization sequence}
\label{sec:comp:push}

In this section, we will now establish the compatibility of the real cycle class map with supports with pushforwards. We restrict to the case of pushforwards along closed immersions which is enough for our intended applications. This will also imply the compatibility of the real cycle class maps with the localization sequence.

\begin{remark}
  We want to point out that the real cycle class maps should actually be compatible with arbitrary proper pushforwards; tracing through the Gersten-type definition of pushforwards for \(\I\)-cohomology, the key point is the compatibility of the identification $\colim \mathbf{I}^m\cong\rho_\ast\Z$ with transfers as established in \cite[Lemma~20]{bachmann}.
\end{remark}

\begin{remark}
  Another thing to point out before we get started is that there are differences in the treatment of pushforward maps in algebra and topology, which could be a potential source for confusion. On the algebraic side, for any point $i\colon\Spec(\R) \to \P^1$, there is a canonical pushforward $i_*\colon  \op{H}^0(\Spec(\R),\mathbf{I}^0(\omega_{\R}))
  \to \op{H}^1(\P^1,\mathbf{I}^1(\omega_{\P^1}))$. On the topological side, pushforwards generally don't appear as frequently, and there is no canonical pushforward $i_*\colon \op{H}^0(\op{pt},\Z) \to
  \op{H}^1(\R \P^1, \Z)$ in topology. This is not a contradiction:
  real realization maps untwisted coefficients $\mathcal{O}$ to untwisted
  local coefficients $\Z$ canonically. But the isomorphism $\omega_{\P^1}
  \cong \mathcal{O}$ of line bundles
  on $\P^1$ involves a choice,
  and this choice corresponds to
  the choice of an orientation
  on $\R \P^1$, hence of a topological fundamental class,
  and the latter uniquely determines the topological pushforward $i_*$. This means that both on the algebraic and topological side, the pushforward maps depend on choices of orientations, but these are treated differently in algebra and topology.
\end{remark}

The main result to be established in this subsection is the following compatibility of real cycle class maps with pushforwards:

\begin{theorem}
  \label{thm:pushforwards-compatible}
  Let $X$ be a smooth real variety and let $Z\subset X$ be a smooth closed subvariety of codimension $c$. We denote the normal bundle of the inclusion $i\colon Z \to X$ by $\Nb=\Nb_Z X$ and set $\Lb:=\op{det}\Nb_Z X$. Then the following diagram is commutative:
  \begin{adjustwidth}{-10cm}{-10cm}
    \begin{equation}
      \begin{aligned}
        \xymatrix@C=13pt{
          \op{H}^{n-c}(Z,\mathbf{I}^{j-c}(\Lb)) \ar[r] \ar[d]^{i_*}
          &
          \op{H}^{n-c}(Z,\rho_\ast\Z(\Lb)) \ar[r]^-{\rho^*}_-\cong &  \op{H}^{n-c}(Z(\R),\Z(\Lb)) \ar@{=}[r] \ar[d]^{i_*^{\op{Th}}}& \op{H}^{n-c}(Z(\R),\Z(\Lb)) \ar@{..>}[d]^{i_*^{\op{PD}}} \\
          \op{H}^n(X,\mathbf{I}^j) \ar[r]
          &  \op{H}^n(X,\rho_\ast\Z) \ar[r]^-{\rho^*}_-\cong &  \op{H}^n(X(\R),\Z) \ar@{=}[r] & \op{H}^n(X(\R),\Z)
        }
      \end{aligned}
    \end{equation}
  \end{adjustwidth}
  The horizontal compositions are the cycle class maps, the left vertical morphism is the pushforward defined on Gersten complexes as in \prettyref{def:pushforwardWittgroups}, the vertical morphism $i_\ast^{\op{Th}}$ is the pushforward defined in terms of the topological Thom class, cf.\ \prettyref{def:topological-thom-class} and the discussion at the end of \prettyref{sec:singcohomology}. The vertical morphism $i_\ast^{\op{PD}}$ is the Poincar\'e dual of the pushforward in singular Borel--Moore homology, which is defined only if both $X(\R)$ and $Z(\R)$ are oriented and hence
  $\Z(\Lb)=\Z$ is constant.
\end{theorem}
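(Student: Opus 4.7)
The plan is to decompose both sides of the diagram through their respective Thom classes and reduce the statement to several already-established (or easier) compatibility results. By \prettyref{def:pushforwardWittgroups}, the algebraic pushforward factors as $i_* = d(Z,X) \circ (\op{th}(\Nb) \cup p^*(-))$, where $p\colon \Nb \to Z$ is the bundle projection and $d(Z,X)$ is the deformation-to-the-normal-bundle isomorphism. The topological Gysin map $i_*^{\op{Th}}$ admits an entirely analogous factorization in which the algebraic Thom class is replaced by the topological Thom class of \prettyref{def:topological-thom-class} and the deformation isomorphism is replaced by the tubular neighborhood isomorphism $t(Z,X)$ identifying a neighborhood of $Z(\R)$ in $X(\R)$ with a neighborhood of the zero section of $\Nb(\R) \to Z(\R)$. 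So the left square of the diagram reduces to three facts: (i) the real cycle class map commutes with cup products and pullbacks (Propositions~\ref{prop:pullbacks-compatible} and~\ref{prop:cup-products-compatible}); (ii) the real cycle class map sends $\op{th}(\Nb)$ to $\op{th}(\Nb(\R))$ up to the canonical identifications of twists; and (iii) under the real cycle class map, $d(Z,X)$ corresponds to $t(Z,X)$.

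The compatibility of Thom classes in (ii) is the central new input and will be the main obstacle. Both Thom classes are characterized by restriction to fibers: they pull back to a generator of the local cohomology of each fiber at the origin. Using the compatibility of the real cycle class map with pullbacks along the inclusion of a single fiber (\prettyref{prop:pullbacks-compatible}), this reduces to the universal case of the trivial vector bundle $\A^c \to \op{pt}$, where the statement becomes: the map $\op{H}^c_{\{0\}}(\A^c,\I^c(\Lambda^{\A^c}_0)) \to \op{H}^c_{\{0\}}(\R^c,\Z(\Lambda^{\A^c}_0))$ sends the symmetric form $\langle 1\rangle$ supported at the origin to the canonical generator. This is essentially the content of \prettyref{prop:computation-cycle-class} and the remark following it: the signature of $\langle 1\rangle$ over $\R$ is $1$, and the choices of orientation are matched by the common twist by $\Lambda^{\A^c}_0$.

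For (iii), the deformation isomorphism is the zig-zag $i_1^*(i_0^*)^{-1}$ constructed from the inclusions of the fibers $0$ and $1$ in $\pi\colon D(Z,X)\to\A^1$, while $t(Z,X)$ can be obtained topologically as the analogous zig-zag of fiber inclusions into $D(Z,X)(\R)$. As the restriction of $D(Z,X)(\R)\to\R$ over any connected subset of $\R$ containing $0$ and $1$ retracts onto $Z(\R)$, both $i_0^*$ and $i_1^*$ are isomorphisms on cohomology with support in $Z(\R)$, and the two zig-zags correspond under the real cycle class map thanks to \prettyref{prop:pullbacks-compatible}. This argument will be spelled out in \prettyref{prop:comparedefconetubnbhd} below, to which we appeal. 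Combining (i)--(iii) establishes the commutativity of the left square.

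Finally, the rightmost square expresses the classical identification $i_*^{\op{Th}} = i_*^{\op{PD}}$ whenever $X(\R)$ and $Z(\R)$ are oriented; under these choices the twisting local system $\Z(\Lb)$ is canonically trivialized and both constructions agree with the Gysin map induced by the tubular neighborhood theorem (see e.g.\ \cite[Chapter~VIII]{iversen}). The bookkeeping of the various line-bundle twists and their associated topological orientations is delicate but straightforward, and once all identifications are in place, the commutativity of the full diagram follows.
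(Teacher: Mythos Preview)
Your proposal is correct and follows essentially the same strategy as the paper: decompose the pushforward into Thom class multiplication followed by the deformation isomorphism, reduce the Thom class comparison to fibers via \prettyref{prop:pullbacks-compatible} and ultimately to \prettyref{prop:computation-cycle-class}, and defer the comparison of $d(Z,X)$ with the tubular neighborhood isomorphism to \prettyref{prop:comparedefconetubnbhd}. The paper organizes steps (i) and (ii) into separate propositions (\prettyref{prop:thom-class-comparison-local} and \prettyref{prop:thom-class-comparison}), and for the right-hand square appeals to \cite[Corollary~10.33]{daviskirk} and \cite[IX.7.5]{iversen}, but the overall architecture is the same as yours.
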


\begin{proof}
  The commutativity of the left rectangle is the core of the result, and will be proved in \prettyref{prop:comparedefconetubnbhd} below. The basic idea is to show that the algebraic Thom class induces generators of the cohomology with support of the fibers of the normal bundle and use the characterization of the topological Thom class to deduce that the algebraic Thom class maps to a topological Thom class. To get a comparison of the pushforward maps, we need a discussion of the relation between deformation to the normal bundle and the topological tubular neighborhood construction, which is the main technical point in \prettyref{prop:comparedefconetubnbhd}.

  Concerning the right hand side square, the morphism $i_*^{\op{Th}}$ is defined using a Thom class in $\op{H}^n_{Z(\R)}(X(\R),\Z)$ combined with the tubular neighborhood theorem, compare e.\ g.\ \cite[section 15.6]{tomdieck},  \cite[Chapter~4]{hirsch} and \cite[VIII.2.4]{iversen}. On the other hand $i_*^{\op{PD}}$ is defined using Poincar\'e duality for both $Z$ and $X$ and $i_*$ in Borel--Moore homology. The commutativity of the right hand side square for oriented manifolds  follows from \cite[Corollary 10.33]{daviskirk} or \cite[IX.7.5]{iversen}.
  For the latter, note that we have a Thom isomorphism $i_*: \op{H}^{*-c}(Z(\R)) \to \op{H}^*_{Z(\R)}(X(\R))$ given by the
(cup) product with the Iversen Thom class $\tau_Z$, see VIII.2.3 of loc. cit..
We have another Thom isomorphism $s_*\colon \op{H}^{*-c}(Z(\R)) \to \op{H}^*_{Z(\R)}(\Nb_{Z(\R)}X(\R))$ given by
the (cup) product with the usual Thom class $th(\Nb(\R))$ as in
Definition \ref{def:topological-thom-class}. There is an isomorphism $d$ between the two targets (see below for possible constructions of it). As in degree $*=c$ -- where both Thom classes
live -- all three groups are free abelian of rank one (we may assume $Z(\R)$ connected), the isos $i_*$,
$s_*$ and $d$ map generators to generators. Hence $d$ maps $\tau_Z$ to
$th(\Nb)$, possibly up to sign if a preferred sign was
choosen. Now the claim follows from
equality IX.7.5 of \cite{iversen}
(one cap there has to be a cup).
\end{proof}

\begin{proposition}
  \label{prop:proto-localization}
  Let $i\colon Z\hookrightarrow X$ be a closed immersion between smooth $\mathbb{R}$-schemes with open complement $U:=X\setminus Z$. Then there is a ladder of long exact sequences defining cohomology with supports
  \[
    \xymatrix@C=1em{
      \cdots \ar[r] & \op{H}^{s}_Z(X,\mathbf{I}^{t}(i^\ast\mathcal{L})) \ar[r] \ar[d] & \op{H}^s(X,\mathbf{I}^t(\mathcal{L})) \ar[r] \ar[d] & \op{H}^s(U,\mathbf{I}^t(\mathcal{L})) \ar[r] \ar[d] & \cdots \\
      \cdots \ar[r] & \op{H}^{s}_{Z(\R)}(X(\R),\Z(i^\ast\mathcal{L})) \ar[r] & \op{H}^s(X(\R),\Z(\mathcal{L})) \ar[r] & \op{H}^s(U(\R),\Z(\mathcal{L})) \ar[r] & \cdots
    }
  \]
\end{proposition}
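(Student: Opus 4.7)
The proof is essentially a formal consequence of combining \prettyref{prop:localization-sequence} with the earlier compatibility result \prettyref{prop:pullbacks-compatible} and standard naturality properties of long exact sequences. My plan is to recognise each row of the ladder as an instance of the localization sequence from \prettyref{prop:localization-sequence}: the top row is the localization sequence for the closed-open decomposition $Z \hookrightarrow X \hookleftarrow U$ applied to the sheaf $\I^t(\mathcal{L})$ on the Zariski site of $X$, and the bottom row is the corresponding sequence for $Z(\R) \hookrightarrow X(\R) \hookleftarrow U(\R)$ applied to the locally constant sheaf $\Z(\mathcal{L})$ on $X(\R)$. Here I use crucially that $\rho^{-1}(Z) = Z(\R)$ and $\rho^{-1}(U) = U(\R)$, so that both decompositions match up cleanly under $\rho$.

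Next I would factor the vertical real cycle class map, as in diagram~\eqref{diagram-twist-support}, as a three-step composition: first the morphism of sheaves $\op{can}\colon \I^t(\mathcal{L}) \to \colim_j \I^j(\mathcal{L})$ on $X$, then the sheaf isomorphism $\sign_\infty\colon \colim_j \I^j(\mathcal{L}) \to \rho_*\Z(\mathcal{L})$ on $X$, and finally the pullback $\rho^*$ along the continuous map $\rho\colon X(\R) \to X$. Correspondingly I would split the desired ladder into three sub-ladders and verify commutativity of each. For the first two, which come from genuine morphisms of abelian sheaves on $X$, the Godement resolutions fit into a morphism of short exact sequences of complexes of the shape $0 \to \Gamma_Z \god\sheaf F \to \Gamma \god\sheaf F \to \Gamma j^* \god\sheaf F \to 0$, which by passage to long exact cohomology produces a ladder of localization sequences. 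For the third, pullback along $\rho$ likewise lifts to a morphism of such short exact sequences of complexes by combining \prettyref{lem:Go-resolution-functorial} with the natural transformation $\Gamma_{Z(\R)}\rho_* \to \Gamma_Z$ of \prettyref{lem:pullback-with-support}, which is an isomorphism precisely because $\rho^{-1}(Z) = Z(\R)$.

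The commutativity of the degree-preserving squares (those forgetting support from $Z$ to $X$, and those restricting to the open complement $U$) then reduces to the naturality of sheaf-theoretic pullbacks with support, already established in \prettyref{prop:pullbacks-compatible}, applied degree by degree. The commutativity of the squares involving the boundary map is just the naturality of the connecting homomorphism associated with a morphism of short exact sequences of chain complexes, which is standard homological algebra. I do not anticipate any substantive technical obstacle: the argument is really just careful bookkeeping of support conditions across the three factors of the cycle class map, together with one small sanity check that the compatibility of the Godement resolution with pullbacks respects the filtration by support coming from closed subsets.
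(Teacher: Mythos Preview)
Your proposal is correct and follows essentially the same three-step factorisation as the paper (through $\op{can}$, $\sign_\infty$, and $\rho^*$), with one minor implementation difference worth noting: the paper handles the first two steps via morphisms of \emph{Gersten} complexes rather than Godement resolutions, and then needs a zig-zag of quasi-isomorphisms $C^\bullet(X,\rho_*\Z(\mathcal{L})) \leftarrow \god\rho_*\Z(\mathcal{L}) \to \rho_*\god\Z(\mathcal{L})$ to pass to the Godement side for the $\rho^*$ step. Your choice to stay with Godement resolutions throughout is slightly more uniform and avoids this zig-zag; the paper's choice has the advantage of tying in directly with the Gersten-complex model of $\I$-cohomology used elsewhere (e.g.\ in \prettyref{prop:GoGe-supports-compatible}).
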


\begin{proof}
The upper long exact sequence is the localization sequence of \cite[Th\'eor{\`e}me 9.3.4]{fasel:memoir}, which coincides by \prettyref{prop:GoGe-supports-compatible} with the localization sequence from \prettyref{prop:localization-sequence}. The morphisms $\op{can}\colon\mathbf{I}^t\to\colim_j\mathbf{I}^j$ and $\sign_\infty\colon \colim_j\mathbf{I}^j\cong \rho_\ast\mathbb{Z}$ induce morphisms of the Gersten resolutions. From the \AKTONLY{(absent) }proof of \prettyref{prop:localization-sequence}, it is then clear that these morphisms induce ladders of appropriate localization sequences because the localization sequence arises from the short exact sequence of complexes arising from the Gersten resolutions by applying $\Gamma_Z$ or $\Gamma j^\ast$:
\[
\xymatrix{
0\ar[r] & C^\bullet(X,\mathbf{I}^t(\mathcal{L}))_Z \ar[r] \ar[d] & C^\bullet(X,\mathbf{I}^t(\mathcal{L})) \ar[r] \ar[d] & C^\bullet(U,\mathbf{I}^t(\mathcal{L})) \ar[r] \ar[d] & 0\\
0\ar[r] & C^\bullet(X,\rho_\ast\mathbb{Z}(\mathcal{L}))_Z \ar[r] & C^\bullet(X,\rho_\ast\mathbb{Z}(\mathcal{L})) \ar[r] & C^\bullet(U,\rho_\ast\mathbb{Z}(\mathcal{L})) \ar[r] & 0
}
\]
Now we want to compare the lower exact sequence to a short exact sequence arising, as in \ARXIVONLY{the proof of }\prettyref{prop:localization-sequence}, from the Godement resolution of $\mathbb{Z}(\mathcal{L})$ on $X$. There is a morphism of short exact sequence of complexes, from the Gersten-complex sequence to the Godement-complex sequence; it is induced by the zig-zag of quasi-isomorphisms
\[
  C^\bullet(X,\rho_\ast\mathbb{Z}(\mathcal{L}))\leftarrow \mathbb{G}\rho_\ast\mathbb{Z}(\mathcal{L}) \to \rho_\ast\mathbb{G}\mathbb{Z}(\mathcal{L}).
  \qedhere
\]
\end{proof}

\begin{remark}
  In other words, the localization sequence is compatible with morphisms of coefficient data.
\end{remark}

\begin{proposition}
  \label{prop:thom-class-comparison-local}
  There is a commutative diagram
  \[
    \xymatrix{
      \op{H}^0(\Spec \R,\mathbf{I}^0) \ar[r] \ar[d] & \op{H}^0(\op{pt}, \Z) \ar[d] \\
      \op{H}^n_{\{0\}}(\mathbb{A}^n,\mathbf{I}^n) \ar[r] & \op{H}^n_{\{0\}}(\R^n,\Z)
    }
  \]
  with maps given as follows. The left vertical map is the algebraic pushforward in \(\I\)-cohomology of \prettyref{def:pushforwardWittgroups}, the right vertical map is the topological pushforward for sheaf cohomology with $\Z$-coefficients, and the  horizontal maps are the respective real cycle class maps, where we use the cycle class map with support from \prettyref{sec:twist-support} for the lower morphism.
  The orientations are chosen as in \prettyref{prop:computation-cycle-class}.
\end{proposition}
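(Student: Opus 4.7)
The plan is to reduce the proposition to verifying that the generator $1 \in \op{H}^0(\Spec\R,\I^0) = \op{W}(\R)$ chases around the square to the same element on both sides. Since $\op{H}^0(\Spec\R,\I^0) \cong \Z$ via the signature and $\op{H}^0(\op{pt},\Z) \cong \Z$ canonically, the top horizontal arrow takes $1 \mapsto 1$. By \prettyref{def:pushforwardWittgroups}, the left vertical map sends $1$ to the algebraic Thom class $\op{th}(\mathbb{A}^n)$ of the trivial rank-$n$ bundle, and the right vertical map sends $1$ to the topological Thom class. So the content is to verify that the real cycle class map sends $\op{th}(\mathbb{A}^n)$ to the topological Thom class under the chosen orientations.

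First, I would use the Gersten--Rost--Schmid descriptions of both bottom cohomology groups that were already spelled out in the remark following \prettyref{prop:computation-cycle-class}: the complex computing $\op{H}^n_{\{0\}}(\mathbb{A}^n,\I^n)$ has a single nontrivial term $\op{W}(\R)(\Lambda^{\mathbb{A}^n}_0)$ in degree $n$, while the complex computing $\op{H}^n_{\{0\}}(\R^n,\Z)$ (via \prettyref{cor:rost-schmid-singular}) has a single nontrivial term $\Z(\Lambda^{\mathbb{A}^n}_0)$ in degree $n$, and the cycle class map is induced by the signature $\op{W}(\R) \to \Z$. Under the orientation $\bigwedge_{i=1}^n \op{d}x_i$ of $\Lambda^{\mathbb{A}^n}_0$, both complexes become identified with $\op{W}(\R)$ and $\Z$ respectively, and the cycle class map sends $\langle 1\rangle \mapsto 1$.

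Next, I would trace through \prettyref{def:algebraic-thom-class}: the algebraic Thom class $\op{th}(\mathbb{A}^n)$ is the image of $1 \in \op{W}(\R)$ under the dévissage isomorphism of \prettyref{thm:I-devissage}, and the canonical trivialization of $s^\ast\det(T^\vee)\otimes\det T$ matches the trivialization of $\Lambda^{\mathbb{A}^n}_0$ by $\bigwedge \op{d}x_i$ used above. Hence $\op{th}(\mathbb{A}^n)$ corresponds to $\langle 1\rangle$ in the Gersten complex, and its image under the cycle class map is the generator $1 \in \Z = \op{H}^n_{\{0\}}(\R^n,\Z)$.

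Finally, it remains to identify this class $1$ with the topological Thom class. By \prettyref{def:topological-thom-class}, a Thom class is characterized by restricting on each fiber to a generator of $\op{H}^n_{\{x\}}(V_x,\Z)$; for the trivial bundle $\R^n \to \op{pt}$ the fiber is the whole space, so the Thom class is the unique element of $\op{H}^n_{\{0\}}(\R^n,\Z) \cong \Z$ corresponding to $+1$ under the chosen orientation $\bigwedge \op{d}x_i$. This agrees with the image of $\op{th}(\mathbb{A}^n)$ computed above, proving commutativity.

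The main obstacle is bookkeeping of orientations and signs: one must check that the trivialization of $\Lambda^{\mathbb{A}^n}_0$ by $\bigwedge \op{d}x_i$ induces on the algebraic side the canonical identification implicit in the dévissage isomorphism, and on the topological side the orientation of $\R^n$ used to single out $+1$ as the Thom class. Both identifications are fixed by the same choice of coordinate volume form, so once unpacked the signs match without residual ambiguity.
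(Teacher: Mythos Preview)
Your proof is correct, but it takes a somewhat different route from the paper's. The paper proceeds by first showing that the lower horizontal map is an isomorphism: it places this map in the ladder of localization sequences for the pair $\{0\}\subset\mathbb{A}^n$ (commutative by \prettyref{prop:proto-localization}), and then applies the five lemma to reduce to the fact that the real cycle class map $\op{H}^{n-1}(\mathbb{A}^n\setminus\{0\},\I^n)\to \op{H}^{n-1}(\R^n\setminus\{0\},\Z)$ is an isomorphism, which is the content of \prettyref{prop:computation-cycle-class}. The paper then notes that the explicit computation in \prettyref{prop:computation-cycle-class} in fact shows that the algebraic Thom class maps to the topological one.

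You bypass the localization sequence entirely and work directly with the Gersten--Rost--Schmid description of $\op{H}^n_{\{0\}}(\mathbb{A}^n,\I^n)$ and $\op{H}^n_{\{0\}}(\R^n,\Z)$ given in the remark following \prettyref{prop:computation-cycle-class}, where both complexes reduce to a single term in degree~$n$ and the cycle class map is simply the signature. This is more direct and arguably cleaner; the price is that it relies on \prettyref{cor:rost-schmid-singular} having been established for cohomology with support, whereas the paper's argument only needs the compatibility of the localization sequences (\prettyref{prop:proto-localization}) and the explicit computation on the open complement. Your bookkeeping of orientations is at the same level of rigor as the paper's remark, so no gap there.
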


\begin{proof}
  We have $\op{H}^0(\Spec \R,\mathbf{I}^0)\cong \op{W}(\R)\cong\Z$. The algebraic pushforward
  \[
    \op{H}^0(\Spec \R,\mathbf{I}^0)\to  \op{H}^n_{\{0\}}(\mathbb{A}^n,\mathbf{I}^n)
  \]
  is an isomorphism by \prettyref{thm:I-devissage} and maps the form $\langle 1\rangle$ to the algebraic Thom class which consequently is a generator of $\op{H}^n_{\{0\}}(\mathbb{A}^n,\mathbf{I}^n)$. The real realization
  \[
  \op{H}^0(\Spec \R,\mathbf{I}^0) \to \op{H}^0(\op{pt},\Z)
  \]
  maps the form $\langle 1\rangle$ to its signature, which is a generator of $\op{H}^0(\op{pt},\Z)$. On the right-hand side, we have the topological Thom isomorphism $\op{H}^0(\op{pt},\Z) \cong \op{H}^n_{\{0\}}(\R^n,\Z)$. The lower horizontal map is also an isomorphism: we have a commutative diagram of exact sequences
  \[
    \xymatrix{
      \op{H}^{n-1}(\mathbb{A}^n,\mathbf{I}^n) \ar[r] \ar[d] & \op{H}^{n-1}(\mathbb{A}^n\setminus\{0\},\mathbf{I}^n) \ar[r] \ar[d] & \op{H}^n_{\{0\}}(\mathbb{A}^n,\mathbf{I}^n) \ar[r] \ar[d] & \op{H}^n(\mathbb{A}^n,\mathbf{I}^n) \ar[d] \\  \op{H}^{n-1}(\mathbb{R}^n,\Z) \ar[r] & \op{H}^{n-1}(\mathbb{R}^n\setminus\{0\},\Z) \ar[r] & \op{H}^n_{\{0\}}(\mathbb{R}^n,\Z) \ar[r] & \op{H}^n(\mathbb{R}^n,\Z)
    }
  \]
  where the vertical arrows are the real cycle class maps (with support) and the rows are the exact sequences defining cohomology with supports. The ladder above is commutative by \prettyref{prop:proto-localization}.
  Using the five lemma, we are reduced (if $n \neq 0$, otherwise everything follows trivially) to show that the real cycle class map $\op{H}^{n-1}(\mathbb{A}^n\setminus \{0\},\I^n) \to \op{H}^{n-1}(\R^n \setminus \{0\},\Z)$ is an isomorphism, which is established in \prettyref{prop:computation-cycle-class}. More precisely, \prettyref{prop:computation-cycle-class} actually shows that the real cycle class map takes the algebraic Thom class to the topological Thom class (provided we use the canonical choices of orientations induced from the standard coordinates on $\mathbb{A}^n$). This proves the commutativity of the diagram as claimed.
\end{proof}

\begin{proposition}
  \label{prop:thom-class-comparison}
  Let $X$ be a smooth real scheme and let $p\colon V\to X$ be a rank $d$ vector bundle on $X$. Then the algebraic Thom class $\op{th}^{\op{alg}}(V)\in \op{H}^d_X(V,\mathbf{I}^d(p^*\det V^\vee))$ of \prettyref{def:algebraic-thom-class} maps to a topological Thom class $\op{th}^{\op{top}}(V(\R)) \in  \op{H}^d_{X(\R)}(V(\R),\Z(p^*\det V^\vee))$ as in \prettyref{def:topological-thom-class}. In other words, there is a commutative diagram
  \[
    \xymatrix{
      \op{H}^0(X,\mathbf{I}^0) \ar[r] \ar[d] & \op{H}^0(X(\R), \Z) \ar[d] \\
      \op{H}^d_{X}(V,\mathbf{I}^d(p^*\det V^\vee)) \ar[r] & \op{H}^d_{X(\R)}(V(\R),\Z(p^*\det V^\vee))
    }
  \]
  where the horizontal maps are the real cycle class maps of \prettyref{sec:cyclesheaf} and the vertical maps are the respective pushforward maps, or equivalently, the Thom isomorphisms.
\end{proposition}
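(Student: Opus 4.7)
The plan is to first show that the algebraic Thom class of $V$ is sent under the real cycle class map to a topological Thom class of $V(\R)$, and then to deduce commutativity of the full square from the compatibilities of real cycle class maps with pullbacks and cup products already established in this section.

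For the first step, recall from \prettyref{def:topological-thom-class} that a class $\tau \in \op{H}^d_{X(\R)}(V(\R),\Z(p^*\det V^\vee))$ is a topological Thom class precisely when, for every real point $x \in X(\R)$, its restriction along the inclusion $V_x \hookrightarrow V(\R)$ generates $\op{H}^d_{\{0\}}(V_x,\Z)$ (after trivializing the fiber and the resulting twist). I would therefore fix such a point $x$, corresponding to a closed point of $X$ with residue field $\R$, and pull back along the closed immersion $i_x\colon \Spec \R \hookrightarrow X$, which is a morphism of smooth $\R$-schemes. By the naturality of the devissage isomorphism of \prettyref{thm:I-devissage} under base change (which for the zero section of a vector bundle follows from the compatibility of normal bundle formation with pullback), $i_x^*\op{th}^{\op{alg}}(V)$ can be identified with $\op{th}^{\op{alg}}(V_x)$, and after trivializing the fiber $V_x \cong \A^d_\R$ this becomes $\op{th}^{\op{alg}}(\A^d_\R)$. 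By \prettyref{prop:pullbacks-compatible}, restriction is compatible with the real cycle class map, so the image of $\op{th}^{\op{alg}}(V)$ restricts on the fiber to the image of $\op{th}^{\op{alg}}(\A^d_\R)$. By \prettyref{prop:thom-class-comparison-local}, this image is a generator of $\op{H}^d_{\{0\}}(\R^d,\Z)$, verifying the defining property of a topological Thom class.

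For the second step, the left-hand vertical map sends an element $\alpha \in \op{H}^0(X,\I^0)$ to $p^*\alpha \cup \op{th}^{\op{alg}}(V)$ by \prettyref{def:algebraic-thom-class} combined with the projection formula (as already used in the proof of \prettyref{lem:pushforwards-agree}), and the right-hand vertical map similarly sends $\beta$ to $p^*\beta \cup \op{th}^{\op{top}}(V(\R))$ via the topological Thom isomorphism recalled in \prettyref{sec:singcohomology}. Since the real cycle class maps respect pullbacks (\prettyref{prop:pullbacks-compatible}) and cup products with supports and twists (\prettyref{prop:cup-products-compatible}), the first step immediately implies commutativity of the square on every $\alpha$. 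The main subtle point I expect is the careful bookkeeping of the base change property for the algebraic Thom class together with the twist by $\det V^\vee$; this is standard but requires tracing the devissage construction through the pullback of the normal bundle of the zero section.
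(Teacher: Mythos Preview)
Your proposal is correct and follows essentially the same approach as the paper: both reduce the commutativity of the square to showing that the algebraic Thom class maps to a topological Thom class, and both verify the latter by restricting to fibers over real points, invoking \prettyref{prop:pullbacks-compatible} for the compatibility with restriction and \prettyref{prop:thom-class-comparison-local} for the local case. The paper is slightly more explicit about the base change step, citing \cite[Theorem~2.12]{asok-fasel:euler} for the identification $i_x^*\op{th}^{\op{alg}}(V)=\op{th}^{\op{alg}}(V_x)$, which is exactly the point you flag as ``the main subtle point''.
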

\begin{proof}
  Denoting the zero section of $V$ by $s$, we have a natural isomorphism
  \[
    s^\ast\colon \op{H}^0(V,\mathbf{I}^0)\to \op{H}^0(X,\mathbf{I}^0),
  \]
  making the top left group a $\op{H}^0(V,\mathbf{I}^0)$-module; the bottom left group is a $\op{H}^0(V,\mathbf{I}^0)$-module via the module structure for cohomology with supports induced by the cup product of \prettyref{thm:Go-cup-product} (which agrees with the algebraically defined intersection product by \prettyref{prop:GoGe-products-compatible}). The left-hand vertical morphism is then an $\op{H}^0(V,\mathbf{I}^0)$-linear map, hence the left-hand vertical map is completely determined by the image of $\langle 1\rangle$ (which by \prettyref{def:algebraic-thom-class} is the algebraic Thom class). A similar statement holds for the right-hand vertical map by definition, see section \ref{sec:singcohomology}. Since the real cycle class map is compatible with the product by \prettyref{prop:cup-products-compatible}, the upper horizontal morphism is a ring homomorphism. This induces an $\op{H}^0(V,\mathbf{I}^0)$-module structure on $\op{H}^d_{X(\R)}(V(\R),\Z({\det V})$ and, by \prettyref{prop:cup-products-compatible}, the lower horizontal cycle class map is an $\op{H}^0(V,\mathbf{I}^0)$-module homomorphism.  As a consequence, the lower horizontal map is completely determined by the image of the algebraic Thom class $\op{th}^{\op{alg}}(V)$ which is a generator of the cyclic $\op{H}^0(V,\mathbf{I}^0)$-module $\op{H}^d_X(V,\mathbf{I}^d(\det V))$ by \prettyref{def:algebraic-thom-class}. Consequently, the commutativity of the diagram follows from the claim on Thom classes.

  To prove the claim about the Thom classes, we use the  characterization of the topological Thom class from \prettyref{def:topological-thom-class}. That is, a class in $\op{H}^d_{X(\R)}(V(\R),\Z({\det V}))$ is a Thom class for the vector bundle $V(\R)\to X(\R)$ if and only for every point $x\in X(\R)$ the restriction of the class to $\op{H}^d_{\{x\}}(V_x(\R),\Z)\cong \Z$ is a generator. Consider the following diagram
  \[
    \xymatrix{
      \op{H}^d_X(V,\mathbf{I}^d(p^*\det V^\vee)) \ar[r] \ar[d]_{i_x^\ast} & \op{H}^d_{X(\R)}(V(\R),\Z(p^*\det V^\vee) \ar[d]^{i_x(\R)^\ast} \\
      \op{H}^d_{\{x\}}(V_x,\mathbf{I}^d) \ar[r] & \op{H}^d_{\{x\}}(V_x(\R),\Z)
    }
  \]
  where again the horizontal morphisms are the respective real cycle class maps and the vertical morphisms are restrictions to the fiber of the respective vector bundle. In particular, the diagram is commutative by \prettyref{prop:pullbacks-compatible} and it suffices to show that the algebraic Thom class maps to a generator of the lower-right group.

  Under the left vertical morphism, the algebraic Thom class $\op{th}^{\op{alg}}(V)$ restricts to the algebraic Thom class of the trivial vector bundle on the point $x$, in particular to a generator of the group $\op{H}^d_{\{x\}}(V_{x},\mathbf{I}^d)$. This follows from the obvious transversal and cartesian diagram of smooth $\R$-schemes

  \[
    \xymatrix{
      \Spec \R \ar[r]_{{x}} \ar[r] \ar[d] & X  \ar[d]^{s} \\
      \mathbb{A}^d \cong V_x \ar[r] & V
    }
  \]
  and the base change theorem \cite[Theorem~2.12]{asok-fasel:euler}. By \prettyref{prop:thom-class-comparison-local}, the local algebraic Thom class in $\op{H}^d_{\{x\}}(V_{x},\mathbf{I}^d)$ maps to a generator of $\op{H}^d_{\{x\}}(V_x(\R),\Z)$, as required. We have proved that the image of the algebraic Thom class in $\op{H}^d_{X(\R)}(V(\R),\Z(p^*\det V^\vee))$ restricts to a generator of the local cohomology group for every fiber. Therefore the algebraic Thom class for $V\to X$ maps to a topological Thom class for $V(\R)\to X(\R)$ as claimed.
\end{proof}

Before the final step in the proof of compatibility of pushfowards, we need to relate the deformation to the normal bundle with tubular neighborhoods.  Recall our notation for the deformation to the nomal cone from \prettyref{fig:deformation-to-the-normal-bundle} in \prettyref{sec:I-pullbacks}, and the construction of the deformation isomorphism $d(Z,X)\colon \op{H}^p_Z(\Nb_ZX,\mathbf{I}^q)\to \op{H}^p_Z(X,\mathbf{I}^q)$ from \prettyref{sec:thomclass}.
The next proposition shows that this isomorphism is compatible under the real cycle class map with a similar isomorphism arising from a tubular neighborhood argument in topology.

\begin{proposition}
  \label{prop:comparedefconetubnbhd}
  Let $i\colon Z\hookrightarrow X$ be a closed immersion of codimension $c$ between smooth $\R$-schemes. Then there is a commutative diagram
  \[
    \xymatrix{
      \op{H}^i(Z, \mathbf{I}^j(\det\Nb_ZX))
      \ar[r]
      \ar[d]
      &
      \op{H}^i(Z(\R), \Z(\det\Nb_{Z(\R)}X(\R)))
      \ar[d]
      \\
      \op{H}^{i+c}(X,\mathbf{I}^{j+c})
      \ar[r]
      &
      \op{H}^{i+c}(X(\R),\Z)
    }
  \]
  where the horizontal morphisms are real cycle class maps and the vertical morphisms are the respective pushforward maps, defined using deformation to the normal bundle and a tubular neighborhood, respectively.
\end{proposition}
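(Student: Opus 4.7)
The plan is to factor both pushforwards into elementary pieces whose compatibility with real realization has already been verified, leaving only a single non-trivial square to check. The algebraic pushforward $i_*$ of \prettyref{def:pushforwardWittgroups} is the composition $p^*$, followed by cup product with $\op{th}(\Nb)$, followed by the deformation isomorphism $d(Z,X)$, followed by forgetting support. The topological pushforward $i_*^{\op{Th}}$ admits a parallel factorization in which $d(Z,X)$ is replaced by the inverse of a tubular neighborhood isomorphism $\nu(Z,X)$ for $Z(\R)\hookrightarrow X(\R)$. The flat pullback $p^*$ is compatible with real realization by \prettyref{prop:pullbacks-compatible}, the Thom class cup product is compatible by combining \prettyref{prop:thom-class-comparison} with \prettyref{prop:cup-products-compatible}, and forgetting support is trivially compatible, cf.\ \prettyref{rem:realization-support}. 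Thus the only non-trivial compatibility remaining is that of $d(Z,X)$ with $\nu(Z,X)$.

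The main obstacle is precisely this compatibility of the algebraic deformation isomorphism with the topological tubular neighborhood isomorphism. To handle it, I would pass to real points in \prettyref{fig:deformation-to-the-normal-bundle}: the space $D(Z,X)(\R)$ projects properly to $\A^1(\R)=\R$ with fibre $\Nb_{Z(\R)}X(\R)$ at $0$ and $X(\R)$ at $1$, and the embedding of $Z(\R)\times\R$ lies fibrewise. Since $\R$ is contractible, an elementary homotopy argument (using the long exact sequences of the pairs and the contractibility of the base) shows that the inclusions $i_0(\R)$ and $i_1(\R)$ both induce isomorphisms on cohomology with support in $Z(\R)$. This allows the definition of a topological deformation isomorphism $d_{\mathrm{top}}(Z,X):=i_1(\R)^*\circ(i_0(\R)^*)^{-1}$, and by the naturality of pullback (\prettyref{prop:pullbacks-compatible}) applied to $i_0$ and $i_1$ together with the compatibility of localization sequences from \prettyref{prop:proto-localization}, the real realization of $d(Z,X)$ agrees with $d_{\mathrm{top}}(Z,X)$.

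It remains to identify $d_{\mathrm{top}}(Z,X)$ with the tubular neighborhood isomorphism $\nu(Z,X)$ entering the definition of $i_*^{\op{Th}}$. This is a standard manifestation of homotopy purity in the topological setting: any smooth, normally non-degenerate family interpolating between $\Nb_{Z(\R)}X(\R)$ and $X(\R)$ over a contractible base gives a tubular neighborhood of $Z(\R)$ in $X(\R)$, and any two such families induce the same isomorphism on cohomology with support in $Z(\R)$ by the uniqueness of tubular neighborhoods up to isotopy. Assembling the three compatibility squares for $p^*$, $\op{th}(\Nb)\cup-$ and the deformation isomorphism, and invoking functoriality of the pullback from \prettyref{prop:sheaf-pullback-functorial}, produces the commutative diagram in the statement.
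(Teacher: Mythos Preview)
Your proposal follows essentially the same strategy as the paper: factor both pushforwards as (pullback along $p$) $\circ$ (cup with Thom class) $\circ$ (deformation isomorphism) $\circ$ (forget support), verify the first, second and fourth squares using \prettyref{prop:pullbacks-compatible}, \prettyref{prop:thom-class-comparison} with \prettyref{prop:cup-products-compatible}, and \prettyref{rem:realization-support}, define $d_{\mathrm{top}}(Z,X):=i_1(\R)^*\circ(i_0(\R)^*)^{-1}$, and check that it matches the real realization of $d(Z,X)$ via \prettyref{prop:pullbacks-compatible}.

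The one place where your sketch is thinner than the paper is the final identification $d_{\mathrm{top}}(Z,X)=\nu(Z,X)$. Your appeal to ``uniqueness of tubular neighborhoods up to isotopy'' is correct in spirit, but as stated it presupposes that $d_{\mathrm{top}}$ itself arises from a tubular neighborhood embedding, which is precisely what needs to be shown. The paper makes this explicit: it observes that the normal bundle $\Nb_{Z(\R)\times\R}(D(Z,X)(\R))$ is pulled back from $Z(\R)$, chooses an open disc bundle $\mathcal D\subset \Nb_{Z(\R)}X(\R)$ as tubular neighborhood, extends it to a tubular neighborhood $\mathcal T\cong\mathcal D\times\R$ of $Z(\R)\times\R$ in $D(Z,X)(\R)$, and then uses excision to replace $i_0(\R)^*,i_1(\R)^*$ by maps $e_0^*,e_1^*$ on the tubular neighborhoods. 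A direct computation with the projection $\mathcal T\to\mathcal T|_1$ then shows that $e_1^*\circ(e_0^*)^{-1}$ is the inverse of the restriction-to-tubular-neighborhood isomorphism, i.e.\ exactly $\nu(Z,X)$. Filling in this construction would complete your argument along the same lines as the paper.
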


\begin{proof}
  Consider the following diagram
  \[
    \xymatrix{
      \op{H}^i(Z, \mathbf{I}^j(\det\Nb_ZX))
      \ar[d]_{\op{th}^{\op{alg}}(\Nb_ZX)}
      \ar[r]
      &
      \op{H}^i(Z(\R), \Z(\det\Nb_{Z(\R)}X(\R)))
      \ar[d]^{\op{th}^{\op{top}}(\Nb_{Z(R)}X(R))}
      \\
      \op{H}^{i+c}_Z(\Nb_ZX, \mathbf{I}^{j+c})
      \ar[d]_{d(Z,X)}^-\cong
      \ar[r]
      &
      \op{H}^{i+c}_{Z(\R)}(\Nb_{Z(\R)}X(\R), \Z)
      \ar[d]^{d(Z,X)(\R)}
      \\
      \op{H}^{i+c}_Z(X, \mathbf{I}^{j+c})
      \ar[r]
      \ar[d]
      &
      \op{H}^{i+c}_{Z(\R)}(X(\R), \Z)
      \ar[d]
      \\
      \op{H}^{i+c}(X,\mathbf{I}^{j+c})
      \ar[r]
      &
      \op{H}^{i+c}(X(\R),\Z)
    }
  \]
  By \prettyref{def:pushforwardWittgroups}, the left vertical composition is the pushforward for \(\I\)-cohomology. The maps in the first square are given by multiplication with the respective Thom classes, hence the square is commutative by \prettyref{prop:thom-class-comparison} and
  \prettyref{prop:cup-products-compatible}. The last square is commutative since real realization with supports commutes with forgetting the support,
  see Remark \ref{rem:realization-support}.

  We recalled the construction of $d(Z,X)$ before the statement of \prettyref{prop:comparedefconetubnbhd}. We now discuss the map $d(Z,X)(\R)$ on the topological side. The topological variants of the  proofs in \cite{nenashev} and \cite[Lemma~2.8]{fasel:excess} show that $i_0(\R)^\ast$ and $i_1(\R)^\ast$ are isomorphisms in singular cohomology with supports and appropriate local coefficient systems. Note that classical excision is sufficient because the real realization of a Nisnevich cover is a surjective local diffeomorphism, and hence can be refined by an honest open cover. As we define the map $d(Z,X)(\R)=i_1(\R)^\ast\circ (i_0(\R)^\ast)^{-1}$, the commutativity of the middle square follows from the compatibility of real realization and pullback morphisms, cf.\ \prettyref{prop:pullbacks-compatible}.

  Finally, it remains to prove that the right vertical composition in our big diagram is the pushforward in singular cohomology. For that, we consider the real realization of the diagram for the deformation to the normal bundle just before the statement of \prettyref{prop:comparedefconetubnbhd}. The normal bundle $\Nb_{Z(\R)\times\mathbb{R}^1}(D(Z,X)(\mathbb{R}))$ is extended from $Z(\R)$ by homotopy invariance. Moreover, since its restriction to $Z(\R)\times\{t\}$ (for any $t\in\R$) is the normal bundle of $i(\R)$ we know that $\Nb_{Z(\R)\times\mathbb{R}^1}(D(Z,X)(\mathbb{R}))$ is the pullback of $\Nb_{Z(\R)}(X(\R))$ along the projection $Z(\R)\times\mathbb{R}\to Z(\R)$. We now choose a tubular neighborhood of $Z(\R)$ in $\Nb_{Z(\R)}(X(\R))$. By definition (see e.g.\ \cite[Chapter~4.5]{hirsch}), this is a smooth map $T\colon\Nb_{Z(\R)}(X(\mathbb{R})) \to \Nb_{Z(\R)}(X(\mathbb{R}))$ such that $T\circ s=s$ which restricts to a diffeomorphism from an open neighborhood of $Z(\R)$ in $\Nb_{Z(\R)\times\mathbb{R}^1}(D(Z,X)(\mathbb{R}))$  to an open neighborhood of $Z(\R)$ in $\Nb_{Z(\R)}(X(\R))$. In our case, we may simply choose $T=\mathrm{id}$ and choose the open disc bundle $\mathcal{D}$ as open neighborhood. Since the above conditions are stable under pullback along $Z \times \R^1 \to Z$, up to the above trivializing isomorphism for $\Nb_{Z(\R)\times\mathbb{R}^1}(D(Z,X)(\mathbb{R}))$ above, the fiber product $\mathcal{T} \cong \mathcal{D} \times \R^1$ and its restriction $\mathcal{T}|_1$ over $Z \times \{1\}$ are tubular neighborhoods as well.

  To summarize, we obtain the following diagram
  \[
    \xymatrix{
      Z(\R) \ar[r] \ar[d]_s & Z(\R)\times \mathbb{R}^1 \ar[d] & Z(\R) \ar[l] \ar[d]^f \\
      \mathcal{D} \ar[r]_{e_0} \ar[d] & \mathcal{T}  \ar[d] & \mathcal{T}|_1 \ar[d]  \ar[l]^{e_1}
      \\
      \Nb_{Z(\R)} X(\R) \ar[r]_{i_0(\R)} & D(Z,X)(\R) & X(\R) \ar[l]^{i_1(\R)}
    }
  \]
  where the first and third row are the real points of the diagram for the deformation to the normal bundle  and the second row contains the above tubular neighborhood $\mathcal{T}$ and its restrictions to the fibers over $0$ and $1$, respectively. The diagram is obviously commutative. By excision, the lower vertical maps induce isomorphisms in cohomology with supports
  \begin{eqnarray*}
    \op{H}^i_{Z(\R)}(\Nb_{Z(\R)}X(\R),\Z)&\xrightarrow{\cong}& \op{H}^i_{Z(\R)}(\mathcal{D},\Z), \\
    \op{H}^i_{Z(\R)\times \R^1}(D(Z,X)(\R),\Z) &\xrightarrow{\cong}& \op{H}^i_{Z(\R)\times \R^1}(\mathcal{T},\Z),
                                                                             \textrm{ and } \\
    \op{H}^i_{Z(\R)}(X(\R),\Z) &\xrightarrow{\cong}& \op{H}^i_{Z(\R)}(\mathcal{T}|_1,\Z).
  \end{eqnarray*}
  Consequently, the morphisms $e_0^\ast$ and $e_1^\ast$ of the appropriate cohomology groups with supports are also isomorphisms. Therefore, we can safely replace the composition $d(Z,X)(\R)=i_1^\ast\circ (i_0^\ast)^{-1}$ by the composition $e_1^\ast\circ (e_0^\ast)^{-1}$. Now we recall that the normal bundle $\Nb_{Z(\R)\times\mathbb{R}^1}(D(Z,X)(\mathbb{R}))$ is extended from $Z$ and we can choose a global isomorphism to $\Nb_{Z(\R)}(X(\R))\times\R$. In particular, there is a natural projection morphism $\mathcal{T}\to \mathcal{T}|_1$ fitting into a commutative diagram
  \[
    \xymatrix{
      Z(\R)\times\R \ar[r]^{\op{pr}} \ar[d] & Z(\R) \ar[d] \\
      \mathcal{T} \ar[r]_{\op{pr}'} & \mathcal{T}|_1
    }
  \]
  such that $\op{pr}' \circ e_0 = \op{id}$ and hence $(\op{pr}')^\ast\colon \op{H}^i_{Z(\R)}(\mathcal{T}|_1,\Z)\to \op{H}^i_{Z(\R)\times \R^1}(\mathcal{T},\Z), $ is the inverse of $e_1^\ast$.  Using these maps, we can rewrite the upper two squares of the above diagram involving the tubular neighborhoods to
  \[
    \xymatrix{
      Z(\R) \ar[r]^{\op{id}} \ar[d] & Z(\R) \ar[d] \\
      \mathcal{D} \ar[r] & \mathcal{T}|_1 \subseteq X(\R)
    }
  \]
  where the lower horizontal morphism $\op{pr}'$ is the inclusion of the disc bundle $\mathcal{D}$ in $\Nb_{Z(\R)}(X(\R))$ as the tubular neighborhood $\mathcal{T}|_1$ of $i(Z(\R))$. Moreover, the pullback $e_0^\ast\circ(\op{pr}')^\ast$ is the inverse of the composition $e_1^\ast\circ(e_0^\ast)^{-1}$. Consequently, the identification
  \[
    \op{H}^i_{Z(\R)}(\Nb_{Z(\R)}(X(\R)),\Z) \cong \op{H}^i_{Z(\R)}(X(\R),\Z)
  \]
  coming from the deformation to the normal bundle is inverse to the identification
  \[
    \op{H}^i_{Z(\R)}(X(\R),\Z) \to \op{H}^i_{Z(\R)}(\mathcal{D},\Z)\cong \op{H}^i_{Z(\R)}(\Nb_{Z(\R)}(X(\R)),\Z)
  \]
  coming from the tubular neighborhood theorem.
\end{proof}

The following compatibility of real cycle class maps with the localization sequences is now a direct consequence of \prettyref{thm:pushforwards-compatible} and \prettyref{prop:proto-localization}.

\begin{corollary}
  \label{cor:compat-localization}
  Let $i\colon Z\hookrightarrow X$ be a closed immersion between smooth $\mathbb{R}$-schemes of codimension $c$ and let $\mathcal{L}$ be a line bundle on $X$. Denote by $U:=X\setminus Z$ the complement of $Z$ and by $\omega$ the determinant of the normal bundle of the inclusion $i$. Then there is a ladder of long exact localization sequences
  \[
    \xymatrix@C=1em{
      \cdots \ar[r] & \op{H}^{s-c}(Z,\mathbf{I}^{t-c}(i^\ast\mathcal{L}\otimes \omega)) \ar[r] \ar[d] & \op{H}^s(X,\mathbf{I}^t(\mathcal{L})) \ar[r] \ar[d] & \op{H}^s(U,\mathbf{I}^t(\mathcal{L})) \ar[r] \ar[d] & \cdots \\
      \cdots \ar[r] & \op{H}^{s-c}(Z(\R),\Z(i^\ast\mathcal{L}\otimes \omega)) \ar[r] & \op{H}^s(X(\R),\Z(\mathcal{L})) \ar[r] & \op{H}^s(U(\R),\Z(\mathcal{L})) \ar[r] & \cdots
    }
  \]
\end{corollary}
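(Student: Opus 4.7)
The plan is to assemble the ladder from earlier results rather than prove compatibility from scratch. The starting point is \prettyref{prop:proto-localization}, which already produces a ladder of long exact sequences, but with the leftmost column given by cohomology \emph{with support on $Z$} rather than by pushforward from $Z$. The task therefore reduces to identifying the support groups with pushforward groups in a way that is compatible with the real cycle class map.

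First I would invoke the d\'evissage isomorphism of \prettyref{thm:I-devissage},
\[
  \op{H}^{s-c}(Z,\mathbf{I}^{t-c}(i^\ast\mathcal{L}\otimes\omega))\xrightarrow{\cong}\op{H}^s_Z(X,\mathbf{I}^t(\mathcal{L})),
\]
and its topological counterpart, namely the Thom isomorphism
\[
  \op{H}^{s-c}(Z(\R),\Z(i^\ast\mathcal{L}\otimes\omega))\xrightarrow{\cong}\op{H}^s_{Z(\R)}(X(\R),\Z(\mathcal{L}))
\]
given by cup product with the topological Thom class, combined with a tubular neighborhood to identify $\op{H}^\ast_{Z(\R)}(\Nb_{Z(\R)}X(\R),\Z(\cdots))$ with $\op{H}^\ast_{Z(\R)}(X(\R),\Z(\cdots))$. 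By \prettyref{lem:pushforwards-agree} the composition of the algebraic d\'evissage with the extension-of-support map $\op{H}^s_Z(X,\mathbf{I}^t(\mathcal{L}))\to\op{H}^s(X,\mathbf{I}^t(\mathcal{L}))$ agrees with the pushforward $i_\ast$ of \prettyref{def:pushforwardWittgroups}, and on the topological side the analogous statement is classical.

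Next, replacing the left column of the ladder of \prettyref{prop:proto-localization} by these source groups via the two isomorphisms above produces the ladder claimed in the corollary, provided the resulting square
\[
  \xymatrix{
    \op{H}^{s-c}(Z,\mathbf{I}^{t-c}(i^\ast\mathcal{L}\otimes\omega)) \ar[r]\ar[d]_{i_\ast} & \op{H}^{s-c}(Z(\R),\Z(i^\ast\mathcal{L}\otimes\omega)) \ar[d]^{i_\ast} \\
    \op{H}^s(X,\mathbf{I}^t(\mathcal{L})) \ar[r] & \op{H}^s(X(\R),\Z(\mathcal{L}))
  }
\]
commutes, where the horizontal arrows are the real cycle class maps. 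But this is exactly the content of \prettyref{thm:pushforwards-compatible} (in the twisted form obtained by applying the same argument to $\I^\bullet(\mathcal{L})$ and $\Z(\mathcal{L})$ in place of the untwisted sheaves). Commutativity of the remaining two squares in the ladder is immediate from \prettyref{prop:proto-localization}, since those squares do not involve pushforwards at all.

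The main technical obstacle is already absorbed into \prettyref{thm:pushforwards-compatible}: one has to know that the algebraic Thom class maps to a topological Thom class, and that deformation to the normal bundle corresponds on real points to a tubular neighborhood. With those results in hand, the present corollary is purely formal. The only subtlety to double-check is that the twist on the support cohomology group in \prettyref{prop:proto-localization} (ignoring what appears to be the misprint $i^\ast\mathcal{L}$) agrees under d\'evissage with the twist $i^\ast\mathcal{L}\otimes\omega$ appearing here; this is transparent because d\'evissage tensors with $\det\Nb_Z X=\omega$ by definition and the restriction of $\mathcal{L}$ to $Z$ is $i^\ast\mathcal{L}$.
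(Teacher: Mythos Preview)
Your proposal is correct and matches the paper's approach: the paper simply states that the corollary is ``a direct consequence of \prettyref{thm:pushforwards-compatible} and \prettyref{prop:proto-localization}'', and your argument spells out exactly how these two ingredients combine, via the d\'evissage\slash Thom isomorphism identification of the support column. Your observation about the apparent misprint $i^\ast\mathcal{L}$ in the support column of \prettyref{prop:proto-localization} is also correct.
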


\subsection{Compatibility with the Bär sequence}\label{sec:compatiblity:Baer}
The short exact sequence of sheaves $0\to\I^{n+1}(\mathcal{L})\to \I^n(\mathcal{L})\to \bar{\I}^n\to 0$ gives rise to a long exact sequence of cohomology groups, the so-called B\"ar sequence.
The boundary map of this sequence is usually denoted \(\beta_{\sheaf L}\) and referred to as Bockstein homomorphism.  We briefly discuss the compatibility of the real cycle class map with this homomorphism. By \cite[Theorem~8.5(4)]{jacobson}, there is an isomorphism of short exact sequences
\[
  \xymatrix{
    0 \ar[r] & \colim_n\mathbf{I}^{n+1} \ar[r]^\eta \ar[d]_\cong &  \colim_n\mathbf{I}^n \ar[r]^\rho \ar[d]_\cong & \colim_n\overline{\mathbf{I}}^n\ar[r] \ar[d]^\cong & 0\\
    0\ar[r] & \rho_\ast \Z \ar[r]_2 & \rho_\ast \Z\ar[r] & \rho_\ast \Z/2\Z\ar[r] & 0
  }
\]
By the discussion in \prettyref{sec:cyclesheaf}, the horizontal arrows in the above diagram are in fact isomorphisms of sheaves on the big site ${\op{Sm}}_F$ and equivariant for the $\mathbb{G}_{\op{m}}$-actions on $\op{colim}\mathbf{I}^n$ and $\rho_\ast\Z$. In particular, for a smooth scheme $X$ and a line bundle $\mathcal{L}$ on $X$, we can immediately obtain an isomorphism of short exact sequences of sheaves on the small site of $X$, twisted by $\mathcal{L}$ as in \prettyref{def:general-twist}:
\[
  \xymatrix{
    0 \ar[r] & \colim_n\mathbf{I}^{n+1}(\sheaf L) \ar[r]^\eta \ar[d]_\cong &  \colim_n\mathbf{I}^n(\sheaf L) \ar[r]^\rho \ar[d]_\cong & \colim_n\overline{\mathbf{I}}^n\ar[r] \ar[d]^\cong & 0\\
    0\ar[r] & \rho_\ast\Z(\sheaf L) \ar[r]_2 & \rho_\ast\Z(\sheaf L)\ar[r] &  \Z/2\Z\ar[r] & 0.
  }
\]
It should be noted here that the $\mathbb{G}_{\op{m}}$-action on $\overline{\mathbf{I}}^n$ and $\rho_\ast\Z/2\Z$ is trivial so that twisting has no effect.

\begin{proposition}
  \label{prop:compatbockstein}
  Let $X$ be a smooth scheme and let $\mathcal{L}$ be a line bundle on $X$. Then there is a commutative diagram
  \[
    \xymatrix{
      \op{Ch}^n(X) \ar[r]^{\beta_{\sheaf L}} \ar[d] & \op{H}^{n+1}(X,\mathbf{I}^{n+1}(\sheaf L)) \ar[d] \\
      \op{H}^n(X(\R),\Z/2\Z) \ar[r]_{\beta_{\sheaf L}} & \op{H}^{n+1}(X(\R),\Z(\sheaf L))
    }
  \]
  In particular, the algebraic and topological Bockstein maps correspond under the real cycle class maps.
\end{proposition}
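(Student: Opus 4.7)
The plan is to realize both Bocksteins as connecting homomorphisms of long exact cohomology sequences, and then to construct a morphism of short exact sequences of sheaves so that the two Bocksteins fit into a commutative square coming from the naturality of the long exact sequence (\prettyref{prop:Go-sheaf-exact-sequence}).

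First I would recall that $\beta_{\sheaf L}$ on the algebraic side is by definition the connecting map of the long exact cohomology sequence attached to $0 \to \I^{n+1}(\sheaf L) \to \I^n(\sheaf L) \to \bar\I^n \to 0$, and that $\beta_{\sheaf L}$ on the topological side is the connecting map of the long exact cohomology sequence attached to the short exact sequence $0 \to \Z(\sheaf L) \xrightarrow{2} \Z(\sheaf L) \to \Z/2\Z \to 0$ of sheaves on $X(\R)$. Then I would combine the canonical maps $\mathrm{can}\colon \I^k \to \colim_m \I^m$ from \prettyref{prop:can-sheaf-map} with Jacobson's identification $\sign_\infty\colon \colim_m \I^m \cong \rho_*\Z$ of \prettyref{prop:J-homotopy-modules} (and its twisted version from \prettyref{thm:sign}) to build a vertical morphism of short exact sequences of sheaves on $X$:
\[
\xymatrix@C=1.4em{
  0 \ar[r] & \I^{n+1}(\sheaf L) \ar[r] \ar[d] & \I^n(\sheaf L) \ar[r] \ar[d] & \bar\I^n \ar[r] \ar[d] & 0 \\
  0 \ar[r] & \colim_m \I^{m+1}(\sheaf L) \ar[r] \ar[d] & \colim_m \I^m(\sheaf L) \ar[r] \ar[d] & \colim_m \bar\I^m \ar[r] \ar[d] & 0 \\
  0 \ar[r] & \rho_*\Z(\sheaf L) \ar[r]^-{2} & \rho_*\Z(\sheaf L) \ar[r] & \rho_*\Z/2\Z \ar[r] & 0
}
\]

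The commutativity of the top part is immediate since filtered colimits of abelian sheaves are exact and compatible with quotients; the commutativity of the bottom part is the isomorphism of short exact sequences recalled just before the statement of the proposition, and twisting by $\sheaf L$ preserves it (the $\Gm$-action on $\bar\I^n$ and $\rho_*\Z/2\Z$ being trivial). Applying $\op{H}^*(X,-)$, \prettyref{prop:pullback-exact-seq} produces a commutative ladder of long exact sequences, and on the appropriate Bockstein square this gives exactly the diagram we want, once the identifications $\op{H}^*(X, \rho_*\mathcal{F}) \cong \op{H}^*(X(\R), \mathcal{F})$ from \prettyref{lem:iota-on-locally-constant-sheaves} are inserted on the bottom row.

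The remaining step is to identify the resulting vertical arrows with the real cycle class maps in the statement. For the right vertical map this is the content of \prettyref{prop:jacobson-vs-borel-haefliger}, which identifies Jacobson's cycle class map $\op{H}^n(X, \bar\I^n) \to \op{H}^n(X(\R), \Z/2\Z)$ with the Borel--Haefliger map under the isomorphism $\op{Ch}^n(X) \cong \op{H}^n(X, \bar\I^n)$. For the left vertical map this is the definition of the twisted real cycle class map given by diagram~\eqref{big-diagram}/\eqref{diagram-twist-support} in \prettyref{sec:sheaf-cycle} and \prettyref{sec:twist-support}, applied in degree $n+1$. The main obstacle is essentially bookkeeping: keeping track of twisting and of the different sheaves involved when passing from $\I^n$ through $\colim_m \I^m$ to $\rho_*\Z$, but once the morphism of short exact sequences above is in place, compatibility with the Bocksteins is automatic.
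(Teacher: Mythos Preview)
Your approach is essentially identical to the paper's: both build the same three-row ladder of short exact sequences of Zariski sheaves (the $\I^{n+1}\to\I^n\to\bar\I^n$ sequence mapping via $\mathrm{can}$ and $\sign_\infty$ to the $\rho_*$ of the Bockstein sequence on $X(\R)$), invoke naturality of the long exact sequence, and then pass from $\op{H}^*(X,\rho_*-)$ to $\op{H}^*(X(\R),-)$ via $\rho^*$. Two small slips to fix: you have swapped ``left'' and ``right'' in your final paragraph (it is the \emph{left} vertical arrow that is the mod~2 cycle class map treated by \prettyref{prop:jacobson-vs-borel-haefliger}), and the reference for the isomorphism $\op{H}^*(X,\rho_*\mathcal F)\cong\op{H}^*(X(\R),\mathcal F)$ should be to the discussion following \prettyref{prop:J-homotopy-modules} (or \cite[Lemma~4.6]{jacobson}) rather than to \prettyref{lem:iota-on-locally-constant-sheaves}, which concerns $\iota\colon X(\R)\to X_r$; the paper also makes explicit that this last identification is compatible with boundary maps via \prettyref{prop:pullback-exact-seq}, which you absorb into a single sentence.
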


\begin{proof}
  The key point is the isomorphism of exact sequences discussed above, resulting from \cite[Theorem~8.5(4)]{jacobson}. There is another commutative ladder of short exact sequences
  \[
    \xymatrix{
      0 \ar[r] & \mathbf{I}^{n+1}(\mathcal{L}) \ar[r]^\eta \ar[d] & \mathbf{I}^n(\mathcal{L}) \ar[r]  \ar[d] & \overline{\mathbf{I}}^n \ar[r] \ar[d] & 0\\
      0 \ar[r] & \colim_j \mathbf{I}^{j+1}(\mathcal{L}) \ar[r]^\eta  & \colim_j\mathbf{I}^j(\mathcal{L}) \ar[r] & \colim_j\overline{\mathbf{I}}^j \ar[r] & 0
    }
  \]
  which arise by twisting from the obvious commutative diagram involving the canonical maps $\mathbf{I}^n\to \colim_j\mathbf{I}^j$. Combining these commutative ladders of exact sequences implies that we have a commutative square for the boundary morphisms
  \[
    \xymatrix{
      \op{H}^n(X,\overline{\mathbf{I}}^n)\cong \op{Ch}^n(X) \ar[r]^{\beta_{\sheaf L}} \ar[d] & \op{H}^{n+1}(X,\mathbf{I}^{n+1}(\sheaf L)) \ar[d] \\
      \op{H}^n(X,\rho_\ast \Z/2\Z) \ar[r]_{\beta_{\sheaf L}} & \op{H}^{n+1}(X,\rho_\ast \Z(\sheaf L))
    }
  \]

  The claim follows by combining this with the commutative diagram
  \[
    \xymatrix{
      \op{H}^n(X,\rho_\ast \Z/2\Z) \ar[r]_{\beta_{\sheaf L}} \ar[d] & \op{H}^{n+1}(X,\rho_\ast \Z(\sheaf L)) \ar[d] \\
      \op{H}^n(X(\R),\Z/2\Z) \ar[r]_{\beta_{\sheaf L}} & \op{H}^{n+1}(X(\R),\Z(\sheaf L))
    }
  \]
  where the vertical arrows are pullbacks along $\rho$. The lower horizontal arrow is the boundary map for the short exact sequence of sheaves
  \[
    0\to \Z(\mathcal{L}(\R)) \to \Z(\mathcal{L}(\R)) \to \Z/2\Z \to 0
  \]
  and the upper horizontal arrow is the boundary map the pushforward of the above sequence along $\rho$. So the commutativity of the final square above is a consequence of the compatibility of pullbacks in sheaf cohomology with long exact sequences of sheaves, cf.~\prettyref{prop:pullback-exact-seq}.
\end{proof}

\section{Cellular varieties}\label{sec:cellular}
\begin{definition}[{\cite[Example~1.9.1]{fultonbook}}]
  A variety \(X\) over a base \(S\) is \textbf{cellular} if it has a filtration over \(S\) by closed subvarieties
  \[
    X = Z_0 \supset Z_1 \supset \cdots \supset Z_N = \emptyset
  \]
  such that, for each \(k\), we have an isomorphism of \(S\)-varieties \(Z_{k-1}\setminus Z_k \cong \A_S^{n_k}\) for some integer \(n_k\).
\end{definition}

\begin{remark}\label{rem:cellular-open-filtration}
  For a smooth cellular variety, it is sometimes more useful to consider the complementary filtration by open subvarieties, cf.\ \cite[\S\,2.1]{wendt:m-cells} or \cite[p.\,485]{zib:WCCV}.  This allows us to ``piece the variety together without leaving the smooth world''.
\end{remark}

\begin{proposition}\label{prop:cellular-cycle-isos}
  For a smooth cellular variety \(X\) over \(\R\), the complex, mod 2 complex and mod 2 real cycle class maps define ring isomorphisms:
  \begin{align*}
    \op{CH}^n(X) & \xrightarrow{\cong} \op{H}^{2n}(X(\C))\\
    \op{Ch}^n(X) & \xrightarrow{\cong} \op{h}^{2n}(X(\C))\\
    \op{Ch}^n(X) & \xrightarrow{\cong} \op{h}^n(X(\R))
  \end{align*}
\end{proposition}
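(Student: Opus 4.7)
The plan is to induct on the number of cells, using the open filtration variant described in \prettyref{rem:cellular-open-filtration} to stay within the smooth world: I will write \(\emptyset = U_0 \subset U_1 \subset \cdots \subset U_N = X\) as an ascending chain of smooth open subvarieties, with each successive difference \(Z_k := U_k \setminus U_{k-1}\) isomorphic to \(\A^{n_k}_\R\) as a smooth closed subvariety of \(U_k\). The base case \(X = \A^n_\R\) is handled by homotopy invariance on both sides: \(\op{CH}^*(X)\) and \(\op{Ch}^*(X)\) are concentrated in degree zero, and \(X(\C) \cong \C^n\) and \(X(\R) \cong \R^n\) are contractible, so each of the three cycle class maps sends the fundamental class to the fundamental class and is an isomorphism.

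For the inductive step, I will exploit the compatibility of each cycle class map with the localization sequence for the closed immersion \(i\colon Z_N \hookrightarrow U_N\) with open complement \(j\colon U_{N-1} \hookrightarrow U_N\). For the mod 2 real cycle class map this is precisely what \prettyref{prop:jacobson-vs-borel-haefliger} combined with \prettyref{cor:compat-localization} (reduced mod 2) delivers; for the integral and mod 2 complex cycle class maps the analogous compatibility is classical, see e.g.\ \cite[Chapter~19]{fultonbook}. Since \(Z_N \cong \A^{n_N}_\R\) has Chow groups and cohomology concentrated in a single degree on both the algebraic and the topological side, the relevant portions of the two localization sequences collapse into short exact sequences compatible under the cycle class map, and a five-lemma comparison using the inductive hypothesis on \(U_{N-1}\) and the base case on \(Z_N\) produces an isomorphism on \(U_N = X\).

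Finally, promoting these additive isomorphisms to ring isomorphisms will use that each cycle class map is compatible with cup/intersection products: \prettyref{cor:realization-is-ring-homo} together with \prettyref{prop:jacobson-vs-borel-haefliger} and reduction mod 2 handles the mod 2 real case, while the multiplicativity of the classical complex and mod 2 complex cycle class maps is well-known. The main technical point to watch is that the algebraic localization sequence for Chow groups is only right-exact, so injectivity of the Chow cycle class map at the inductive step requires a separate small argument; this is exactly where the fact that the cells are affine spaces becomes essential, since restriction to the top-dimensional cell together with the concentration of cohomology in a single degree provides the missing injectivity. Beyond this one piece of bookkeeping, the proof should follow cleanly from the compatibility results collected in Section~\ref{sec:compatibilities}.
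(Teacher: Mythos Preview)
Your overall strategy---cellular induction via the open filtration, localization ladder, base case by homotopy invariance---matches the paper's approach exactly.  The complex and mod~2 complex cases, as you say, are classical and need no further comment.

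There is, however, a genuine gap in your treatment of the mod~2 real case.  You assert that because the cell \(Z_N\cong\A^{n_N}\) has cohomology concentrated in a single degree on both sides, ``the relevant portions of the two localization sequences collapse into short exact sequences.''  This does not follow.  Write \(c\) for the codimension of \(Z_N\) in \(U_N\).  The topological Gysin sequence
\[
  \cdots \to \op{h}^{c-1}(U_{N-1}(\R)) \xrightarrow{\partial} \op{h}^{0}(Z_N(\R)) \xrightarrow{i_*} \op{h}^{c}(U_N(\R)) \to \op{h}^{c}(U_{N-1}(\R)) \to 0
\]
has a potentially nonzero boundary map \(\partial\) landing in the single nonvanishing group \(\op{h}^0(Z_N(\R))\cong\Z/2\).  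Concentration of \(\op{h}^*(Z_N(\R))\) in degree~0 says nothing about whether \(\partial\) vanishes; unlike the complex case, \(\op{h}^{c-1}(U_{N-1}(\R))\) need not be zero.  If \(\partial\neq 0\) then \(i_*\) is not injective, and the diagram chase for injectivity of the middle cycle class map breaks down: an element of \(\op{Ch}^c(U_N)\) mapping to zero pulls back to some \(b\in\op{Ch}^0(Z_N)\), and you need injectivity of the topological \(i_*\) to conclude \(b=0\).

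The paper resolves this by a slightly different organization of the induction: one first observes that surjectivity of the real cycle class map for \(U_{N-1}\) (part of the inductive hypothesis), together with the surjectivity of the algebraic restriction \(\op{Ch}^n(U_N)\to\op{Ch}^n(U_{N-1})\), forces the topological restriction \(\op{h}^n(U_N(\R))\to\op{h}^n(U_{N-1}(\R))\) to be surjective in every degree.  By exactness of the Gysin sequence this forces \(i_*\) to be injective, and then the diagram chase you have in mind goes through.  Your final paragraph senses that something extra is needed, but the fix you propose (``restriction to the top-dimensional cell'') is not the right mechanism; the missing injectivity is on the \emph{topological} side and is supplied by the inductive surjectivity hypothesis, not by concentration alone.
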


\begin{proof}
  The first two maps are ring homomorphisms by \cite[Corollary 19.2(b)]{fultonbook}, see also \cite[Theorem~4.1]{totaro:mu}. The statement for the (integral) complex cycle class map is well-known, see for example \cite[Example~19.1.11(b)]{fultonbook}.  That it also defines an isomorphism \(\op{Ch}^n(X)\cong \op{h}^{2n}(X(\C))\) follows immediately since \(\op{Ch}^n(X)=\op{CH}^n(X)/2\) for any \(X\) and \(\op{h}^{2n}(Y) = \op{H}^{2n}(Y)/2\) for any topological space \(Y\) whose cohomology is concentrated in even degrees.

   For the real cycle class maps, compatibility with the ring structure is established in \prettyref{cor:realization-is-ring-homo}. That it is an isomorphism can be proved similarly as in the complex case, by cellular induction.  Indeed, consider the main induction step (cf.\ \prettyref{rem:cellular-open-filtration}): we assume that \(X\) is the union of a regularly embedded codimension-\(c\) cell \(Z\) and its open complement \(U\), we know that the real cycle class map is an isomorphism for \(Z\). We need to compare the following two exact sequences:
  \[\xymatrix{
      \op{Ch}^{n-c}(Z) \ar[r] \ar[d]^{\cong}
      & \op{Ch}^n(X) \ar[r] \ar[d]
      & \op{Ch}^n(U) \ar[r] \ar[d]
      & 0 
      \\
      \op{h}^{n-c}(Z(\R)) \ar[r]
      & \op{h}^n(X(\R)) \ar[r]
      & \op{h}^n(U(\R)) 
    }\]
  The square on the left is commutative by \prettyref{thm:pushforwards-compatible}, and the second square commutes by \prettyref{prop:pullbacks-compatible}.
  The corresponding diagram for the complex cycle class map has (top and) lower row a short exact sequence, because \(\op{H}^*(X(\C))\) is concentrated in even degrees only. In contrast, \(\op{h}^*(X(\R))\) can be non-zero in any degree.  Nonetheless, assuming by induction that the real cycle class map is surjective for \(U\), we find that the restriction map \(\op{h}^n(X(\R)) \to \op{h}^n(U(\R))\) is also surjective in all degrees \(n\), and hence that the pushforward \(\op{h}^{n-c}(Z(\R))\to \op{h}^n(X(\R))\) is injective for all~\(n\).  Thus, assuming by induction that the real cycle class map is an isomorphism for \(U\), we may deduce inductively that the real cycle class map for \(X\) is also an isomorphism.
\end{proof}

\begin{proposition}
  \label{prop:mult2-barI}
  For a smooth cellular variety \(X\)  over \(\R\), multiplication by \(\pfist{-1}\in \op{I}(\R)\) induces isomorphisms
  \[
    \op{H}^i(X,\bar{\I}^j) \xrightarrow{\pfist{-1}} \op{H}^i(X,\bar{\I}^{j+1})
    \quad  \text{ for all } \quad j\geq i.
  \]
\end{proposition}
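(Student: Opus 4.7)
The plan is to proceed by cellular induction applied to a ladder of localization sequences with vertical maps given by multiplication by $\pfist{-1}$, and to conclude via the five lemma. For the inductive step, decompose $X = U\sqcup Z$ where $Z = \A^n$ is a cell of codimension $c$ attached to a smaller open cellular $U$, and compare
\[
\xymatrix@C=0.35em{
\cdots\ar[r] & \op{H}^{i-c}(Z,\bar\I^{j-c})\ar[r]\ar[d]^{\pfist{-1}} & \op{H}^i(X,\bar\I^j)\ar[r]\ar[d]^{\pfist{-1}} & \op{H}^i(U,\bar\I^j)\ar[r]\ar[d]^{\pfist{-1}} & \op{H}^{i+1-c}(Z,\bar\I^{j-c})\ar[r]\ar[d]^{\pfist{-1}} & \cdots\\
\cdots\ar[r] & \op{H}^{i-c}(Z,\bar\I^{j+1-c})\ar[r] & \op{H}^i(X,\bar\I^{j+1})\ar[r] & \op{H}^i(U,\bar\I^{j+1})\ar[r] & \op{H}^{i+1-c}(Z,\bar\I^{j+1-c})\ar[r] & \cdots
}
\]
Both the localization sequence for $\bar\I$-cohomology and the unfolding $\op{H}^{i-c}(Z,\bar\I^{j-c})\cong \op{H}^i_Z(X,\bar\I^j)$ (in which the $\det\Nb$-twist drops out because $\bar\I$ has trivial $\Gm$-action) can be obtained from the corresponding statements for $\I$-cohomology via a 5-lemma applied to the short exact sequence $0\to\I^{j+1}\to\I^j\to\bar\I^j\to 0$. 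The diagram commutes by naturality, since $\pfist{-1}$ is a global element of $\mathbf{W}$ acting on all $\I^\bullet$ and $\bar\I^\bullet$.

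The base case $X=\A^n$ reduces to $\Spec\R$ via $\A^1$-homotopy invariance for $\bar\I$-cohomology, which follows from Fasel's homotopy invariance for $\I$-cohomology by another 5-lemma argument on the above short exact sequence. Over $\R$, $\op{H}^0(\Spec\R,\bar\I^j) \cong \op{I}^j(\R)/\op{I}^{j+1}(\R)\cong 2^j\Z/2^{j+1}\Z\cong \Z/2\Z$ for $j\geq 0$ (and vanishes in higher cohomological degrees), and multiplication by $\pfist{-1}=2$ sends the generator $2^j$ to the generator $2^{j+1}$, hence is an isomorphism.

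For the inductive step, the vertical map at $U$ in position $f_4$ is an isomorphism by the inductive hypothesis for $U$ (valid since $j\geq i$), the preceding vertical map at $U$ in position $f_1$ is likewise an isomorphism (since $j\geq i-1$), and the vertical map at $Z=\A^n$ in position $f_2$ is the base case applied at $(i-c,j-c)$, valid since $j-c\geq i-c$. The five lemma then produces the desired isomorphism at $X$. The main obstacle will be verifying injectivity of the fifth vertical map $f_5\colon \op{H}^{i+1-c}(Z,\bar\I^{j-c})\to \op{H}^{i+1-c}(Z,\bar\I^{j+1-c})$ in the degenerate case $j=i$, as then $j-c=i-c$ lies one short of the base-case range $j-c\geq i+1-c$. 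Fortunately, for $Z=\A^n$ and $i+1-c>0$ both groups vanish by homotopy invariance and higher-degree vanishing over a point, while for $i+1-c=0$ (so $i=c-1$, $j-c=-1$) one invokes the convention $\bar\I^k=0$ for $k<0$ (consistent with $\I^k=\mathbf{W}$ for $k\leq 0$, or equivalently with $\bar\I^k\cong \mathbf{K}^{\op{M}}_k/2$), which makes the source trivial and the map vacuously injective.
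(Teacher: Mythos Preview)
Your proof is correct and follows essentially the same route as the paper: cellular induction via the ladder of localization sequences for $\bar\I^j$ and $\bar\I^{j+1}$, with the five lemma concluding the step. Your treatment of the fifth vertical map in the edge case $j=i$ (vanishing for $i+1-c>0$, and source zero via $\bar\I^{-1}=0$ when $i+1-c=0$) is exactly what the paper compresses into the phrase ``the inclusion of $0$''.
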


Note that \(\op{H}^i(X,\bar{\I}^j) = 0\) for all \(j<i\), for any smooth \(X\) over any field.  This follows directly from the definition\slash computation of \(\bar{\I}^j\)-cohomology in terms of (quotients) of Gersten complexes.
The proposition says that for real cellular \(X\), the non-zero values of \(\bar{\I}^j\)-cohomology are independent of~\(j\).

\begin{proof}
  We have \(\op{W}(\R)=\Z\) with \(\pfist{-1} = 2\) and \(\op{I}^j(\R) = 2^j\Z\).  So the conclusion holds for \(X=\Spec(\R)\). We now use homotopy invariance, the localization sequence and  d{\'e}vissage for \(\bar\I^j\)-cohomology, which even hold for $\I^j$-cohomology, see section \ref{sec:I-recollections}.

  We also use that each of these isomorphisms\slash exact sequences is compatible with the \(\op{W}(\R)\)-module structure on \(\bar{\I}^j\)-cohomology. First, by homotopy invariance, the claim follows for a ``cell'', i.e.,\ for \(\A^n\), for any \(n\).  By \prettyref{rem:cellular-open-filtration}, it now suffices to consider the situation that \(X\) is the union of a regularly embedded codimension-\(c\) cell \(Z\) and its open complement \(U\), and to assume that the hypothesis already holds for \(U\).  To deduce that the claim also holds for \(X\), we compare the localization sequences for \(\bar{\I}^j\) and \(\bar{\I}^{j+1}\)-cohomology:
  \[
    \xymatrix@C=8pt{
      \op{H}^{i-1}(U,\bar{\I}^j) \ar[r]^-{\partial} \ar[d]^{\cong}
      & \op{H}^{i-c}(Z,\bar{\I}^{j-c}) \ar[r] \ar[d]^{\cong}
      & \op{H}^i(X,\bar{\I}^j) \ar[r] \ar[d]^{?}
      & \op{H}^i(U,\bar{\I}^j) \ar[r]^-{\partial} \ar[d]^{\cong}
      & \op{H}^{i+1-c}(Z,\bar{\I}^{j-c}) \ar@{>->}[d]
      \\
      \op{H}^{i-1}(U,\bar{\I}^{j+1}) \ar[r]^-{\partial}
      & \op{H}^{i-c}(Z,\bar{\I}^{j+1-c}) \ar[r]
      & \op{H}^i(X,\bar{\I}^{j+1}) \ar[r]
      & \op{H}^i(U,\bar{\I}^{j+1}) \ar[r]^-{\partial}
      & \op{H}^{i+1-c}(Z,\bar{\I}^{j+1-c})
    }
  \]
  The diagram commutes since the localization sequences are induced from short exact sequences of $\op{W}(F)$-modules, and all vertical arrows are given by multiplication by \(\pfist{-1}\). The arrow we are interested in is the central arrow. The first and fourth arrow are isomorphisms by induction hypothesis.  The second arrow is an isomorphism since \(Z\) is a cell. The fifth arrow is either an isomorphism (in case \(j\geq i+1\)) or the inclusion of \(0\) (in case \(j = i\)), so in any case it is injective.  This shows that the central arrow is an isomorphism, as desired.
\end{proof}

\begin{proposition}
  \label{prop:mult2-I}
  Suppose \(X\) is a smooth variety over \(\R\) for which the conclusion of \prettyref{prop:mult2-barI} holds, i.e.,\ such that multiplication by \(\pfist{-1}\) induces isomorphisms
  \[
    \op{H}^i(X,\bar{\I}^j) \cong \op{H}^i(X,\bar{\I}^{j+1})
  \]
  for all \(j\geq i\).
  Then for any line bundle \(\sheaf L\) over \(X\) multiplication by \(\pfist{-1}\) also induces isomorphisms
  \[
    \op{H}^i(X,{\I}^j(\sheaf L)) \xrightarrow{\pfist{-1}} \op{H}^i(X,{\I}^{j+1}(\sheaf L))
    \quad  \text{ for all } \quad j\geq i.
  \]
\end{proposition}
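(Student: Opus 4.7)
The plan is to leverage the short exact sequence
\[
0\to \I^{j+1}(\sheaf L)\to \I^j(\sheaf L)\to \bar\I^j\to 0
\]
together with the ladder of long exact cohomology sequences obtained by applying multiplication by $\pfist{-1}$ to it, and to run a five-lemma argument with downward induction on~$j$.

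First I would note that multiplication by $\pfist{-1}$ defines a morphism from the above short exact sequence to the corresponding sequence with $j$ replaced by $j+1$; here I use that $\bar\I^j$ carries no non-trivial line-bundle twist since it is a $\Z/2$-module. Taking the induced map of long exact sequences gives a ladder whose rows are five-term pieces of the form
\[
\op{H}^{i-1}(X,\bar\I^j)\to \op{H}^i(X,\I^{j+1}(\sheaf L))\to \op{H}^i(X,\I^j(\sheaf L))\to \op{H}^i(X,\bar\I^j)\to \op{H}^{i+1}(X,\I^{j+1}(\sheaf L)),
\]
with vertical maps multiplication by $\pfist{-1}$ into the analogous row at $j+1$. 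By the hypothesis carried over from \prettyref{prop:mult2-barI}, the first and fourth vertical maps are isomorphisms whenever $j\geq i$ (and then automatically $j\geq i-1$).

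For the base case I would take $j\geq \dim X+1$. By \prettyref{thm:sign}\,(2) the signature gives an isomorphism $\op{H}^i(X,\I^j(\sheaf L))\xrightarrow{\cong} \op{H}^i(X(\R),\Z(\sheaf L))$, and the identity $\sign(\pfist{-1}\cdot x)=2\sign(x)$ together with the identifications $\colim_j 2^j\Z\cong \Z$ from \prettyref{prop:J-homotopy-modules} shows that under these signature isomorphisms the map $\pfist{-1}_{\ast}$ corresponds to the identity on $\op{H}^i(X(\R),\Z(\sheaf L))$. Thus $\pfist{-1}_{\ast}$ is an isomorphism for all $j\geq \dim X +1$.

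The inductive step now proceeds by descending induction. Fix $i$ and assume the proposition holds at every $(i',j')$ with $j'\geq j+1$ and $i'\leq j'$. For $j\geq i$, I apply the five lemma to the ladder above at degree $i$: the first and fourth vertical maps are isomorphisms by the $\bar\I$-hypothesis, and the second and fifth vertical maps, namely $\op{H}^i(X,\I^{j+1}(\sheaf L))\to \op{H}^i(X,\I^{j+2}(\sheaf L))$ and $\op{H}^{i+1}(X,\I^{j+1}(\sheaf L))\to \op{H}^{i+1}(X,\I^{j+2}(\sheaf L))$, are isomorphisms by the inductive hypothesis (using $j+1\geq i$ and $j+1\geq i+1$, both implied by $j\geq i$). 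The five lemma then yields that the middle vertical map, the desired one at $(i,j)$, is an isomorphism. There is no substantial obstacle here; the main point to keep track of is simply that the direction of the induction is forced on us by the fact that the five lemma at $(i,j)$ needs input at the higher indices $(i,j+1)$ and $(i+1,j+1)$, which is why the base case sits at large $j$ rather than at $j=i$.
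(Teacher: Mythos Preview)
Your proof is correct and follows essentially the same approach as the paper: both establish a base case for large~$j$ and then run a downward induction via the five lemma applied to the ladder of B\"ar sequences, using the hypothesis on $\bar\I$-cohomology for the outer terms and the induction hypothesis for the $\I$-terms. The only cosmetic differences are that the paper indexes the unknown middle map as $\I^{j-1}\to\I^j$ rather than $\I^j\to\I^{j+1}$, and that for the base case the paper invokes Jacobson's sheaf-level isomorphism $\I^j\cong\I^{j+1}$ for $j\gg 0$ directly, whereas you reach the same conclusion via the cohomology-level signature isomorphism of \prettyref{thm:sign}.
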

\begin{remark}\label{rem:j-dichotomy}
The inclusions \(\I^j(\sheaf L)\hookrightarrow \W(\sheaf L)\) induce isomorphisms \( \op{H}^i(X,\I^j(\sheaf L))\cong \op{H}^i(X,\W(\sheaf L)) \) for all \(j<i\).  This follows easily from the Bär sequence. So for \(X\) and \(\sheaf L\) as in \prettyref{prop:mult2-I} and a fixed cohomological degree~\(i\), the groups \(\op{H}^i(X,\I^j(\sheaf L))\) take only two different values according to whether \(j\geq i\) or \(j<i\).
\end{remark}
\begin{proof}
  As Jacobson explains in the proof of Corollary~8.11 in \cite{jacobson}, there is an integer \(d\) such that multiplication by \(\pfist{-1}\) induces isomorphisms of sheaves \(\I^j\cong \I^{j+1}\) for all \(j\geq d+1\).  In fact, we can take \(d\) to be exactly the Krull dimension of \(X\), but we do not need to know this.  Twisting with \(\sheaf L\), we see that \(\pfist{-1}\) also induces isomorphisms \(\I^j(\sheaf L)\cong \I^{j+1}(\sheaf L)\) in this range.  Thus, the conclusion of the proposition holds for \(j\geq d+1\). We now run a downward induction over~\(j\).  Suppose the conclusion holds for some particular value~\(j\). We now compare the B{\"ar} sequences
  for \(\I^{j-1}(\sheaf L)\) and \(\I^{j}(\sheaf L)\) around some cohomological degree \(i\leq j-1\):
  \[\xymatrix@C=5pt{
      \op{H}^{i-1}(X,\bar{\I}^{j-1}) \ar[r]^-{\partial} \ar[d]^{\cong}
      & \op{H}^i(X,{\I}^{j}(\sheaf L)) \ar[r] \ar[d]^{\cong}
      & \op{H}^i(X,{\I}^{j-1}(\sheaf L)) \ar[r] \ar[d]^{?}
      & \op{H}^i(X,\bar{\I}^{j-1}) \ar[r]^-{\partial} \ar[d]^{\cong}
      & \op{H}^{i+1}(X,{\I}^{j}(\sheaf L)) \ar[d]^{\cong}
      \\
      \op{H}^{i-1}(X,\bar{\I}^{j}) \ar[r]^-{\partial}
      & \op{H}^i(X,{\I}^{j+1}(\sheaf L)) \ar[r]
      & \op{H}^i(X,{\I}^{j}(\sheaf L)) \ar[r]
      & \op{H}^i(X,\bar{\I}^{j}) \ar[r]^-{\partial}
      & \op{H}^{i+1}(X,{\I}^{j+1}(\sheaf L))
    }\]
  Again the diagram commutes because the long exact sequences are induced from a short sequence of sheaves of $\op{W}(F)$-modules and all vertical arrows are multiplication by \(\pfist{-1}\). We need to show that the central arrow is an isomorphism.  The first and fourth arrows are isomorphisms by our assumption on~\(X\). The second and fifth arrow are isomorphisms by our induction hypothesis.  So the claim follows.
\end{proof}

\begin{theorem}\label{thm:cellular-iso}
  Let \(X\) be a smooth cellular variety over \(\R\), and let \(\sheaf L\) be a line bundle over \(X\).  Consider the real cycle class maps
  \[
    \op{H}^i(X,\I^j(\sheaf L)) \to \op{H}^i(X(\R),\Z(\sheaf L)).
  \]
  \begin{enumerate}[(a)]
  \item
    For \(j\geq i\), the real cycle class map is a group isomorphism.  In particular, in diagonal bidegrees the homomorphism of graded commutative rings considered in \prettyref{cor:realization-is-ring-homo} is an isomorphism  \(
  \bigoplus_i \op{H}^i(X,\I^i) \cong \bigoplus_i \op{H}^i(X(\R),\Z)
  \).
  \item
    For \(j < i\), the image of the real cycle class map is the subgroup \(2^{i-j}\cdot \op{H}^i(X(\R),\Z(\sheaf L)) \subset \op{H}^i(X(\R),\Z(\sheaf L))\).  In case \(j=i-1\), the real cycle class map is moreover injective.  Thus, by composing the isomorphisms of  \prettyref{rem:j-dichotomy} with the real cycle class map we obtain an isomorphism
    \(\op{H}^i(X,\I^j(\sheaf L)) \cong 2\cdot \op{H}^i(X(\R),\Z(\sheaf L))\) for all \(j < i\).
  \end{enumerate}
\end{theorem}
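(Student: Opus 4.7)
The plan for part (a) is to factor the real cycle class map through the sheaf isomorphism $\colim_k\I^k\cong\rho_\ast\Z$ of \prettyref{prop:J-homotopy-modules}. Concretely, I will view the cycle class map as the composition
\[
\op{H}^i(X,\I^j(\sheaf L))\to \op{H}^i(X,\colim_k\I^k(\sheaf L))\cong \op{H}^i(X(\R),\Z(\sheaf L)),
\]
in which the second arrow combines Jacobson's identification (twisted by $\sheaf L$) with the pullback isomorphism along $\rho$. Since $X$ is Noetherian, \prettyref{lem:cohomology-of-colimits} identifies the middle term with $\colim_k\op{H}^i(X,\I^k(\sheaf L))$, and the first arrow becomes the canonical map into this colimit. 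For $j\geq i$, \prettyref{prop:mult2-barI} together with \prettyref{prop:mult2-I} ensures that every transition $\pfist{-1}\colon \op{H}^i(X,\I^k(\sheaf L))\to\op{H}^i(X,\I^{k+1}(\sheaf L))$ with $k\geq j$ is an isomorphism, so the canonical map into the colimit is already an isomorphism. The ring isomorphism in diagonal bidegrees will then follow from \prettyref{cor:realization-is-ring-homo}.

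Part (b) will rest on the factorization
\[
\op{H}^i(X,\I^j(\sheaf L))\xrightarrow{\pfist{-1}^{i-j}}\op{H}^i(X,\I^i(\sheaf L))\xrightarrow{\cong}\op{H}^i(X(\R),\Z(\sheaf L))
\]
of the cycle class map, in which the second arrow is the isomorphism from part (a). The key observation is that the B\"ar sequence $0\to\I^{k+1}(\sheaf L)\to\I^k(\sheaf L)\to\bar\I^k\to 0$, together with the vanishing $\op{H}^i(X,\bar\I^k)=0$ for $k<i$ (forced by the length of the Gersten complex for $\bar\I^k$), makes each inclusion $\iota\colon \op{H}^i(X,\I^{k+1}(\sheaf L))\to\op{H}^i(X,\I^k(\sheaf L))$ surjective for $k\leq i-1$; hence so is the iterated inclusion $\iota^{i-j}\colon \op{H}^i(X,\I^i(\sheaf L))\twoheadrightarrow\op{H}^i(X,\I^j(\sheaf L))$. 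Consequently, the image of $\pfist{-1}^{i-j}$ coincides with the image of the endomorphism $\pfist{-1}^{i-j}\circ\iota^{i-j}$ of $\op{H}^i(X,\I^i(\sheaf L))$. A direct signature calculation (using that $\sign(\pfist{-1})=2$ and that cycle classes for $\I^k$ are given by signature divided by $2^k$) will show this endomorphism corresponds to multiplication by $2^{i-j}$ under the cycle class isomorphism, and the image of multiplication by $2^{i-j}$ on $\op{H}^i(X(\R),\Z(\sheaf L))$ is exactly the subgroup $2^{i-j}\op{H}^i(X(\R),\Z(\sheaf L))$.

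The injectivity for $j=i-1$ is the step I expect to require the most care. Given $\alpha\in\op{H}^i(X,\I^{i-1}(\sheaf L))$ with zero cycle class, the surjectivity of $\iota\colon\op{H}^i(X,\I^i(\sheaf L))\to\op{H}^i(X,\I^{i-1}(\sheaf L))$ lets me lift to some $\tilde\alpha$. The same signature calculation shows $\iota$ corresponds to multiplication by $2$ under the cycle class isomorphism, so the image of $\tilde\alpha$ in $\op{H}^i(X(\R),\Z(\sheaf L))$ is a $2$-torsion class. The Bockstein long exact sequence for $0\to\Z\xrightarrow{2}\Z\to\Z/2\to 0$ identifies this $2$-torsion as the image of the topological Bockstein $\op{H}^{i-1}(X(\R),\Z/2\Z)\to \op{H}^i(X(\R),\Z(\sheaf L))$, and combining \prettyref{prop:compatbockstein} with the mod-$2$ cellular isomorphism of \prettyref{prop:cellular-cycle-isos} this matches the image of the algebraic Bockstein $\op{H}^{i-1}(X,\bar\I^{i-1})\to\op{H}^i(X,\I^i(\sheaf L))$. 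By exactness of the B\"ar long exact sequence, this image is precisely $\ker(\iota)$, so $\alpha=\iota(\tilde\alpha)=0$. Finally, the composite isomorphism $\op{H}^i(X,\I^j(\sheaf L))\cong 2\cdot\op{H}^i(X(\R),\Z(\sheaf L))$ for arbitrary $j<i$ will be obtained by combining the just-established $j=i-1$ case with the isomorphism of \prettyref{rem:j-dichotomy}.
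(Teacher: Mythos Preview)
Your proof is correct and follows essentially the same route as the paper. For part~(a), the paper argues by backward induction from Jacobson's large-$j$ isomorphism using \prettyref{prop:mult2-I}, while you pass directly through the colimit via \prettyref{lem:cohomology-of-colimits}; these are equivalent repackagings of the same idea. For part~(b), the paper's proof is a single sentence (``follows from the isomorphism for $j\geq i$ and the compatibility of the B\"ar sequence with the Bockstein sequence''), and your argument is precisely the diagram chase this sentence encodes: the commutative ladder between the B\"ar sequence for $\I^i\hookrightarrow\I^{i-1}$ and the Bockstein sequence for $\Z\xrightarrow{2}\Z$ (established in \prettyref{sec:compatiblity:Baer}), with the first two vertical maps isomorphisms by \prettyref{prop:cellular-cycle-isos} and part~(a), yields both the image computation and the injectivity for $j=i-1$ exactly as you spell out.
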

\begin{proof}
  (a) Consider first the case \(j\geq i\). It suffices to show that \(\sign\colon \op{H}^i(X,\I^j(\sheaf L))\to \op{H}^i(X(\R),2^j\Z(\sheaf L))\) is an isomorphism.  Consider the following commutative square of morphisms of coefficient data (cf.\ \cite[Definition~8.4]{jacobson}):
  \[
    \xymatrix@C=5em{
      (X,\I^j)\ar@{<-}[r]^-{(\rho,\sign)}
      &(X(\R),2^j\Z)
      \\
      (X,\I^{j+1}) \ar[u]^{(\id,\pfist{-1})} \ar@{<-}[r]^-{(\rho,\sign)}
      &(X(\R),2^{j+1}\Z) \ar[u]^{(\id,2)}_{\cong}
    }
  \]
  By \prettyref{cor:twist-key}, this square remains commutative if we twist by \(\sheaf L\). We thus obtain the following commutative square in cohomology:
  \[\xymatrix{
      \op{H}^i(X,\I^j(\sheaf L)) \ar[r] \ar[d]_{\pfist{-1}}
      & \op{H}^i(X(\R),2^j\Z(\sheaf L)) \ar[d]_{2}
      \\
      \op{H}^i(X,\I^{j+1}(\sheaf L)) \ar[r]^-{\cong}
      & \op{H}^i(X(\R),2^{j+1}\Z(\sheaf L))
    }\]
  By \prettyref{prop:mult2-I}, the left vertical morphism is an isomorphism for all \(j\geq i\).  One of Jacobson's main results in \cite{jacobson} is that the horizontal arrows are isomorphisms for sufficiently large~\(j\) and trivial twists, see once again (the proof of) Corollary 8.11 in loc.\ cit.
  The proof given there generalizes to arbitrary twists, cf.\ \prettyref{thm:sign}. So the proof can be completed by backward induction over~\(j\).  The statement concerning the ring structure is immediate from \prettyref{cor:realization-is-ring-homo}.

  (b) The claim concerning \(j<i\) follows from the isomorphism for \(j\geq i\) and the compatibility of the Bär sequence with the Bockstein sequence (\prettyref{prop:compatbockstein}).
\end{proof}

\begin{proposition}
  \label{prop:bw-iso}
  For any smooth cellular variety $X$ over  $\mathbb{R}$, the restricted equivariant cycle class map $\op{CH}^\bullet(X)\to \op{H}^\bullet_{\op{C}_2}(X(\mathbb{C}),\Z)_{(0)}$ of \cite{benoist:wittenberg} is an isomorphism.
\end{proposition}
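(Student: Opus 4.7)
The plan is to proceed by induction on the length of the cellular filtration of $X$, exploiting the compatibility of the Benoist--Wittenberg equivariant cycle class map with pullbacks along open immersions and pushforwards along regular closed embeddings (\cite[Propositions~2.1.3 and 2.3.3]{krasnov}). For the base case of a single cell $X = \mathbb{A}^n_{\mathbb{R}}$, both sides are $\mathbb{Z}$ in codimension $0$ and I would show they both vanish in positive codimension $k$. On the left this is immediate; on the right one observes that $X(\mathbb{C})$ is $C_2$-equivariantly contractible to the origin in $X(\mathbb{R}) = \mathbb{R}^n$ (contracting first along the imaginary direction, then radially), so the ambient equivariant group reduces to $\op{H}^{2k}(BC_2, \mathbb{Z}(k))$ and its image under the decomposition into $\bigoplus_p \op{H}^p(\mathbb{R}^n, \mathbb{Z}/2)$ is concentrated in the single non-zero summand at $p = 0$; the Steenrod-square constraints defining the restricted subgroup then force this summand to vanish.

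For the inductive step, I would pick a regularly embedded closed cell $i \colon Z \hookrightarrow X$ of codimension $c$ with open complement $j \colon U \hookrightarrow X$, noting that both $Z$ and $U$ inherit cellular filtrations with strictly fewer cells. Injectivity of the middle vertical map is immediate from the commutative triangle whose edges are the equivariant cycle class map, the forgetful map $\op{H}^{2n}_{C_2}(X(\mathbb{C}), \mathbb{Z}(n))_0 \to \op{H}^{2n}(X(\mathbb{C}), \mathbb{Z})$, and the complex cycle class map $\op{CH}^n(X) \to \op{H}^{2n}(X(\mathbb{C}), \mathbb{Z})$, since the latter is an isomorphism for cellular $X$ by \prettyref{prop:cellular-cycle-isos}. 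For surjectivity, I would set up a commutative ladder
\[
\xymatrix@C=1.8em{
\op{CH}^{n-c}(Z) \ar[r]^-{i_*} \ar[d] & \op{CH}^n(X) \ar[r]^-{j^*} \ar[d] & \op{CH}^n(U) \ar[r] \ar[d] & 0 \\
\op{H}^{2n-2c}_{C_2}(Z(\mathbb{C}), \mathbb{Z}(n-c))_0 \ar[r]^-{i_*} & \op{H}^{2n}_{C_2}(X(\mathbb{C}), \mathbb{Z}(n))_0 \ar[r]^-{j^*} & \op{H}^{2n}_{C_2}(U(\mathbb{C}), \mathbb{Z}(n))_0
}
\]
and apply the five-lemma, using the inductive hypothesis for the outer vertical arrows.

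The main obstacle is establishing exactness of the bottom row at the middle term. The ambient $C_2$-equivariant localization sequence for $(Z(\mathbb{C}), X(\mathbb{C}), U(\mathbb{C}))$ is of course exact, so what must be verified is two compatibility statements for the Steenrod-square constraints: first, that the pushforward $i_*$ of a class satisfying the constraints on $Z$ still satisfies them on $X$; second, that any class in $\op{H}^{2n}_{C_2}(X(\mathbb{C}), \mathbb{Z}(n))_0$ whose restriction to $U$ vanishes lifts to the restricted subgroup on $Z$. Both points reduce, via the Borel--Haefliger identification $\op{Ch}^\bullet(-) \cong \op{H}^\bullet((-)(\mathbb{R}), \mathbb{Z}/2)$ from \prettyref{prop:cellular-cycle-isos} and its compatibility with the Steenrod action (which corresponds under Borel--Haefliger to the classical Steenrod action on $\op{Ch}^\bullet$), to analogous statements for the mod-$2$ Chow group side, where everything is already controlled by the cellular induction on the Borel--Haefliger cycle class map itself. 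Once these compatibilities are in hand, the five-lemma closes the induction and completes the proof.
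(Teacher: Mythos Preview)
Your overall strategy---cellular induction via the same ladder diagram---matches the paper's, and your injectivity argument through the forgetful map to ordinary cohomology and the complex cycle class isomorphism of \prettyref{prop:cellular-cycle-isos} is a clean shortcut the paper does not use.

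The gap is in your treatment of exactness of the bottom row at the middle term. You correctly identify what is needed: a restricted class on $X$ that dies on $U$ lifts along the ambient equivariant localization sequence to some $\gamma\in\op{H}^{2(n-c)}_{\op{C}_2}(Z(\C),\Z(n-c))$, and one must show $\gamma$ is itself restricted. But your claim that this ``reduces via Borel--Haefliger to analogous statements for the mod-$2$ Chow group side'' does not do the work. The restricted condition involves \emph{all} the components $\alpha_p$ in the decomposition over $X(\R)$, not just the top one, and knowing that the Borel--Haefliger map is an isomorphism for cellular $X$ does not by itself prevent a non-restricted $\gamma$ on $Z$ from pushing forward to a restricted class on $X$. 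The paper confronts exactly this point: since $Z$ is an affine cell, the non-restricted part of $\op{H}^{2(n-c)}_{\op{C}_2}(Z(\C),\Z(n-c))$ consists of the $2$-torsion classes coming from $\op{H}^*(\op{B}C_2)$, and \cite[Theorem~1.20]{benoist:wittenberg} is invoked to show that the pushforward of any such class still violates the Steenrod-square constraint on $X$. You need this input (or a genuine substitute) to close the argument; the hand-wave to ``mod-$2$ Chow groups'' does not supply it.
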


\begin{proof}
  For $X=\op{Spec}\R$, we obtain by \cite[1.2.2]{benoist:wittenberg}
  \[
    \op{H}^{2k}_{\op{C}_2}(X(\mathbb{C}),\Z(k))=\left\{
      \begin{array}{ll}
        0 & k \textrm{ odd}\\
        \Z/2\Z & k>0 \textrm{ even}
      \end{array}\right.
  \]
  and the map \cite[(1.57)]{benoist:wittenberg} has the form
  \[
    \op{H}^{2k}_{\op{C}_2}(X(\mathbb{C}),\Z(k)) \to \bigoplus_{0\leq p\leq 2k, p\equiv 0\bmod 2} \op{H}^p(X(\R),\Z/2\Z)
  \]
  where the direct sum in the target has its only nontrivial term for $p=0$. In particular, for a class $\alpha\in \op{H}^{2k}_{\op{C}_2}(X(\mathbb{C}),\Z(k))$ the only potentially nontrivial component is $\alpha_0$, in the notation of \cite[1.6.3]{benoist:wittenberg}. Moreover, since $\op{H}^{2k}_{\op{C}_2}(X(\mathbb{C}),\Z(k)) \to \op{H}^0(X(\R),\Z/2\Z)$ is an isomorphism, $\alpha_0=0$ if and only if $\alpha=0$. This implies that $\op{H}^{2k}_{\op{C}_2}(X(\mathbb{C}),\Z(k))_0$ as defined in \cite[Definition~1.17]{benoist:wittenberg} is trivial: the generator $\alpha$ of $\op{H}^{2k}_{\op{C}_2}(X(\mathbb{C}),\Z(k))$ has the only non-trivial component $\alpha_0$ but then the condition $\alpha_0=\op{Sq}^{-2k}(\alpha_k)$ cannot be satisfied. In particular, for $X=\op{Spec}\R$, the equivariant cycle class map $\op{CH}^\bullet(X)\to \op{H}^{2\bullet}_{\op{C}_2}(X(\mathbb{C}),\Z(\bullet))_{(0)}$ of \cite{benoist:wittenberg} is an isomorphism. By homotopy invariance, this also implies that it is an isomorphism for $X=\mathbb{A}^n_{\R}$.

  We now want to argue by induction on the number of cells of $X$. As before, we have a diagram
  \[\xymatrix{
      \op{CH}^{n-c}(Z) \ar[r] \ar[d]^{\cong}
      & \op{CH}^n(X) \ar[r] \ar[d]
      & \op{CH}^n(U) \ar[r] \ar[d]
      & 0
      \\
      \op{H}^{2n-2c}_{\op{C}_2}(Z(\mathbb{C}),\Z(n-c))_0 \ar[r]
      & \op{H}^{2n}_{\op{C}_2}(X(\mathbb{C}),\Z(n))_0 \ar[r]
      & \op{H}^{2n}_{\op{C}_2}(U(\mathbb{C}), \Z(n))_0
    }\]
  The top row is the localization sequence for Chow groups. The vertical arrows are the restricted cycle class maps of \cite{benoist:wittenberg} and therefore commutativity of the diagram follows by the compatibility statements in 1.6.1 and 1.6.4 of \cite{benoist:wittenberg}. However, we don't know exactness of the lower row in general. In our special case where $Z$ is an affine space, we can argue as follows: we have the localization sequence for equivariant cohomology
  \[
    \op{H}^{2n-2c}_{\op{C}_2}(Z(\mathbb{C}),\Z(n-c)) \to \op{H}^{2n}_{\op{C}_2}(X(\mathbb{C}),\Z(n))\to  \op{H}^{2n}_{\op{C}_2}(U(\mathbb{C}), \Z(n))
  \]
  which is exact. The problem with exactness for the restricted cohomology is that a restricted class on $X(\mathbb{C})$ which vanishes on $U(\mathbb{C})$ may only come from an equivariant cohomology class on $Z(\mathbb{C})$ which may not be restricted. However, if $Z$ is an affine space, we know what the non-restricted classes look like: essentially, they are 2-torsion classes from the cohomology of $\op{C}_2$. By \cite[Theorem~1.20]{benoist:wittenberg}, the Steenrod square condition is still going to be violated for the pushforward of such a 2-torsion class. This implies the exactness in the middle of the lower row.

  Now we can use the inductive assumption that the right-hand vertical map is an isomorphism. This implies that the restriction morphism $\op{H}^{2n}_{\op{C}_2}(X(\mathbb{C}),\Z(n))_0\to  \op{H}^{2n}_{\op{C}_2}(U(\mathbb{C}), \Z(n))_0$ is surjective. The injectivity of the pushforward morphism $\Z\cong \op{H}^{0}_{\op{C}_2}(Z(\mathbb{C}),\Z)_0 \to \op{H}^{2c}_{\op{C}_2}(X(\mathbb{C}),\Z(c))_0$ also follows directly from non-triviality which can be checked on mod 2 reduction. The diagram chase then implies that the middle vertical morphism must be an isomorphism.
\end{proof}

\begin{corollary}
  For any smooth cellular variety $X$ over $\R$, the cycle class map in Corollary~\ref{CWcycleclassmap} is an isomorphism.
\end{corollary}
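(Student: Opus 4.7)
The plan is to factor the cycle class map as
\[
  \widetilde{\op{CH}}^n(X,\sheaf L) \xrightarrow{\;\alpha\;} \op{H}^n(X,\mathbf{I}^n(\mathcal{L})) \times_{\op{Ch}^n(X)} \ker\partial \xrightarrow{\;\beta\;} \op{H}^n(X(\R),\Z(\mathcal{L})) \times_{\op{H}^n(X(\R),\Z/2\Z)} \ker\psi
\]
as in the construction in Corollary~\ref{CWcycleclassmap}, and to verify that both $\alpha$ and $\beta$ are isomorphisms when $X$ is smooth cellular.

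For $\alpha$, one invokes \cite[Proposition~2.11]{hornbostelwendt}, which asserts that the natural map is always surjective and is injective whenever $\op{CH}^n(X)$ has no $2$-torsion. For smooth cellular $X$ over $\R$, \prettyref{prop:cellular-cycle-isos} (together with the standard fact that singular cohomology of $X(\C)$ is torsion-free in the cellular case) shows that $\op{CH}^n(X)$ is free abelian, so the $2$-torsion hypothesis is satisfied; hence $\alpha$ is an isomorphism.

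For $\beta$, one observes that it is induced by three maps on the vertices and edges of the defining pullback squares: Jacobson's real cycle class map $\op{H}^n(X,\mathbf{I}^n(\sheaf L)) \to \op{H}^n(X(\R),\Z(\sheaf L))$, which is an isomorphism by \prettyref{thm:cellular-iso}; the mod~$2$ Borel--Haefliger map $\op{Ch}^n(X) \to \op{H}^n(X(\R),\Z/2\Z)$, which is an isomorphism by \prettyref{prop:cellular-cycle-isos} (combined with \prettyref{prop:jacobson-vs-borel-haefliger}); and the restriction to $\ker\partial \to \ker\psi$ of the Benoist--Wittenberg equivariant cycle class map $\op{CH}^n(X) \to \op{H}^{2n}_{\op{C}_2}(X(\C),\Z(n))_0$, which is an isomorphism by \prettyref{prop:bw-iso}. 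For the latter, one uses that the Benoist--Wittenberg isomorphism is compatible with the mod~$2$ reductions (the kernels $\ker\partial$ and $\ker\psi$ being defined by compatible maps to $\op{Ch}^n(X)$ and $\op{H}^n(X(\R),\Z/2\Z)$, respectively, via \prettyref{prop:krasnov-vs-borel-haefliger} and \prettyref{prop:compatbockstein}), so its restriction to $\ker\partial$ factors through $\ker\psi$ and remains an isomorphism. Since the pullback of a commutative cube in which three parallel edges are isomorphisms is itself an isomorphism, $\beta$ is an isomorphism.

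The main obstacle, such as it is, lies not in the final diagram chase but in ensuring the genuine identification of the two ``Bockstein-style'' kernels: one needs the algebraic boundary $\partial\colon \op{CH}^n(X) \to \op{H}^{n+1}(X,\mathbf{I}^{n+1}(\sheaf L))$ to correspond, via the Benoist--Wittenberg and Jacobson cycle class maps, to the topological obstruction $\psi$ built from the Bockstein and restriction. This is exactly the content of \prettyref{prop:compatbockstein} combined with \prettyref{prop:jacobson-vs-borel-haefliger} and the setup preceding Corollary~\ref{CWcycleclassmap}; with those compatibilities in hand the identification $\alpha\circ\beta\cong \op{id}$-up-to-isomorphism is automatic, and the corollary follows.
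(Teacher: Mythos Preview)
Your proposal is correct and follows essentially the same approach as the paper: factor through the fiber product, use that $\op{CH}^n(X)$ is $2$-torsion-free for cellular $X$ to make $\alpha$ an isomorphism via \cite[Proposition~2.11]{hornbostelwendt}, and then conclude that $\beta$ is an isomorphism from \prettyref{thm:cellular-iso}, \prettyref{prop:cellular-cycle-isos}, and \prettyref{prop:bw-iso}, together with the compatibilities of \prettyref{prop:krasnov-vs-borel-haefliger} and \prettyref{prop:jacobson-vs-borel-haefliger}. Your explicit invocation of \prettyref{prop:compatbockstein} to match the two Bockstein-type kernels is a welcome clarification that the paper's proof leaves implicit.
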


\begin{proof}
  By \prettyref{prop:krasnov-vs-borel-haefliger}, the isomorphism $\op{CH}^n(X)\xrightarrow{\cong} \op{H}^{2n}_{\op{C}_2}(X(\mathbb{C}), \Z(n))_0$ of \prettyref{prop:bw-iso} induces an isomorphism between the kernel of $\partial\colon \op{CH}^n(X)\to \op{Ch}^n(X)\xrightarrow{\beta} \op{H}^{n+1}(X,\mathbf{I}^{n+1})$ and the kernel of the composition
  \[
    \op{H}^{2n}_{\op{C}_2}(X(\mathbb{C}),\Z(n))_0\to \op{H}^n(X(\mathbb{R}),\Z/2\Z)\xrightarrow{\beta} \op{H}^{n+1}(X(\mathbb{R}),\Z).
  \]
  Using the compatibility of \prettyref{prop:jacobson-vs-borel-haefliger}, the above isomorphism combined with the isomorphisms $\op{H}^n(X,\mathbf{I}^n)\xrightarrow{\cong}  \op{H}^n(X(\R),\Z)$ of \prettyref{thm:cellular-iso} imply that the map in Corollary~\ref{CWcycleclassmap} is an isomorphism because $\op{CH}^n(X)$ is 2-torsion-free for cellular $X$.
\end{proof}
\begin{remark}\label{rem:base-fields-for-cellular-applications}
  We could choose base fields \(F\) slightly more general than \(F=\R\).  For \prettyref{prop:mult2-barI}, we just need that the conclusion holds for \(\Spec(F)\), i.e.,\ that
  \(
  \pfist{-1}\colon \bar{\op{I}}^j(F) \to \bar{\op{I}}^{j+1}(F)
  \)
  is an isomorphism for all \(j\geq 0\).

  For \prettyref{prop:mult2-I},
  it should similarly suffice to assume that
  \(
  \pfist{-1}\colon \op{I}^j(F) \to \op{I}^{j+1}(F)
  \)
  is an isomorphism for all \(j\geq 0\). By \prettyref{lem:field-condition} below, this (stronger) condition implies that \(\vcd_2(F) = 0\).
  The latter condition ensures that, in the proof of \prettyref{prop:mult2-I}, multiplication by \(\pfist{-1}\) still induces isomorphisms \(\I^j\cong \I^{j+1}\) for sufficiently large~\(j\) (as in the proof of \cite[Theorem~4.5]{jacobson-pre2}).

  \prettyref{thm:cellular-iso} also follows for smooth cellular \(X\) over any field \(F\) satisfying the conditions of \prettyref{lem:field-condition}, provided we replace singular by real cohomology.
\end{remark}

\begin{lemma}\label{lem:field-condition}
  For a field \(F\) of characteristic \(\neq 2\), the following conditions are equivalent (as always, \(\op{I}^0(F) = \op{W}(F)\) by convention):
  \begin{enumerate}
  \item \(\vcd_2(F) = 0\) and \(F\) is not quadratically closed
  \item \(\pfist{-1}\colon \op{I}^j(F)\to \op{I}^{j+1}(F)\) is an isomorphism for all \(j\geq 0\)
  \end{enumerate}

\end{lemma}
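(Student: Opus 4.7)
\emph{Plan.} The key is to translate both conditions into statements about the mod~$2$ Galois cohomology of $F$ via the Milnor isomorphism
\(
\op{I}^n(F)/\op{I}^{n+1}(F) \cong \op{H}^n_{\et}(F, \Z/2)
\)
(a consequence of Voevodsky's solution to the Milnor conjecture, already invoked elsewhere in the paper), under which multiplication by $\pfist{-1}$ becomes the cup product with the class $(-1) \in \op{H}^1(F, \Z/2) = F^*/(F^*)^2$. The bridge between $F$ and $F(\sqrt{-1})$ is Arason's long exact sequence
\[
  \cdots \to \op{H}^n(F, \Z/2) \xrightarrow{\cup(-1)} \op{H}^{n+1}(F, \Z/2) \to \op{H}^{n+1}(F(\sqrt{-1}), \Z/2) \to \op{H}^{n+1}(F, \Z/2) \to \cdots,
\]
from which $\cup(-1)$ being an isomorphism in all degrees is equivalent to $\op{cd}_2(F(\sqrt{-1})) = 0$.

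For (2)~$\Rightarrow$~(1), I would iterate the hypothesis to obtain an isomorphism $\pfist{-1}^n \colon \op{W}(F) \xrightarrow{\cong} \op{I}^n(F)$, which gives $\op{I}^n(F)/\op{I}^{n+1}(F) \cong \op{W}(F)/\op{I}(F) = \Z/2$ for every $n \geq 0$, with the induced map on successive quotients an isomorphism of copies of $\Z/2$. Translating, $\op{H}^n_{\et}(F, \Z/2) = \Z/2$ and $\cup(-1)$ is an isomorphism for all $n \geq 0$. Arason's sequence then forces $\vcd_2(F) = \op{cd}_2(F(\sqrt{-1})) = 0$, while $F^*/(F^*)^2 \neq 0$ shows $F$ is not quadratically closed.

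For (1)~$\Rightarrow$~(2), I would prove the stronger assertion that $F$ is a \emph{Euclidean} field (formally real, Pythagorean, and uniquely ordered with positives equal to squares). Once this is established, $\op{W}(F) \cong \Z$ via the signature, $\op{I}^j(F) = 2^j\Z$, and $\pfist{-1}$ maps to $\pm 2 \in \Z$, so multiplication by $\pfist{-1}$ is clearly an isomorphism $2^j\Z \to 2^{j+1}\Z$. To see that $F$ is Euclidean: $\vcd_2(F) = 0$ means $F(\sqrt{-1})$ is quadratically closed, so for all $a, b \in F$ we can write $a + bi = (c+di)^2$; comparing real and imaginary parts yields $a^2 + b^2 = (c^2 + d^2)^2$, so every sum of two squares in $F$ is a square, and inductively $F$ is Pythagorean. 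Non-quadratic-closedness of $F$ forces $-1 \notin (F^*)^2$, hence (by Pythagorean) $-1$ is not a sum of squares, so $F$ is formally real. The Arason sequence in low degrees gives $F^*/(F^*)^2 \cong \Z/2$, so every $a \in F^*$ is $\pm$ a square; combined with formally real and Pythagorean, this produces a unique ordering in which the positives are exactly the squares, making $F$ Euclidean.

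The main obstacle is the passage from isomorphisms on the quotients $\op{I}^n/\op{I}^{n+1}$ to isomorphisms on $\op{I}^n$ itself in the direction (1)~$\Rightarrow$~(2). A naive five-lemma induction on the B\"ar short exact sequence does not terminate, since there is no natural base case. The structural identification of $F$ as Euclidean sidesteps this issue by reducing the problem to the trivial computation $\op{W}(F) \cong \Z$ with $\op{I}^j(F) = 2^j\Z$.
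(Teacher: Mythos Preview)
Your proof is correct but takes a genuinely different route from the paper's.

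The paper's argument stays entirely within Witt theory: it identifies $\pfist{-1}$ with $2\in\op{W}(F)$, observes that $2\colon \op{I}^j(F)\to \op{I}^{j+1}(F)$ is injective iff $\op{I}^j(F)$ is torsion-free (since Witt torsion is $2$-primary), and then invokes a lemma of Jacobson stating that $\vcd_2(F)=0$ is equivalent to $\op{I}(F)$ being torsion-free together with surjectivity of $2\colon \op{I}^j(F)\to \op{I}^{j+1}(F)$ for all $j\geq 0$. This already handles $j\geq 1$ under either hypothesis; the remaining $j=0$ case (injectivity of $2\colon \op{W}(F)\to \op{I}(F)$ versus $\op{I}(F)\neq 0$, i.e.\ $F$ not quadratically closed) is dispatched by a short diagram chase with the B\"ar sequences. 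No Galois cohomology, no Arason sequence, no structure theory of fields.

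Your approach trades this directness for more external machinery (Milnor conjecture, Arason's long exact sequence) but delivers a bonus: you actually identify the fields satisfying these equivalent conditions as precisely the Euclidean fields, which the paper does not make explicit. The paper's proof is shorter and more self-contained modulo the cited lemma from Jacobson; yours is more structural and makes the endpoint $\op{W}(F)\cong\Z$, $\op{I}^j(F)=2^j\Z$ completely transparent.
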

\begin{proof}
  In the proof, we identify \(\pfist{-1}\) with \(2\in \op{W}(F)\). As the torsion in the Witt ring of a field is \(2\)-primary \cite[\S\,31]{elman-karpenko-merkurjev}, \(2\colon \op{I}^j(F)\to \op{I}^{j+1}(F)\) is injective if and only if \(\op{I}^j(F)\) is torsion-free.
  By \cite[Lem.~2.4]{jacobson-pre2}, \(\vcd_2(F)=0\) if and only if \(\op{I}(F)\) is torsion-free and \(2\colon \op{I}^j(F)\to \op{I}^{j+1}(F)\) is surjective for all \(j\geq 0\).
  The field \(F\) is quadratically closed if and only if \(\op{I}(F)\neq 0\) \cite[Lem.~31.3]{elman-karpenko-merkurjev}.  Thus, under either condition, \(2\colon \op{W}(F)\to \op{I}(F)\) is surjective, \(2\colon \op{I}^j(F) \to \op{I}^{j+1}(F)\) is an isomorphism for all \(j\geq 1\), and \(\op{I}(F)\) is torsion-free.   We need to show that, in this situation, \(\op{I}(F)\) is non-zero if and only if \(2\colon \op{W}(F)\to \op{I}(F)\) is injective.  This can be checked using the following commutative diagram:
  \[\xymatrix{
      0 \ar[r] & \op{I}(F) \ar[r]\ar[d]_{\cong}^{2} & \op{W}(F) \ar[r]\ar@{->>}[d]^{2} & \Z/2 \ar[r]\ar@{->>}[d]^{2} & 0\\
      0 \ar[r] & \op{I}^2(F) \ar[r] & \op{I}(F) \ar[r] & \op{I}(F)/\op{I}^2(F) \ar[r] & 0
    }\]
  In our situation, the first vertical arrow is an isomorphism, the second and (hence) the third vertical arrows are surjective, and \(\op{I}(F)/\op{I}^2(F)=\op{I}(F)/2\op{I}(F)\).  Given that \(\op{I}(F)\) is torsion-free, \(\op{I}(F)\) is non-zero if and only if \(\op{I}(F)/2\op{I}(F)\) is non-zero, which is equivalent to the third vertical map being injective, which is equivalent to the central vertical map being injective, as claimed.
\end{proof}

\section{Characteristic classes}
\label{sec:characteristic}

In this section, we discuss the compatibility of the real cycle class maps with characteristic classes of vector bundles. For spaces like ${\op{B}}\op{GL}_n$ or the finite Grassmannians this will imply a strengthening of \prettyref{thm:cellular-iso}: not only is the real cycle class map a ring isomorphism, it also maps the natural generators (characteristic classes of the tautological bundles) in the algebraic setting of \(\I\)-cohomology to their topological counterparts. This also settles a point which was only mentioned but not fully proved in \cite{hornbostelwendt} and \cite{real-grassmannian}.

\begin{proposition}
  \label{prop:compateuler}
  Let $X$ be  a smooth scheme over $\R$ and let $\mathcal{E}\to X$ be a vector bundle of rank $n$ over $X$. Then the real cycle class map
  \[
    \op{H}^n(X,\mathbf{I}^n(p^*\det\mathcal{E}^\vee))\to \op{H}^n(X(\R),\Z(p^*\det\mathcal{E}^\vee))
  \]
  maps the algebraic Euler class to the topological Euler class.
\end{proposition}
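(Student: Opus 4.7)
The plan is to reduce the claim to the compatibility of the real cycle class map with Thom classes (Proposition~\ref{prop:thom-class-comparison}) and with pullbacks (Proposition~\ref{prop:pullbacks-compatible}), via the description of the Euler class as the pullback of the Thom class along the zero section.

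More concretely, write $p\colon \mathcal{E}\to X$ for the bundle projection and $s\colon X\to \mathcal{E}$ for the zero section. By definition (see \prettyref{def:algebraic-thom-class} and \prettyref{sec:thomclass}) the algebraic Euler class $e^{\op{alg}}(\mathcal{E})\in \op{H}^n(X,\mathbf{I}^n(\det\mathcal{E}^\vee))$ equals $s^\ast\,\op{th}^{\op{alg}}(\mathcal{E})$, where we use the canonical identification $s^\ast p^\ast\det\mathcal{E}^\vee\cong\det\mathcal{E}^\vee$ together with the extension-of-support map $\op{H}^n_X(\mathcal{E},-)\to \op{H}^n(\mathcal{E},-)$; the same description holds topologically for $e^{\op{top}}(\mathcal{E}(\R))\in \op{H}^n(X(\R),\Z(\det\mathcal{E}^\vee))$ (cf.\ \prettyref{sec:singcohomology} and \cite[Chapter~9]{milnorstasheff}).

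The first step is then to invoke \prettyref{prop:thom-class-comparison}, which guarantees that the real cycle class map with supports sends $\op{th}^{\op{alg}}(\mathcal{E})$ to $\op{th}^{\op{top}}(\mathcal{E}(\R))$ inside $\op{H}^n_{X(\R)}(\mathcal{E}(\R),\Z(p^\ast\det\mathcal{E}^\vee))$. The second step is to apply \prettyref{prop:pullbacks-compatible} to the morphism $s$, which provides a commutative square
\[
  \xymatrix@C=4em{
    \op{H}^n_X(\mathcal{E},\mathbf{I}^n(p^\ast\det\mathcal{E}^\vee)) \ar[r]\ar[d]^{s^\ast}
    & \op{H}^n_{X(\R)}(\mathcal{E}(\R),\Z(p^\ast\det\mathcal{E}^\vee)) \ar[d]^{s(\R)^\ast}
    \\
    \op{H}^n(X,\mathbf{I}^n(\det\mathcal{E}^\vee)) \ar[r]
    & \op{H}^n(X(\R),\Z(\det\mathcal{E}^\vee))
  }
\]
relating pullbacks with supports in both theories; note that $s^{-1}(X)=X$ so the support hypothesis of \prettyref{def:Go-pullback-with-support} is satisfied. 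Chasing the Thom class around this diagram yields the claim, once we also observe that the real cycle class map is compatible with the extension-of-support morphisms (\prettyref{rem:realization-support}).

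There is essentially no obstacle here beyond bookkeeping: the genuine content has already been established in \prettyref{prop:thom-class-comparison} and \prettyref{prop:pullbacks-compatible}, and all that remains is to check that the algebraic and topological definitions of the Euler class being used are indeed $s^\ast\op{th}$ (composed with forgetting support) in their respective settings, and that the canonical identification $s^\ast p^\ast\det\mathcal{E}^\vee\cong\det\mathcal{E}^\vee$ is respected by real realization---both of which are immediate.
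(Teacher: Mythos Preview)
Your argument is correct. It differs from the paper's proof only in packaging: the paper writes the algebraic Euler class as $(p^\ast)^{-1}s_\ast\langle 1\rangle$ and then invokes compatibility with pullbacks (\prettyref{prop:pullbacks-compatible}) and with pushforwards (\prettyref{thm:pushforwards-compatible}), whereas you write it as $s^\ast\op{th}(\mathcal{E})$ and invoke compatibility with Thom classes (\prettyref{prop:thom-class-comparison}) together with pullbacks. Since for the zero section of a vector bundle the normal bundle is the bundle itself and the deformation isomorphism is the identity, $s_\ast\langle 1\rangle$ is just the Thom class (after forgetting support), and $(p^\ast)^{-1}=s^\ast$; so the two descriptions coincide. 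Your route is marginally more economical in that it appeals only to \prettyref{prop:thom-class-comparison} rather than the full \prettyref{thm:pushforwards-compatible} (which additionally requires the tubular-neighbourhood comparison of \prettyref{prop:comparedefconetubnbhd}), but the underlying content is the same.
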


\begin{proof}
  The algebraic Euler class is defined as $\op{e}_n(\mathcal{E}):=(p^\ast)^{-1}s_\ast\langle 1\rangle$ where $p\colon \mathcal{E}\to X$ is the structure morphism of the vector bundle, $s\colon X\to \mathcal{E}$ is the zero section and $\langle 1\rangle$ is the image of the natural element of $\op{W}(F)$  under the morphism $\op{W}(F)\cong\op{H}^0(\op{Spec}\R,\mathbf{I}^0)\to \op{H}^0(X,\mathbf{I}^0)$ induced from the structure morphism $X\to\op{Spec}\R$.

  The topological definition is the same, cf.\ e.g.\ the end of the proof of Theorem~4D.10 in \cite{hatcher} in the case of orientable vector bundles. The claim then follows from the compatibility of real cycle class maps with pullbacks and pushforwards, cf.\ \prettyref{prop:pullbacks-compatible} and \prettyref{thm:pushforwards-compatible}.
\end{proof}

\begin{remark}
  A similar statement is true for the characteristic classes in the real-\'etale cohomology, as well as the characteristic classes in semi-algebraic cohomology over real closed fields. We won't need these statements here.
\end{remark}

\begin{theorem}
  \label{thm:glnret}
  Let $X$ be one of the spaces ${\op{B}}\op{Sp}_{2n}$, ${\op{B}}\op{SL}_n$, ${\op{B}}\op{GL}_n$, or $\op{Gr}(k,n)$ over the base field $\R$.
  The real cycle class map induces an isomorphism
  \[
    \bigoplus_{j,\sheaf L} \op{H}^j(X, \mathbf{I}^j(\sheaf L)) \xrightarrow{\cong}  \bigoplus_{j,\sheaf L} \op{H}^j(X(\R),\Z(\sheaf L)),
  \]
  and this identification maps the algebraic characteristic classes (Euler class, Pontryagin classes, Bockstein classes) to their topological counterparts.
\end{theorem}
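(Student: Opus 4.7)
The plan is to combine the cellular ring isomorphism of \prettyref{thm:cellular-iso} with the compatibility results of \prettyref{sec:compatibilities} to obtain both the ring isomorphism and the identification of characteristic classes, treating the finite Grassmannian directly and passing to filtered colimits for the classifying spaces.

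First, for $\op{Gr}(k,n)$, which is smooth projective cellular, \prettyref{thm:cellular-iso} and \prettyref{cor:realization-is-ring-homo} immediately yield the asserted isomorphism of bigraded rings summed over all twists. For the classifying spaces ${\op{B}}\op{GL}_n$, ${\op{B}}\op{SL}_n$, ${\op{B}}\op{Sp}_{2n}$, I would adopt the standard approximation approach: model each as a sequential colimit along open inclusions of smooth cellular varieties (for example, open subvarieties of sufficiently large faithful representation spaces, à la Totaro, or the usual Stiefel-type geometric models). Since these approximations are smooth cellular, \prettyref{thm:cellular-iso} applies to each of them. The isomorphism then passes to the limit: on the algebraic side this uses \prettyref{lem:cohomology-of-colimits} to commute $\I$-cohomology with the colimit, while on the topological side one uses the standard fact that in any fixed cohomological degree the classifying space is computed by a sufficiently large finite approximation. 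The ring and twist structure persists in the limit because \prettyref{cor:realization-is-ring-homo} is natural.

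Second, for the compatibility of characteristic classes I would treat the three families separately. The algebraic Euler class of the tautological bundle maps to the topological Euler class of its real realization by \prettyref{prop:compateuler}. For Pontryagin (respectively Borel/symplectic Pontryagin) classes, both the algebraic classes of \cite{hornbostelwendt, real-grassmannian} and their topological counterparts are built universally out of simpler classes via pullbacks, pushforwards and cup products (for instance via the splitting principle applied to a suitable flag bundle, reducing to Euler classes of line subbundles or rank-two symplectic subbundles). Consequently, \prettyref{prop:pullbacks-compatible}, \prettyref{thm:pushforwards-compatible} and \prettyref{prop:cup-products-compatible} together with the Euler class compatibility force the algebraic Pontryagin classes to map to the topological ones. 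The Bockstein classes on ${\op{B}}\op{GL}_n$ are handled by \prettyref{prop:compatbockstein}.

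The main obstacle I expect is bookkeeping rather than a genuine mathematical difficulty: carefully matching the conventions (indexing, signs, choice of twist) between the algebraic characteristic classes as defined in \cite{hornbostelwendt} and \cite{real-grassmannian} and the classical topological Pontryagin and Bockstein classes. Once this identification is fixed on the universal example of each group, naturality together with the compatibility of the real cycle class map with all the relevant structural operations propagates the correspondence to every $X$ in the list, and the theorem follows.
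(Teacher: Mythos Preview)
Your proposal is correct and largely parallels the paper's proof: both derive the ring isomorphism from \prettyref{thm:cellular-iso}, the Euler class compatibility from \prettyref{prop:compateuler}, and the Bockstein compatibility from \prettyref{prop:compatbockstein}. The main substantive difference is in the treatment of Pontryagin classes. The paper does not invoke a splitting principle; instead it uses that the symplectic Pontryagin (Borel) classes are obtained from the Euler class by stabilization (citing \cite[Proposition~4.3]{hornbostelwendt}), and that the Pontryagin classes for $\op{SL}_n$ and $\op{GL}_n$ arise by pulling back along symplectification. This is shorter because it matches the actual definitions in \cite{hornbostelwendt}, so compatibility reduces immediately to naturality plus the Euler case. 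Your splitting-principle route would also work but requires you to verify injectivity of pullback to the flag bundle in $\I$-cohomology, which is extra work. One small correction: \prettyref{lem:cohomology-of-colimits} concerns colimits of coefficient sheaves on a fixed Noetherian space, not colimits of spaces, so it does not apply to the passage from finite approximations to $\op{B}G$; the correct mechanism is exactly the stabilization-in-fixed-degree argument you already sketch, on both the algebraic and topological sides. The paper's proof is in fact silent on this colimit issue, so your explicit treatment of it is an improvement in exposition.
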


\begin{proof}

  Compatibility of the Euler class with the identification with singular cohomology under the real cycle class map is established in \prettyref{prop:compateuler}. Compatibility of the Pontryagin classes with the real cycle class map then follows from this: the Pontryagin (or Borel) classes for the symplectic groups are obtained from the Euler class by stabilization, cf.\ \cite[Proposition~4.3]{hornbostelwendt}. The Pontryagin classes for $\op{SL}_n$ and $\op{GL}_n$ are then simply obtained as characteristic classes of the symplectification. For the Stiefel--Whitney classes, we note that these also are uniquely defined by stabilization and the fact that the top class should be the reduction of the Euler class. Then \prettyref{thm:cellular-iso} (or better its $\overline{\mathbf{I}}^\bullet$-version) implies that the real cycle class maps induce isomorphisms
  \[
    \bigoplus_i \op{Ch}^i(X)\xrightarrow{\cong} \bigoplus_i \op{H}^i(X,\overline{\I}^i)\xrightarrow{\cong} \bigoplus_i \op{H}^i(X(\R),\Z/2\Z)
  \]
  for the spaces $X$ in the statement. The compatibility of Bockstein classes with the real realization maps then follows from this and the compatibility with the Bockstein maps in \prettyref{prop:compatbockstein}.
\end{proof}

\begin{remark}
  As a special case, the ring structure for $\bigoplus_{j,\sheaf L}\op{H}^j(\mathbb{P}^n,\mathbf{I}^j(\sheaf L))$ determined in \cite{fasel:ij}, cf.\ also \cite[Propositions 4.5 and 6.5]{real-grassmannian} completely agrees with the presentation known in topology. The natural generators in both cases are the Euler classes $\op{e}_1$ and $\op{e}_{n}^\perp$ of the tautological line bundle and its complement (and the orientation class whenever $\mathbb{P}^n$ is orientable); and the above result shows that these natural generators correspond under the real realization maps. The same statement is true for the Grassmannians.
\end{remark}

\appendix
\section{A Gersten complex for real cohomology}\label{sec:app}
The purpose of this section is to give an alternative proof of \prettyref{thm:pushforwards-compatible}.
\subsection{The Rost--Schmid complex}
The references for this section are \cite{Schmidt} and \cite{fasel:memoir}. Let $X$ be a smooth integral  scheme of Krull dimension $n$ over a field $k$.  The Rost--Schmid complex $\op {C}^{RS}(X, \W)$ for Witt groups is defined as
\[ \xymatrix @ -1pc { \bigoplus_{x \in X^{(0)}} \op{W}(k(x), \omega_x) \ar[r] &  \bigoplus_{x \in X^{(1)}} \op{W}(k(x), \omega_x)  \ar[r] & \cdots \ar[r]  &  \bigoplus_{x \in X^{(n)}} \op{W}(k(x), \omega_x)   }   \]
where $\omega_x$ denotes for any point $x \in X^{(j)}$ the vector space $\Hom_{\mathcal{O}_{X,x}}(\det \mathfrak{m}_{X,x}/\mathfrak{m}_{X,x}^2, \mathcal{O}_{X,x})$. Here, $\Hom_{\mathcal{O}_{X,x}}(\det \mathfrak{m}_{X,x}/\mathfrak{m}_{X,x}^2, \mathcal{O}_{X,x})$ is canonically isomorphic to   $\textnormal{Ext}^j_{\mathcal{O}_{X,x}} (k(x), \mathcal{O}_{X,x})$.
The differential
\[ \partial^i_{RS}\colon \bigoplus_{x \in X^{(i)}} \op{W}(k(x), \omega_x) \rightarrow \bigoplus_{y \in X^{(i+1)}} \op{W}(k(y), \omega_y)   \]
may be described geometrically by the following composition (see e.g. \cite[section 7.3]{fasel:memoir}).
\[ \op{W}(k(x), \omega_{k(x)/k}) \stackrel{\oplus\partial}\longrightarrow \bigoplus_{ \tilde{y}} \op{W}(k(\tilde{y}), \omega_{k(\tilde{y})/k}) \stackrel{\Sigma\textnormal{tr}}\longrightarrow  \op{W}(k(y), \omega_{k(y)/k})  \]
where the sum is taken over all points $\tilde{y}$ that dominate y and
live inside the normalization $\widetilde{\overline{\{x\}}}$ of $\overline{\{x\}}$
in its residue field $k(x)$. Note that $\mathcal{O}_{\widetilde{\overline{\{x\}}}, \tilde{y}}$ is the integral closure of the one dimensional local domain $\mathcal{O}_{\overline{\{x\}}, y} $ in its field of fractions which is isomorphic to $k(x)$. Therefore,  $\mathcal{O}_{\widetilde{\overline{\{x\}}}, \tilde{y}}$ is a discrete valuation ring with maximal ideal $\mathfrak{m}_{\tilde{x},\tilde{y}}$ and the uniformizing parameter $\pi$.
The first map $\partial$ is induced by the Rost--Schmid residue map
\[ \delta\colon \op{W}(k(x)) = \op{W}(k(\tilde{x})) \rightarrow \op{W}(k(\tilde{y}), (\mathfrak{m}_{\widetilde{\overline{\{x\}}}, \tilde{y}}/\mathfrak{m}^2_{\widetilde{\overline{\{x\}}}, \tilde{y}})^* )\]
defined by sending
a form $\langle a \rangle$ on $k(x)$ to $ \delta_2^{\pi_{\tilde{y}}}(\langle a \rangle) \otimes \pi^*$
where $\delta_2$ is Milnor's second residue and $\pi^*$ is the dual basis of the basis $\pi$ in the free rank one $k(\tilde{y})$ vector space  $\mathfrak{m}_{\widetilde{\overline{\{x\}}}, \tilde{y}}/\mathfrak{m}^2_{\widetilde{\overline{\{x\}}}, \tilde{y}}$. The map $\partial$ is obtained by twisting $\delta$ with the rank one free $k(x)$ vector space $\omega_{k(x)/k}$ in view of the following canonical isomorphism
\[ \omega_{k(\tilde{y})/k}\cong \omega_{k(\tilde{x})/k} \otimes  ( \mathfrak{m}_{\widetilde{\overline{\{x\}}}, \tilde{y}}/\mathfrak{m}^2_{\widetilde{\overline{\{x\}}}, \tilde{y}})^*  \] Note that the differential $\partial$ does not depend on the choice of uniformizing parameter. The map $\textnormal{tr}$ is the twisted transfer on Witt groups, for the field extension $k(\tilde{y})$ of $k(y)$, cf.\ \cite[6.4]{fasel:memoir}.

For any line bundle $\mathcal{L}$ on $X$, we have a twisted Rost-Schmid complex $\op{C}^{RS}(X, \W,\mathcal{L})$ for Witt groups
\[ \xymatrix @ -1pc { \bigoplus_{x \in X^{(0)}} \op{W}(k(x), \omega_x^\mathcal{L}) \ar[r] &  \bigoplus_{x \in X^{(1)}} \op{W}(k(x), \omega_x^{\mathcal{L}})  \ar[r] & \cdots \ar[r]  &  \bigoplus_{x \in X^{(n)}} \op{W}(k(x), \omega_x^\mathcal{L})   }   \]  defined by a similar natural above.  It is known that the Rost--Schmid complex coincides with the Gersten complex defined by Balmer--Walter (reviewed in Section \ref{sec:construction-Gersten}) \cite[Section 7]{fasel:memoir}.

\subsection{The Jacobson--Scheiderer complex and a generalization}
Let $X$ be a smooth integral  scheme of Krull dimension $d$ over a field $k$ with a line bundle $\mathcal{L}$. Let $x \in X^{(i)}, y \in X^{(i+1)}$. Our  goal in this section is to define a twisted differential
\[\partial_{re}\colon  \op{C}(k(x)_r, \Z({\omega_{k(x)/k}})) \rightarrow \op{C}(k(y)_r, \Z({\omega_{k(y)/k}}))\]
cf. Section \ref{sec:tr} and Definition \ref{def:realdiff} below. This will be defined by a twisted residue map followed by a twisted transfer map on the real spectrum.
Here and in all that follows, we use the short-hand \(F_r := \Sper F\) for a field \(F\).
\subsubsection{Twisted residue} \label{sec:tr}
Let $R$ be a d.v.r with field of fractions $F$ and let $\mathfrak{m}$ be its maximal ideal. Let $k_\mathfrak{m}$ be it is residue field. Choose a uniformizing parameter $\pi$ of $R$. Suppose $P$ is an ordering on $F$. We say $R$ is \textit{convex} on $(F,P)$ whenever for all $x,y,z \in F$, $x \leq_P z \leq_P y$ and  $x,y \in R$ implies $z \in R$. For any ordering $\bar{\xi} \in k_\mathfrak{m}$, the set
\[
Y_{\bar{\xi}}:= \{P \in \Sper F: R \textnormal{ is convex in } (F,P), \textnormal{ and $\bar{\xi} = \bar{P}$ on $k_\mathfrak{m}$}  \}
\]
maps bijectively to $\{\pm 1\}$ by sending $P$ to $\sign_P(\pi)$
\cite[\S\,3]{jacobson}. Denote $\xi_\pm^\pi$ the ordering on $F$ such that $\sign_{\xi_\pm^\pi}(\pi) = \pm1$. Jacobson \cite[\S\,3]{jacobson} defines a residue map (a group homomorphism)
\begin{align*}
  \beta_\pi\colon \op{C}(F_r, \Z) &\rightarrow \op{C}((k_\mathfrak{m})_r, \Z)\\
  s &\mapsto \left(\bar{\xi}\mapsto s(\xi_+^\pi) - s(\xi_-^\pi)\right)
\end{align*}
This map depends on the choice of uniformizing parameter. We modify it slightly in the spirit of Rost--Schmid. For a field $Q$ and a rank one $Q$-module $H$, we define $\op{C}(Q_r, \Z(H))$ as the group
\[
  \op{C}(Q_r, \Z) \otimes_{\Z[Q^\times]} \Z[H^\times]
\]
where the underlying map
\begin{align*}
  \zeta\colon \Z[Q^\times] &\rightarrow \op{C}(Q_r, \Z)
\shortintertext{is the ring homomorphism given by }
                        \left(\textstyle\sum n_a a\right) &\mapsto \left( \xi \mapsto \sum n_a \sign_\xi a\right).
\end{align*}
\begin{definition}
The twisted residue map is the group homomorphism
\begin{align*}
          \beta\colon \op{C}(F_r, \Z) &\rightarrow \op{C}((k_{\mathfrak m})_r, \Z({(\mathfrak{m}/\mathfrak{m}^2)^*}))\\
          s &\mapsto \beta_\pi(s) \otimes \pi^*
\end{align*}
where $\pi^*$ is the dual basis of the basis $\pi$ of the free rank one $k_{\mathfrak{m}}$-vector space  $\mathfrak{m}/\mathfrak{m}^2$.\
\end{definition}

\begin{lemma}
The twisted residue map $\beta$ does not depend on the choice of the uniformizing parameter.
\end{lemma}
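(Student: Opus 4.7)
The plan is to compare the definitions of $\beta$ for two different uniformizers $\pi$ and $\pi' = u\pi$, where $u \in R^\times$, and show that the resulting tensors $\beta_{\pi'}(s)\otimes(\pi')^\ast$ and $\beta_\pi(s)\otimes\pi^\ast$ coincide in $\op{C}((k_{\mathfrak m})_r,\Z) \otimes_{\Z[k_{\mathfrak m}^\times]} \Z[(\mathfrak{m}/\mathfrak{m}^2)^{\ast,\times}]$.

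First I would translate the question across the tensor product. Denote by $\bar u \in k_{\mathfrak m}^\times$ the residue of $u$. Then $(\pi')^\ast = \bar u^{-1}\cdot \pi^\ast$ in the free rank-one $k_{\mathfrak m}$-vector space $(\mathfrak{m}/\mathfrak{m}^2)^\ast$, so moving $\bar u^{-1}$ across the tensor product reduces the claim to the identity
\[
\beta_{\pi'}(s) = \zeta(\bar u^{-1})\cdot \beta_\pi(s) \qquad \text{in }\op{C}((k_{\mathfrak m})_r,\Z),
\]
i.e.\ $\beta_{\pi'}(s)(\bar\xi) = \sign_{\bar\xi}(\bar u)\cdot \beta_\pi(s)(\bar\xi)$ for every ordering $\bar\xi$ on $k_{\mathfrak m}$.

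Second, I would analyse how the two lifts $\xi_\pm^{\pi'}$ and $\xi_\pm^\pi$ of $\bar\xi$ are related. The key input is that for any $P \in Y_{\bar\xi}$ and any unit $u \in R^\times$ one has $\sign_P(u) = \sign_{\bar\xi}(\bar u)$; this follows from the convexity of $R$ in $(F,P)$, since $\bar u$ is well-defined as the class of $u$ in $k_{\mathfrak m}$ and convexity forces the sign of a unit to descend. Using $\pi' = u\pi$ and multiplicativity of the sign, $\sign_P(\pi') = \sign_{\bar\xi}(\bar u)\cdot \sign_P(\pi)$ for any $P\in Y_{\bar\xi}$. Hence if $\sign_{\bar\xi}(\bar u) = +1$ we have $\xi_\pm^{\pi'} = \xi_\pm^\pi$, while if $\sign_{\bar\xi}(\bar u) = -1$ we have $\xi_\pm^{\pi'} = \xi_\mp^\pi$.

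Third, I would substitute into the definition $\beta_{\pi'}(s)(\bar\xi) = s(\xi_+^{\pi'}) - s(\xi_-^{\pi'})$: in the first case this equals $s(\xi_+^\pi) - s(\xi_-^\pi) = \beta_\pi(s)(\bar\xi)$, and in the second case it equals $s(\xi_-^\pi) - s(\xi_+^\pi) = -\beta_\pi(s)(\bar\xi)$. Both cases produce exactly $\sign_{\bar\xi}(\bar u)\cdot\beta_\pi(s)(\bar\xi)$, which is what we wanted.

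The argument has no serious obstacle; the only delicate point is the descent of signs of units from $(F,P)$ to $(k_{\mathfrak m},\bar\xi)$ for $P\in Y_{\bar\xi}$, but this is a standard consequence of convexity and is already implicit in Jacobson's bijection $Y_{\bar\xi}\cong\{\pm 1\}$ used in the definition of $\beta_\pi$.
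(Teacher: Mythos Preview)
Your argument is correct and follows essentially the same route as the paper's proof: both compare two uniformizers differing by a unit $u\in R^\times$, use that $\sign_P(u)=\sign_{\bar\xi}(\bar u)$ for $P\in Y_{\bar\xi}$ to obtain $\beta_{u\pi}(s)(\bar\xi)=\sign_{\bar\xi}(\bar u)\,\beta_\pi(s)(\bar\xi)$, and then cancel this sign against the factor coming from $(u\pi)^*=\bar u^{-1}\pi^*$ after moving it across the tensor product. Your exposition is simply more detailed than the paper's compressed computation.
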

\begin{proof}
Let $a \pi$ be another uniformizing parameter with $a \in R - \mathfrak{m}$.
Note that $\beta_{a\pi}(s)(\bar{\xi})  = \sign_P(a)\beta_\pi(s)(\bar{\xi})$ and $\sign_P(a) = \sign_{\bar{\xi}}(a)$.
It follows that
\[
\beta_{a\pi}(s)  \otimes {a\pi}^* = \beta_{a\pi}(s) \zeta(a)  \otimes {\pi}^*
\]
Now, $\beta_{a\pi}(s) \zeta(a)(\bar{\xi})= \sign_{\bar{\xi}}(a)^2\beta_\pi(s)(\bar{\xi})   =\beta_\pi(s)(\bar{\xi}).$
The result follows.
\end{proof}
\subsubsection{Twisted transfer}
Let $L$ be a finite field extension of $F$. Assume that $F$ has an ordering $P$. Hence, $F$ has characteristic zero. We say an ordering $R$ (on $L$) is an extension of $P$ if $P\subset R$ under the injection $F \hookrightarrow L$. Define a map
\begin{align*}
  t\colon \op{C}(L_r, \Z) &\rightarrow \op{C}(F_r, \Z)\\
  \phi &\mapsto \left(P \mapsto \sum_{R\supset P} \phi(R)\right)
\end{align*}
where the sum runs through all extensions $R$ of $P$.  This map is well-defined as the sum is finite. (Note that the number of extensions equals $\sign_P(Tr_*\langle 1 \rangle)$, cf.\ \cite[Chapter~3, Theorem~4.5]{Schar}). This map will be called the transfer map.

Now we twist the transfer. Note that \(\op{C}(L_r,\Z) = \op{C}(L_r,\Z(L))\).
Suppose $k \subset F \subset L$ with $L$ a separable finite extension of $F$. There is a canonical isomorphism (cf.\ \cite[6.4]{fasel:memoir})
\[ \sigma\colon \omega_{F/k} \otimes_F L \cong \omega_{L/k} \]
\begin{definition}
  We define the twisted transfer as
  \[
    t\colon \op{C}(L_r, \Z({\omega_{L/k}})) \rightarrow \op{C}(F_r, \Z({\omega_{F/k}})) : \phi \mapsto t(\phi): P \mapsto \sum_{R\supset P} \sigma\phi(R)
    \]
 which is the composition
    \[
      \op{C}(L_r,\Z({\omega_{L/k}}))
      \xrightarrow[\cong]{\id\otimes\sigma^{-1}}
      \op{C}(L_r,\Z({\omega_{F/k}}))
      \xrightarrow{t\otimes\id}
      \op{C}(F_r,\Z({\omega_{F/k}})).
    \]
\end{definition}

\subsubsection{The differential}

Let $X$ be a smooth integral  scheme of Krull dimension $d$ over a field $k$ with a line bundle $\mathcal{L}$. Let $x \in X^{(i)}, y \in X^{(i+1)}$. Now, we are ready to define the map
\begin{definition}\label{def:realdiff}
  The differential in the Gersten complex of the real spectrum
  \[
    \partial_{re}\colon  \op{C}(k(x)_r, \Z({\omega_{k(x)/k}})) \rightarrow \op{C}(k(y)_r, \Z({\omega_{k(y)/k}}))
  \]
  is defined by
  \[
    \op{C}(k(x)_r, \Z({\omega_{k(x)/k}})) \stackrel{\oplus\beta}\longrightarrow \bigoplus_{ \tilde{y}}   \op{C}(k(\tilde{y})_r, \Z({\omega_{k(\tilde{y})/k}})) \stackrel{\Sigma t}\longrightarrow  \op{C}(k(y)_r, \Z({\omega_{k(y)/k}}))
  \]
  where the sum is taken over all points $\tilde{y}$ that dominate y and
live inside the normalization $\widetilde{\overline{\{x\}}}$ of $\overline{\{x\}}$ in its residue field $k(x)$. The maps $\beta$ and $t$ are defined as above.
\end{definition}

\begin{theorem}\label{thm:comparison}
  For any smooth integral scheme \(X\) over a field $k$, the sequence
  \[
    \xymatrix @ -1pc { \bigoplus_{x \in X^{(0)}} \op{C}(k(x)_r, \Z({\omega^\mathcal{L}_x})) \ar[r] &  \bigoplus_{x \in X^{(1)}} \op{C}(k(x)_r, \Z({\omega^\mathcal{L}_x})) \ar[r] & \cdots \ar[r]  & \bigoplus_{x \in X^{(n)}} \op{C}(k(x)_r, \Z({\omega^\mathcal{L}_x})) \ar[r] & 0   }
  \]
  is a cochain complex. Its cohomology is the sheaf cohomology of \(X\) with coefficients in the Zariski sheaf $\support_* \Z(\mathcal{L})$.
\end{theorem}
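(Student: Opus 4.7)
The strategy is to identify the proposed complex with the filtered colimit (along multiplication by $\pfist{-1}$) of the Rost--Schmid complexes for the sheaves $\mathbf{I}^n(\mathcal{L})$, and then appeal to Jacobson's identification $\colim_n \mathbf{I}^n \cong \rho_*\Z$ from \prettyref{prop:J-homotopy-modules} (or rather its twisted, real-spectrum variant from \prettyref{thm:sign}).

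Term by term, the comparison rests on the fact that the signature induces, for any field $F$ of characteristic $\neq 2$ and any one-dimensional $F$-vector space $L$, a canonical isomorphism
\[
  \sign_\infty\colon \colim_n \op{I}^n(F, L) \xrightarrow{\cong} \op{C}(F_r,\Z(L))
\]
compatible with the $F^\times$-action used in the twist (this is the field case of \prettyref{thm:sign}, together with \prettyref{prop:identtwist} to identify the two twistings). Applying this at each point $x\in X^{(i)}$ with $L = \omega_x^{\mathcal{L}}$ identifies the terms of our complex with the colimit of the terms of the Rost--Schmid complexes $\gersten{\bullet}{X}{\mathbf{I}^n}{\mathcal{L}}$.

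Next I would verify that the differentials match under these identifications. The Rost--Schmid differential for $\mathbf{I}^n$ is described in Section~\ref{sec:construction-Gersten} (and in \cite[\S\,7.3]{fasel:memoir}) as a sum, over normalized points $\tilde y$ dominating $y$, of a Milnor second residue $\delta_2^\pi$ followed by Scharlau's twisted transfer. On the real-spectrum side, the residue map $\beta_\pi$ of \cite[\S\,3]{jacobson} was defined precisely so that it is compatible with $\delta_2^\pi$ via the signature, and the twist $\beta = \beta_\pi(-)\otimes \pi^*$ above removes the dependence on the uniformiser exactly as the Rost--Schmid differential does on the Witt-theoretic side. The twisted transfer $t$ on $\op{C}((-)_r,\Z)$ is likewise compatible with the twisted Scharlau transfer of \cite[6.4]{fasel:memoir}, since the signature of $\textnormal{tr}_*(\phi)$ at an ordering $P$ of the smaller field equals the sum over the extensions $R\supset P$ of $\sign_R(\phi)$ (compare \cite[Ch.~3, Thm.~4.5]{Schar}), which is exactly the formula defining~$t$. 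Putting these two compatibilities together, the signature induces a morphism from $\gersten{\bullet}{X}{\mathbf{I}^n}{\mathcal{L}}$ to the proposed complex that becomes an isomorphism in the colimit over~$n$.

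Given this identification, the cochain property $\partial_{re}^2 = 0$ is automatic from $\partial_{RS}^2 = 0$, and the cohomology is computed as follows. Since filtered colimits are exact on abelian groups, the colimit complex has cohomology equal to $\colim_n \op{H}^i(\gersten{\bullet}{X}{\mathbf{I}^n}{\mathcal{L}}) = \colim_n \op{H}^i(X,\mathbf{I}^n(\mathcal{L}))$ by \prettyref{thm:allIcohomologyagree}. By \prettyref{lem:cohomology-of-colimits} (applicable because $X$, being of finite type over $k$, is Noetherian) this agrees with $\op{H}^i(X,\colim_n \mathbf{I}^n(\mathcal{L}))$, which by the twisted Jacobson isomorphism of \prettyref{thm:sign} is canonically $\op{H}^i(X,\support_*\Z(\mathcal{L}))$, as claimed.

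The main obstacle I anticipate is the verification that the twisted transfer $t$ on the real-spectrum side corresponds to the Scharlau transfer twisted by the canonical isomorphism $\sigma\colon \omega_{F/k}\otimes_F L\cong \omega_{L/k}$. The untwisted statement is essentially classical (sum-of-extensions of orderings equals the signature of the transfer), but carrying the $\omega$-twist through requires a careful check that $\sigma$ intertwines the dual-basis bookkeeping on both sides; this is where the definitions have to be unpacked most carefully.
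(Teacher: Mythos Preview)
Your proposal is correct and follows essentially the same route as the paper. The paper packages the compatibility of differentials you describe into a separate statement (\prettyref{thm:signature-compatible-with-differential} and its corollaries, culminating in \prettyref{cor:sign_infty-isomorphism-of-complexes}), citing exactly the inputs you anticipate: Jacobson's compatibility of $\beta_\pi$ with the second residue and Scharlau's theorem for the transfer, then passing to twists via compatible trivialisations. The only organisational difference is in the final step: rather than invoking \prettyref{lem:cohomology-of-colimits} to commute cohomology with the colimit, the paper observes that the isomorphism of complexes sheafifies, so $\op{C}^{re}(-,\Z,\mathcal L)$ is itself a flasque resolution whose degree-zero kernel is identified with $\support_*\Z(\mathcal L)$ directly via \cite[Proposition~4.7 and Lemma~4.9]{jacobson}; your route via $\colim_n \op{H}^i(X,\mathbf{I}^n(\mathcal L))\cong \op{H}^i(X,\colim_n\mathbf{I}^n(\mathcal L))$ reaches the same conclusion.
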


We denote this complex by $\op{C}^{re}(X,\Z,\mathcal{L})$. The proof will be given at the end of Section \ref{sec:comparison}.
\subsection{Comparison via the signature}\label{sec:comparison}

\subsubsection{Signature}
Let $H$ be a free rank one vector space over a field $F$. Choose a trivialization $s\colon F\rightarrow H$.
\begin{definition} The twisted signature is the map
  \begin{align*}
    \sign\colon \op{W}(F,H) &\rightarrow \op{C}(F_r, \Z(H))\\
    \phi &\mapsto \sign_s(\phi) \otimes s(1)
  \end{align*}
  where $\sign_s(\phi)(P) = \sign_P(s^{-1} \phi)$.
\end{definition}
\begin{lemma}
The twisted signature does not depend on the choice of trivialization $s\colon F\rightarrow H$.
\end{lemma}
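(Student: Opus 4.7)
The plan is a short, direct computation comparing the two definitions. Given two trivializations \(s,s'\colon F\to H\), there is a unique \(a\in F^\times\) with \(s'(1)=a\cdot s(1)\), and hence \(s'=a\cdot s\) and \((s')^{-1}=a^{-1}\cdot s^{-1}\) as linear maps. The first step is to compare the two scalar-valued signatures. For any symmetric form \(\phi\in\op{W}(F,H)\), the transported form satisfies \((s')^{-1}\phi=a^{-1}\cdot s^{-1}\phi\), so at every ordering \(P\) we get
\[
  \sign_{s'}(\phi)(P) = \sign_P(a^{-1}\cdot s^{-1}\phi) = \sign_P(a)\cdot\sign_s(\phi)(P),
\]
using \(\sign_P(a^{-1})=\sign_P(a)\). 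In other words, as elements of \(\op{C}(F_r,\Z)\) we have the identity \(\sign_{s'}(\phi)=\zeta(a)\cdot\sign_s(\phi)\).

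The second step is to move this into the tensor product \(\op{C}(F_r,\Z)\otimes_{\Z[F^\times]}\Z[H^\times]\) and balance the factor \(a\) across the tensor. Since the action of \(a\in F^\times\subset\Z[F^\times]\) on \(\Z[H^\times]\) is \(a\cdot[s(1)]=[a\cdot s(1)]=[s'(1)]\), we may compute
\[
  \sign_{s'}(\phi)\otimes s'(1)
  = \bigl(\zeta(a)\cdot\sign_s(\phi)\bigr)\otimes\bigl(a\cdot s(1)\bigr)
  = \zeta(a)^2\cdot\sign_s(\phi)\otimes s(1).
\]
The final step is to observe \(\zeta(a)^2=1\) in \(\op{C}(F_r,\Z)\), because \(\sign_P(a)^2=1\) for every ordering \(P\); hence \(\sign_{s'}(\phi)\otimes s'(1)=\sign_s(\phi)\otimes s(1)\), as required.

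There is no real obstacle: the only point to be careful about is the bookkeeping of the \(\Z[F^\times]\)-module structures on both factors of the tensor product, so that the factor \(a\) really can be moved from the right tensorand to the left via the relation \(\zeta(a)\cdot x\otimes y = x\otimes a\cdot y\), and then collapsed using \(\zeta(a)^2=1\).
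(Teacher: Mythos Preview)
Your proof is correct. Both your argument and the paper's hinge on the balancing relation in the tensor product \(\op{C}(F_r,\Z)\otimes_{\Z[F^\times]}\Z[H^\times]\), so the overall approach is the same, but the execution differs slightly.

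The paper first reduces by additivity to the case where \(\phi\) has rank one, and then observes that for a rank-one form the element \(\phi(1,1)\in H^\times\) is an \emph{intrinsic} nonzero vector, independent of any trivialization. Writing \(s^{-1}\phi=\langle a\rangle\) forces \(s(a)=\phi(1,1)\), and then one checks directly that \(\sign_s(\phi)\otimes s(1)=\mathrm{const}_1\otimes\phi(1,1)\). Since the right-hand side visibly does not mention \(s\), independence follows. Your route instead works for arbitrary \(\phi\) without the rank-one reduction: you compute the scaling identity \(\sign_{s'}(\phi)=\zeta(a)\cdot\sign_s(\phi)\) and then collapse the resulting factor \(\zeta(a)^2\) using \(\sign_P(a)^2=1\). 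Your argument is marginally more general in that it never invokes additive generators of the Witt group; the paper's has the conceptual benefit of exhibiting an invariant representative \(\mathrm{const}_1\otimes\phi(1,1)\) for the image of a rank-one form.
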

\begin{proof}
  Let $u\colon F\rightarrow H$ be another trivialization. We may assume the rank of $\phi$ is one. Then $s^{-1}\phi = \langle a \rangle$ and $u^{-1} \phi = \langle b \rangle$ for some $a,b \in F^\times$, and we find that
  \[
    \sign_s(\phi) \otimes s(1) = \mathrm{const}_1 \otimes  s(a)
    =\mathrm{const}_1 \otimes \phi(1,1)
    =\mathrm{const}_1 \otimes u(b)  =  \sign_u(\phi) \otimes u(1),
  \]
  where \(\mathrm{const}_1\) denotes the constant map \(\Sper F\to \Z\) with value~\(1\).
\end{proof}
\begin{lemma}\label{lem:trivilization}
  Consider the following diagram, in which the top horizontal morphism is given by composition with \(s\) and the lower horizontal morphism is given by tensoring with \(s(1)\):
  \[
    \xymatrix{
      \op{W}(F)
      \ar[d]^-{\sign}
      \ar[r]^-{s_*}
      &
      \op{W}(F,H)
      \ar[d]^-{\sign}
      \\
      \op{C}(F_r, \Z)
      \ar[r]^-{s_*}
      &
      \op{C}(F_r, \Z(H))
    }
  \]
  This diagram commutes.
\end{lemma}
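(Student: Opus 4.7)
The plan is to unwind the definitions on both sides and observe that they match tautologically once we exploit the freedom, established in the preceding lemma, to compute the twisted signature using any trivialization of $H$.

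First I would reduce to the rank-one case by the additivity of both the signature and the map $s_*$. Let $\phi = \langle a \rangle \in \op{W}(F)$. By definition, the top morphism $s_*$ sends $\phi$ to the symmetric form on $H$ represented by $s \circ \phi \circ (s^{-1}, s^{-1})$; equivalently, it is the unique element of $\op{W}(F,H)$ whose pullback along $s$ is $\phi$, so that $s^{-1}\circ s_*\phi = \phi$.

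To compute $\sign(s_*\phi) \in \op{C}(F_r, \Z(H))$, use the trivialization $s$ itself; this is legitimate by the preceding lemma. Then
\[
  \sign(s_*\phi) = \sign_s(s_*\phi) \otimes s(1),
\]
and by the defining formula $\sign_s(s_*\phi)(P) = \sign_P(s^{-1}\circ s_*\phi) = \sign_P(\phi)$, so $\sign_s(s_*\phi) = \sign(\phi) \in \op{C}(F_r,\Z)$. On the other hand, the lower horizontal morphism sends $\sign(\phi)$ to $\sign(\phi) \otimes s(1)$ by definition. Thus both compositions agree, proving the commutativity of the diagram.

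There is no real obstacle here: the content of the lemma is just that the trivialization $s$ compatibly identifies $\op{W}(F)$ with $\op{W}(F,H)$ and $\op{C}(F_r,\Z)$ with $\op{C}(F_r,\Z(H))$ in a way that intertwines the signatures, and this is immediate from the definitions once independence of trivialization (previous lemma) is invoked.
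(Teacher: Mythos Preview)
Your proof is correct. The paper in fact gives no proof of this lemma at all, treating it as immediate from the definitions; your unwinding via the choice of trivialization $s$ and the preceding independence lemma is exactly the tautological verification one would supply if pressed.
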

\subsubsection{Comparison}
Let $R$ be a one dimensional local domain containing a field $k$ and essentially of finite type over \(k\).
Let $F$ be its field of fractions and $k_\mathfrak{m}$ its residue field.
Let \(\bar R\) denote the integral closure of \(R\) in \(F\), let \(\tilde R\) denote the d.v.r.\ obtained by localizing \(\bar R\) at a maximal ideal \(\tilde m\), and let \(k_{\tilde{\mathfrak m}} := \tilde R/\tilde{\mathfrak{m}}\) denote the residue field at \(\tilde m\). As \(R\) is essentially of finite type over a field, \(R\) is excellent, and hence the normalization \(\Spec \bar R\to \Spec R\) is a finite morphism [EGAIV, 2, 7.8.6\,(ii)].  In particular, $\bar R$ contains only finitely many maximal ideals \(\tilde m\), and the residue fields \(k_{\tilde{\mathfrak m}}\) are finite extensions of $k_{\mathfrak{m}} = R/\mathfrak{m}$.

\begin{theorem}\label{thm:signature-compatible-with-differential}
  The following square commutes:
  \[
    \xymatrix{
      \op{W}(F,\omega_{F/k}) \ar[r]^-{\partial_{RS}} \ar[d]^-{\sign} & \op{W}(k_\mathfrak{m}, \omega_{k_\mathfrak{m}/k}) \ar[d]^-{2\sign} \\
      \op{C}(F_r, \Z({\omega_{F/k}}) \ar[r]^-{\partial_{re}}) & \op{C}((k_{\mathfrak m})_r, \Z({\omega_{k_\mathfrak{m}/k}}))
    }
  \]
\end{theorem}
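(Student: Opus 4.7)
The plan is to split the desired square into two compatibility squares, matching the two-step factorization of each differential: a (twisted) residue step from $F$ down to the residue fields of the normalization of $R$, followed by a (twisted) transfer step down to $k_\mathfrak{m}$. The factor of $2$ on the right vertical arrow will appear entirely in the residue step, and disappear in the transfer step. First, I would reduce to the case where $R$ is already a DVR. This reduction is possible because both $\partial_{RS}$ and $\partial_{re}$ are by construction the sum, over the finitely many maximal ideals $\tilde m$ of the normalization $\bar R$, of the corresponding residue into $k_{\tilde{\mathfrak m}}$ followed by the transfer along $k_{\tilde{\mathfrak m}}/k_\mathfrak{m}$. Hence it suffices to check that signature intertwines each of these two constituent operations.

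For the residue step, working with a DVR $\tilde R$ with uniformizer $\pi$, residue field $k_{\tilde{\mathfrak m}}$ and fraction field $F$, I would first verify the untwisted identity $\beta_\pi\circ\sign = 2\,\sign\circ\delta_2^\pi$ on rank-one forms $\langle u\pi^n\rangle$. For an ordering $\bar\xi$ on $k_{\tilde{\mathfrak m}}$ and its two lifts $\xi_\pm^\pi$ to $F$ characterized by $\sign_{\xi_\pm^\pi}(\pi)=\pm 1$, a direct computation yields
\[
\beta_\pi(\sign\langle u\pi^n\rangle)(\bar\xi) \;=\; \sign_{\bar\xi}(\bar u)\bigl(1-(-1)^n\bigr),
\]
which equals $2\sign_{\bar\xi}(\bar u)$ for $n$ odd and vanishes for $n$ even; this matches twice the signature of $\delta_2^\pi\langle u\pi^n\rangle$, which is $\langle\bar u\rangle$ in the odd case and $0$ in the even case. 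Additivity (and the fact that every element of $\op{W}(F)$ is a sum of rank-one forms) extends the identity to all of $\op{W}(F)$. The twisted identity is then obtained by tensoring with $\omega_{F/k}$ and using the canonical isomorphism $\omega_{k_{\tilde{\mathfrak m}}/k}\cong \omega_{F/k}\otimes_F(\mathfrak m_{\tilde R}/\mathfrak m_{\tilde R}^2)^*$ that appears in the definition of both twisted residues, invoking \prettyref{lem:trivilization} to see that the signature is compatible with passage between the various trivializations.

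For the transfer step, given a finite separable extension $L/F$ with trace $\mathrm{Tr}_{L/F}$, the classical formula
\[
\sign_P(\mathrm{Tr}_{L/F\,*}\phi) \;=\; \sum_{R\supset P}\sign_R(\phi)
\]
of Scharlau (see e.g.\ \cite[Chapter~3]{Schar} or the discussion before \cite[Chapter~3, Theorem~4.5]{Schar} in the sources on which Jacobson's treatment is built) is exactly the defining formula for the transfer map $t$ on $\op{C}((-)_r,\Z)$, so $\sign\circ\mathrm{Tr}_{L/F\,*} = t\circ\sign$ in the untwisted case. The twisted identity follows because the isomorphism $\sigma\colon \omega_{F/k}\otimes_F L\cong \omega_{L/k}$ used in the definition of the twisted Witt-theoretic transfer is the very same isomorphism appearing in the definition of the twisted real-spectrum transfer, so the tensor factors match up on both sides.

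Composing the residue and transfer squares then yields the claim, with the factor of $2$ on the right vertical arrow arising solely from the residue step. The main obstacle, as is usual in these Witt-theoretic computations, will be the careful bookkeeping of the twists: one must check at each stage that the signature $\sign\colon\op{W}(F,H)\to\op{C}(F_r,\Z(H))$ is well-defined and independent of trivialization (handled by \prettyref{lem:trivilization}), that it is compatible with the $\Z[F^\times]$-balanced tensor products defining both sides, and that the various canonical isomorphisms relating $\omega_{F/k}$, $\omega_{k_{\tilde{\mathfrak m}}/k}$, and $(\mathfrak m/\mathfrak m^2)^*$ line up between the Rost--Schmid and the real-spectrum constructions. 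None of these steps is conceptually difficult, but collectively they require disciplined diagram chases through the constructions in Sections~7.1 and 7.2.
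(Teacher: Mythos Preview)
Your proposal is correct and follows essentially the same approach as the paper: both factor the square into a residue step (where the factor of $2$ appears) and a transfer step, reduce to a DVR via the normalization, invoke \prettyref{lem:trivilization} to handle the twists, and cite Scharlau for the transfer compatibility. The only cosmetic difference is that you spell out the residue computation on rank-one forms $\langle u\pi^n\rangle$ directly, whereas the paper simply cites \cite[Lemmas~3.1 and 3.2]{jacobson} for this step; your computation is exactly the content of those lemmas.
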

\begin{proof}

  By the discussion above, we can assume the integral closure $\bar R$ of $R$ in $F$ is a finitely generated $R$-module. There are finitely many d.v.r.s $\tilde{R} $ dominating $R$, namely the localizations of $\bar R$ at its maximal ideals. Let $\tilde{\mathfrak{m}}$ denote the maximal ideal of such $\tilde{R}$. It is well-know that $k_{\tilde{\mathfrak{m}}}:=\tilde{R}/\tilde{\mathfrak{m}}$ is a finite field extension of $k_{\mathfrak{m}} = R/\mathfrak{m}$.
  The square
  \begin{equation}\label{eq:Jacobsons-square}
    \begin{aligned}
    \xymatrix{
      \op{W}(F)
      \ar[r]^-{\delta}
      \ar[d]^-{\sign}
      &
      \op{W}(k_{\tilde{\mathfrak{m}}}, (\tilde{\mathfrak{m}}/\tilde{\mathfrak{m}}^2)^* )
      \ar[d]^-{2\sign}
      \\
      \op{C}(F_r, \Z)
      \ar[r]^-{\beta}
      &
      \op{C}((k_{\tilde{\mathfrak m}})_r, \Z({(\tilde{\mathfrak{m}}/\tilde{\mathfrak{m}}^2)^*}) )
    }
    \end{aligned}
  \end{equation}
  is commutative by \cite[Lemmas~3.1 and 3.2]{jacobson} and \prettyref{lem:trivilization}. (Choose an arbitrary uniformizing parameter \(\pi\), and choose the trivialization \(s\colon k_{\mathfrak{m}}\to (\mathfrak{m}/\mathfrak{m}^2)^*\) given by \(1\mapsto \pi^*\).)
 Now, twisting the square above, we conclude that the square
  \[
    \xymatrix{
      \op{W}(k_{\tilde{\mathfrak{m}}}, \omega_{k_{\tilde{\mathfrak{m}}}/k})
      \ar[d]^-{\sign}
      \ar[r]^-{tr}
      &
      \op{W}(k_{\mathfrak{m}}, \omega_{k_{\mathfrak{m}}/k})
      \ar[d]^-{\sign}
      \\
      \op{C}((k_{\tilde{\mathfrak m}})_r, \Z({\omega_{k_{\tilde{\mathfrak{m}}}/k}}))
      \ar[r]^-{t}
      &
      \op{C}((k_{\mathfrak m})_r, \Z({\omega_{k_{\mathfrak{m}}/k}}))
    }
  \]
  also commutes.  Without twists, this is \cite[Chapter~3, Theorem~4.5]{Schar}.  To see that it commutes with arbitrary twists, choose a trivialization \(s\colon k_{\mathfrak{m}}\to \omega_{k_{\mathfrak{m}}/k}\) and define a trivialization \(\tilde s\colon k_{\tilde{\mathfrak{m}}}\to \omega_{k_{\tilde{\mathfrak{m}}}/k}\) as the composition
  \[
    k_{\tilde{\mathfrak{m}}}
    \cong
    k_{\mathfrak{m}} \otimes_{k_{\mathfrak{m}}} k_{\tilde{\mathfrak{m}}}
    \xrightarrow{s\otimes\id}
    \omega_{k_{\mathfrak{m}}/k} \otimes_{k_{\mathfrak{m}}} k_{\tilde{\mathfrak{m}}}
    \xrightarrow{\sigma}
    \omega_{k_{\tilde{\mathfrak{m}}}/k}.
  \]
  Then use \prettyref{lem:trivilization} again.
\end{proof}
\begin{corollary}\label{cor:compatibilityWC} The signature induces a commutative diagram
  \[
    \xymatrix @ -1pc {
      \bigoplus_{x \in X^{(0)}} \op{W}(k(x), \omega_x^\mathcal{L}) \ar[r]^-{\partial_{RS}} \ar[d]^-{\sign}&  \bigoplus_{x \in X^{(1)}} \op{W}(k(x), \omega_x^{\mathcal{L}})  \ar[r]^-{\partial_{RS}} \ar[d]^-{2\sign}& \cdots \ar[r]^-{\partial_{RS}}  &  \bigoplus_{x \in X^{(n)}} \op{W}(k(x), \omega_x^\mathcal{L}) \ar[d]^-{2^n\sign} \ar[r] & 0\\
      \bigoplus_{x \in X^{(0)}} \op{C}(k(x)_r, \Z({\omega^\mathcal{L}_x})) \ar[r]^-{\partial_{re}} &  \bigoplus_{x \in X^{(1)}} \op{C}(k(x)_r, \Z({\omega^\mathcal{L}_x}) \ar[r]^-{\partial_{re}})  & \cdots \ar[r]^-{\partial_{re}}   & \bigoplus_{x \in X^{(n)}} \op{C}(k(x)_r, \Z({\omega^\mathcal{L}_x}))  \ar[r] & 0 }
  \]
\end{corollary}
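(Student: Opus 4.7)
The plan is to reduce to \prettyref{thm:signature-compatible-with-differential}, applied componentwise, and then to track the factors of $2$ that arise from iteration. First, I would decompose the commutativity of each square
\[
\xymatrix{
\bigoplus_{x \in X^{(i)}} \op{W}(k(x), \omega_x^\mathcal{L}) \ar[r]^-{\partial_{RS}} \ar[d]^-{2^i\sign} & \bigoplus_{x \in X^{(i+1)}} \op{W}(k(x), \omega_x^\mathcal{L}) \ar[d]^-{2^{i+1}\sign} \\
\bigoplus_{x \in X^{(i)}} \op{C}(k(x)_r, \Z(\omega_x^\mathcal{L})) \ar[r]^-{\partial_{re}} & \bigoplus_{x \in X^{(i+1)}} \op{C}(k(x)_r, \Z(\omega_x^\mathcal{L}))
}
\]
into its $(x,y)$-components, for $x \in X^{(i)}$ and $y \in X^{(i+1)}$. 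By definition, both $\partial_{RS}$ and $\partial_{re}$ are zero on this component unless $y \in \overline{\{x\}}$, and in that case they are described as a sum of residues $\beta$ (or $\delta$) followed by a transfer $t$ (or $\mathrm{tr}$) indexed by the points $\tilde{y}$ of the normalization of $\overline{\{x\}}$ lying over $y$. So the compatibility for each component reduces to two steps applied to each $\tilde{y}$: compatibility of the residue with the signature, and compatibility of the transfer with the signature.

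Next, I would apply \prettyref{thm:signature-compatible-with-differential} locally to the one-dimensional local domain $R = \OO_{\widetilde{\overline{\{x\}}},\tilde{y}}$ with field of fractions $F = k(x)$ and residue field $k_\mathfrak{m} = k(\tilde{y})$. The theorem already packages together the residue and transfer steps and yields exactly the factor of $2$ that distinguishes the two signature maps. More precisely, after rewriting the differentials via the residue--transfer composition, I obtain for each pair $(x,y)$ a commutative diagram
\[
\xymatrix{
\op{W}(k(x), \omega_{k(x)/k}) \ar[r]^-{\partial_{RS}} \ar[d]^-{\sign} & \op{W}(k(y), \omega_{k(y)/k}) \ar[d]^-{2\sign} \\
\op{C}(k(x)_r, \Z(\omega_{k(x)/k})) \ar[r]^-{\partial_{re}} & \op{C}(k(y)_r, \Z(\omega_{k(y)/k}))
}
\]
Multiplying both vertical maps by $2^i$ then produces the required square with $2^i\sign$ on the left and $2^{i+1}\sign$ on the right, and summing over all relevant $y$ assembles the componentwise squares into the global square.

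The remaining point is to incorporate the twist by $\mathcal{L}$. Here the key observation is that both the Rost--Schmid differential and the real-spectrum differential are defined by first constructing an untwisted version and then tensoring with the local rank-one module $\omega_x^\mathcal{L} = \omega_x \otimes_{\OO_{X,x}} \mathcal{L}_x$, and similarly both signature maps are equivariant under the tensor product formation of \prettyref{def:general-twist}. Thus, after a local trivialization of $\mathcal{L}$ at the generic point of the curve $\overline{\{x\}}$ (which is possible since $\mathcal{L}$ restricted to a sufficiently small Zariski neighborhood of that generic point is trivial), the twisted diagram becomes the untwisted diagram tensored with a one-dimensional vector space, and commutativity follows from the untwisted case.

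The main obstacle I anticipate is bookkeeping for the twists and the various canonical isomorphisms $\sigma\colon \omega_{F/k} \otimes_F L \cong \omega_{L/k}$ that appear in the transfer map, both on the Witt side and on the real-spectrum side. The argument requires verifying that the two signature maps intertwine these canonical isomorphisms, which is essentially the content of \prettyref{lem:trivilization}, but needs to be applied simultaneously at each stage (residue and transfer). Once this bookkeeping is set up, the corollary is a direct consequence of \prettyref{thm:signature-compatible-with-differential} applied pointwise.
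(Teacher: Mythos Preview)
Your approach is essentially what the paper intends: the corollary is stated without proof, as an immediate consequence of \prettyref{thm:signature-compatible-with-differential} applied componentwise, together with the obvious scaling by $2^i$. Your handling of the twist by $\mathcal{L}$ is also fine.

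There is, however, a small confusion in your choice of local ring. You propose to apply the theorem to the d.v.r.\ $R = \OO_{\widetilde{\overline{\{x\}}},\tilde{y}}$ with residue field $k(\tilde y)$, but then write the resulting square with target indexed by $k(y)$. The correct move is to apply \prettyref{thm:signature-compatible-with-differential} directly to the (generally non-normal) one-dimensional local domain $R = \OO_{\overline{\{x\}},y}$, whose fraction field is $k(x)$ and whose residue field is $k(y)$. The passage to the normalization and the sum over the points $\tilde y$ lying above $y$ is already carried out inside the \emph{proof} of the theorem, so the output square lands directly in $k(y)$ with the factor of $2$ built in; no separate residue-then-transfer bookkeeping is required on your side. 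With this correction, your argument goes through.
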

The upper horizontal sequence is the Rost--Schmid complex, which coincides with the Gersten complex defined by Balmer--Walter \cite[Section 7]{fasel:memoir}. Moreover, the upper sequence restricts from $\W$ to powers
$\I^j$ of the fundamental ideal, and this complex is indeed a flasque
resolution for all $j$ computing the Zariski sheaf cohomology of $\I^j$ by the proof of \cite[Theorem 3.26]{fasel:chowwittring}.

The restriction of the signature to powers of the fundamental ideal \(\op{I}^j(F)\) takes values in the subgroup \(\op{C}(F_r,2^j\Z) \subset \op{C}(F_r,\Z)\). Indeed, \(\op{I}(F)\) is additively generated by Pfister forms \(\pfist{a}\), and $\sign(\pfist{a}) (P) = \sign_P(1) + \sign_P(-a)$ is always even.  So we obtain a map
\[
  \sign\colon \op{I}^j(F) \rightarrow \op{C}(F_r, 2^j\Z).
\]
Jacobson's commutative square \eqref{eq:Jacobsons-square}, displayed as the far side of the following cube, thus restricts to the square displayed as the front side of the cube:
\[
  \xymatrix@R=1em@C=1em{
    &
    \op{W}(F)
    \ar[rr]^(0.6){\delta_\pi}
    \ar'[d][dd]_(0.4){\sign}
    &&
    \op{W}(k_{\mathfrak m})
    \ar[dd]_(0.6){2\sign}
    \\
    \op{I}^{j+1}(F)
    \ar[rr]^(0.7){\delta_\pi}
    \ar[dd]_(0.6){\sign}
    \ar@{^{(}->}[ur]
    &&
    \op{I}^j(k_{\mathfrak m})
    \ar[dd]_(0.7){2\sign}
    \ar@{^{(}->}[ur]
    \\
    &
    \op{C}(F_r,\Z)
    \ar'[r]^(0.7){\beta_\pi}[rr]
    &&
    \op{C}(k_{\mathfrak m},\Z)
    \\
      \op{C}(F_r, 2^{j+1}\Z)
      \ar[rr]^-{\beta_\pi}
          \ar@{^{(}->}[ur]
      &&
      \op{C}((k_{\mathfrak m})_r, 2^{j+1}\Z)
          \ar@{^{(}->}[ur]
  }
\]
Denoting by \(2^{-j}\sign\) the obvious composition
\(
 \op{I}^j(F) \xrightarrow{\sign} \op{C}(F_r, 2^j\Z)\xrightarrow{\cong}\op{C}(F_r, \Z)
\),
we can rewrite the commutative square at the front as:
\[
  \xymatrix{
    \op{I}^{j+1}(F)
    \ar[r]^-{\delta_\pi}
    \ar[d]^-{2^{-(j+1)}\sign}
    &
    \op{I}^j(k_\mathfrak{m})
    \ar[d]^-{2^{-j}\sign}
      \\
      \op{C}(F_r, \Z)
      \ar[r]^-{\beta_\pi}
      & \op{C}((k_{\mathfrak m})_r, \Z)
    }
  \]
The arguments used in the proof of \prettyref{thm:signature-compatible-with-differential} therefore yield the following generalization.
\begin{corollary}
  The following square commutes:
  \[
    \xymatrix{
      \op{I}^{j+1}(F,\omega_{F/k})
      \ar[r]^-{\partial_{RS}}
      \ar[d]^-{2^{-(j+1)}\sign}
      &
      \op{I}^j(k_\mathfrak{m}, \omega_{k_\mathfrak{m}/k})
      \ar[d]^-{2^{-j}\sign}
      \\
      \op{C}(F_r, \Z({\omega_{F/k}}))
      \ar[r]^-{\partial_{re}}
      &
      \op{C}((k_{\mathfrak m})_r, \Z({\omega_{k_\mathfrak{m}/k}}))
    }
  \]
\end{corollary}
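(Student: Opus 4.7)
The plan is to follow the proof strategy of \prettyref{thm:signature-compatible-with-differential}, now restricted to the subgroups $\op{I}^{\bullet}$ with the rescaled signatures $2^{-j}\sign$. The cube diagram immediately preceding the corollary already establishes the untwisted version of the desired square for a single d.v.r.\ residue, and the task is to (a) twist by the line bundles $\omega_{-/k}$ appearing in $\partial_{RS}$ and $\partial_{re}$, and (b) combine with the transfer step that turns the single-d.v.r.\ residue into the Rost--Schmid differential summed over points $\tilde y$ of the normalization of $\overline{\{x\}}$.

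For step (a) I would argue exactly as in the proof of \prettyref{thm:signature-compatible-with-differential}. The front face of the cube gives the commutative square for $\delta_\pi\colon \op{I}^{j+1}(F)\to \op{I}^j(k_{\tilde{\mathfrak m}})$ against $\beta_\pi\colon \op{C}(F_r,\Z)\to \op{C}((k_{\tilde{\mathfrak m}})_r,\Z)$ with the rescaled signatures $2^{-(j+1)}\sign$ and $2^{-j}\sign$ on the vertical sides. Tensoring this square over $\Z[F^\times]$ with $\Z[\omega_{F/k}^\times]$ on the source and over $\Z[k_{\tilde{\mathfrak m}}^\times]$ with $\Z[\omega_{k_{\tilde{\mathfrak m}}/k}^\times]$ on the target produces the corresponding twisted square for the twisted residues $\delta$ and $\beta$ defined in Section~\ref{sec:tr}; the canonical isomorphism $\omega_{k_{\tilde{\mathfrak m}}/k}\cong \omega_{F/k}\otimes_F(\tilde{\mathfrak m}/\tilde{\mathfrak m}^2)^{*}$ identifies the twist on the right. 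Compatibility with trivializations is handled exactly as in \prettyref{lem:trivilization}.

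For step (b), the twisted transfer does not introduce any further factor of $2$: Scharlau's trace formula \cite[Ch.~3, Thm.~4.5]{Schar} gives that $\sign_P(\op{tr}_*\phi) = \sum_{R\supset P} \sign_R(\phi)$, which is exactly the transfer $t$ on $\op{C}(-_r,\Z)$ without rescaling, and this compatibility is preserved under the twist $\sigma\colon \omega_{F/k}\otimes_F L\cong \omega_{L/k}$ by the same argument used in the second commutative square of the proof of \prettyref{thm:signature-compatible-with-differential}. Restricting to powers of the fundamental ideal (for which the transfer is known to respect the $\op{I}$-filtration, since $\partial_{RS}$ does), the rescalings by $2^{-j}$ on both source and target of the transfer match. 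Composing the two twisted squares of (a) and (b) and summing over the points $\tilde y$ of $\widetilde{\overline{\{x\}}}$ dominating $y$ yields the corollary, since by definition $\partial_{RS}=\sum\op{tr}\circ\bigoplus\delta$ and $\partial_{re}=\sum t\circ\bigoplus\beta$.

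The main obstacle is the bookkeeping of the factors of $2$. The relevant subtlety is that the image of $\delta\colon \op{I}^{j+1}(F,\omega_{F/k})\to \op{W}(k_{\tilde{\mathfrak m}},\omega_{k_{\tilde{\mathfrak m}}/k})$ lands in $\op{I}^j$ (this is the compatibility of the Gersten differential with the $\op{I}$-filtration used already in Section~\ref{sec:construction-Gersten}), so that $2^{-j}\sign$ makes sense on the target; the factor of $2$ in the untwisted diagram $\op{W}(F)\to \op{W}(k_{\tilde{\mathfrak m}})$ (namely $\sign\circ\delta = 2\beta\circ\sign$ from \cite[Lemmas~3.1, 3.2]{jacobson}) is precisely what converts $2^{-(j+1)}$ on the source into $2^{-j}$ on the target. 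Once this one point is checked, all remaining compatibilities are formal consequences of the twisting construction and of the (already established) commutativity in \prettyref{thm:signature-compatible-with-differential}.
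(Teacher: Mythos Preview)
Your proposal is correct and matches the paper's approach exactly. The paper does not give a separate proof of this corollary; it simply states that ``the arguments used in the proof of \prettyref{thm:signature-compatible-with-differential} therefore yield the following generalization,'' relying on the cube diagram and its rescaled front face established just before the statement. Your steps (a) and (b)---twisting the untwisted residue square via \prettyref{lem:trivilization} and then handling the transfer via Scharlau's trace formula with no additional factor of $2$---are precisely the arguments from that theorem, and your bookkeeping of the powers of $2$ correctly explains why the shift from $2^{-(j+1)}$ to $2^{-j}$ matches the extra factor of $2$ in Jacobson's square~\eqref{eq:Jacobsons-square}.
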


Next, we want to pass to the colimit in the diagram in \prettyref{cor:compatibilityWC}.
Note that multiplication by $2$ on the Witt group is  multiplication by the Pfister form $\pfist{-1}$.
Clearly, the following triangle commutes:
\[
  \xymatrix{
    \op{I}^j(F) \ar[dr]_-{2^{-j}\sign} \ar[rr]^-{\cdot 2} && \op{I}^{j+1}(F) \ar[dl]^-{2^{-(j+1)}\sign}  \\
    & \op{C}(F_r, \Z)
  }
\]
The induced morphism
\begin{align*}
  \sign_\infty\colon &\colim_{j} \op{I}^{j}(F) \rightarrow \op{C}(F_r,\Z)
\end{align*}
is an isomorphism by a result of Arason and Knebusch (cf.\ \cite[Proposition~2.7]{jacobson}).
\begin{corollary}\label{cor:sign_infty-isomorphism-of-complexes}
The signature induces an isomorphism of complexes:
\[
  \colim_j\op{C}^{RS}(X,\I^j,\mathcal L)
  \xrightarrow[\sign_\infty]{\cong}
  \op{C}^{re}(X,\Z({\mathcal L})).
\]
(In particular, \(\op{C}^{re}(X,\Z({\mathcal L}))\) indeed \emph{is} a complex.)
\end{corollary}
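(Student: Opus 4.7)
The plan is to observe that virtually all the substance has been established in the discussion preceding the corollary; the task is to package these facts and apply a pointwise Arason--Knebusch argument.

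First, I would verify that the rescaled signatures \(2^{-j}\sign\colon \op{I}^j(F,H)\to\op{C}(F_r,\Z(H))\) form a cocone over the colimit system, i.e.\ that the triangle
\[
  \xymatrix{
    \op{I}^j(F,H) \ar[dr]_-{2^{-j}\sign} \ar[rr]^-{\cdot\pfist{-1}} && \op{I}^{j+1}(F,H) \ar[dl]^-{2^{-(j+1)}\sign}\\
    & \op{C}(F_r,\Z(H))
  }
\]
commutes. This is the twisted version of the commutative triangle stated before the corollary and follows from \prettyref{lem:trivilization} by picking a local trivialization. Combined with the commutative square immediately preceding the corollary (compatibility of rescaled signatures with the differentials \(\partial_{RS}\) and \(\partial_{re}\)), this shows that the rescaled signatures assemble into a well-defined morphism of complexes after taking the colimit along multiplication by \(\pfist{-1}\).

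Second, I would check that this morphism is an isomorphism degree by degree. In degree \(k\), the source is \(\bigoplus_{x\in X^{(k)}} \colim_j \op{I}^{j}(k(x),\omega_x^{\mathcal L})\), since filtered colimits commute with direct sums, and the target is \(\bigoplus_{x\in X^{(k)}} \op{C}(k(x)_r,\Z(\omega_x^{\mathcal L}))\). So it suffices to show that for each \(x\in X^{(k)}\) the map
\[
  \sign_\infty\colon \colim_j \op{I}^{j}(k(x),\omega_x^{\mathcal L}) \to \op{C}(k(x)_r,\Z(\omega_x^{\mathcal L}))
\]
is an isomorphism. Picking a trivialization \(s\colon k(x)\to \omega_x^{\mathcal L}\) and applying \prettyref{lem:trivilization}, this reduces to the untwisted statement \(\colim_j \op{I}^j(F)\xrightarrow{\cong}\op{C}(F_r,\Z)\), which is the theorem of Arason--Knebusch already invoked in the discussion before the corollary.

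The main obstacle, if any, is conceptual rather than technical: one must make sure that ``multiplication by \(\pfist{-1}\)'' on the Rost--Schmid complex (shifting the exponent of \(\mathbf I\)) really commutes with the Rost--Schmid differential, so that the colimit is a colimit of \emph{complexes} and not just of graded groups. But this is clear because \(\partial_{RS}\) is \(\op{W}(F)\)-linear, so left multiplication by a global form (in particular by \(\pfist{-1}\in \op{W}(k)\)) commutes with \(\partial_{RS}\). Parenthetically, the corollary thus also shows in passing that \(\op{C}^{re}(X,\Z,\mathcal L)\) is indeed a complex: the differential squares to zero because it does so on each \(\op{C}^{RS}(X,\I^j,\mathcal L)\), and the quotation ``the signature induces an isomorphism of complexes'' can be read as the assertion that \(\op{C}^{re}(X,\Z,\mathcal L)\) inherits its complex structure from the Rost--Schmid complexes via \(\sign_\infty\).
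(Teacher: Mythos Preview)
Your proposal is correct and follows exactly the approach implicit in the paper, which states the corollary without proof as an immediate consequence of the preceding discussion: the compatibility square for the rescaled signatures with the differentials, the commutative triangle for the transition maps, and the Arason--Knebusch isomorphism. You have simply spelled out the details the paper leaves to the reader.
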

\begin{proof}[Proof of \prettyref{thm:comparison}]
  By replacing \(X\) with open subsets of \(X\), we can view the complexes appearing in \prettyref{cor:sign_infty-isomorphism-of-complexes} as complexes of flasque Zariski sheaves on \(X\).
  The Rost--Schmid complexes of sheaves \(\op{C}^{RS}(-,\I^j,\mathcal L)\) are known to be exact in positive degrees over any smooth scheme \(X\) over a field $k$.  Passing to the colimit, and using the isomorphism \(\sign_\infty\), we find that the complex of sheaves \(\op{C}^{re}(-,\Z,\mathcal L)\) is likewise exact in positive degrees over such \(X\), and hence provides a flasque resolution of the kernel of its first differential.  The identification of this kernel with $\support_* \Z$ follows directly from \cite[Proposition~4.7 and Lemma~4.9]{jacobson}.
\end{proof}

Now we consider cohomology with support in a closed subset $Z\subset X$.
Recall
\ARXIVONLY{from \prettyref{rem:cohomology-with-support-via-flasque-resolution} }%
that, given an abelian sheaf $\sheaf F$ on $X$ with a flasque resolution \(\sheaf F\to \sheaf F^\bullet\), the cohomology with support $\op{H}^s_Z(X, \sheaf F)$ can be computed as the homology of the kernel of the restriction morphism $\sheaf F^\bullet(X) \rightarrow \sheaf F^\bullet(X\setminus Z)$.
In our situation, we obtain:
\begin{corollary}\label{cor:isosignature}
  Let $X$ be a smooth scheme over a field $k$, and let $Z$ be a closed subset of $X$. The signature map
  \[
    \sign_\infty \colon \op{H}^i_Z(X, \colim\limits \I^j(\mathcal{L})) \rightarrow \op{H}^i_Z(X, \support_*\Z(\mathcal{L}))
  \]
  is an isomorphism.
\end{corollary}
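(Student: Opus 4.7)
The plan is to deduce the claim by computing both cohomologies with support via the flasque resolutions constructed in \prettyref{thm:comparison}, and then transporting the isomorphism of complexes coming from \prettyref{cor:sign_infty-isomorphism-of-complexes} through the functor $\Gamma_Z$.

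More concretely, the first step is to regard the isomorphism
\[
  \colim_j \op{C}^{RS}(-,\I^j,\mathcal L) \xrightarrow[\sign_\infty]{\cong} \op{C}^{re}(-,\Z(\mathcal L))
\]
as an isomorphism of complexes of flasque sheaves on $X$. The right-hand side is a flasque resolution of $\support_*\Z(\mathcal L)$ by \prettyref{thm:comparison}. For the left-hand side, each $\op{C}^{RS}(-,\I^j,\mathcal L)$ is a flasque resolution of $\I^j(\mathcal L)$ (cf.\ \prettyref{thm:allIcohomologyagree}), and since $X$ is Noetherian, filtered colimits of flasque sheaves remain flasque and filtered colimits of exact sequences remain exact; hence the colimit is a flasque resolution of $\colim_j \I^j(\mathcal L)$.

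Second, I apply the functor of sections with support $\Gamma_Z$ to both sides. By \prettyref{prop:GoGe-supports-compatible}, the complex $\Gamma_Z \op{C}^{RS}(X,\I^j,\mathcal L)$ computes $\op{H}^i_Z(X,\I^j(\mathcal L))$, and since $\Gamma_Z$ commutes with sequential colimits on the Noetherian scheme $X$ (this follows exactly as in \prettyref{lem:cohomology-of-colimits}, writing $\Gamma_Z$ as a kernel of $\Gamma(X,-)\to \Gamma(X\setminus Z,-)$ and using that both preserve filtered colimits on $X$), one has
\[
  \Gamma_Z \colim_j \op{C}^{RS}(X,\I^j,\mathcal L) \;\cong\; \colim_j \Gamma_Z \op{C}^{RS}(X,\I^j,\mathcal L).
\]
Taking cohomology and using exactness of filtered colimits of abelian groups together with \prettyref{lem:cohomology-of-colimits}, the left-hand side computes
\[
  \colim_j \op{H}^i_Z(X,\I^j(\mathcal L)) \;\cong\; \op{H}^i_Z(X,\colim_j \I^j(\mathcal L)).
\]
Meanwhile, $\Gamma_Z \op{C}^{re}(X,\Z(\mathcal L))$ computes $\op{H}^i_Z(X,\support_*\Z(\mathcal L))$ because $\op{C}^{re}(-,\Z(\mathcal L))$ is a flasque resolution. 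The isomorphism of complexes $\sign_\infty$ therefore induces the desired isomorphism on cohomology.

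The potential obstacle is really just the bookkeeping about flasqueness and the interchange of $\Gamma_Z$ with filtered colimits; both are standard once one keeps the Noetherian hypothesis in mind, and no new geometric input is needed beyond the sheafified signature comparison already carried out in \prettyref{thm:signature-compatible-with-differential} and \prettyref{cor:sign_infty-isomorphism-of-complexes}.
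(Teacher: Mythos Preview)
Your proposal is correct and follows essentially the same approach as the paper: both sides are computed by applying $\Gamma_Z$ to the flasque resolutions appearing in \prettyref{cor:sign_infty-isomorphism-of-complexes} and \prettyref{thm:comparison}, and the isomorphism of complexes $\sign_\infty$ then induces the desired isomorphism on cohomology with support. Your additional remarks about flasqueness of filtered colimits and the interchange of $\Gamma_Z$ with colimits on a Noetherian scheme just make explicit the bookkeeping the paper leaves implicit.
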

In the following corollary, we write \(\realcycle_\infty\) for the composition
\begin{equation}
  \label{eq:real-cycle-infty}
  \begin{aligned}
    \op{H}^i_Z(X, \colim\limits \I^j(\mathcal{L})) &\xrightarrow{\sign_\infty} \op{H}^i_Z(X, \support_*\Z(\mathcal{L})) \overset{\support^*}{\text{---------}}\\
    \longrightarrow
    &\op{H}^i_{Z_r}(X_r, \Z(\mathcal{L}))  \xrightarrow{\iota^*} \op{H}^i_{Z(\R)}(X(\R), \Z(\mathcal{L})).
  \end{aligned}
\end{equation}
The real cycle class map \(\op{H}^i_Z(X,\I^j)\to\op{H}^i_{Z(\R)}(X(\R), \Z(\mathcal{L}))\) is obtained by composing \(\realcycle_\infty\) with the canonical map \(\op{H}^i(X,\colim\limits \I^j)\to \op{H}^i_Z(X,\I^j)\) (see diagram~\eqref{big-diagram}).
\begin{corollary}\label{cor:cycle_infty-iso} For any a smooth scheme \(X\) over $\mathbb{R}$, and any closed subset $Z$ of $X$, the map \(\realcycle_\infty\colon \op{H}^i_Z(X, \colim\limits \I^j(\mathcal{L})) \to \op{H}^i_{Z(\R)}(X(\R), \Z(\mathcal{L}))\) is an isomorphism.
\end{corollary}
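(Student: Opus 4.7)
The plan is to show that each of the three maps appearing in the defining composition~\eqref{eq:real-cycle-infty} of $\realcycle_\infty$ is individually an isomorphism with supports. The first map $\sign_\infty$ is handled by \prettyref{cor:isosignature} above, so it remains to treat $\support^*$ and $\iota^*$.

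For $\support^*\colon \op{H}^i_Z(X, \support_*\Z(\mathcal{L})) \to \op{H}^i_{Z_r}(X_r, \Z(\mathcal{L}))$, I would argue through flasque resolutions. By Scheiderer's result recalled in \prettyref{sec:prelim-ret}, the functor $\support_*$ is exact; it also preserves flasque sheaves, so applying it to a flasque resolution of $\Z(\mathcal{L})$ on $X_r$ produces a flasque resolution of $\support_*\Z(\mathcal{L})$ on $X$. Moreover $\support^{-1}(Z) = Z_r$ and $\support^{-1}(X \setminus Z) = X_r \setminus Z_r$, so for every abelian sheaf $\mathcal{F}$ on $X_r$ we have a natural identification $\Gamma_Z(X, \support_*\mathcal{F}) \cong \Gamma_{Z_r}(X_r, \mathcal{F})$, which upgrades the absolute version of Jacobson's isomorphism to the desired isomorphism with supports.

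For $\iota^*\colon \op{H}^i_{Z_r}(X_r, \Z(\mathcal{L})) \to \op{H}^i_{Z(\R)}(X(\R), \Z(\mathcal{L}))$, I would apply the five lemma to the ladder of localization sequences provided by \prettyref{prop:localization-sequence} for the closed pair $(Z_r, X_r)$ and its restriction $(Z(\R), X(\R))$ under $\iota$, with vertical pullback morphisms $\iota^*$ between them (using that $\iota^{-1}(Z_r) = Z(\R)$ and $\iota^{-1}(X_r \setminus Z_r) = X(\R) \setminus Z(\R)$). On the absolute cohomology of $X_r$ and of $X_r \setminus Z_r$, the map $\iota^*$ is an isomorphism by \prettyref{lem:iota-on-locally-constant-sheaves} applied to the locally constant sheaf $\Z(\mathcal{L})$, once we observe that locally constant sheaves of the form $\Z(\mathcal{L})$ on $X_r$ satisfy the unit isomorphism $\Z(\mathcal{L}) \cong \iota_*\iota^*\Z(\mathcal{L})$; this can be checked locally on a trivializing cover for $\mathcal{L}$, where it reduces to the corresponding statement for constant sheaves also recorded in \prettyref{lem:iota-on-locally-constant-sheaves}. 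The five lemma then yields the isomorphism with supports.

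The principal technical care required lies in tracking the compatibility of each of the three functors with passage to supports and with the line bundle twists; once these compatibilities are assembled, the conclusion follows formally from the isomorphism results already established in the preliminary sections.
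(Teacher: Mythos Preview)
Your proof is correct and follows the same three-step decomposition as the paper. The only notable difference is in packaging: for $\support^*$, the paper invokes the Grothendieck spectral sequence for the composition $\Gamma_Z\circ\support_*$ and observes that it collapses because $\op{R}^q\support_* = 0$ for $q>0$ (Scheiderer), whereas you argue directly via flasque resolutions and the identification $\Gamma_Z\support_* \cong \Gamma_{Z_r}$; these are equivalent. For $\iota^*$, the paper is terse and simply cites the unit isomorphism $\iota_*\Z(\mathcal L)\cong\Z(\mathcal L)$ together with \prettyref{lem:iota-on-locally-constant-sheaves}, without spelling out the passage from absolute cohomology to cohomology with supports; your five-lemma reduction via the localization sequence makes this step explicit.
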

\begin{proof}
  The first map in \eqref{eq:real-cycle-infty} is an isomorphism by \prettyref{cor:isosignature}.
  The last map is an isomorphism by 
  Lemmas~\ref{lem:iota-on-constant-sheaves} and \ref{lem:general-twist-of-structure-morphism}: \(\iota_*\Z(\mathcal L) \cong \Z(\mathcal L)\).
  It remains to see that the central map $\support^*$ is an isomorphism.  This is proved for unrestricted support in \cite[Lemma~4.6]{jacobson}, and the argument given there can be generalized to arbitrary supports.  Indeed, recall that the cohomology groups with support in \(Z\) are defined via the right derived functors of \(\Gamma_Z\). We thus have a Grothendieck spectral sequence
  \[
    \op{H}^p_Z(X, \op{R}^q\support_*\Z({\mathcal L})) \Rightarrow \op{R}^{p+q}(\Gamma_Z\support_*)(\Z({\mathcal L})).
  \]
  By \prettyref{lem:pullback-with-support}, we can identify \(\Gamma_Z\support_*\) with \(\Gamma_{Z_r}\), hence we may identify the target of the spectral sequence with \(\op{H}^{p+q}_{Z_r}(X_r,\Z({\mathcal L}))\).
  On the other hand, by \cite[Theorem~19.2]{scheiderer} the higher direct images $\op{R}^q\support_*$ for \(q>0\) vanish, so the spectral sequence collapses. As in the case without supports (see Section \ref{sec:prelim-ret}), one may check that the edge map is given by $\support^*$.
\end{proof}
\subsection{Compatibility with pushforwards: an alternative proof}

\subsubsection{The real Thom class and an alternative proof of \prettyref{thm:pushforwards-compatible}}
\begin{lemma}\label{lem:real-Thom-class}
  Let $p\colon V\to X$ be a vector bundle of rank $d$ over a smooth variety $X$ over $\R$.
  The image of the algebraic Thom class $\op{th}(V)$ of \prettyref{def:algebraic-thom-class} under the real cycle class map
  \[
    \op{H}^d_X(V, \I^d(p^*\det V^\vee)) \longrightarrow \op{H}^d_{X(\R)}(V(\R), \Z(p^*\det V^\vee))
  \]
  is a Thom class of the continuous vector bundle \(p(\R)\colon V(\R)\to X(\R)\) in the sense of \prettyref{def:topological-thom-class}.
\end{lemma}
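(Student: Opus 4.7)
The plan is to reduce to a single fiber computation using naturality and then read off both Thom classes directly from the complexes identified via the signature in \prettyref{cor:sign_infty-isomorphism-of-complexes}.

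First, I would check that the Thom class property is local on $X$. For any real point $x \in X(\R)$, the base change formula for the dévissage isomorphism \cite[Theorem~2.12]{asok-fasel:euler} shows that $x^*\op{th}(V) = \op{th}(V_x)$ in $\op{H}^d_{\Spec\R}(V_x, \I^d(\det V_x^\vee))$, and \prettyref{prop:pullbacks-compatible} shows that pullback along $x$ is compatible with the real cycle class map. Since being a topological Thom class is checked by restriction to fibers, this reduces the question to the universal local case $X = \Spec\R$ and $V = \A^d_\R$, where we must verify that the image of $\op{th}(\A^d)$ in $\op{H}^d_{\{0\}}(\R^d, \Z(\det V^\vee))$ is a generator.

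Second, I would identify both groups via the complexes of this appendix. The algebraic Gersten complex with support on $\{0\}\subset \A^d$ (cf.\ \prettyref{prop:GoGe-supports-compatible}) is concentrated in degree $d$ and consists of the single term $\op{W}(\R, \omega_0\otimes p^*\det V^\vee)$. The canonical identification $\omega_0 \cong \det \Nb_0$ combined with $\Nb_0 \cong V_0$ makes the twist $\omega_0\otimes p^*\det V^\vee$ canonically trivial, so this term is just $\op{W}(\R)$; unravelling \prettyref{def:algebraic-thom-class} shows that $\op{th}(\A^d)$ is represented by $\langle 1\rangle$. Symmetrically, \prettyref{cor:cycle_infty-iso} together with the single-term real Gersten complex of \prettyref{thm:comparison} for $\{0\}\subset \A^d$ identifies the target $\op{H}^d_{\{0\}}(\R^d, \Z(\det V^\vee))$ with $\op{C}(\Sper(\R), \Z) \cong \Z$.

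Third, I would finish using \prettyref{cor:compatibilityWC}, which shows that under these identifications the composition defining the real cycle class map (through $\colim_j \I^j$) agrees with the twisted signature $\op{W}(\R) \to \op{C}(\Sper(\R), \Z)$, $\langle 1\rangle \mapsto (P\mapsto 1)$. This is the generator of $\Z \cong \op{H}^d_{\{0\}}(\R^d, \Z(\det V^\vee))$, as required.

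The main obstacle is really just bookkeeping: one must verify that the canonical trivialization of $\omega_0 \otimes p^*\det V^\vee$ used on the algebraic side agrees with the corresponding trivialization used on the real-spectrum side, so that the twisted signature actually restricts to the untwisted map $\op{W}(\R) \to \op{C}(\Sper(\R),\Z)$ we want. Since both trivializations come from the same identification $\omega_0 \cong \det \Nb_0^\vee$ of the conormal and cotangent at a smooth closed point, and the signature is $\Gm$-equivariant by \prettyref{prop:J-homotopy-modules}, this compatibility is automatic.
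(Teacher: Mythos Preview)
Your argument is correct, but it takes a different route from the paper's appendix proof. You reduce to a fiber and then compute explicitly: the Gersten complex with support at $\{0\}\subset\A^d$ has a single term $\op{W}(\R)$ in degree~$d$, the real complex of \prettyref{thm:comparison} has a single term $\op{C}(\Sper\R,\Z)\cong\Z$, and the cycle class map is the signature sending $\langle 1\rangle$ to~$1$. (A small remark: the citation of \prettyref{cor:compatibilityWC} is slightly off, since that corollary concerns the $\W$-complex with factors $2^i\sign$; what you actually need is \prettyref{cor:sign_infty-isomorphism-of-complexes} together with the observation that in degree~$d$ for $\I^d$ the map through the colimit is just $\sign$ on $\op{I}^0=\op{W}$.) This is essentially the strategy of \prettyref{prop:thom-class-comparison-local} in the main text, transplanted to the appendix machinery.

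The paper's appendix proof is more structural and avoids any explicit local computation. It argues that cup product with $\op{th}(V)$ on $\colim_j\I^j$-cohomology is an isomorphism (being a colimit of d\'evissage isomorphisms), and since $\realcycle_\infty$ is an isomorphism by \prettyref{cor:cycle_infty-iso}, the map $\op{th}^{\realcycle}(V)\cup p^*(-)$ must also be an isomorphism on singular cohomology. Restricting to a fiber, an isomorphism $\Z\to\Z$ sends $1$ to a generator, which is exactly the Thom class condition. The advantage of this approach is that it never needs to identify the generator concretely or track orientations; the advantage of yours is that it makes transparent \emph{why} the image is a generator (signature of $\langle 1\rangle$ is~$1$).
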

\begin{proof}
  Let $\op{th}^{\op{cl}}(V)$ be the image in $\op{H}^d_{X(\R)}(V(\R), \Z(p^*\det V^\vee))$ of the algebraic Thom class under the real cycle class map. Consider the following square:
  \[
    \xymatrix@C=5em{
      \op{H}^i(X, \colim \I^j ) \ar[r]^-{\op{th}(V) \cup p^*(-) }
      \ar[d]^{\realcycle_\infty}
      &
      \op{H}^{i+d}_X(V, \colim \I^{j+d}(p^*\det V^\vee) )
      \ar[d]^{\realcycle_\infty}
      \\
      \op{H}^i(X(\R), \Z )
      \ar[r]^-{\op{th}^{\op{cl}}(V) \cup p^*(-)}
      &
      \op{H}^{i+d}_{X(\R)}(V(\R), \Z(p^*\det V^\vee) )
    }
  \]
  The square commutes as the real cycle class map is compatible with products (\prettyref{prop:cup-products-compatible}) and pullbacks (\prettyref{prop:pullbacks-compatible}). The vertical maps are isomorphisms by \prettyref{cor:cycle_infty-iso}.
  Multiplication with the Thom class \(\op{th}(V)\) can be identified with the d\'evissage isomorphism, as noted in the proof of \prettyref{lem:pushforwards-agree}, so by \prettyref{lem:cohomology-of-colimits} the top horizontal map is a colimit of isomorphisms.  It follows that the top horizontal map is itself an isomorphism, hence the lower horizontal map is an isomorphism, too.

  This applies, in particular, to the restricted bundle \(V_x\to \Spec(\R)\) over any real point $x\colon \Spec(\R) \rightarrow X$. So all maps in the following commutative square are isomorphisms:
  \[
    \xymatrix@C=5em{
      \op{H}^i(x, \colim \I^j )
      \ar[r]^-{\op{th}(V_x) \cup p^*(-) }
      \ar[d]^-{\realcycle_\infty}
      &
      \op{H}^{i+d}_x(V_x, \colim \I^{j+d}(p^*\det V_x^\vee) )
      \ar[d]^-{\realcycle_\infty}
      \\
      \op{H}^i(x(\R), \Z )
      \ar[r]^-{\op{th}^{\realcycle}(V_x) \cup p^*(-)}
      &
      \op{H}^{i+d}_{x(\R)}(V_x(\R), \Z(p^*\det V_x^\vee ))
    }
  \]
  As the Thom class \(\op{th}(V_x)\) of the restricted bundle is the pullback of \(\op{th}(V)\) along the inclusion \(V_x\hookrightarrow V\), and as the real cycle class map is compatible with pullbacks, we find that \(\op{th}^{\realcycle}(V_x)\) is likewise the pullback of \(\op{th}^{\realcycle}(V)\) along the inclusion \(V_x(\R)\hookrightarrow V(\R)\).  By taking $i=0$ in the above diagram, we see that the lower horizontal map sends $1$ to a generator. So,   \(\op{th}^{\realcycle}(V)\) is a Thom class in the sense of
  \prettyref{def:topological-thom-class}, as claimed.
\end{proof}
\begin{proof}[Alternative proof of \prettyref{prop:thom-class-comparison}]
\prettyref{prop:thom-class-comparison} follows from \prettyref{lem:real-Thom-class} and the compatibility of the canonical map
\(
  \op{H}^i_Z(X,\I^j(\mathcal{L})) \rightarrow \op{H}^i_Z(X, \colim \I^j(\mathcal{L}))
\)
with the cup product.
\end{proof}
\prettyref{thm:pushforwards-compatible} now follows exactly as before.

 \begin{remark}
Given \prettyref{prop:thom-class-comparison}, which we have just reproved, the main remaining point is to see that the real cycle class map is compatible with deformation to the normal bundle. Using the same notation as in the first proof of \prettyref{prop:comparedefconetubnbhd}, we have the following commutative diagram:
\[
\xymatrix{
\op{H}^i_Z(\Nb_ZX, \colim \I^j(\mathcal{L}))
\ar[r]^-{i_0^*}
\ar[d]^-{\sign}
&
\op{H}^i_{Z\times \mathbb{A}^1}(D(Z,X), \colim \I^j(\mathcal{L}))
\ar[d]^-{\sign}
&
\ar[l]_-{i_1^*}
\op{H}^i_Z(X, \colim \I^j(\mathcal{L}))
\ar[d]^-{\sign}
\\
\op{H}^i_{Z(\mathbb{R})}(\Nb_ZX (\mathbb{R}), \Z({\mathcal{L}}))
\ar[r]^-{i_0^*}
&
\op{H}^i_{(Z\times \mathbb{A}^1) (\mathbb{R})}(D(Z,X)(\mathbb{R}), \Z({\mathcal{L}}))
&
\ar[l]_-{i_1^*}
\op{H}^i_{Z(\mathbb{R})}(X(\mathbb{R}), \Z({\mathcal{L}}))
}
      \]
     As in the proof of \prettyref{lem:real-Thom-class}, the horizontal maps are isomorphisms by the well-known construction of the deformation to the normal bundle (see the discussion above \prettyref{prop:comparedefconetubnbhd}) and by \prettyref{lem:cohomology-of-colimits}, and the vertical maps are isomorphisms by \prettyref{cor:cycle_infty-iso}.
    We conclude that the lower horizontal maps are also isomorphisms.
   This is independent of any comparison of the deformation to the normal bundle with tubular neighborhood constructions.

  We could have \emph{defined} the pushforward in topology as the cup product with a Thom class followed by the deformation to the normal bundle, and this would have been enough for our purposes since the two localization sequences in topology (with values in the locally constant sheaf $\Z(\mathcal{L})$) and in algebraic geometry (with values in the sheaf $\colim \I^j (\mathcal{L})$) can be identified via the cycle class map.  However, the identification of the pushforward defined in this way with more common constructions of pushforwards in singular cohomology requires arguments of the kind presented in the the proof of \prettyref{prop:comparedefconetubnbhd} below.
 \end{remark}

\end{document}